\documentclass[12pt]{article}                
\usepackage{graphicx, xcolor}
\usepackage{psfrag}
\usepackage{amsmath, amssymb, amsthm}
\usepackage{mathptmx}
\usepackage{esvect, bm, bbm}
\usepackage{hyperref}
\usepackage{cleveref}
\usepackage{enumitem}

\hypersetup{
	colorlinks=true,
	linkcolor={blue!80!black},
	citecolor={green!50!black},
	urlcolor={red!50!black}
}

\textwidth=6.5in
\textheight=8.9in
\topmargin=-0.6in
\oddsidemargin=0.1in
\evensidemargin=0.1in
\parskip= 2pt

\newcommand{\Q}{\mathbb{Q}}
\newcommand{\R}{\mathbb{R}}
\newcommand{\Rp}{\mathbb{R}_{\geq 0}}
\newcommand{\Rpp}{\mathbb{R}_{>0}}
\newcommand{\1}{\mathbbm{1}}

\newcommand{\A}{\mathcal{A}}
\newcommand{\D}{\mathcal{D}}
\newcommand{\C}{\mathsf{C}}
\newcommand{\CC}{\mathcal{C}}
\newcommand{\EE}{\mathcal{E}}

\renewcommand{\H}{\mathsf{H}}
\newcommand{\HH}{\mathcal{H}}

\newcommand{\I}{\mathcal{I}}
\newcommand{\K}{\mathcal{K}}
\renewcommand{\L}{\mathcal{L}}

\newcommand{\M}{\mathcal{M}}

\renewcommand{\O}{\Omega}
\newcommand{\p}{\varphi}

\renewcommand{\P}{\mathbb{P}}

\renewcommand{\S}{\mathcal{S}}
\newcommand{\TV}{\mathrm{TV}}
\newcommand{\td}{\widetilde}
\newcommand{\fHL}{f_{\mathrm{HL}}}

\newcommand{\abs}[1]{\lvert#1\rvert}
\newcommand{\norm}[1]{\lVert#1\rVert}
\newcommand{\Norm}[1]{{\vert\kern-0.1ex\vert\kern-0.1ex\vert#1\vert\kern-0.1ex\vert\kern-0.1ex\vert}}
\newcommand{\dNorm}[2]{{\vert\kern-0.1ex\vert\kern-0.1ex\vert#2\vert\kern-0.1ex\vert\kern-0.1ex\vert}_{#1,*}}

\newcommand{\dec}{\searrow}

\newcommand{\ud}{\, \mathrm{d}}
\DeclareMathOperator{\udo}{\mathrm{d}}

\DeclareMathOperator{\supp}{\operatorname{supp}}

\DeclareMathOperator{\sgn}{\operatorname{sgn}}
\DeclareMathOperator{\BigO}{\operatorname{\mathcal{O}}}

\newcommand{\step}[1]{{\underline{\textit {Step #1}}}}
\newcommand{\case}[1]{{\underline{\textit {Case #1}}}}

\newtheorem{theorem}{Theorem}[section]
\newtheorem{lemma}[theorem]{Lemma}
\newtheorem{proposition}[theorem]{Proposition}
\newtheorem{corollary}[theorem]{Corollary}
\newtheorem{definition}[theorem]{Definition}
\newtheorem*{acknowledgements}{Acknowledgements}

\theoremstyle{remark}

\numberwithin{equation}{section}




\newcommand{\Ll}{\left}
\newcommand{\Rr}{\right}

\begin{document}

\title{\vspace{-1cm} Infinite-dimensional Hamilton-Jacobi equations for statistical inference on sparse graphs}
\author{Tomas Dominguez\thanks{\textsc{\tiny Department of Mathematics, University of Toronto, tomas.dominguezchiozza@mail.utoronto.ca}} \and  Jean-Christophe Mourrat\thanks{\textsc{\tiny Department of Mathematics, ENS Lyon and CNRS, jean-christophe.mourrat@ens-lyon.fr}}}

\date{}
\maketitle
\vspace*{-0.7cm}

\begin{abstract}
We study the well-posedness of an infinite-dimensional Hamilton-Jacobi equation posed on the set of non-negative measures and with a monotonic non-linearity. Our results will be used in a companion work to propose a conjecture and prove partial results concerning the asymptotic mutual information in the assortative stochastic block model in the sparse regime. The equation we consider is naturally stated in terms of the Gateaux derivative of the solution, unlike previous works in which the derivative is usually of transport type. We introduce an approximating family of finite-dimensional Hamilton-Jacobi equations, and use the monotonicity of the non-linearity to show that no boundary condition needs to be prescribed to establish well-posedness. The solution to the infinite-dimensional Hamilton-Jacobi equation is then defined as the limit of these approximating solutions. In the special setting of a convex non-linearity, we also provide a Hopf-Lax variational representation of the solution.
\end{abstract}

\section{Introduction and main results}
A recent approach to describe the asymptotic free energy of a mean-field disordered system is as the solution to a Hamilton-Jacobi equation. 
Spin-glass models have typically led to infinite-dimensional Hamilton-Jacobi equations of transport type \cite{JC_upper,JC_NC,JC_parisi} while statistical inference problems have given rise to finite-dimensional Hamilton-Jacobi equations defined on closed convex cones \cite{HB_nonsymmetric, JC_HB_FinRank, HB_tensor, HB_multilayer, JC_matrix, JC_HJ}. A general well-posedness theory for the former was established in~\cite{HB_HJ} while one for the latter was developed in \cite{HB_cone}. In \cite{TD_JC} we will propose to describe the asymptotic mutual information in the sparse stochastic block model in terms of a Hamilton-Jacobi equation posed over a space of probability measures, but featuring derivatives  of ``affine'' rather than transport type. The purpose of this paper is to develop a well-posedness theory for such an infinite-dimensional Hamilton-Jacobi equation; we expect that this type of equation will appear in other mean-field problems with sparse interactions. While our setting is different, the techniques we use here draw heavily upon the arguments introduced in \cite{HB_HJ} and \cite{HB_cone}.

Let us describe the class of infinite-dimensional Hamilton-Jacobi equations that we consider. We denote by $\M_s$ the space of signed measures on $[-1,1]$,
\begin{equation}
\M_s=\big\{\mu \mid \mu \text{ is a signed measure on } [-1,1]\big\},
\end{equation}
and by $\M_+$ the cone of non-negative measures on this interval,
\begin{equation}\label{eqn: SBME M+}
\M_+=\big\{\mu \in \M_s\mid \mu \text{ is a non-negative measure}\big\}.
\end{equation}
We follow the convention that a signed measure can only take finite values, and in particular, every $\mu \in \M_+$ must have finite total mass. 
We fix a continuously differentiable function $g:[-1,1]\to \R$, and for each measure $\mu \in \M_+$, define the function $G_\mu : [-1,1]\to \R$ by
\begin{equation}
G_\mu(x)=\int_{-1}^1 g(xy)\ud \mu (y).
\end{equation}
We introduce the cone of functions
\begin{equation}\label{eqn: SBME cone of functions}
\CC_\infty=\big\{G_\mu \mid  \mu\in \M_+\big\}
\end{equation}
as well as the non-linearity $\C_\infty: \CC_\infty\to \R$ defined on this cone by
\begin{equation}\label{eqn: SBME infinite non-linearity}
\C_\infty(G_\mu)=\frac{1}{2}\int_{-1}^1 G_\mu(x)\ud \mu(x)=\frac{1}{2}\int_{-1}^1 \int_{-1}^1 g(xy)\ud \mu(y)\ud \mu(x).
\end{equation}
This non-linearity is well-defined by the Fubini-Tonelli theorem. Indeed, if $G_\mu=G_\nu$ for some measures $\mu,\nu\in \M_+$, then
\begin{equation}
\int_{-1}^1 G_\mu(x)\ud \mu(x)=\int_{-1}^1 G_\nu(x)\ud \mu(x)=\int_{-1}^1\int_{-1}^1 g(xy)\ud \mu(x)\ud \nu(x)   ,
\end{equation}
while
\begin{equation}
\int_{-1}^1 G_\nu(x)\ud \nu(x)=\int_{-1}^1 G_\mu(x)\ud \nu(x)=\int_{-1}^1 \int_{-1}^1 g(xy)\ud \mu(y)\ud \nu(x),
\end{equation}
and the symmetry of the map $(x,y)\mapsto g(xy)$ implies that these two expressions coincide.
Given a function $f:[0,\infty)\times \M_+\to \R$ and measures $\mu,\nu\in \M_+$, we denote by $D_\mu f(t,\mu;\nu)$ the Gateaux derivative of the function $f(t,\cdot)$ at the measure $\mu$ in the direction $\nu$,
\begin{equation}\label{eqn: SBME Gateaux derivative}
D_\mu f(t,\mu;\nu)=\lim_{\epsilon \to 0}\frac{f(t,\mu+\epsilon \nu)-f(t,\mu)}{\epsilon}.
\end{equation}
We will say that the Gateaux derivative of $f(t,\cdot)$ at the measure $\mu \in \M_+$ admits a density if there exists a bounded measurable function $x\mapsto D_\mu f(t,\mu,x)$ defined on the interval $[-1,1]$ with
\begin{equation}\label{eqn: SBME Gateaux derivative density}
D_\mu f(t,\mu;\nu)=\int_{-1}^1 D_\mu f(t,\mu,x)\ud \nu(x)
\end{equation}
for every measure $\nu \in \M_+$. We will often abuse notation and identify the density $D_\mu f(t,\mu,\cdot)$ with the Gateaux derivative $D_\mu f(t,\mu)$. The purpose of this paper is to establish the well-posedness of the infinite-dimensional Hamilton-Jacobi equation
\begin{equation}\label{eqn: SBME infinite HJ eqn}
\left\{
\begin{aligned}
\partial_t f(t,\mu)&=\C_\infty\big(D_\mu f(t,\mu)\big) & \text{on }& \Rpp\times \M_+,\\
f(0,\mu)&=\psi(\mu)& \text{on }& \M_+.
\end{aligned}
\right.
\end{equation}
under appropriate assumptions on the kernel $g$ and the initial condition $\psi:\M_+\to \R$. In particular, these assumptions will imply that, in a suitably weak sense, the Gateaux derivative of the solution $D_\mu f(t,\mu)$ belongs to the cone $\CC_\infty$ for all $t \ge 0$ and $\mu \in \M_+$. Before stating these assumptions and the results they lead to precisely, let us describe the general strategy we will follow.

To obtain the well-posedness of the Hamilton-Jacobi equation \eqref{eqn: SBME infinite HJ eqn}, we will project it from the infinite-dimensional space of measures $\smash{\M_+}$ to a family of finite-dimensional spaces of measures~$\smash{\M_+^{(K)}}$ with dimension monotone in some integer parameter $K\geq 1$. The well-posedness of each of these projected equations will be obtained using techniques similar to those in \cite{HB_cone}, and the limit  as $K$ tends to infinity of these projected solutions will be shown to exist using techniques similar to those in \cite{HB_HJ, JC_upper, JC_NC}; we take the limit thus obtained as the definition of the solution to the infinite-dimensional Hamilton-Jacobi equation \eqref{eqn: SBME infinite HJ eqn}. 

In previous works, derivatives of transport type were the primary focus of investigation, and it was thus natural to discretize the space of measures by restricting to measures of the form $\smash{K^{-1} \sum_{k = 1}^K \delta_{x_k}}$, only allowing the $x_k$'s to vary but keeping the weight of each atom fixed. Due to the nature of the derivatives appearing in \eqref{eqn: SBME infinite HJ eqn}, we choose instead here to define our finite-dimensional approximating space as the cone of non-negative measures supported on dyadic rationals in the interval $[-1,1]$. That is, we allow the weights to vary, provided that they remain non-negative, but keep the positions of the atoms fixed.
Given an integer $K\geq 1$, we write
\begin{equation}\label{eqn: SBME dyadic rationals}
\D_K=\Big\{k=\frac{i}{2^K} \mid -2^K\leq i<  2^K\Big\}
\end{equation}
for the set of dyadic rationals on $[-1,1]$ at scale $K$. It will be convenient to index vectors using the set of dyadic rationals, writing
$\smash{x=(x_k)_{k\in \D_K}\in \R^{\D_K}}$.
We denote the set of discrete measures supported on the dyadic rationals at scale $K$ in the interval $[-1,1]$ by
\begin{equation}
\M^{(K)}_+=\Big\{\mu\in \M_+\mid \mu=\frac{1}{\abs{\D_K}}\sum_{k\in \D_K}x_k\delta_k\text{ for some } x=(x_k)_{k\in \D_K}\in \Rp^{\D_K}\Big\}.
\end{equation}
A natural way to project a general measure $\mu \in \M_+$ onto $\M_+^{(K)}$ is via the mapping
\begin{equation}\label{eqn: SBME weights of dyadic measure}
x^{(K)}(\mu)=\big(\abs{\D_K}\mu\big[k,k+2^{-K}\big)\big)_{k\in \D_K}
\in \Rp^{\D_K}.
\end{equation}
For $\smash{\mu \in \M_+^{(K)}}$, the image of $\mu$ is simply the sequence of weights of the measure $\mu$ at each point in $\D_K$, up to multiplication by $\abs{\D_K}$. The inverse of this mapping assigns to each $\smash{x \in \Rp^{\D_K}}$ the measure
\begin{equation}
\mu^{(K)}_x=\frac{1}{\abs{\D_K}}\sum_{k\in \D_K}x_{k}\delta_k \in \M_+^{(K)}.
\end{equation}
We can use these projections to devise finite-dimensional approximations to the Hamilton-Jacobi equation \eqref{eqn: SBME infinite HJ eqn}. These will be posed on the cone $\smash{\Rp\times \Rp^{\D_K}}$. Indeed, any real-valued function $\smash{f:[0,\infty)\times \M_+^{(K)}\to \R}$ may be identified with the function 
\begin{equation}
f^{(K)}(t,x)=f\big(t,\mu_x^{(K)}\big)
\end{equation}
defined on $\smash{\Rp\times \Rp^{\D_K}}$. Moreover, the Gateaux derivative at the measure $\smash{\mu\in \M_+^{(K)}}$ may be identified with the gradient $\smash{\abs{\D_K}\nabla f^{(K)}(t,x^{(K)}(\mu))}$ by duality. Indeed, for any direction $\smash{\nu\in \M_+^{(K)}}$,
\begin{equation}\label{eqn: SBME projecting the Gateaux derivative}
D_\mu f(t,\mu; \nu)=\frac{\mathrm{d}}{\mathrm{d} \epsilon}\Big|_{\epsilon=0}f^{(K)}(t,x^{(K)}(\mu)+\epsilon x^{(K)}(\nu))=\nabla f^{(K)}(t,x^{(K)}(\mu))\cdot x^{(K)}(\nu).
\end{equation}
The additional factor of $\abs{\D_K}$ appears because $\smash{x^{(K)}(\nu)}$ has $\ell^1$-norm $\abs{\D_K}$ whenever $\nu$ is a probability measure. The corresponding initial condition becomes the function $\smash{\psi^{(K)}:\Rp^{\D_K}\to \R}$ defined by
\begin{equation}\label{eqn: SBME projected initial condition}
\psi^{(K)}(x)=\psi\big(\mu_x^{(K)}\big).
\end{equation}
The cone \eqref{eqn: SBME cone of functions} and the non-linearity \eqref{eqn: SBME infinite non-linearity} may be projected in a similar manner. We introduce the symmetric matrix
\begin{equation}\label{eqn: SBME matrix GK}
G^{(K)}=\frac{1}{\abs{\D_K}^2}\big(g(kk')\big)_{k,k'\in \D_K}\in \R^{\D_K\times \D_K},
\end{equation}
and observe that for every $\mu\in \M_+^{(K)}$ and $k\in \D_K$,
\begin{equation}
G_\mu(k)=\sum_{k'\in \D_K}g(kk')\mu(k')=\frac{1}{\abs{\D_K}}\sum_{k'\in \D_K}g(kk')x^{(K)}(\mu)_{k'}=\abs{\D_K}\big(G^{(K)}x^{(K)}(\mu)\big)_{k}.
\end{equation}
This motivates the definition of the projected cone,
\begin{equation}\label{eqn: SBME projected cone}
\CC_{K}=\Big\{G^{(K)}x^{(K)}(\mu)\in \R^{\D_K}\mid \mu\in \M_+^{(K)}\Big\}=\Big\{G^{(K)}x\in \R^{\D_K}\mid x\in \Rp^{\D_K}\Big\},
\end{equation}
and the projected non-linearity $\C_K:\CC_K\to \R$ defined by
\begin{equation}\label{eqn: SBME projected non-linearity}
\C_K\big(G^{(K)}x\big)=\frac{1}{2}G^{(K)}x\cdot x=\frac{1}{2\abs{\D_K}^2}\sum_{k,k'\in \D_K}g(kk')x_{k}x_{k'}=\C_\infty\big( G_{\mu_x^{(K)}}\big).
\end{equation}
This projected non-linearity can be shown to be well-defined using the same argument that showed the non-linearity \eqref{eqn: SBME infinite non-linearity} was well-defined. With this notation, our finite-dimensional approximation of the Hamilton-Jacobi equation \eqref{eqn: SBME infinite HJ eqn} reads
\begin{equation}\label{eqn: SBME HJ eqn on cone}
\partial_t f^{(K)}(t,x)=\C_K\big(\nabla f^{(K)}(t,x)\big) \quad\text{on}\quad \Rpp\times \Rp^{\D_K}
\end{equation}
subject to the initial condition $f^{(K)}(0,x)=\psi^{(K)}(x)$ on $\Rp^{\D_K}$. To study this equation we will introduce an appropriate extension $\smash{\H_K:\R^{\D_K}\to \R}$ of the non-linearity $\smash{\C_K}$, and instead consider the Hamilton-Jacobi equation 
\begin{equation}\label{eqn: SBME HJ eqn on Rdd H_K}
\partial_t f^{(K)}(t,x)=\H_K\big(\nabla f^{(K)}(t,x)\big) \quad\text{on}\quad \Rpp\times \Rpp^{\D_K}
\end{equation}
subject to the initial condition $f^{(K)}(0,x)=\psi^{(K)}(x)$ on $\Rp^{\D_K}$. Notice that we have used the cone $\smash{\Rpp^{\D_K}}$ as opposed to the more intuitive cone $\smash{\Rp^{\D_K}}$. As detailed below, the monotonicity of the projected non-linearity \eqref{eqn: SBME projected non-linearity} and the ideas regarding boundary conditions of Hamilton-Jacobi equations with suitable non-linearities developed in \cite{HB_cone, Crandall, Souganidis} make these two choices equivalent. In particular, it will not be necessary to endow the projected Hamilton-Jacobi equation with a boundary condition. Remembering that this Hamilton-Jacobi equation appears in the context of statistical inference makes this insight rather reassuring. Indeed, the statistical inference model does not suggest an obvious choice of boundary condition---given that we are ultimately interested in the identification of the value of the solution at a point in $\Rp \times \{0\}$, we would at least not want to use a Dirichlet boundary condition there! In earlier works, the imposition of a Neumann-type boundary condition was observed to be a workable option  \cite{JC_upper, JC_HJ, JC_NC}. In \cite{HB_cone}, it was shown that this somewhat artificial choice is not necessary,  and no boundary condition needs to be specified, because the non-linearity ``points in the right direction''.

We now state the precise assumptions that will allow us to obtain the well-posedness of the projected Hamilton-Jacobi equations and establish the convergence of their solutions. In the same spirit as \cite{HB_HJ, HB_cone, JC_upper, JC_NC}, we will need the initial conditions $\smash{\psi^{(K)}}$ and $\psi$ to satisfy a certain number of Lipschitz continuity assumptions. Given an integer $d\geq 1$, we introduce the normalized-$\smash{\ell^1}$ and normalized-$\smash{\ell^{1,*}}$ norms, defined for every $x, y \in \R^d$ by
\begin{equation}\label{eqn: SBME normalized norms}
\Norm{x}_1=\frac{1}{d}\sum_{k=1}^d \abs{x_k} \quad \text{and} \quad \dNorm{1}{y}=\max_{k\leq d}d\abs{y_k}.
\end{equation}
The underlying dimension $d\geq 1$ will be kept implicit but will always be clear from the context. The normalized-$\smash{\ell^1}$ norm is meant to measure elements of $\smash{\R^{\D_k}}$ with a scaling that is consistent with our identification of this space with the space of measures $\smash{\M^{(K)}_+}$. The normalized-$\smash{\ell^{1,*}}$ norm serves to measure elements of the dual space, and is defined so that the H\"older-type inequality $x \cdot y \le \Norm{x}_1 \, \dNorm{1}{y}$ is valid.

The key continuity assumption on the projected initial condition $\smash{\psi^{(K)}}$ that will make it possible to establish the well-posedness of the projected Hamilton-Jacobi equations will be Lipschitz continuity with respect to the normalized-$\smash{\ell^1}$ norm. Another way to encode this property is to require the initial condition $\psi:\M_+\to \R$ to be Lipschitz continuous with respect to the total variation distance on~$\M_+$,
\begin{equation}\label{eqn: SBME TV metric}
\TV(\mu,\nu)=\sup\big\{\abs{\mu(A)-\nu(A)}\mid A \text{ is a measurable subset of } [-1,1]\big\}.
\end{equation}
The normalized-$\ell^{1,*}$ norm will play its part when discussing the Lipschitz continuity of the projected non-linearity \eqref{eqn: SBME projected non-linearity}. To determine the convergence of the projected solutions, it will be important to assume that the initial condition $\psi:\M_+\to \R$ is Lipschitz continuous with respect to the Wasserstein distance on the set of probability measures $\Pr[-1,1]$,
\begin{equation}\label{eqn: SBME Wasserstein}
W(\P,\Q)=\sup\bigg\{\Big\lvert \int_{-1}^1 h(x)\ud \P(x)-\int_{-1}^1 h(x)\ud \Q(x)\Big\rvert \mid \norm{h}_{\text{Lip}}\leq 1\bigg\}.
\end{equation}
Here $\norm{\cdot}_{\mathrm{Lip}}$ denotes the Lipschitz semi-norm
\begin{equation}
\norm{h}_{\mathrm{Lip}}=\sup_{x\neq x'\in [-1,1]}\frac{\abs{h(x)-h(x')}}{\abs{x-x'}}
\end{equation}
defined on the space of functions $h:[-1,1]\to \R$. The final assumption on the initial condition will ensure that, in a sense to be made precise, the solution to the projected Hamilton-Jacobi equation has a bounded gradient close to the projected cone $\CC_K$ defined in \eqref{eqn: SBME projected cone}. It would of course be more convenient to assume that the gradient really belongs to $\CC_K$, rather than only being close to it, but unlike in earlier works, this stronger property does not hold in the context of the main application we have in mind in \cite{TD_JC}. To impose the boundedness of the gradient, fix $a>0$, and for each integer $K\geq 1$ introduce the closed convex set
\begin{equation}
\K_{a,K}=\Big\{G^{(K)}x\in \R^{\D_K}\mid x\in \Rp^{\D_K} \text{ and } \Norm{x}_1\leq a\Big\}\subset \CC_K.
\end{equation}
Given a closed convex set $\smash{\K\subset \R^d}$, write
\begin{equation}\label{eqn: SBME enlarged set}
\K'=\K+B_{d^{-1/2}}(0)
\end{equation}
for the neighborhood of radius $\smash{d^{-1/2}}$ around $\K$ in the normalized-$\ell^{1,*}$ norm. Here 
\begin{equation}\label{eqn: SBME l1* ball}
B_r(x)=\big\{x'\in \R^d\mid \dNorm{1}{x'-x}\leq r\big\}
\end{equation}
denotes the closed ball of radius $r>0$ centered around $x\in \R^d$ relative to the normalized-$\ell^{1,*}$ norm. We will say that a Lipschitz continuous function $\smash{h:\Rp^d\to \R}$ has its gradient in $\K$ if
\begin{equation}\label{eqn: SBME gradient in cone}
\nabla h\in L^\infty\big(\Rp^d;\K\big).
\end{equation}
Recall that a Lipschitz continuous function is differentiable almost everywhere by Rademacher's theorem (see Theorem 6 in Chapter 5.8 of \cite{Evans}), so the spatial gradient $\nabla h$ is well-defined as an element of $L^\infty$, and the condition \eqref{eqn: SBME gradient in cone} requires that this object take values in $\K$ almost everywhere. A non-differential criterion for the gradient of a Lipschitz continuous function to lie in a closed convex set is given in \Cref{SBME gradient in convex set}, and will be used frequently throughout the paper. As will be shown below, assuming that the initial condition has its gradient in $\smash{\K_{a,K}'}$ suffices to ensure that the gradient of the solution remains in this set at all times. Notice that this is insufficient to be able to evaluate the non-linearity $\smash{\C_K}$ at the gradient of the solution; however, under suitable Lipschitz continuity properties of the extension $\H_K$, it ensures that the projected Hamilton-Jacobi equation \eqref{eqn: SBME HJ eqn on Rdd H_K} should be an adequate replacement for the Hamilton-Jacobi equation \eqref{eqn: SBME HJ eqn on cone}. In particular, it justifies defining the solution to the infinite-dimensional Hamilton-Jacobi \eqref{eqn: SBME infinite HJ eqn} as the limit of the solutions to the projected Hamilton-Jacobi equation \eqref{eqn: SBME HJ eqn on Rdd H_K}.
Besides some smoothness, the only constraint we will impose on the kernel $g:[-1,1]\to \R$ is that it be strictly positive. Among other things, this assumption ensures that a non-negative measure $\mu \in \M_+$ cannot have a large total mass unless the function~$G_\mu$ takes large values. In summary, the assumptions on the kernel $g:[-1,1]\to \R$ and the initial condition $\psi:\M_+\to \R$ required for the validity of our main results are the following.
\begin{enumerate}[label = \textbf{H\arabic*}]
\item The kernel $g:[-1,1]\to\R$ is continuously differentiable and bounded away from zero by some positive constant $m>0$, 
\begin{equation}\label{eqn: SBME g lower bound}
g(x)\ge m.
\end{equation}\label{SBME H g}
\item The initial condition $\psi:\M_+\to \R$ is Lipschitz continuous with respect to the total variation distance \eqref{eqn: SBME TV metric},
\begin{equation}
\abs{\psi(\mu)-\psi(\nu)}\leq \norm{\psi}_{\mathrm{Lip},\TV}\TV(\mu,\nu)
\end{equation}
for all measures $\nu,\mu\in \M_+$. \label{SBME H TV}
\item There exists $a>0$ such that the initial condition $\psi:\M_+\to \R$ has the property that each of the projected initial conditions \eqref{eqn: SBME projected initial condition} has its gradient in the set $\smash{\K_{a,K}'}$,
\begin{equation}
\nabla \psi^{(K)}\in L^\infty\big(\Rp^d;\K_{a,K}'\big).
\end{equation}\label{SBME H K'}
\item The initial condition $\psi:\Pr[-1,1]\to \R$ is Lipschitz continuous with respect to the Wasserstein distance \eqref{eqn: SBME Wasserstein},
\begin{equation}
\abs{\psi(\P)-\psi(\Q)}\leq \norm{\psi}_{\mathrm{Lip},W} W(\P,\Q)
\end{equation}
for all probability measures $\P,\Q\in \Pr[-1,1]$. \label{SBME H W}
\end{enumerate}
Observe that the hypothesis \eqref{SBME H TV} on the initial condition implies that the projected initial conditions \eqref{eqn: SBME projected initial condition} are Lipschitz continuous with respect to the normalized-$\smash{\ell^1}$ norm,
\begin{equation}\label{eqn: SBME projected non-linearity Lipschitz}
\big\lvert \psi^{(K)}(x)-\psi^{(K)}(x')\big\rvert\leq \norm{\psi}_{\mathrm{Lip},\TV}\TV\big(\mu^{(K)}_x,\mu^{(K)}_{x'}\big)\leq \norm{\psi}_{\mathrm{Lip},\TV}\Norm{x-x'}_1.
\end{equation}

With these assumptions at hand, it is natural to wonder why we cannot simply invoke the main result in \cite{HB_cone} to obtain the well-posedness of the projected Hamilton-Jacobi equation \eqref{eqn: SBME HJ eqn on Rdd H_K}. The setting proposed in \cite{HB_cone} is that of a Hamilton-Jacobi equation posed on a cone $\CC$ and with a non-linearity that is defined over the cone $\CC$ as well; the key assumption to establish well-posedness is that the non-linearity and the initial condition have their gradients in the cone $\CC$. In our context, the non-linearity is initially only well-defined on the cone $\CC_\infty$, or $\CC_K$ for the projected equations, and we must make sure that the gradient of the solution remains in this space. This suggests that we try to use the results in \cite{HB_cone} with $\smash{\CC = \CC_\infty}$, or $\smash{\CC_K}$ for the projected equations. However, our problem, say for the projected equations, is naturally posed over $\smash{\Rp^{\D_K}}$ rather than $\CC_K$, and moreover, the gradient of the non-linearity that appears in our setting is not in $\CC_K$, although it is in $\smash{\Rp^{\D_K}}$. 
To make matters more complicated, the gradient of the finite-dimensional initial condition, and therefore also of the solution, does not quite belong to $\CC_K$, although it is in the closed convex set $\smash{\K_{a,K}'}$. Despite all this, we will show that the somewhat richer geometry of our problem can be dealt with using arguments that are similar to those in \cite{HB_cone}.

We now describe the structure of these arguments in more detail, and state our main results. We will first show that for any $R>0$, it is possible to define a non-linearity $\smash{\H_{K,R}:\R^{\D_K}\to \R}$ which agrees with the projected non-linearity $\C_K$ on a large enough ball $\smash{\CC_K\cap B_{R}(0)}$, and is uniformly Lipschitz continuous. We will then obtain the well-posedness of the projected Hamilton-Jacobi equation
\begin{equation}\label{eqn: SBME projected HJ eqn}
\partial_t f^{(K)}(t,x)=\H_{K,R}\big(\nabla f^{(K)}(t,x)\big) \quad \text{on}\quad \Rpp\times \Rpp^{\D_K}
\end{equation}
subject to the initial condition $\smash{f^{(K)}(0,x)=\psi^{(K)}(x)}$ on $\smash{\Rp^{\D_K}}$. 
Finally, we will show that the solutions to these projected Hamilton-Jacobi equations admit a limit as $K$ tends to infinity. We will verify that this limit does not depend on the choice of the extension $\H_{K,R}$, provided that $R$ is chosen sufficiently large, and define it to be the solution to the infinite-dimensional Hamilton-Jacobi equation \eqref{eqn: SBME infinite HJ eqn}.

To state our main well-posedness results, we introduce additional notation. 
Given functions $h:\Rp^d\to \R$ and $\smash{u:[0,\infty)\times \Rp^d\to \R}$, we define the semi-norms
\begin{equation}
\Norm{h}_{\mathrm{Lip},1}=\sup_{x\neq x'\in\Rp^d}\frac{\abs{h(x)-h(x')}}{\Norm{x-x'}_1} \quad \text{and} \quad [u]_{0}=\sup_{\substack{t>0\\ x\in \Rp^d}}\frac{\abs{u(t,x)-u(0,x)}}{t},
\end{equation}
and introduce the space of functions with Lipschitz initial condition that grow at most linearly in time,
\begin{equation}\label{eqn: SBME L space}
\mathfrak{L}=\big\{u:[0,\infty)\times \Rp^d\to \R\mid u(0,\cdot) \text{ is Lipschitz continuous and } [u]_0<\infty\big\},
\end{equation}
as well as its subset of uniformly Lipschitz continuous functions,
\begin{equation}\label{eqn: SBME L unif space}
\mathfrak{L}_{\mathrm{unif}}=\Big\{u\in \mathfrak{L}\mid \sup_{t\geq 0}\Norm{u(t,\cdot)}_{\mathrm{Lip},1}<\infty\Big\}.
\end{equation}
The main well-posedness results for the projected Hamilton-Jacobi equation \eqref{eqn: SBME projected HJ eqn} and the infinite-dimensional Hamilton-Jacobi equation \eqref{eqn: SBME infinite HJ eqn} now read as follows.

\begin{theorem}\label{SBME WP of projecetd HJ eqn}
Under assumptions \eqref{SBME H g}-\eqref{SBME H K'}, the projected Hamilton-Jacobi equation \eqref{eqn: SBME projected HJ eqn} with $\smash{R>0}$ admits a unique viscosity solution $\smash{f^{(K)}_R\in \mathfrak{L}_{\mathrm{unif}}}$ subject to the initial condition $\smash{\psi^{(K)}}$. Moreover, $\smash{f^{(K)}_R}$ has its gradient in the set $\smash{\K_{a,K}'}$ and satisfies the Lipschitz bound 
\begin{equation}
\sup_{t>0}\Norm{f_R^{(K)}(t,\cdot)}_{\mathrm{Lip},1}=\Norm{\psi^{(K)}}_{\mathrm{Lip},1}\leq \norm{\psi}_{\mathrm{Lip},\TV}.
\end{equation}
\end{theorem}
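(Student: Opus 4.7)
The plan is to adapt the finite-dimensional well-posedness machinery of \cite{HB_cone} to our slightly richer setting, where the equation is posed on $\Rpp^{\D_K}$ but the gradient of the solution is only required to live in the enlarged set $\K_{a,K}'$ rather than in $\CC_K$. I would proceed in three steps: construct the extension $\H_{K,R}$, establish existence, uniqueness and the Lipschitz bound by a comparison argument that avoids prescribing boundary data on $\partial \Rpp^{\D_K}$, and finally propagate the gradient constraint from $\psi^{(K)}$ to $f^{(K)}_R(t,\cdot)$.

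For the first step, I would extend $\C_K$ from $\CC_K\cap B_R(0)$ to a globally Lipschitz function $\H_{K,R}:\R^{\D_K}\to \R$ that remains coordinate-wise non-decreasing on $\Rpp^{\D_K}$. Such an extension exists because $\C_K$ is the restriction to $\CC_K$ of the quadratic form $\tfrac12 G^{(K)}x\cdot x$, and by \eqref{SBME H g} the entries of $G^{(K)}$ are uniformly positive, which makes the form coordinate-wise non-decreasing on $\Rp^{\D_K}$; a Kirszbraun-type extension (or one obtained by projecting onto $\CC_K\cap B_R(0)$ in the normalized-$\ell^{1,*}$ norm and then applying $\C_K$) preserves this monotonicity. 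It is this monotonicity of $\H_{K,R}$ that allows the equation to be solved on $\Rpp^{\D_K}$ without prescribing any boundary condition, in the spirit of \cite{HB_cone, Crandall, Souganidis}.

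For the second step, I would run the standard doubling-of-variables argument on $\Rpp^{\D_K}$. The monotonicity of $\H_{K,R}$ ensures that at a boundary maximum of the difference of a subsolution and a supersolution, the equation still yields the correct sign, so the comparison principle propagates without boundary data. Existence can be obtained either by Perron's method or by a suitable Hopf--Lax-type representation based on the Lipschitz extension. The Lipschitz bound
\begin{equation*}
\sup_{t>0}\Norm{f_R^{(K)}(t,\cdot)}_{\mathrm{Lip},1}\leq \Norm{\psi^{(K)}}_{\mathrm{Lip},1}
\end{equation*}
follows by applying the comparison principle to the pair $(f^{(K)}_R(t,x), f^{(K)}_R(t, x+h))$ for $h\in \Rp^{\D_K}$; the reverse inequality is automatic at $t=0$, and the bound $\Norm{\psi^{(K)}}_{\mathrm{Lip},1}\leq \norm{\psi}_{\mathrm{Lip},\TV}$ is exactly \eqref{eqn: SBME projected non-linearity Lipschitz}. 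The time-Lipschitz estimate $[f^{(K)}_R]_0<\infty$ is then immediate because $\H_{K,R}$ is bounded on any fixed ball in the normalized-$\ell^{1,*}$ norm.

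The hardest step is the propagation of $\nabla f^{(K)}_R\in \K_{a,K}'$. Using the non-differential supporting-hyperplane criterion from \Cref{SBME gradient in convex set}, this condition amounts to a family of linear inequalities of the form $f^{(K)}_R(t,x')-f^{(K)}_R(t,x)\leq v\cdot (x'-x)+\epsilon$ for all supporting half-spaces $\{v\cdot y\leq c\}$ of $\K_{a,K}'$. I would propagate each such inequality by noting that, for fixed $v$, the function $(t,x)\mapsto f^{(K)}_R(t,x)-v\cdot x$ is a viscosity solution of the shifted equation $\partial_t u = \H_{K,R}(\nabla u + v)$, and a second application of the comparison principle (again using coordinate-wise monotonicity of $\H_{K,R}$ to handle the boundary and the translation) shows that the initial supporting-hyperplane condition supplied by \eqref{SBME H K'} is preserved for all $t>0$. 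The main obstacle I anticipate is ensuring that the extension $\H_{K,R}$ and the enlargement in the definition of $\K_{a,K}'$ are calibrated compatibly, so that the set $\K_{a,K}'$ — which strictly contains the cone $\CC_K\cap\{\Norm{\cdot}_1\leq a\}$ where $\C_K$ is a priori defined — is still invariant under the flow generated by $\H_{K,R}$; this is exactly the reason for introducing the $d^{-1/2}$-neighborhood in \eqref{eqn: SBME enlarged set} rather than working directly with $\CC_K$.
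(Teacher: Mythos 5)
Your overall architecture is the same as the paper's: build a non-decreasing, uniformly Lipschitz extension of $\C_K$, get existence/uniqueness and the Lipschitz bound on the positive orthant without prescribing boundary data, and propagate the gradient constraint through the supporting half-spaces of $\K_{a,K}'$ (the paper's \Cref{SBME WP of HJ eqn on Rpp}, resting on \Cref{SBME extending the non-linearity} and the appendix). But two of your steps would not go through as written. For the extension, neither a Kirszbraun-type extension nor ``project onto $\CC_K\cap B_R(0)$ and apply $\C_K$'' is guaranteed to be non-decreasing; the paper first regularizes $\C_K$ on the cone by taking a maximum with an affine function of $\dNorm{1}{\cdot}$ (preserving monotonicity and producing a uniform Lipschitz constant of order $R/m$) and then extends by the monotone infimum $\H_R(y)=\inf\{\widetilde\C_R(w)\mid w\in\CC,\ w\ge y\}$, whose Lipschitz continuity uses the vector $G\iota/m\in\CC$ with coordinates pinched between $1/d$ and $M/(dm)$ --- this is exactly where \eqref{SBME H g} enters quantitatively, and it is the content you would have to reproduce. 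There is also a circularity in your route to the spatial Lipschitz bound: the comparison principle you invoke is only available for functions already in $\mathfrak{L}_{\mathrm{unif}}$, so you cannot apply it to the Perron solution (a priori only in $\mathfrak{L}$) and a translate of itself; the paper instead proves preservation of the Lipschitz constant directly for $\mathfrak{L}$ solutions by a separate doubling argument (\Cref{SBME solution to HJ eqn on Rp is Lipschitz}).

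The more serious gap is the propagation of $\nabla f^{(K)}_R\in\K_{a,K}'$. Reducing to the supporting half-spaces of $\K_{a,K}'$ via \Cref{SBME gradient in convex set} is the right move, but the conclusion is not ``a second application of the comparison principle to handle the boundary and the translation'': the normal $v$ of a supporting half-space of the bounded convex set $\K_{a,K}'$ is an arbitrary unit vector, generally not in $\Rp^{\D_K}$, so $x\mapsto f^{(K)}_R(t,x+sv)$ need not be defined on $\Rp^{\D_K}$ and there is no translate of the solution to compare with. What is actually needed is that the equation preserves $\HH_v^*$-monotonicity of the linearly shifted initial condition, where $\HH_v=\{x\mid x\cdot v\ge 0\}$; the paper proves this as a standalone result (\Cref{SBME monotone initial conition gives monotone solution}) by a dedicated doubling-of-variables argument built around the distance-like function \eqref{eqn: SBME distance-like function} to the boundary of the dual cone, and this is the step your sketch leaves entirely unproved. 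Relatedly, your closing diagnosis is slightly off: the enlargement \eqref{eqn: SBME enlarged set} is not a calibration between $\H_{K,R}$ and the flow --- the invariance of $\K_{a,K}'$ follows from the monotonicity-preservation theorem applied half-space by half-space, with $\H_{K,R}$ entering only through its monotonicity and Lipschitz continuity; the $d^{-1/2}$-neighborhood is there because in the intended application the gradient of $\psi^{(K)}$ is only close to the cone $\CC_K$, not in it.
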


\begin{theorem}\label{SBME infinite HJ eqn WP}
Suppose \eqref{SBME H g}-\eqref{SBME H W}, and given an integer $K\geq 1$ and a real number $\smash{R>\norm{\psi}_{\mathrm{Lip},\TV}}$, denote by $\smash{f_R^{(K)}\in \mathfrak{L}_{\mathrm{unif}}}$ the unique viscosity solution to the Hamilton-Jacobi equation \eqref{eqn: SBME projected HJ eqn}  constructed in \Cref{SBME WP of projecetd HJ eqn}. For every $t\geq 0$ and every measure $\mu \in \M_+$, the limit
\begin{equation}\label{eqn: SBME infinite HJ eqn WP}
f(t,\mu)=\lim_{K\to \infty}f_R^{(K)}(t,x^{(K)}(\mu))
\end{equation}
exists, is finite and is independent of $R$. The value of this limit is defined to be the solution to the infinite-dimensional Hamilton-Jacobi equation \eqref{eqn: SBME infinite HJ eqn}.
\end{theorem}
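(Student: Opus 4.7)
The plan involves three essentially independent tasks: establishing $R$-independence of $f_R^{(K)}$ whenever $R > \norm{\psi}_{\mathrm{Lip},\TV}$, proving that $\{f_R^{(K)}(t, x^{(K)}(\mu))\}_K$ is Cauchy in $K$, and confirming that the limit is finite. For $R$-independence, \Cref{SBME WP of projecetd HJ eqn} guarantees that $\nabla f_R^{(K)}$ takes values almost everywhere in $\K_{a,K}' \cap B_{\norm{\psi}_{\mathrm{Lip},\TV}}(0)$. Since $\H_{K,R}$ is designed to coincide with $\C_K$ on $\CC_K \cap B_R(0)$ and its extensions off this cone are tied to the Lipschitz constant of $\C_K$ there, a short check should show that for $R, R' > \norm{\psi}_{\mathrm{Lip},\TV}$ the two Hamiltonians agree on the range of $\nabla f_R^{(K)}$. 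Consequently $f_R^{(K)}$ is also a viscosity solution of \eqref{eqn: SBME projected HJ eqn} with $R$ replaced by $R'$, and uniqueness in \Cref{SBME WP of projecetd HJ eqn} forces $f_R^{(K)} = f_{R'}^{(K)}$, handling both the $R$-independence of each projected solution and of the eventual limit.

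For the Cauchy property, the key observation is that for $K \le K'$ the inclusion $\D_K \subset \D_{K'}$ induces an embedding $P_{K \to K'}: \Rp^{\D_K} \hookrightarrow \Rp^{\D_{K'}}$ under which $\mu_x^{(K)} = \mu_{P_{K\to K'}(x)}^{(K')}$ and hence $\C_K(G^{(K)}x) = \C_{K'}(G^{(K')}P_{K\to K'}(x))$ by a direct calculation using $\abs{\D_{K'}}/\abs{\D_K} = 2^{K'-K}$. A viscosity comparison argument should then show that $(t,x) \mapsto f_R^{(K')}(t, P_{K\to K'}(x))$ solves the scale-$K$ projected equation with the same initial data $\psi^{(K)}$, yielding the compatibility relation
\begin{equation}
f_R^{(K)}\bigl(t, x^{(K)}(\mu_K)\bigr) = f_R^{(K')}\bigl(t, x^{(K')}(\mu_K)\bigr), \qquad \mu_K := \mu_{x^{(K)}(\mu)}^{(K)}.
\end{equation}
Combined with the elementary estimate $W(\mu, \mu_K) \le 2^{-K}\mu([-1,1])$, the Cauchy property then reduces to showing that the map $\mu \mapsto f_R^{(K')}(t, x^{(K')}(\mu))$ is Lipschitz continuous in the Wasserstein distance \eqref{eqn: SBME Wasserstein}, uniformly in $K'$ and locally uniformly in $t$, with a constant depending only on $g$, $\norm{\psi}_{\mathrm{Lip},W}$, and the total mass of $\mu$.

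The principal technical obstacle is precisely this propagation-of-regularity statement: assumption \eqref{SBME H W} provides Wasserstein-Lipschitz continuity only of the initial condition $\psi$, whereas the projected equation naturally preserves the stronger normalized-$\ell^1$ (i.e.\ total-variation) Lipschitz continuity, not the weaker Wasserstein one. I expect this to be established by a doubling-of-variables argument in the viscosity framework analogous to those of \cite{HB_HJ, HB_cone}, using a test function modeled on a regularized Wasserstein distance adapted to the dyadic grid, and relying on assumption \eqref{SBME H g} to control the behavior of the Hamiltonian relative to $\CC_K$. Once the Cauchy property is secured, finiteness of the limit follows from the linear-in-$t$ bound $\abs{f_R^{(K)}(t, x^{(K)}(\mu)) - \psi(\mu_K)} \le t [f_R^{(K)}]_0$ together with boundedness of $\H_{K,R}$ on the range $\K_{a,K}' \cap B_{\norm{\psi}_{\mathrm{Lip},\TV}}(0)$ of $\nabla f_R^{(K)}$, both uniform in $K$.
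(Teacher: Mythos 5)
There are two genuine gaps, and both stem from trying to make exact statements where only approximate ones hold. First, your ``compatibility relation'' $f_R^{(K)}(t,x^{(K)}(\mu_K))=f_R^{(K')}(t,x^{(K')}(\mu_K))$ is not true in general, and the proposed route to it fails: restricting a viscosity solution to an embedded subspace does not produce a viscosity solution of the restricted equation (a test function touching $f_R^{(K')}\circ P_{K\to K'}$ only touches $f_R^{(K')}$ along the embedded slice, so no PDE inequality at the touching point can be extracted), and the Hopf--Lax formula in the convex case makes the failure explicit: the fine-scale value is a supremum over all $\nu\in\M_+^{(K')}$, strictly larger than the coarse-scale supremum over $\nu\in\M_+^{(K)}$. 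The paper goes in the opposite direction: it composes the \emph{coarse} solution with the \emph{projection}, $f^{(K,K')}(t,x)=f_R^{(K)}(t,P^{(K,K')}x)$, shows (via \Cref{SBME gradient of test function in cone}, \Cref{SBME projecting cones}, \Cref{SBME distance to K' through hyperplanes} and the Lipschitz continuity of $\H_{K,R}$) that this is a viscosity sub/supersolution of the scale-$K'$ equation up to an error $\EE_K=O(2^{-K/2})$, and then applies the comparison principle of \Cref{SBME comparison principle on Rpp}; the Wasserstein hypothesis \eqref{SBME H W} enters only to bound the initial-data discrepancy $\lvert\psi^{(K')}(x')-\psi^{(K)}(P^{(K,K')}x')\rvert\le\norm{\psi}_{\mathrm{Lip},W}\Norm{x'}_1 2^{-K}$. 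Consequently the ``principal technical obstacle'' you identify --- propagation of a Wasserstein-Lipschitz bound for the solutions, uniformly in $K$ --- is not needed at all, and it is far from clear it could be proved: $\H_{K,R}$ is only controlled in the normalized-$\ell^{1,*}$ norm (dual to total variation), which gives no handle on a doubling argument with a Wasserstein-type penalization.

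Second, your $R$-independence argument is also unsubstantiated as stated. \Cref{SBME WP of projecetd HJ eqn} only places $\nabla f_R^{(K)}$ (and the gradients of admissible test functions at touching points) in the \emph{enlarged} set $\K_{a,K}'$, not in the cone $\CC_K$, whereas $\H_{K,R}$ and $\H_{K,R'}$ are only known to coincide on $\CC_K\cap B_R$; off the cone the two extensions from \Cref{SBME extending the non-linearity} are built from $R$-dependent regularizations, and there is no reason they agree on $\K_{a,K}'\cap B_{\norm{\psi}_{\mathrm{Lip},\TV}}(0)$ (note also that the natural comparison point $\dNorm{1}{y}v$ with $v=G\iota/m$ can have norm of order $\norm{\psi}_{\mathrm{Lip},\TV}M/m>R$). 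So you cannot conclude $f_R^{(K)}=f_{R'}^{(K)}$ for fixed $K$ by uniqueness. The paper only proves the weaker statement actually needed: at a touching point the gradient is within $2^{-K/2}$ of $\CC_K\cap B_R$, so by the Lipschitz bound \eqref{eqn: SBME extending the non-linearity} the function $f_R^{(K)}\mp\EE_K t$ is a sub/supersolution of the $R'$-equation with $\EE_K=O(2^{-K/2})$, and the comparison principle gives $\lvert f_R^{(K)}-f_{R'}^{(K)}\rvert\le\EE_K t$, hence equality of the limits as $K\to\infty$ but not of the finite-$K$ solutions.
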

Solutions to \eqref{eqn: SBME infinite HJ eqn} satisfy a comparison principle, since a comparison principle also holds for solutions to the projected Hamilton-Jacobi equation \eqref{eqn: SBME projected HJ eqn}, by Corollary~\ref{SBME comparison principle corollary on Rpd}.

As is apparent, and similarly to \cite{JC_upper, JC_NC}, we content ourselves here with identifying the solution to \eqref{eqn: SBME infinite HJ eqn} as the limit of our finite-dimensional approximations. This will suffice for our purposes, and we leave open the question of providing a more intrinsic characterization of the solution to~\eqref{eqn: SBME infinite HJ eqn}, as was achieved in \cite{HB_HJ} in a related context.

In addition to these well-posedness results, we also obtain a Hopf-Lax variational representation for the solution to the infinite-dimensional Hamilton-Jacobi equation \eqref{eqn: SBME infinite HJ eqn} in the case when the non-linearity $\C_\infty$ is convex. Hopf-Lax formulas for related problems have been explored in \cite{HB_Fenchel, HB_tensor, HB_HJ, HB_cone}. In \cite{TD_JC}, this variational representation will allow us to verify that, in the disassortative regime, our conjectured asymptotic mutual information for the sparse stochastic block model coincides with the value of the asymptotic mutual information established in \cite{CKPZ}. The convexity condition on $\C_\infty$ boils down to the requirement that the mapping $(x,y) \mapsto g(xy)$ be non-negative definite, and can be phrased as follows.
\begin{enumerate}[label = \textbf{H\arabic*}]
\setcounter{enumi}{4}
\item The kernel $g:[-1,1]\to \R$ satisfies the property
\begin{equation}\label{eqn: SBME g non-negative definite}
\int_{-1}^1 \int_{-1}^1g(xy) \ud \mu(x)\ud \mu(y)\geq 0
\end{equation}
for every signed measure $\mu\in \M_s$.\label{SBME H conv}
\end{enumerate}

\begin{theorem}\label{SBME infinite Hopf-Lax}
If \eqref{SBME H g}-\eqref{SBME H conv} hold, then the unique solution $\smash{f:[0,\infty)\times \M_+\to \R}$ to the infinite-dimensional Hamilton-Jacobi equation \eqref{eqn: SBME infinite HJ eqn} constructed in \Cref{SBME infinite HJ eqn WP} admits the Hopf-Lax variational representation
\begin{equation}
\label{eqn: infinite Hopf-Lax}
f(t,\mu)=\sup_{\nu\in \M_+}\bigg\{\psi(\mu+t\nu)-\frac{t}{2}\int_{-1}^1 G_\nu(y)\ud \nu(y)\bigg\}
\end{equation}
for every $t>0$ and $\mu \in \M_+$. Moreover, the supremum in \eqref{eqn: infinite Hopf-Lax} is achieved at some $\nu^* \in \M_+$, and whenever the initial condition $\psi$ admits a Gateaux derivative at the measure $\mu + t\nu^*$ with a density $x\mapsto D_\mu\psi(\mu+t\nu^*,x)$ belonging to the cone $\CC_\infty$, we have
\begin{equation}
    G_{\nu^*}=D_\mu\psi(\mu+t\nu^*,\cdot).
\end{equation}
\end{theorem}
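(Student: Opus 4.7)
The plan is to transfer a Hopf-Lax representation from the finite-dimensional projected problems to the infinite-dimensional limit, and then extract the maximizer via first-order optimality in combination with the convexity hypothesis \eqref{SBME H conv}. Under \eqref{SBME H conv}, the map $x \mapsto \tfrac{1}{2} G^{(K)} x \cdot x$ is convex on all of $\R^{\D_K}$, so one expects the candidate
\[
  f_{\mathrm{HL}}^{(K)}(t,x) := \sup_{y \in \Rp^{\D_K}} \Bigl\{ \psi^{(K)}(x+ty) - \tfrac{t}{2}\, G^{(K)} y \cdot y \Bigr\}
\]
to coincide with $f_R^{(K)}$ for every $R > \norm{\psi}_{\mathrm{Lip},\TV}$. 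Adapting the Hopf-Lax arguments in \cite{HB_cone,HB_tensor,HB_HJ}, I would verify that $f_{\mathrm{HL}}^{(K)}$ is uniformly Lipschitz (bounded by $\Norm{\psi^{(K)}}_{\mathrm{Lip},1}$) with gradient staying in $\K_{a,K}'$, and that it is a viscosity sub- and super-solution of \eqref{eqn: SBME projected HJ eqn} through a direct envelope computation at points of differentiability. The Lipschitz bound keeps the gradients in the range where $\H_{K,R}$ agrees with the quadratic form, so the extension causes no issue; uniqueness from \Cref{SBME WP of projecetd HJ eqn} then identifies $f_{\mathrm{HL}}^{(K)} = f_R^{(K)}$.

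Substituting into \eqref{eqn: SBME infinite HJ eqn WP} and writing $y = x^{(K)}(\nu)$ for $\nu \in \M_+^{(K)}$ recasts the finite-dimensional supremum as
\[
  f_R^{(K)}\bigl(t, x^{(K)}(\mu)\bigr) = \sup_{\nu \in \M_+^{(K)}} \Bigl\{ \psi\bigl(\mu^{(K)}_{x^{(K)}(\mu)+t\, x^{(K)}(\nu)}\bigr) - \tfrac{t}{2} \int G_\nu \, d\nu \Bigr\}.
\]
For the lower bound, given $\nu \in \M_+$ I would take the discretization $\nu^{(K)} := \mu^{(K)}_{x^{(K)}(\nu)}$, which converges to $\nu$ in Wasserstein at the rate $2^{-K}\nu([-1,1])$; the continuity of $g$ yields $\int G_{\nu^{(K)}}\,d\nu^{(K)} \to \int G_\nu\,d\nu$, and \eqref{SBME H W} delivers $\psi(\mu + t\nu^{(K)}) \to \psi(\mu + t\nu)$. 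For the upper bound, \eqref{SBME H g} yields the coercivity $\int G_\nu\,d\nu \geq m\,\nu([-1,1])^2$, which combined with \eqref{SBME H TV} confines near-maximizers to a tight set of bounded total mass; weak compactness plus upper semicontinuity then close the two-sided inequality and produce \eqref{eqn: infinite Hopf-Lax}.

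The same coercivity and upper semicontinuity supply a maximizer $\nu^* \in \M_+$ of $F(\nu) := \psi(\mu + t\nu) - \tfrac{t}{2}\int G_\nu\,d\nu$. Testing first-order optimality along the admissible directions $\nu^* + \epsilon\zeta$ with $\zeta \in \M_+$ and $\nu^* - \epsilon h \nu^*$ with $0 \leq h \leq 1$ gives, respectively,
\[
  D_\mu\psi(\mu+t\nu^*, y) \leq G_{\nu^*}(y) \text{ for every } y \in [-1,1], \qquad D_\mu\psi(\mu+t\nu^*,\cdot) = G_{\nu^*}(\cdot)\ \nu^*\text{-a.e.}
\]
Under the hypothesis $D_\mu\psi(\mu+t\nu^*,\cdot) = G_\tau$ for some $\tau \in \M_+$, set $\sigma := \nu^* - \tau$; then $G_\sigma \geq 0$ pointwise, and by Fubini together with the two displays above,
\[
  \iint g(yz)\,d\sigma(y)\,d\sigma(z) = \int G_\sigma\,d\nu^* - \int G_\sigma\,d\tau = -\int G_\sigma\,d\tau \leq 0.
\]
Hypothesis \eqref{SBME H conv} forces this quadratic form to be non-negative, so both integrals vanish; Cauchy-Schwarz for the positive semidefinite form of \eqref{SBME H conv} applied to $\sigma$ and $\delta_y$ then gives $G_\sigma(y)^2 \leq g(yy) \cdot \iint g\,d\sigma d\sigma = 0$ for every $y \in [-1,1]$, whence $G_{\nu^*} \equiv G_\tau = D_\mu\psi(\mu+t\nu^*,\cdot)$.

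The main obstacle is the second stage: converting the discrete supremum over $\Rp^{\D_K}$ into a supremum over $\M_+$ in the limit $K \to \infty$ requires the joint use of \eqref{SBME H g} for coercivity of $F$ in total mass and of \eqref{SBME H W} for Wasserstein continuity of $\psi$, together with a tightness argument to close the two-sided inequality. The concluding Cauchy-Schwarz step, exploiting the degenerate case of the inequality for a (possibly non-strict) positive semidefinite form, is the main non-obvious ingredient in the characterization of the maximizer.
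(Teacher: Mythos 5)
Your overall architecture is the same as the paper's: a Hopf--Lax representation for the projected problems, passage to the limit $K\to\infty$ via \Cref{SBME infinite HJ eqn WP}, existence of a maximizer from the coercivity supplied by \eqref{SBME H g} together with Prokhorov's theorem, and identification of the maximizer through first-order optimality and a Cauchy--Schwarz inequality for the non-negative definite kernel (your last step, deriving the pointwise inequality $D_\mu\psi(\mu+t\nu^*,\cdot)\le G_{\nu^*}$ and the $\nu^*$-a.e.\ equality before concluding that the quadratic form of $\sigma=\nu^*-\tau$ vanishes, is a valid rearrangement of the paper's computation, which instead tests the first-order inequality directly with $\eta=\eta^*$; both then conclude with \Cref{SBME Cauchy-Schwarz for g} applied against $\delta_y$). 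The genuine gap is in your first step: under \eqref{SBME H K'} the gradient of $\psi^{(K)}$, and hence of your candidate $\fHL^{(K)}$, is only guaranteed to lie in the enlarged set $\K_{a,K}'$, i.e.\ within normalized-$\ell^{1,*}$ distance $\abs{\D_K}^{-1/2}$ of the cone $\CC_K$, not in $\CC_K$ itself. The extension $\H_{K,R}$ agrees with the quadratic non-linearity only on $\CC_K\cap B_R$, and the convex-duality identity of \Cref{l.convex.dual} is available only at points $Gx$ of the cone; at a gradient of the form $G^{(K)}u+w$ with $\dNorm{1}{w}\le\abs{\D_K}^{-1/2}$ the envelope computation picks up errors of order $\abs{\D_K}^{-1/2}$ (a term $u\cdot w$ of size $a\abs{\D_K}^{-1/2}$ plus a Lipschitz error $8RM m^{-2}\abs{\D_K}^{-1/2}$). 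Consequently $\fHL^{(K)}$ is in general only an \emph{approximate} viscosity sub- and supersolution of \eqref{eqn: SBME projected HJ eqn}, and you cannot invoke the uniqueness statement of \Cref{SBME WP of projecetd HJ eqn} to get the exact identity $\fHL^{(K)}=f_R^{(K)}$.

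The repair is exactly what the paper does in \Cref{SBME Hopf-Lax}: carry the $O(\abs{\D_K}^{-1/2})$ errors through the sub/supersolution arguments and apply the comparison principle of \Cref{SBME comparison principle on Rpp} to obtain $\abs{\fHL^{(K)}-f_R^{(K)}}\le t\abs{\D_K}^{-1/2}\big(R+a+8RM/m^2\big)$, which suffices because this error disappears in the $K\to\infty$ limit. With that modification, your two-sided limiting argument (discretization plus \eqref{SBME H W} for the lower bound, coercivity and tightness for the upper bound) and your treatment of the maximizer go through and coincide in substance with \Cref{SBME supremum in Parisi functional attained} and the paper's proof of the final identity.
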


For the purposes of our companion work \cite{TD_JC}, it will also be important to identify solutions to equations of the form \eqref{eqn: SBME infinite HJ eqn} with a kernel $g$ that does not satisfy the positivity assumption \eqref{SBME H g}. The idea will be to introduce a new kernel which satisfies \eqref{SBME H g} by translating $g$, and to deduce the well-posedness of the equation with kernel $g$ from the well-posedness of the equation with the translated kernel. For this strategy to work, we will replace the assumption \eqref{SBME H K'} on the initial condition by a stronger assumption which we now describe.

For every $a\in \R$ introduce the set of measures with mass $a$,
\begin{equation}
\M_{a,+}=\big\{\mu \in \M_+\mid \mu[-1,1]=a\big\},
\end{equation}
as well as the set of functions
\begin{equation}
\CC_{a,\infty} = \big\{G_\mu \mid \mu \in \M_{a,+}\big\}.
\end{equation}
The assumption \eqref{SBME H K'} on the initial condition will essentially be replaced by the assumption that its Gateaux derivative lies in the set $\smash{\CC_{a,\infty}}$ for some $a\in \R$. As before, it will be convenient to state this as an assumption on the projected initial conditions \eqref{eqn: SBME projected initial condition}. For every integer $K\geq 1$ introduce the set of projected measures with mass $a$,
\begin{equation}  
\label{e.def.ma+}
\M_{a,+}^{(K)} = \bigg\{ \mu \in \M_+^{(K)} \mid \mu[1,1]=a\bigg\},
\end{equation}
and write
\begin{equation}  
\label{e.def.CCa}
\K_{=a,K} = \Big\{ G^{(K)}x\in \R^{\D_K} \mid x\in \Rp^{\D_K} \text{ and } \Norm{x}_1=a \Big\}
\end{equation}
for its associated set of functions. We replace \eqref{SBME H K'} by the following stronger assumption.

\begin{enumerate}[label = \textbf{H3'}]
\setcounter{enumi}{5}
\item 
\label{SBME H a}
There exists $a>0$ such that the initial condition $\smash{\psi:\M_+\to \R}$ has the property that each of the projected initial conditions \eqref{eqn: SBME projected initial condition} has its gradient in the set $\K_{=a,K}'$,
\begin{equation}
\nabla \psi^{(K)}\in L^\infty\big(\Rp^d;\K_{=a,K}'\big).
\end{equation}
\end{enumerate}

Formal calculations now suggest a way to modify the solution to the infinite-dimensional Hamilton-Jacobi equation \eqref{eqn: SBME infinite HJ eqn} if the kernel $g$ is not assumed to satisfy \eqref{SBME H g} but is translated by a large enough constant so that it becomes positive. Given a continuously differentiable kernel $g:[-1,1]\to \R$, fix $b\in \R$ such that the modified kernel
\begin{equation}
\label{e.def.tdg}
\td g_b (z) = g(z) + b
\end{equation}
is strictly positive. For every $\mu \in \M_+$ define the modified function $\smash{\widetilde G_{b,\mu}}:[-1,1]\to \R$,
\begin{equation}
\widetilde{G}_{b,\mu}(x)=\int_{-1}^1 \td g_{b}(xy)\ud \mu(y),
\end{equation}
the modified cone of functions,
\begin{equation}
\widetilde{\CC}_{b,\infty}=\big\{\widetilde{G}_{b,\mu} \mid \mu\in \M_+\big\},
\end{equation}
and the modified non-linearity $\widetilde{\C}_{b,\infty} : \widetilde{\CC}_{b,\infty} \to \R$,
\begin{equation}
\widetilde{\C}_{b,\infty}\big(\widetilde{G}_{b,\mu}\big)=\frac{1}{2}\int_{-1}^1 \widetilde{G}_{b,\mu}(x)\ud\mu(x)=\frac{1}{2}\int_{-1}^1 \int_{-1}^1 \td g_b(xy)\ud \mu(y)\ud \mu(x).
\end{equation}
Notice that the additional constant $b$ in $\td g_b$ induces a shift in the expression above that depends only on the total mass of the measure $\mu$. This suggests that, under assumption \eqref{SBME H a}, if $\td f_{b}$ is a solution to the infinite-dimensional Hamilton-Jacobi equation
\begin{equation}\label{eqn: SBME infinite HJ eqn with tildes}
\left\{
\begin{aligned}
\partial_t \td f(t,\mu)&=\td \C_{b,\infty}\big(D_\mu \td f(t,\mu)\big) & \text{on }& \Rpp\times \M_+,\\
\td f(0,\mu)&=\td \psi_{b}(\mu)& \text{on }& \M_+,
\end{aligned}
\right.
\end{equation}
for the initial condition $\td \psi_{b}:\M_+\to \R$ defined by
\begin{equation}\label{e.def.tdpsi}
\td \psi_{b}(\mu)=\psi(\mu)+ab\int_{-1}^1 \ud \mu,
\end{equation}
then the function
\begin{equation}\label{e.def.f.or.tdf}
f_{b}(t,\mu)=\td f_{b}(t,\mu)-ab\int_{-1}^1 \ud \mu-\frac{a^2bt}{2}
\end{equation}
should be a solution to the infinite-dimensional Hamilton-Jacobi equation \eqref{eqn: SBME infinite HJ eqn}. We omit the dependence of $\smash{\td f_b}$, $\smash{f_b}$ and $\smash{\td \psi_b}$ on $a$ since this constant is given to us and fixed by \eqref{SBME H a}. The following result renders this construction precise and ensures that it is independent of the choice of $b$.

\begin{theorem}\label{SBME general infinite HJ eqn WP}
Fix a continuously differentiable kernel $g:[-1,1]\to \R$ and assume that \eqref{SBME H TV}, \eqref{SBME H a}, and \eqref{SBME H W} hold. Let $b \in \R$ be such that the function $\smash{\td g_b}$ defined in \eqref{e.def.tdg} is positive on $[-1,1]$, let $\smash{\td \psi_{b}}$ be defined by \eqref{e.def.tdpsi}, and let $\smash{\td f_{b}}$ be the solution to the infinite-dimensional Hamilton-Jacobi equation \eqref{eqn: SBME infinite HJ eqn with tildes} constructed in \Cref{SBME infinite HJ eqn WP}. The function $\smash{f_{b}}$ given by \eqref{e.def.f.or.tdf} does not depend on the choice of $b \in \R$, and it is defined to be the solution to the infinite-dimensional Hamilton-Jacobi equation~\eqref{eqn: SBME infinite HJ eqn}. 
\end{theorem}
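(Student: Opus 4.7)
The plan is to verify that the translated problem with kernel $\td g_b$ and initial condition $\td\psi_b$ satisfies the hypotheses of \Cref{SBME infinite HJ eqn WP}, so that $\td f_b$ (and hence $f_b$) is well defined, and then to show that the formula \eqref{e.def.f.or.tdf} does not depend on the choice of $b$.

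For the hypothesis check, $\td g_b$ satisfies \eqref{SBME H g} by construction. Since $\td\psi_b(\mu) - \td\psi_b(\nu) = \psi(\mu) - \psi(\nu) + ab(\mu[-1,1]-\nu[-1,1])$ and $\mu \mapsto \mu[-1,1]$ is $1$-Lipschitz in $\TV$, hypothesis \eqref{SBME H TV} holds for $\td\psi_b$, while \eqref{SBME H W} transfers because $\td\psi_b - \psi = ab$ is constant on $\Pr[-1,1]$. Hypothesis \eqref{SBME H K'} (relative to the translated kernel) follows from \eqref{SBME H a} for $\psi$ through the algebraic identity $\td G^{(K)}_b x = G^{(K)}x + \tfrac{b\Norm{x}_1}{\abs{\D_K}}\mathbf{1}$; evaluated at $\Norm{x}_1 = a$, this shows that $\nabla\td\psi_b^{(K)} = \nabla\psi^{(K)} + \tfrac{ab}{\abs{\D_K}}\mathbf{1}$ lies close to the translated cone whenever $\nabla\psi^{(K)}$ lies close to the mass-$a$ slice of the original cone.

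To establish the independence of $b$, fix another admissible $b'$ and aim to prove
\begin{equation*}
\td f_{b'}(t,\mu) - a(b'-b)\mu[-1,1] - \tfrac{a^2(b'-b)t}{2} = \td f_b(t,\mu),
\end{equation*}
which, substituted into \eqref{e.def.f.or.tdf}, yields $f_{b'} = f_b$. I would establish this identity at the finite-dimensional level. Setting $h_R^{(K)}(t,x) = \td f_{b',R}^{(K)}(t,x) - a(b'-b)\Norm{x}_1 - \tfrac{a^2(b'-b)t}{2}$, one checks directly that $h_R^{(K)}(0,x) = \td\psi_b^{(K)}(x)$; moreover, the identity
\begin{equation*}
\td\C_{b',K}\bigl(\td G^{(K)}_{b'}x\bigr) = \td\C_{b,K}\bigl(\td G^{(K)}_b x\bigr) + \tfrac{a^2(b'-b)}{2} \quad (\Norm{x}_1 = a),
\end{equation*}
together with the relation $\td G^{(K)}_{b'}x - \tfrac{a(b'-b)}{\abs{\D_K}}\mathbf{1} = \td G^{(K)}_b x$ valid on the same mass-$a$ slice, allows one to recognize $h_R^{(K)}$ as a viscosity solution of the projected Hamilton-Jacobi equation \eqref{eqn: SBME projected HJ eqn} associated to $\td g_b$, provided the respective extensions of the non-linearities are chosen compatibly. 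Uniqueness in \Cref{SBME WP of projecetd HJ eqn} then identifies $h_R^{(K)}$ with $\td f_{b,R'}^{(K)}$ for a suitable $R'$, and passing to the limit $K \to \infty$ using \eqref{eqn: SBME infinite HJ eqn WP} yields the desired identity.

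The principal obstacle is to confirm that the gradient of $\td f_{b',R}^{(K)}$ actually remains on the mass-$a$ slice of the translated cone, rather than only in the a priori larger set supplied by \Cref{SBME WP of projecetd HJ eqn}: outside this slice, the two translated non-linearities no longer differ by a constant, and the argument above breaks down. I expect to propagate the mass-$a$ constraint from the initial condition to all positive times by exploiting the $x$-independence of the extended Hamiltonian, which makes the gradient invariant along characteristics of the finite-dimensional Hamilton-Jacobi flow, so that hypothesis \eqref{SBME H a} is preserved by the evolution. A secondary technical point is the compatible choice of extensions of the non-linearities for the two values of $b$, which can be arranged by fixing a common extension scheme and taking $R, R'$ sufficiently large that the shift $-\tfrac{a(b'-b)}{\abs{\D_K}}\mathbf{1}$ keeps one inside the region where the two extensions agree up to the additive constant $\tfrac{a^2(b'-b)}{2}$.
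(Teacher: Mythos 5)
Your architecture is the same as the paper's: verify the hypotheses for the translated data (your check, including the identity $\td G^{(K)}_b x = G^{(K)}x + b\Norm{x}_1\iota_K$ with $\iota_K=(\abs{\D_K}^{-1})_{k\in\D_K}$, is exactly \Cref{SBME H1-H4 for tilted kernel}), transfer the affine change of unknown to the finite-dimensional level, use the identity $\td \C_{b',K}(\td G^{(K)}_{b'}u)=\td\C_{b,K}(\td G^{(K)}_{b}u)+\tfrac{a^2(b'-b)}{2}$ on the mass-$a$ slice, and conclude by comparison and the limit $K\to\infty$. The issue is with the step you yourself single out as the principal obstacle. Propagating the mass-$a$ gradient constraint "along characteristics, since the Hamiltonian is $x$-independent" is not an argument in this setting: the solution is merely Lipschitz, the extended Hamiltonian $\td\H_{b',K,R}$ is only Lipschitz, and viscosity solutions are not transported by classical characteristics, so nothing is proved this way. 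The paper closes this gap by a different and rigorous route: \Cref{SBME H1-H4 for tilted kernel} shows $\nabla\td\psi^{(K)}_b\in\td\K_{=a,b,K}'$, and then the gradient-propagation argument of \Cref{SBME WP of HJ eqn on Rpp} --- which works for an arbitrary closed convex set via its representation \eqref{eqn: SBME expanded cone as hyperplanes} as an intersection of affine half-spaces together with the monotonicity-preservation result \Cref{SBME monotone initial conition gives monotone solution} applied to $g_{v,c}(t,x)=f(t,x)-c\,x\cdot v$ --- applies verbatim with $\K=\td\K_{=a,b,K}'$. You need this (or an equivalent) argument; as written, the key step of your proof is unsupported.

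A second, related overstatement: because \eqref{SBME H a} only places the gradient in the enlarged set $\td\K_{=a,b',K}'$ (the $2^{-K/2}$-neighborhood of the slice in the $\dNorm{1}{\cdot}$ norm), the test-function gradient can only be written as $\td G^{(K)}_{b'}u+w$ with $\Norm{u}_1=a$ and $\dNorm{1}{w}\leq 2^{-K/2}$, and off the slice the two translated kernels differ by a mass-dependent, not constant, shift. Consequently $h^{(K)}_R$ cannot be made an exact viscosity solution of the $b$-equation by any "compatible" choice of extensions; it is only a sub- and supersolution up to an error $\EE_K=\BigO(2^{-K/2})$, and the identification must go through the quantitative comparison principle (\Cref{SBME comparison principle on Rpp}), giving $\lvert h^{(K)}_R-\td f^{(K)}_{b,R}\rvert\leq \EE_K t$ and then letting $K\to\infty$, which is exactly how the paper concludes --- not through exact uniqueness.
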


Combining this well-posedness result with the Hopf-Lax representation formula in \Cref{SBME infinite Hopf-Lax} shows that under the additional assumption \eqref{SBME H conv}, the function \eqref{e.def.f.or.tdf} admits a variational representation.

\begin{theorem}\label{SBME general infinite Hopf-Lax}
Fix a continuously differentiable kernel $g:[-1,1]\to \R$ satisfying \eqref{SBME H TV}, \eqref{SBME H a} and \eqref{SBME H W}. Suppose that there exists $b\in \R$ such that the translated kernel $\td g_b$ in \eqref{e.def.tdg} is strictly positive on $[-1,1]$ and satisfies \eqref{SBME H conv}. Suppose moreover that for every $\smash{\mu \in \M_+}$, the initial condition $\psi$ admits a Gateaux derivative with density $\smash{x\mapsto D_\mu\psi(\mu,x)}$ belonging to the set $\smash{\CC_{a,\infty}}$. Then, the unique solution $\smash{f:[0,\infty)\times \M_+\to \R}$ to the infinite-dimensional Hamilton-Jacobi equation \eqref{eqn: SBME infinite HJ eqn} constructed in \Cref{SBME general infinite HJ eqn WP} admits the Hopf-Lax variational representation
\begin{equation}
\label{eqn: infinite Hopf-Lax restricted}
f(t,\mu)=\sup_{\nu\in \M_{a,+}}\bigg\{\psi(\mu+t \nu)-\frac{t}{2}\int_{-1}^1 G_\nu(y)\ud \nu(y)\bigg\}
\end{equation}
for every $t>0$ and $\mu \in \M_+$. Moreover, the supremum in \eqref{eqn: infinite Hopf-Lax restricted} is achieved at some $\nu^* \in \M_{a,+}$ with
\begin{equation}
 G_{\nu^*}=D_\mu\psi(\mu+t \nu^*,\cdot).
\end{equation}
\end{theorem}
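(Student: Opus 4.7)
The plan is to transfer the Hopf-Lax representation from the translated problem. By \Cref{SBME general infinite HJ eqn WP}, the solution satisfies
\begin{equation}
f(t,\mu) = \td f_b(t,\mu) - ab\int_{-1}^1 \ud \mu - \frac{a^2bt}{2},
\end{equation}
where $\td f_b$ solves \eqref{eqn: SBME infinite HJ eqn with tildes}. The construction in \Cref{SBME general infinite HJ eqn WP} already verifies \eqref{SBME H g}--\eqref{SBME H W} for the translated data $(\td g_b,\td\psi_b)$, and the additional assumption that $\td g_b$ satisfies \eqref{SBME H conv} makes \Cref{SBME infinite Hopf-Lax} applicable to $\td f_b$, yielding the Hopf-Lax formula for $\td f_b$ together with a maximizer $\nu^*\in\M_+$.

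The next step is to unfold this formula into an expression involving only the original kernel $g$. Using the identities $\int \widetilde{G}_{b,\nu}\,\ud\nu = \int G_\nu\,\ud\nu + b\,\nu[-1,1]^2$ and $\td\psi_b(\mu+t\nu) = \psi(\mu+t\nu) + ab\,\mu[-1,1] + abt\,\nu[-1,1]$, substituting into the Hopf-Lax formula for $\td f_b$, subtracting $ab\,\mu[-1,1]+\frac{a^2bt}{2}$, and completing the square in $m=\nu[-1,1]$ yields
\begin{equation}
f(t,\mu) = \sup_{\nu\in\M_+}\bigg\{\psi(\mu+t\nu) - \frac{t}{2}\int_{-1}^1 G_\nu(y)\,\ud\nu(y) - \frac{bt}{2}\big(\nu[-1,1]-a\big)^2\bigg\},
\end{equation}
with the supremum still attained at $\nu^*$. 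On the subset $\M_{a,+}$ the penalty vanishes, so the right-hand side of this identity dominates the right-hand side of \eqref{eqn: infinite Hopf-Lax restricted}; the main task is the reverse inequality.

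The critical step, which I expect to be the main obstacle, is to show that the maximizer $\nu^*$ itself lies in $\M_{a,+}$; once this is established, the penalty at $\nu^*$ vanishes and the two suprema coincide. The hypothesis that $D_\mu\psi(\mu+t\nu^*,\cdot)\in\CC_{a,\infty}$ is essential here. Since the Gateaux derivative of $\mu\mapsto ab\int\ud\mu$ has constant density $ab$, one gets
\begin{equation}
D_\mu\td\psi_b(\mu+t\nu^*,x) = D_\mu\psi(\mu+t\nu^*,x) + ab = G_\sigma(x)+ab = \widetilde{G}_{b,\sigma}(x)
\end{equation}
for some $\sigma\in\M_{a,+}$, where the last equality uses $\sigma[-1,1]=a$. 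This density therefore lies in $\widetilde{\CC}_{b,\infty}$, so the optimality clause of \Cref{SBME infinite Hopf-Lax}, applied to $\td f_b$, forces $\widetilde{G}_{b,\nu^*}=\widetilde{G}_{b,\sigma}$. Evaluating at $x=0$ gives $\td g_b(0)\,\nu^*[-1,1]=\td g_b(0)\,\sigma[-1,1]$, and since $\td g_b(0)>0$ we conclude $\nu^*[-1,1]=a$. Combining $\widetilde{G}_{b,\nu^*}=\widetilde{G}_{b,\sigma}$ with $\nu^*,\sigma\in\M_{a,+}$ then yields $G_{\nu^*}=G_\sigma=D_\mu\psi(\mu+t\nu^*,\cdot)$, which is the stated optimality condition.
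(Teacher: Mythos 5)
Your proposal is correct and follows essentially the same route as the paper's proof: apply \Cref{SBME infinite Hopf-Lax} to the translated data $(\td g_b,\td\psi_b)$, use the optimality clause together with $D_\mu\psi(\mu+t\nu^*,\cdot)\in\CC_{a,\infty}$ to get $\td G_{b,\nu^*}=\td G_{b,\sigma}$, evaluate at $x=0$ with $\td g_b(0)>0$ to conclude $\nu^*\in\M_{a,+}$, and then undo the translation. The only difference is cosmetic: you undo the translation first and complete the square in $\nu[-1,1]$ to exhibit a penalty term, whereas the paper first restricts the supremum to $\M_{a,+}$ (justified by the location of the maximizer) and then observes that the $b$-dependent terms cancel there.
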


We briefly review related works on Hamilton-Jacobi equations in infinite dimensions. The study of equations posed on infinite-dimensional Banach spaces was initiated in \cite{cl1,cl2,cl3}. The assumptions imposed on the Banach space preclude the possibility to apply the results presented there to the space of bounded measures. The existence of solutions is obtained via a connection with differential games. An example is also given in which solutions to natural  finite-dimensional approximations fail to converge to the solution of the infinite-dimensional equation. We do not expect this phenomenon to occur for the problem we consider in this paper, and in any case, our definition of the solution as the limit of finite-dimensional approximations is the one we make use of in our companion work \cite{TD_JC}. Moreover, for the equations of transport type appearing in the context of mean-field spin glasses, it was shown in \cite{HB_HJ} that finite-dimensional approximations do converge to the intrinsic viscosity solution of the infinite-dimensional equation. 

Equations that are posed over a space of probability measures, or more general metric spaces, have been considered in a number of works including \cite{amb14, car10, carqui, carsou, fenkat, fenkur, gan08, ganswi}. These works revolve around equations involving derivatives of transport type for probability measures over~$\R^d$. Since transportation of mass over $\R^d$ can be carried without limit, questions of boundary conditions do not arise there, unlike in the more recent works \cite{HB_HJ, JC_upper, JC_NC} already cited above in which probability measures over $\Rp^d$ or the space of non-negative definite matrices are considered. We are not aware of previous works considering equations that involve derivatives of ``affine'' type, as we do here. In this context, the natural ``movements'' are different from those appearing for the transport geometry, and our additional constraint that we must deal with non-negative measures is the source of the necessity to address boundary issues.

We close this section with a brief outline of the paper. In \Cref{SBME section HJ on cone}, we introduce a non-decreasing and uniformly Lipschitz continuous non-linearity $\smash{\H_{K,R}}$ which agrees with the projected non-linearity \eqref{eqn: SBME projected non-linearity} on the intersection between the projected cone \eqref{eqn: SBME projected cone} and a large enough ball, and we define the appropriate notion of solution to the projected Hamilton-Jacobi equation \eqref{eqn: SBME projected HJ eqn}. The definition of the function $\smash{\H_{K,R}}$ is inspired by Proposition 6.8 in \cite{JC_upper} and Lemma 2.5 in \cite{HB_cone}. We then leverage the well-posedness results established in \Cref{SBME app HJ eqn on half-space} to prove \Cref{SBME WP of projecetd HJ eqn}. In Section~\ref{SBME section convergence}, with the well-posedness of the projected Hamilton-Jacobi equations \eqref{eqn: SBME projected HJ eqn} at hand, we modify the arguments in Section 3.2 of \cite{JC_NC} and Section 3.3 of \cite{HB_HJ} to obtain the convergence of solutions as described in \Cref{SBME infinite HJ eqn WP}. In \Cref{SBME section Hopf-Lax} we proceed as in Section 6 of \cite{HB_cone} to obtain an approximate Hopf-Lax variational representation for the solution to the projected Hamilton-Jacobi equation~\eqref{eqn: SBME projected HJ eqn}. By taking an appropriate limit in this variational formula, \Cref{SBME infinite Hopf-Lax} is established in \Cref{SBME section Hopf Lax limit}. \Cref{SBME section general HJ WP} is devoted to the proof of \Cref{SBME general infinite HJ eqn WP} and \Cref{SBME general infinite Hopf-Lax}. So as to not disrupt the flow of the paper, the main technical arguments required to establish the well-posedness of the projected Hamilton-Jacobi equation \eqref{eqn: SBME projected HJ eqn} have been postponed to \Cref{SBME app HJ eqn on half-space}. All the results in this appendix appear in \cite{HB_cone} in some form but have been reproduced here for the reader's convenience. It is also worth pointing out that we treat a slightly different setting to the one in \cite{HB_cone}. Indeed, our initial condition has its gradient close to a convex set and not in it, and our non-linearity is monotonic with respect to a different convex set, as discussed below \eqref{eqn: SBME projected non-linearity Lipschitz}. \Cref{SBME app background} reviews a fundamental duality theorem from convex analysis, establishes a non-differential criterion for a Lipschitz function to have its gradient in a closed convex set and provides a refresher on the basic properties of semi-continuous functions, which play a role in \Cref{SBME app HJ eqn on half-space} when running Perron's argument for the existence of solutions.

\begin{acknowledgements}
We would like to warmly thank Hong-Bin Chen and Jiaming Xia for sharing a preliminary version of \cite{HB_cone} with us. Their work considerably simplified our task, allowing us in particular to discard the much more complicated approach we had originally envisaged.
\end{acknowledgements}

\section{Construction of finite-dimensional approximations}\label{SBME section HJ on cone}

In this section, we prove Theorem~\ref{SBME WP of projecetd HJ eqn}, and in particular establish the well-posedness of the projected Hamilton-Jacobi equation \eqref{eqn: SBME projected HJ eqn}. Throughout the paper, we say that a function $\smash{h:\R^d\to \R}$ is non-decreasing if for every $\smash{x,x'\in \R^d}$ with $\smash{x'-x\in \Rp^d}$, we have $h(x')-h(x)\geq 0$. More generally, given a closed convex cone $\smash{\CC\subset \R^d}$, and denoting its dual cone by $\CC^*$ (see \eqref{e.def.dual.cone} for the definition), we say that a function $\smash{h:\R^d\to \R}$ is $\smash{\CC^*}$-non-decreasing if for every $\smash{x,x'\in \R^d}$ with $x'-x\in \CC^*$, we have $h(x')-h(x)\geq 0$. In the case when $h$ is Lipschitz continuous, this is equivalent to the requirement that $h$ have its gradient in $\CC$ (see \Cref{SBME gradient in convex set}). To alleviate notation and strive for generality, fix an integer dimension $d\geq 1$ and a symmetric matrix $G\in \R^{d\times d}$ for which there exist positive constants $m,M>0$ with
\begin{equation}\label{eqn: SBME min and max of G}
\frac{m}{d^2}\leq G_{kk'}\leq \frac{M}{d^2}
\end{equation}
for all $1\leq k,k'\leq d$.  Consider the cone
\begin{equation}\label{eqn: SBME cone}
\CC=\big\{Gx\in \R^d\mid x\in \Rp^d\big\},
\end{equation}
the closed convex set
\begin{equation}
\K_a=\big\{Gx\in \R^d\mid x\in \Rp^d \text{ and } \Norm{x}_1\leq a\big\},
\end{equation}
and the non-linearity $\C:\CC\to \R$ defined by
\begin{equation}\label{eqn: SBME non-linearity on cone}
\C(Gx)=\frac{1}{2}Gx\cdot x.
\end{equation}
This mapping is well-defined, for the same reason as that explained below \eqref{eqn: SBME infinite non-linearity}. Recall the definition of the enlarged set \eqref{eqn: SBME enlarged set} and of the normalized-$\smash{\ell^1}$ and normalized-$\smash{\ell^{1,*}}$ norms in \eqref{eqn: SBME normalized norms}. The first important result of this section will be the definition of a uniformly Lipschitz continuous and non-decreasing non-linearity $\smash{\H_R:\R^d\to \R}$ which agrees with $\C$ on the intersection of the cone $\CC$ and a large enough ball.  We will then obtain the well-posedness of the Hamilton-Jacobi equation associated with this non-linearity,
\begin{equation}\label{eqn: SBME HJ eqn on Rpp}
\partial_t f(t,x)=\H_R\big(\nabla f(t,x)\big) \quad  \text{on} \quad \Rpp\times \Rpp^{d},
\end{equation}
subject to a Lipschitz continuous initial condition $\psi:\Rp^d\to \R$ with
\begin{equation}\label{eqn: SBME initial condition Lipschitz assumption}
\abs{\psi(x)-\psi(y)}\leq \Norm{\psi}_{\mathrm{Lip},1}\Norm{x-y}_1 \quad \text{and} \quad \nabla \psi\in L^\infty(\Rp^d;\K_a').
\end{equation}
By \eqref{SBME H K'} and \eqref{eqn: SBME projected non-linearity Lipschitz}, the Hamilton-Jacobi equation \eqref{eqn: SBME HJ eqn on Rpp} corresponds to the projected Hamilton-Jacobi equation \eqref{eqn: SBME projected HJ eqn} for the choices $d=\abs{\D_K}$, $G=G^{(K)}$ and $\psi=\psi^{(K)}$. \Cref{SBME WP of projecetd HJ eqn} will therefore be an immediate consequence of the main well-posedness result of this section. 

To establish the well-posedness of the Hamilton-Jacobi equation \eqref{eqn: SBME HJ eqn on Rpp} using the results in \Cref{SBME app HJ eqn on half-space}, it will be important that the extended non-linearity $\smash{\H_R:\R^d\to \R}$ be Lipschitz continuous and non-decreasing. Let us start by verifying that these properties are satisfied locally on the cone by the original non-linearity \eqref{eqn: SBME non-linearity on cone}. It will be convenient to note that
\begin{equation}\label{eqn: SBME x bounded by Gx}
\Norm{x}_1\leq \frac{1}{m}\dNorm{1}{Gx}
\end{equation}
for all $x\in \Rp^d$.

\begin{lemma}\label{SBME non-linearity locally Lipschitz}
The non-linearity \eqref{eqn: SBME non-linearity on cone} is locally Lipschitz continuous with respect to the normalized-$\smash{\ell^{1,*}}$ norm,
\begin{equation}
\abs{\C(y)-\C(y')}\leq \frac{1}{m}\big(\dNorm{1}{y}+\dNorm{1}{y'}\big)\dNorm{1}{y-y'}
\end{equation}
for all $y,y'\in \CC$.
\end{lemma}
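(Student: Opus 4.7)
The plan is to write $y = Gx$ and $y' = Gx'$ for some representatives $x, x' \in \Rp^d$ (which exist by the definition of $\CC$ in \eqref{eqn: SBME cone}), express the difference $\C(y) - \C(y')$ as an inner product involving $y - y'$, and then apply the Hölder-type inequality $a \cdot b \le \Norm{a}_1 \dNorm{1}{b}$ together with the bound \eqref{eqn: SBME x bounded by Gx} to convert the $x$-norms into $y$-norms. The key subtlety to keep in mind is that for a given $y \in \CC$ the representative $x \in \Rp^d$ with $y = Gx$ may fail to be unique, but the same argument used below \eqref{eqn: SBME infinite non-linearity} shows that $\C(y) = \tfrac{1}{2} y \cdot x$ is independent of this choice, so we may fix any such $x$ and $x'$.

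With this setup, I would compute
\begin{equation*}
2\C(y) - 2\C(y') = y \cdot x - y' \cdot x' = \tfrac{1}{2}(y - y') \cdot (x + x') + \tfrac{1}{2}(y + y') \cdot (x - x'),
\end{equation*}
and then use the symmetry of $G$ together with $y \pm y' = G(x \pm x')$ to observe that
\begin{equation*}
(y + y') \cdot (x - x') = G(x + x') \cdot (x - x') = (x + x') \cdot G(x - x') = (y - y') \cdot (x + x').
\end{equation*}
Substituting this back yields the clean identity $\C(y) - \C(y') = \tfrac{1}{2}(y - y') \cdot (x + x')$.

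Once this identity is in hand, the estimate is routine. Applying the Hölder-type inequality gives $|\C(y) - \C(y')| \le \tfrac{1}{2} \dNorm{1}{y - y'} \Norm{x + x'}_1$. The triangle inequality for $\Norm{\cdot}_1$ and the bound \eqref{eqn: SBME x bounded by Gx}, which controls $\Norm{x}_1$ and $\Norm{x'}_1$ by $\tfrac{1}{m}\dNorm{1}{y}$ and $\tfrac{1}{m}\dNorm{1}{y'}$ respectively, then deliver
\begin{equation*}
|\C(y) - \C(y')| \le \tfrac{1}{2m} \bigl(\dNorm{1}{y} + \dNorm{1}{y'}\bigr) \dNorm{1}{y - y'},
\end{equation*}
which is even slightly stronger than the stated bound.

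There is no serious obstacle here: the only step requiring a moment of care is the symmetrization that collapses the two cross terms into one, which relies crucially on $G^\top = G$. The lower bound \eqref{eqn: SBME min and max of G} enters only through \eqref{eqn: SBME x bounded by Gx} and is the reason the factor $1/m$ (rather than something involving $M$) appears in the Lipschitz constant.
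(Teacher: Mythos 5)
Your proof is correct and follows essentially the same route as the paper: write $y=Gx$, $y'=Gx'$ with $x,x'\in\Rp^d$, use the symmetry of $G$ to reduce the difference to $(y-y')\cdot(x+x')$, and conclude with the H\"older-type inequality and \eqref{eqn: SBME x bounded by Gx}. The only difference is cosmetic — your symmetrization keeps the factor $\tfrac12$, giving a marginally sharper constant than the stated bound, which the paper's proof discards.
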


\begin{proof}
Fix $y,y'\in \CC$ with $y=Gx$ and $y'=Gx'$ for some $x,x'\in \Rp^d$. The symmetry of $G$ and the Cauchy-Schwarz inequality imply that
$$\abs{\C(y)-\C(y')}\leq  \abs{G(x-x')\cdot x}+\abs{G(x-x')\cdot x'}\leq\big( \Norm{x}_1+\Norm{x'}_1\big)\Norm{y-y'}_{1,*}.$$
It follows by \eqref{eqn: SBME x bounded by Gx} that
$$\abs{\C(y)-\C(y')}\leq \frac{1}{m}\big(\dNorm{1}{y}+\dNorm{1}{y'}\big)\dNorm{1}{y-y'}.$$
This completes the proof.
\end{proof}

\begin{lemma}\label{SBME non-linearity non-decreasing}
The non-linearity \eqref{eqn: SBME non-linearity on cone} is non-decreasing.
\end{lemma}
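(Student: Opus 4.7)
The plan is to use the parametrization $x \mapsto Gx$ from $\Rp^d$ onto $\CC$ and show that the function $x \mapsto \C(Gx) = \frac{1}{2}Gx \cdot x$ is non-decreasing on $\Rp^d$, in the sense that $x' - x \in \Rp^d$ implies $\C(Gx') \geq \C(Gx)$. The well-definedness of $\C$, observed in the paragraph below \eqref{eqn: SBME non-linearity on cone}, guarantees that this conclusion descends unambiguously to $\CC$ regardless of which pre-image of $Gx$ is chosen. This interpretation of ``non-decreasing'' is also the one that will be compatible with extending $\C$ to the non-decreasing non-linearity $\H_R$ on $\R^d$ in the sense recalled at the start of the section.

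The core of the argument is a one-line algebraic identity relying on the symmetry $G^\top = G$:
\[ Gx' \cdot x' - Gx \cdot x = Gx' \cdot (x' - x) + G(x' - x) \cdot x = (x' - x) \cdot G(x + x'). \]
Given $x, x' \in \Rp^d$ with $x' - x \in \Rp^d$, we have $x + x' \in \Rp^d$, and since every entry of $G$ is non-negative by \eqref{eqn: SBME min and max of G}, the vector $G(x + x')$ lies in $\Rp^d$ as well. Its inner product with the non-negative vector $x' - x$ is therefore non-negative, yielding $2\bigl[\C(Gx') - \C(Gx)\bigr] = (x'-x) \cdot G(x+x') \geq 0$, which is the required monotonicity.

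There is essentially no obstacle here; the calculation uses only the symmetry of $G$ and the non-negativity of its entries. Notably, the strict positivity bound $m > 0$ in \eqref{eqn: SBME min and max of G} plays no role in this lemma (although it is crucial for \Cref{SBME non-linearity locally Lipschitz}), so the argument would extend verbatim to any symmetric matrix with non-negative entries.
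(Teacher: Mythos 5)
Your argument proves the wrong monotonicity statement. With the convention fixed at the start of Section~\ref{SBME section HJ on cone}, saying that $\C:\CC\to\R$ is non-decreasing means: for $y,y'\in\CC$ with $y'-y\in\Rp^d$, i.e.\ an ordering of the \emph{arguments} $y=Gx$ as points of $\R^d$, one has $\C(y)\le \C(y')$. You instead prove monotonicity in the parametrizing variable: $x'-x\in\Rp^d$ implies $\C(Gx)\le\C(Gx')$. Since the entries of $G$ are non-negative, $x'-x\in\Rp^d$ forces $Gx'-Gx\in\Rp^d$, so the pairs you treat form a strict subset of those the lemma covers, and the converse fails: for $d=2$ and $G$ with $G_{11}=G_{22}=2$, $G_{12}=G_{21}=1$ (rescaled so as to satisfy \eqref{eqn: SBME min and max of G}), the points $x=(1,0)$ and $x'=(0.9,0.3)$ satisfy $Gx'-Gx=(0.1,0.5)\in\Rp^2$ while $x'-x\notin\Rp^2$; as this $G$ is invertible, the pre-images are unique, so no alternative choice of pre-images restores the ordering. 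The appeal to well-definedness of $\C$ cannot bridge this: it only says the value of $\C$ is independent of the chosen pre-image, not that componentwise-ordered points of $\CC$ admit componentwise-ordered pre-images. And it is the image-order version that is actually needed later, e.g.\ in \Cref{SBME extending the non-linearity}, where one fixes $y\le y'$ in $\CC$ and compares $\C(y)$ with $\C(y')$, and where $\H_R(y)=\inf\{\widetilde{\C}_R(w)\mid w\in\CC,\ w\ge y\}$ is built from exactly this kind of comparison.

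The fix is small, because your algebraic identity almost does the job: by symmetry of $G$, $(x'-x)\cdot G(x+x')=G(x'-x)\cdot(x+x')=(y'-y)\cdot(x+x')$, and this is non-negative precisely under the lemma's hypothesis $y'-y\in\Rp^d$, using only $x+x'\in\Rp^d$; neither the non-negativity of the entries of $G$ nor any ordering of the pre-images is needed. This is essentially the paper's proof, which chains $2\C(y)=y\cdot x\le y'\cdot x=Gx'\cdot x=Gx\cdot x'=y\cdot x'\le y'\cdot x'=2\C(y')$, using $x,x'\in\Rp^d$, $y'-y\in\Rp^d$ and the symmetry of $G$. As written, however, your proof establishes a strictly weaker statement than the lemma and the later arguments would not go through with it.
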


\begin{proof}
Fix $y,y'\in \CC$ with $y\leq y'$ (by this we mean that $y'-y \in \Rp^d$), and let $x,x'\in \Rp^d$ be such that $y=Gx$ and $y'=Gx'$. Observe that
$$2\C(y)=Gx\cdot x=y\cdot x\leq y'\cdot x=Gx'\cdot x=Gx\cdot x'=x'\cdot y\leq x'\cdot y'=Gx'\cdot x'=2\C(y').$$
This completes the proof.
\end{proof}

Extending the non-linearity \eqref{eqn: SBME non-linearity on cone} to $\smash{\R^d}$ while preserving these two key properties requires some care. For each $R>0$, we will define a non-decreasing function $\H_R:\R^d\to \R$ which is uniformly Lipschitz continuous with respect to the normalized-$\smash{\ell^{1,*}}$ norm and agrees with the non-linearity \eqref{eqn: SBME non-linearity on cone} on the intersection of the cone \eqref{eqn: SBME cone} and the ball $\smash{B_R=B_R(0)}$ defined in \eqref{eqn: SBME l1* ball}. The definition of this extension is inspired by Proposition 6.8 in \cite{JC_upper} and Lemma 2.5 in \cite{HB_cone}.

\begin{proposition}\label{SBME extending the non-linearity}
For every $R>0$, there exists a non-decreasing non-linearity $\H_R:\R^d\to \R$ which agrees with $\C$ on $\CC\cap B_R$ and satisfies the Lipschitz continuity property
\begin{equation}\label{eqn: SBME extending the non-linearity}
\abs{\H_R(y)-\H_R(y')}\leq \frac{8RM}{m^2}\dNorm{1}{y-y'}
\end{equation}
for all $y,y'\in \R^d$.
\end{proposition}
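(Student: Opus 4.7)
The plan is to construct $\H_R$ as a sup-convolution engineered to enforce monotonicity and Lipschitz continuity simultaneously. With $L := \tfrac{8RM}{m^2}$, set
\[
\H_R(y) := \sup_{z \in \CC \cap B_R} \bigl\{\C(z) - L \, \dNorm{1}{(z-y)^+}\bigr\},
\]
where $v^+ \in \R^d$ denotes the componentwise positive part of $v \in \R^d$, defined by $(v^+)_k = \max(v_k, 0)$. The key design choice is to penalize only the positive coordinates of $z - y$, which is precisely what yields componentwise monotonicity of $\H_R$ for free.

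The supremum is finite and attained because $\CC \cap B_R$ is compact and $\C$ is continuous by Lemma~\ref{SBME non-linearity locally Lipschitz}. For monotonicity, if $y' - y \in \Rp^d$, then $(z-y')^+ \le (z-y)^+$ componentwise and hence $\dNorm{1}{(z-y')^+} \le \dNorm{1}{(z-y)^+}$ for every $z$, so passing to the supremum gives $\H_R(y') \ge \H_R(y)$. The Lipschitz bound follows from the coordinatewise estimate $|(z-y)^+_k - (z-y')^+_k| \le |y_k - y'_k|$ combined with the reverse triangle inequality for $\dNorm{1}{\cdot}$: one obtains $|\dNorm{1}{(z-y)^+} - \dNorm{1}{(z-y')^+}| \le \dNorm{1}{y-y'}$, and passing to the supremum yields $|\H_R(y) - \H_R(y')| \le L\,\dNorm{1}{y-y'}$.

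The main step is to verify that $\H_R$ agrees with $\C$ on $\CC \cap B_R$. Fix $y \in \CC \cap B_R$. Taking $z = y$ in the defining supremum gives $\H_R(y) \ge \C(y)$, so the task reduces to the one-sided estimate $\C(z) - \C(y) \le L \, \dNorm{1}{(z-y)^+}$ for every $z \in \CC \cap B_R$. If $(z-y)^+ = 0$, i.e., $z \le y$ componentwise, the inequality follows from Lemma~\ref{SBME non-linearity non-decreasing}. Otherwise, writing $z = Gx$ and $y = Gw$ with $x, w \in \Rp^d$, the symmetry of $G$ yields the discrete integration-by-parts identity
\[
\C(z) - \C(y) = \tfrac{1}{2}(z-y) \cdot (x+w).
\]
Since $x + w \ge 0$, the right-hand side is bounded above by $\tfrac{1}{2}(z-y)^+ \cdot (x+w)$, and the H\"older-type inequality $u \cdot v \le \Norm{u}_1 \dNorm{1}{v}$ together with $\Norm{x}_1, \Norm{w}_1 \le R/m$ (from \eqref{eqn: SBME x bounded by Gx} and $z, y \in B_R$) gives $\C(z) - \C(y) \le (R/m)\,\dNorm{1}{(z-y)^+}$. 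Since $M \ge m$, this bound is comfortably absorbed by $L = 8RM/m^2$.

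The main obstacle is that the one-sided penalty $(z-y)^+$, forced on us by the monotonicity requirement, demands a correspondingly one-sided Lipschitz estimate on $\C$ controlled by $\dNorm{1}{(z-y)^+}$ rather than by $\dNorm{1}{z-y}$. The integration-by-parts identity, combined with the non-negativity of $x + w$ ensured by $z, y \in \CC$, is exactly the ingredient that makes this asymmetric bound available.
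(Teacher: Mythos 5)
Your construction is correct, and it is genuinely different from the paper's. The paper proceeds in two steps: it first modifies $\C$ on the cone by taking a maximum with the radial function $\C(0)+2L(\dNorm{1}{y}-R)$, producing a non-decreasing, globally Lipschitz regularization $\widetilde\C_R$ on $\CC$ that agrees with $\C$ on $\CC\cap B_R$, and it then extends to $\R^d$ by the monotone infimal extension $\H_R(y)=\inf\{\widetilde\C_R(w)\mid w\in\CC,\ w\ge y\}$, where the strictly positive vector $v=G\iota/m$ is needed both to see that the constraint set is non-empty and to transfer the Lipschitz bound from $\CC$ to $\R^d$ (this is where the extra factor $M/m$ in $8RM/m^2$ enters). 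You instead do everything in one stroke with a sup-convolution over the compact set $\CC\cap B_R$ penalized only through the positive part $(z-y)^+$; monotonicity and the global Lipschitz bound are then immediate from the structure of the penalty, and the whole burden falls on the one-sided estimate $\C(z)-\C(y)\le \frac{R}{m}\dNorm{1}{(z-y)^+}$ for $z,y\in\CC\cap B_R$, which you obtain from the polarization identity $\C(z)-\C(y)=\tfrac12(z-y)\cdot(x+w)$, the non-negativity of $x+w$, the H\"older inequality, and \eqref{eqn: SBME x bounded by Gx}. Your agreement step thus exploits the specific quadratic form of $\C$, whereas the paper's two-step argument only invokes the abstract local Lipschitz and monotonicity properties of Lemmas~\ref{SBME non-linearity locally Lipschitz} and~\ref{SBME non-linearity non-decreasing}; in exchange, your route is shorter, avoids the auxiliary vector $v$ entirely, and in fact shows that any Lipschitz constant $L\ge R/m$ would do, so the stated constant $8RM/m^2$ is comfortably sufficient. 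Two cosmetic remarks: attainment of the supremum is never actually used (finiteness follows from the Lipschitz bound on $\C$ and $\C(0)=0$), and if you do want compactness of $\CC\cap B_R$ you should note that $\CC$ is closed, which follows from \eqref{eqn: SBME x bounded by Gx} by extracting a convergent subsequence of preimages; neither point affects the validity of the argument.
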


\begin{proof}
The proof proceeds in two steps: first we regularize $\C$ by defining a non-decreasing and uniformly Lipschitz continuous function which agrees with $\C$ on $\smash{\CC\cap B_R}$, and then we extend this regularization to $\smash{\R^d}$.
\\
\step{1: regularizing $\C$.}
\\
By \Cref{SBME non-linearity locally Lipschitz}, the non-linearity \eqref{eqn: SBME non-linearity on cone} satisfies the Lipschitz bound
$$\abs{\C(y)-\C(y')}\leq \frac{4R}{m}\dNorm{1}{y-y'}$$
for all $y,y'\in \CC\cap B_{2R}$. With this in mind, let $L=\frac{4R}{m}$, and define the regularized non-linearity $\smash{\widetilde{\C}_R:\CC\to \R}$ by
$$\widetilde{\C}_R(y)=\begin{cases}
\max\Big(\C(y), \C(0)+2L\big(\dNorm{1}{y}-R\big)\Big) & \text{if } y\in \CC\cap B_{2R},\\
\C(0)+2L\big(\dNorm{1}{y}-R\big) & \text{if } y\in \CC\setminus B_{2R}.
\end{cases}$$
To see that $\widetilde{\C}_R$ agrees with $\C$ on $\CC\cap B_R$, observe that for any $y\in \CC\cap B_R$,
$$\C(0)+2L\big(\dNorm{1}{y}-R\big)\leq \C(0)=0\leq \C(y),$$
where the last inequality uses the non-negativity of the components of $G$. It will also be convenient to note that by Lipschitz continuity of $\C$ on $\CC\cap B_{2R}$,
$$\C(0)+2L\big(\dNorm{1}{y}-R\big)=\C(0)+2LR= \C(0)+L\dNorm{1}{y}\geq \C(y)$$
for any $\smash{y\in \CC\cap \partial B_{2R}}$. This shows that $\smash{\widetilde{\C}_R}$ is continuous.
To establish the non-decreasingness of $\smash{\widetilde{\C}_R}$, fix $y,y'\in \CC$ with $y\leq y'$. If $\smash{y,y'\in B_{2R}}$, then the non-decreasingness of $\C$ in \Cref{SBME non-linearity non-decreasing} implies that $\C(y)\leq \C(y')$. Combining this with the fact that $\smash{\dNorm{1}{y}\leq \dNorm{1}{y'}}$ reveals that $\smash{\widetilde{\C}_R(y)\leq \widetilde{\C}_R(y')}$. On the other hand, if $\smash{y\in B_{2R}}$ and $\smash{y'\in \CC\setminus B_{2R}}$, then
$$\C(y)\leq \C(0)+L\dNorm{1}{y}\leq \C(0)+L\dNorm{1}{y'}+L\big(\dNorm{1}{y'}-2R\big)=\widetilde{\C}_R(y')$$
and
$$\C(0)+2L\big(\dNorm{1}{y}-R\big)\leq \C(0)+2L\big(\dNorm{1}{y'}-R\big)=\widetilde{\C}_R(y').$$
Once again $\smash{\widetilde{\C}_R(y)\leq \widetilde{\C}_R(y')}$. Finally, if $\smash{y\in \CC\setminus B_{2R}}$, then $\smash{2R\leq \dNorm{1}{y}\leq \dNorm{1}{y'}}$ so $\smash{y'\in \CC\setminus B_{2R}}$ and clearly $\smash{\widetilde{\C}_R(y)\leq \widetilde{\C}_R(y')}$. This establishes the non-decreasingness of the regularized non-linearity $\smash{\widetilde{\C}_R}$. We now show that this non-linearity is uniformly Lipschitz continuous. The reverse triangle inequality implies that the map $\smash{y\mapsto \C(0)+2L\big(\dNorm{1}{y}-R\big)}$ is Lipschitz continuous with Lipschitz constant at most $2L$. Recall that the maximum of two Lipschitz continuous maps with Lipschitz constants at most $L_1$ and $L_2$, respectively, is Lipschitz continuous with Lipschitz constant at most $\max(L_1,L_2)$. This means that $\smash{\widetilde{\C}_R}$ is Lipschitz continuous with Lipschitz constant at most $2L$ when it is restricted to $\smash{\CC\cap B_{2R}}$ or $\smash{\CC\setminus B_{2R}}$. For $\smash{y,y'\in \CC}$ with $\smash{y\in B_{2R}}$ and $\smash{y'\in \CC\setminus B_{2R}}$, we distinguish two cases. On the one hand, if $\smash{\widetilde{\C}_R(y)=\C(0)+2L\big(\dNorm{1}{y}-R\big)}$, the reverse triangle inequality shows that
$$\big\lvert \widetilde{\C}_R(y)-\widetilde{\C}_R(y')\big\rvert\leq 2L\big\lvert \dNorm{1}{y}-\dNorm{1}{y'}\big\rvert\leq 2L\dNorm{1}{y-y'}.$$
On the other hand, if $\smash{\widetilde{\C}_R(y)=\C(y)}$, then the reverse triangle inequality reveals that
\begin{align*}
\widetilde{\C}_R(y)-\widetilde{\C}_R(y')&\leq \C(0)+L\dNorm{1}{y}-\C(0)-2L\big(\dNorm{1}{y'}-R\big)\leq L\dNorm{1}{y-y'}+L\big(2R-\dNorm{1}{y'}\big)\\
&\leq L\dNorm{1}{y-y'}
\end{align*}
while the lower bound $\widetilde{\C}_R(y)=\C(y)\geq \C(0)+2L\big(\dNorm{1}{y}-R\big)$ yields
$$\widetilde{\C}_R(y')-\widetilde{\C}_R(y)=2L\big(\dNorm{1}{y'}-\dNorm{1}{y}\big)\leq 2L\dNorm{1}{y-y'}.$$
This shows that $\widetilde{\C}_R$ is a non-decreasing function which agrees with $\C$ on $\CC\cap B_R$ and satisfies the Lipschitz continuity property 
\begin{equation}\label{eqn: SBME regularization is Lipschitz on cone}
\big\lvert \widetilde{\C}_R(y)-\widetilde{\C}_R(y')\big\rvert\leq \frac{8R}{m}\dNorm{1}{y-y'}
\end{equation}
for all $y,y'\in \CC$.\\
\noindent \step{2: extending to $\R^d$.}\\
To extend the regularization of the non-linearity \eqref{eqn: SBME non-linearity on cone} to $\smash{\R^d}$, define the function $\H_R:\R^d\to \R$ by
\begin{equation}\label{eqn: SBME extended non-linearity}
\H_R(y)=\inf\Big\{\widetilde{\C}_R(w)\mid w\in \CC\text{ with } w\geq y\Big\}.
\end{equation}
Let $\iota=(1,\ldots,1)\in \R^d$ and observe that the vector $v=\frac{G\iota}{m}$ belongs to $\CC$ and satisfies the bounds
\begin{equation}\label{eqn: SBME extented non-linearity vector v}
\frac{1}{d}\leq v_k\leq \frac{M}{dm}
\end{equation}
for $1\leq k\leq d$. In particular, the infimum in \eqref{eqn: SBME extended non-linearity} is never taken over the empty set. Moreover, the non-decreasingness of $\smash{\widetilde{\C}_R}$ and the fact that this function agrees with $\C$ on $\CC\cap B_R$ imply that $\smash{\H_R}$ also agrees with $\C$ on $\CC\cap B_R$. To see that $\smash{\H_R}$ is non-decreasing, fix $\smash{y,y'\in \R^d}$ with $y\geq y'$, and let $w\in \CC$ be such that $w\geq y'$. Since $w\geq y$, the definition of $\smash{\H_R}$ gives $\smash{\H_R(y)\leq \widetilde{\C}_R(w)}$, and taking the infimum over all such $w$ shows that $\smash{\H_R(y)\leq \H_R(y')}$. To establish the Lipschitz continuity of $\smash{\H_R}$, fix $\smash{y,y'\in \R^d}$ and let $z=\dNorm{1}{y-y'}v\in \CC$. Recalling \eqref{eqn: SBME extented non-linearity vector v} reveals that for any $1\leq k\leq d$,
$$y_k-y_k'\leq \norm{y-y'}_\infty=\frac{1}{d}\dNorm{1}{y-y'}\leq v_k\dNorm{1}{y-y'}=z_k.$$
This means that $z\geq y-y'$. In particular, if $w\in \CC$ is such that $w\geq y'$, then $w+z\in \CC$ with $w+z\geq y$. It follows by \eqref{eqn: SBME regularization is Lipschitz on cone}, \eqref{eqn: SBME extended non-linearity} and \eqref{eqn: SBME extented non-linearity vector v} that
$$\H_R(y)-\widetilde{\C}_R(w)\leq \widetilde{\C}_R(w+z)-\widetilde{\C}_R(w)\leq \frac{8R}{m}\dNorm{1}{z}\leq \frac{8RM}{m^2}\dNorm{1}{y-y'}.$$
Taking the infimum over all such $w$ and reversing the roles of $y$ and $y'$ completes the proof.
\end{proof}

With this extended non-linearity at hand, we can now establish the well-posedness of the Hamilton-Jacobi equation \eqref{eqn: SBME HJ eqn on Rpp}. The appropriate notion of solution for \eqref{eqn: SBME HJ eqn on Rpp} will be that of a viscosity solution. With \Cref{SBME app HJ eqn on half-space} in mind, let us fix a non-linearity $\smash{\H:\R^d\to \R}$ and a domain $\smash{\D\subset \Rp^d}$, and define the notion of a viscosity solution for the more general Hamilton-Jacobi equation
\begin{equation}\label{eqn: SBME general HJ eqn}
\partial_t f(t,x)=\H\big(\nabla f(t,x)\big) \quad \text{on}\quad \Rpp\times \D.\\
\end{equation}
A brief review of the definition and elementary properties of semi-continuous functions is provided in \Cref{SBME app background}.
\begin{definition}
An upper semi-continuous function $\smash{u:[0,\infty)\times  \D\to \R}$ is said to be a viscosity subsolution to \eqref{eqn: SBME general HJ eqn} if, given any $\smash{\phi\in C^\infty\big((0,\infty)\times  \D\big)}$ with the property that $u-\phi$ has a local maximum at $\smash{(t^*,x^*)\in (0,\infty)\times \D}$,
\begin{equation}\label{eqn: SBME subsolution conditions}
\big(\partial_t \phi-\H(\nabla \phi)\big)(t^*,x^*)\leq 0.
\end{equation}
\end{definition}

\begin{definition}
A lower semi-continuous function $\smash{v:[0,\infty)\times \D\to \R}$ is said to be a viscosity supersolution to \eqref{eqn: SBME general HJ eqn} if, given any $\smash{\phi\in C^\infty\big((0,\infty)\times  \D\big)}$ with the property that $v-\phi$ has a local minimum at $\smash{(t^*,x^*)\in (0,\infty)\times \D}$,
\begin{equation}\label{eqn: SBME supersolution conditions}
\big(\partial_t \phi-\H(\nabla \phi)\big)(t^*,x^*)\geq 0.
\end{equation}
\end{definition}

\begin{definition}\label{SBME definition of viscosity solution}
A continuous function $\smash{f\in C\big([0,\infty)\times \D\big)}$ is said to be a viscosity solution to \eqref{eqn: SBME general HJ eqn} if it is both a viscosity subsolution and a viscosity supersolution to \eqref{eqn: SBME general HJ eqn}.
\end{definition}

The existence and uniqueness results for Hamilton-Jacobi equations on positive half-spaces developed in \Cref{SBME app HJ eqn on half-space} now give the well-posedness of the Hamilton-Jacobi equation \eqref{eqn: SBME HJ eqn on Rpp}. It will be convenient to remember that any closed and convex set $\smash{\K\subset \R^d}$ may be represented as the intersection of the closed and affine half-spaces which contain it,
\begin{equation}\label{eqn: SBME expanded cone as hyperplanes}
\K=\big\{x\in \R^d\mid x\cdot v\geq c \text{ for all } (v,c)\in \A\big\},
\end{equation}
where
\begin{equation}\label{eqn: SBME expanded cone as hyperplanes set A}
\A=\big\{(v,c)\in \R^{d+1}\mid x\cdot v\geq c \text{ for all } x\in \K \text{ and } \norm{v}=1\big\}
\end{equation}
for any norm $\norm{\cdot}$. A proof of this classical result may be found in Corollary 4.2.4 of \cite{Lemarechal}.

\begin{proposition}\label{SBME WP of HJ eqn on Rpp}
For every $R>0$, the Hamilton-Jacobi equation \eqref{eqn: SBME HJ eqn on Rpp} admits a unique viscosity solution $f_R\in \mathfrak{L}_{\mathrm{unif}}$ subject to the initial condition $\psi$. Moreover, $f_R$ has its gradient in the set $\smash{\K_a'}$ and satisfies the Lipschitz bound 
\begin{equation}\label{eqn: SBME WP of HJ eqn on cone}
\sup_{t>0}\Norm{f_R(t,\cdot)}_{\mathrm{Lip},1}=\Norm{\psi}_{\mathrm{Lip},1}.
\end{equation}
\end{proposition}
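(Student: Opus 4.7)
The plan is to deduce the statement from the general well-posedness theory for Hamilton-Jacobi equations on positive half-spaces developed in \Cref{SBME app HJ eqn on half-space}. By \Cref{SBME extending the non-linearity}, the extended non-linearity $\smash{\H_R:\R^d\to\R}$ is non-decreasing and uniformly Lipschitz continuous with respect to the normalized-$\ell^{1,*}$ norm, and by \eqref{eqn: SBME initial condition Lipschitz assumption}, the initial condition $\psi$ is Lipschitz continuous with respect to the normalized-$\ell^1$ norm. These are precisely the hypotheses of the existence result (obtained via Perron's method) and the comparison principle of \Cref{SBME app HJ eqn on half-space}, which together produce a unique viscosity solution $\smash{f_R\in\mathfrak{L}_{\mathrm{unif}}}$ of \eqref{eqn: SBME HJ eqn on Rpp} with initial datum $\psi$.

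To obtain the Lipschitz bound, I would rely on a comparison-with-translates argument. Given $h\in\Rp^d$, the translate $(t,x)\mapsto f_R(t,x+h)$ is still a viscosity solution of \eqref{eqn: SBME HJ eqn on Rpp} on $\Rpp\times\Rp^d$ with initial condition $\psi(\cdot+h)$, since $\H_R$ is independent of $x$ and $\Rp^d+h\subset\Rp^d$. The inequalities $\psi(x+h)-\Norm{\psi}_{\mathrm{Lip},1}\Norm{h}_1\leq\psi(x)\leq\psi(x+h)+\Norm{\psi}_{\mathrm{Lip},1}\Norm{h}_1$ together with the comparison principle then yield $\abs{f_R(t,x+h)-f_R(t,x)}\leq\Norm{\psi}_{\mathrm{Lip},1}\Norm{h}_1$. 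To pass from directions $h\in\Rp^d$ to arbitrary pairs $x,y\in\Rp^d$, let $z\in\Rp^d$ denote the componentwise maximum of $x$ and $y$, apply the preceding bound twice (once with $h=z-x$ and once with $h=z-y$), and use the identity $\Norm{z-x}_1+\Norm{z-y}_1=\Norm{x-y}_1$ to conclude that $\sup_{t>0}\Norm{f_R(t,\cdot)}_{\mathrm{Lip},1}\leq\Norm{\psi}_{\mathrm{Lip},1}$. The reverse inequality follows from continuity of $f_R$ at $t=0$.

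The main obstacle is to establish the gradient containment $\nabla f_R\in L^\infty(\Rp^d;\K_a')$. Using \eqref{eqn: SBME expanded cone as hyperplanes}--\eqref{eqn: SBME expanded cone as hyperplanes set A}, it suffices to verify that for each $(v,c)\in\A$, the inequality $\nabla f_R(t,\cdot)\cdot v\geq c$ holds almost everywhere on $\Rp^d$ for every $t\geq 0$. By the non-differential criterion of \Cref{SBME gradient in convex set}, the hypothesis $\nabla\psi\in L^\infty(\Rp^d;\K_a')$ translates into a family of affine slope inequalities for $\psi$, one for each supporting half-space of $\K_a'$. My plan is to preserve each such inequality under the Hamilton-Jacobi flow by applying the comparison principle to $f_R$ against a suitably shifted translate of itself, exploiting the monotonicity of $\H_R$ to cope with the fact that the normals $v$ arising in these half-spaces need not lie in $\Rp^d$. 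This strategy follows the approach introduced in \cite{HB_cone}, but requires some adaptation since $\K_a'$ is merely a neighborhood of the set $\K_a$ rather than a subset of the cone $\CC$ itself, so the test slopes have to be enlarged by an $\BigO(d^{-1/2})$ amount to absorb the discrepancy. A second application of \Cref{SBME gradient in convex set} then delivers $\nabla f_R(t,\cdot)\in\K_a'$ almost everywhere, completing the proof.
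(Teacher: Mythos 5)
Your first two steps are essentially fine, but they are also essentially already contained in the appendix: the paper's proof simply invokes \Cref{SBME comparison principle on Rpp}, which by itself yields existence, uniqueness in $\mathfrak{L}_{\mathrm{unif}}$ \emph{and} the identity $\sup_{t>0}\Norm{f_R(t,\cdot)}_{\mathrm{Lip},1}=\Norm{\psi}_{\mathrm{Lip},1}$, so your translation argument with the componentwise maximum, while correct for shifts $h\in\Rp^d$, re-proves something you already have.

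The genuine gap is in the step that is the actual content of the proposition, the containment $\nabla f_R\in L^\infty(\Rp^d;\K_a')$. You propose to preserve the slope inequalities by ``applying the comparison principle to $f_R$ against a suitably shifted translate of itself, exploiting the monotonicity of $\H_R$'' to handle normals $v\notin\Rp^d$. This is exactly where a translate-comparison argument breaks down, and monotonicity of $\H_R$ does not rescue it: if $v$ has a negative component, $(t,x)\mapsto f_R(t,x+sv)$ is not defined on all of $\Rp^d$, so it is not a supersolution on the domain to which \Cref{SBME comparison principle on Rpp} applies, and comparing on the smaller set $\Rp^d\cap(\Rp^d-sv)$ would require control on a piece of boundary lying in the interior of $\Rp^d$, which is not available. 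The mechanism that actually closes this step (the one used in the paper, following \cite{HB_cone}) is different: for each supporting half-space $(v,c)$ of $\K_a'$ one subtracts the affine function $c\,x\cdot v$, observes that $g_{v,c}(t,x)=f_R(t,x)-c\,x\cdot v$ solves an equation of the same form with a translated non-linearity and with an initial condition that is non-decreasing along the ray spanned by $v$, and then invokes the boundary-free monotonicity-preservation theorem, \Cref{SBME monotone initial conition gives monotone solution}; that theorem is not a consequence of the comparison principle but is proved by a separate doubling-of-variables argument with a distance-like penalization of the boundary of the dual cone. The conclusion is then read off through \Cref{SBME gradient in convex set}. Without this reduction and this theorem (or an equivalent substitute), your sketch does not close. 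A minor further point: no ``$\BigO(d^{-1/2})$ enlargement of the test slopes'' is needed; \Cref{SBME gradient in convex set} and the representation \eqref{eqn: SBME expanded cone as hyperplanes} are applied directly to the closed convex set $\K_a'$ itself, so the fact that $\K_a'$ is only a neighborhood of $\K_a$ costs nothing at this stage.
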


\begin{proof}
To alleviate notation, we fix $R>0$ and omit all dependencies on $R>0$. We invoke \Cref{SBME comparison principle on Rpp} to find a viscosity solution $f\in \mathfrak{L}_{\mathrm{unif}}$ to the Hamilton-Jacobi equation \eqref{eqn: SBME HJ eqn on Rpp} subject to the initial condition $\psi$ with
\begin{equation}\label{eqn: SBME HJ eqn on Rpp in Lunif}
\sup_{t>0}\Norm{f(t,\cdot)}_{\mathrm{Lip},1}=\Norm{\psi}_{\mathrm{Lip},1}.
\end{equation}
We will now show that $f$ has its gradient in the closed convex set $\smash{\K_a'}$. Denote by $\A$ the set \eqref{eqn: SBME expanded cone as hyperplanes set A} associated with $\smash{\K_a'}$ and the Euclidean norm $\norm{\cdot}_2$. For each $(v,c)\in \A$ introduce the closed convex cone
$$\HH_v=\{x\in \R^d\mid x\cdot v\geq 0\big\}$$
as well as the function $g_{v,c}:[0,\infty)\times \Rp^d\to \R$ defined by $g_{v,c}(t,x)=f(t,x)-cx\cdot v$. It is readily verified that $g_{v,c}$ satisfies the Hamilton-Jacobi equation
$$\partial_tg (t,x)=\widetilde{\H}_R\big(\nabla g(t,x)\big) \quad \text{ on } \Rpp\times \Rp^d$$
subject to the initial condition $\smash{g_{v,c}(0,x)=\psi(x)-cx\cdot v}$ for the non-linearity $\smash{\widetilde{\H}_R(y)=\H_R(y+cv\iota)}$, where $\smash{\iota=(1)_{\D_K}\in \R^{\D_K}}$. Moreover, this initial condition is $\smash{\HH_v^*}$-non-decreasing (the definition of being $\smash{\HH_v^*}$-non-decreasing was introduced at the beginning of this section). Indeed, we have $\smash{\HH_v^*=\R v}$ by the biduality result in \Cref{SBME closed convex cone bidual}. Moreover, for any $\smash{x,x'\in \Rp^d}$ with $x'-x=tv$ for some $t\in \R$, the fact that $(x'-x)\cdot z\geq tc$ for all $\smash{z\in \K_a'}$ and the characterization of $\psi$ having its gradient in the set $\smash{\K_a'}$ given in \Cref{SBME gradient in convex set} imply that
$$g_{v,c}(x')-g_{v,c}(x)=\psi(x')-\psi(x)-c(x'-x)\cdot v\geq tc-tcv\cdot v=0.$$
It follows by \Cref{SBME monotone initial conition gives monotone solution} that $g_{v,c}$ is $\HH_v^*$-non-decreasing. We now fix $x,x'\in \Rp^d$ with the property that for all $z\in \K_a'$, we have $(x'-x)\cdot z\geq c$. If $\smash{v=\frac{x'-x}{\norm{x'-x}_2}}$ and $\smash{c'=\frac{c}{\norm{x'-x}_2}}$, then $(v,c')\in \A$ so the function $\smash{g_{v,c'}}$ is $\smash{\HH_v^*}$-non-decreasing. This implies that
$$f(t,x')-f(t,x)=g_{v,c'}\big(t,x+\norm{x'-x}_2v\big)-g_{v,c'}(t,x)+c'\norm{x'-x}_2v\cdot v\geq c$$
which means that $f$ has its gradient in $\smash{\K_a'}$ by \Cref{SBME gradient in convex set}. This completes the proof.
\end{proof}

\begin{proof}[Proof of Theorem~\ref{SBME WP of projecetd HJ eqn}]
Writing $\H_{K,R}$ for the extension of the non-linearity $\C_K$ constructed in \Cref{SBME extending the non-linearity}, the desired result is now an immediate consequence of \Cref{SBME WP of HJ eqn on Rpp} and the Lipschitz bound~\eqref{eqn: SBME projected non-linearity Lipschitz}.
\end{proof}

\section{Well-posedness of the infinite-dimensional equation}\label{SBME section convergence}

In this section, we establish the convergence of the solutions to the projected Hamilton-Jacobi equations \eqref{eqn: SBME projected HJ eqn} as stated in \Cref{SBME infinite HJ eqn WP}. In the notation of \Cref{SBME WP of projecetd HJ eqn}, given an integer $K\geq 1$ and some $R>0$, write $\smash{f_R^{(K)}\in \mathfrak{L}_{\mathrm{unif}}}$ for the unique solution to the Hamilton-Jacobi equation \eqref{eqn: SBME HJ eqn on Rpp} subject to the initial condition $\smash{\psi^{(K)}}$. Recall that $\smash{f_R^{(K)}}$ has its gradient in the set $\smash{\K_{a,K}'}$ and satisfies the Lipschitz bound
\begin{equation}\label{eqn: SBME projected solution Lipschitz}
\sup_{t\geq 0}\Norm{f^{(K)}_R(t,\cdot)}_{\mathrm{Lip},1}=\Norm{\psi^{(K)}}_{\mathrm{Lip},1}\leq \norm{\psi}_{\mathrm{Lip},\TV}.
\end{equation}
To prove the existence of the limit
\begin{equation}\label{eqn: SBME limit of projections}
f_R(t,\mu)=\lim_{K\to \infty} f^{(K)}_R\big(t,x^{(K)}(\mu)\big)
\end{equation}
we will appropriately adapt the arguments in Section 3.2 of \cite{JC_NC} and Section 3.3 of \cite{HB_HJ}. Given two integers $K'> K$, it will be convenient to introduce the projection map $\smash{P^{(K,K')}: \Rp^{\D_{K'}}\to \Rp^{\D_K}}$ defined by
\begin{equation}\label{eqn: SBME projection operator}
P^{(K,K')}x=x^{(K)}\big(\mu^{(K')}_x\big)
\end{equation}
as well as the lifting map $L^{(K,K')}: \R^{\D_K}\to \R^{\D_{K'}}$ given by
\begin{equation}
L^{(K,K')}x=\big(\widetilde{x}_{k}\big)_{k\in \D_K},
\end{equation}
where $\smash{\widetilde{x}_{k}=(x_{k},\ldots,x_{k})\in \R^{2^{K'-K}}}$. A key observation in proving the existence of the limit \eqref{eqn: SBME limit of projections} is that
\begin{equation}\label{eqn: SBME projection is surjective}
P^{(K,K')}L^{(K,K')}x=x.
\end{equation}
The following technical lemmas will also play their part. The first two translate non-differential properties of a non-differentiable function into differential properties of a smooth function at any point where the difference of these functions is locally maximal. The third analyzes the transformation of the pairs $(v,c)$ in the representation \eqref{eqn: SBME expanded cone as hyperplanes} of a closed convex set by the projection map \eqref{eqn: SBME projection operator}, and the fourth shows that these pairs can be used to quantify the distance from a point to the closed convex set they define.

\begin{lemma}\label{SBME gradient bound on test function}
Fix a Lipschitz function $\smash{u\in C\big((0,\infty)\times \Rp^d\big)}$ with $\smash{L=\sup_{t>0}\Norm{u(t,\cdot)}_{\mathrm{Lip},1}<\infty}$.
If $\smash{\phi \in C^\infty\big((0,\infty)\times \Rpp^d\big)}$ is a smooth function with the property that $u-\phi$ has a local maximum at the point $\smash{(t^*,x^*)\in (0,\infty)\times \Rpp^d}$, then $\smash{\dNorm{1}{\nabla \phi(t^*,x^*)}\leq L}$. An identical statement holds at a local minimum.
\end{lemma}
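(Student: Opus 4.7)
The plan is to exploit the local maximum condition in each coordinate direction separately, using the fact that $x^* \in \Rpp^d$ (strictly positive coordinates) gives us room to move in both directions.

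First, fix any $k \in \{1, \ldots, d\}$ and let $e_k$ denote the $k$-th standard basis vector. Since $x^* \in \Rpp^d$, for all sufficiently small $|h|$ (of either sign) the point $x^* + h e_k$ still lies in $\Rpp^d$, and by the local maximum assumption
\begin{equation*}
u(t^*, x^* + h e_k) - \phi(t^*, x^* + h e_k) \leq u(t^*, x^*) - \phi(t^*, x^*),
\end{equation*}
which rearranges to $\phi(t^*, x^* + h e_k) - \phi(t^*, x^*) \geq u(t^*, x^* + h e_k) - u(t^*, x^*)$. The Lipschitz hypothesis gives $|u(t^*, x^* + h e_k) - u(t^*, x^*)| \leq L \Norm{h e_k}_1 = L|h|/d$. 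Dividing by $h$ and sending $h \to 0^+$ and $h \to 0^-$ respectively, and using smoothness of $\phi$, yields
\begin{equation*}
\partial_{x_k} \phi(t^*, x^*) \geq -L/d \quad \text{and} \quad \partial_{x_k} \phi(t^*, x^*) \leq L/d,
\end{equation*}
so $|\partial_{x_k} \phi(t^*, x^*)| \leq L/d$. Taking the maximum over $k$ and multiplying by $d$ gives $\dNorm{1}{\nabla \phi(t^*, x^*)} = \max_k d |\partial_{x_k} \phi(t^*, x^*)| \leq L$, which is exactly the desired bound.

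For the local minimum statement, the inequality $u(t^*, x^* + h e_k) - \phi(t^*, x^* + h e_k) \geq u(t^*, x^*) - \phi(t^*, x^*)$ is reversed, but after dividing by $|h|$ and applying the same Lipschitz estimate (which is sign-free), the same one-sided bounds on $\partial_{x_k}\phi(t^*,x^*)$ emerge, so the conclusion is identical.

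There is no real obstacle: the only thing to be careful about is that one needs $x^* \in \Rpp^d$ (and not merely $\Rp^d$) in order to vary each coordinate in both directions; this is why the statement and hypotheses restrict to the open half-space. The use of the normalized norms is purely a bookkeeping matter, handled by the identity $\Norm{e_k}_1 = 1/d$ together with the coordinate characterization $\dNorm{1}{y} = d \max_k |y_k|$.
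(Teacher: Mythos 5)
Your proof is correct and follows essentially the same route as the paper: both exploit that $x^*\in\Rpp^d$ is interior, feed the local-max (or local-min) inequality with coordinate perturbations, and use the Lipschitz bound on $u$ to get $\abs{\partial_{x_k}\phi(t^*,x^*)}\leq L/d$ for each $k$, hence $\dNorm{1}{\nabla\phi(t^*,x^*)}\leq L$. The paper merely phrases the perturbation with a general direction first and then picks signed coordinate vectors, which is the same two-sided coordinate argument you carry out directly.
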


\begin{proof}
Since $u-\phi$ has a local maximum at $(t^*,x^*)\in (0,\infty)\times \Rpp^d$, for every $\epsilon>0$ small enough and $x\in \Rp^d$,
$$\phi(t^*,x^*+\epsilon x)-\phi(t^*,x^*)\geq u(t^*,x^*+\epsilon x)-u(t^*,x^*)\geq -\epsilon L\Norm{x}_1.$$
Dividing by $\epsilon$ and letting $\epsilon$ tend to zero reveals that
$$\nabla \phi(t^*,x^*)\cdot x\geq -L\Norm{x}_1.$$
Choosing $\smash{x_k=-d\sgn\big(\partial_{x_k}\phi(t^*,x^*)\big)e_k}$ for each $1\leq k\leq d$ completes the proof.
\end{proof}

\begin{lemma}\label{SBME gradient of test function in cone}
Fix a closed convex set $\K'\subset \R^d$ and a Lipschitz function $\smash{u\in C\big((0,\infty)\times \Rp^d\big)}$ with $\smash{\nabla u\in L^\infty((0,\infty)\times}\Rp^d;\K')$. Any smooth function $\smash{\phi \in C^\infty\big((0,\infty)\times \Rpp^d\big)}$ with the property that $u-\phi$ has a local maximum at $\smash{(t^*,x^*)\in (0,\infty)\times \Rpp^d}$ is such that $\nabla \phi(t^*,x^*) \in \K'$. An identical statement holds at a local minimum.
\end{lemma}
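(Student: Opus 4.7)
The plan is to combine the representation of the closed convex set $\K'$ as an intersection of affine half-spaces with the non-differential characterization of \Cref{SBME gradient in convex set} applied to $u$. Writing $\A$ for the set in \eqref{eqn: SBME expanded cone as hyperplanes set A} associated with $\K'$ and the Euclidean norm, the identity \eqref{eqn: SBME expanded cone as hyperplanes} reduces the claim $\nabla \phi(t^*,x^*)\in \K'$ to establishing that $\nabla \phi(t^*,x^*)\cdot v\geq c$ for every pair $(v,c)\in \A$.

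Fix an arbitrary such pair $(v,c)\in \A$. I would first use the assumption $\nabla u\in L^\infty((0,\infty)\times \Rp^d;\K')$, interpreted through the non-differential characterization of \Cref{SBME gradient in convex set} and extended from a.e. time slice to every time slice by continuity of $u$ in time, to conclude that for every $t\geq 0$ and every $y,y'\in \Rp^d$ with $(y'-y)\cdot z\geq c$ for all $z\in \K'$, one has $u(t,y')-u(t,y)\geq c$. Since $x^*\in \Rpp^d$, for all $\epsilon>0$ small enough the point $x^*+\epsilon v$ belongs to $\Rp^d$ and satisfies $(\epsilon v)\cdot z\geq \epsilon c$ for every $z\in \K'$, so the characterization yields
\begin{equation*}
u(t^*,x^*+\epsilon v)-u(t^*,x^*)\geq \epsilon c.
\end{equation*}
The local maximum property of $u-\phi$ at $(t^*,x^*)$ then gives $\phi(t^*,x^*+\epsilon v)-\phi(t^*,x^*)\geq u(t^*,x^*+\epsilon v)-u(t^*,x^*)\geq \epsilon c$ for all sufficiently small $\epsilon>0$; dividing by $\epsilon$ and letting $\epsilon\to 0^+$ produces $\nabla \phi(t^*,x^*)\cdot v\geq c$, as desired.

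For a local minimum, I would apply the same characterization to the pair of points $x^*-\epsilon v$ and $x^*$ in $\Rp^d$. This yields $u(t^*,x^*)-u(t^*,x^*-\epsilon v)\geq \epsilon c$, and the local minimum condition $\phi(t^*,x^*-\epsilon v)-\phi(t^*,x^*)\leq u(t^*,x^*-\epsilon v)-u(t^*,x^*)$ rearranges to
\begin{equation*}
\phi(t^*,x^*)-\phi(t^*,x^*-\epsilon v)\geq u(t^*,x^*)-u(t^*,x^*-\epsilon v)\geq \epsilon c.
\end{equation*}
Dividing by $\epsilon$ and letting $\epsilon\to 0^+$ again yields $\nabla \phi(t^*,x^*)\cdot v\geq c$.

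The only genuinely delicate point I foresee is the passage from the almost-everywhere assumption on $\nabla u$ to an inequality that holds at every single time $t^*$, which I plan to handle by continuity of $u$ together with the stability of the non-differential characterization under pointwise limits. The strict positivity $x^*\in \Rpp^d$, rather than merely $x^*\in \Rp^d$, is essential so that perturbations $x^*\pm \epsilon v$ remain in $\Rp^d$ for every direction $v$ and all sufficiently small $\epsilon>0$, allowing the local max/min inequality to be combined freely with the characterization of the gradient of $u$.
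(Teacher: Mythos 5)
Your proof is correct and follows essentially the same route as the paper: represent $\K'$ via \eqref{eqn: SBME expanded cone as hyperplanes}, use the characterization in \Cref{SBME gradient in convex set} to get $u(t^*,x^*+\epsilon v)-u(t^*,x^*)\geq \epsilon c$, combine with the local max (resp.\ min, via $x^*-\epsilon v$) inequality for $u-\phi$, and divide by $\epsilon$. The one point you flag — upgrading the a.e.-in-time gradient condition to the inequality at the specific time $t^*$ by continuity — is handled implicitly in the paper and your treatment of it is fine.
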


\begin{proof}
Recall the representation \eqref{eqn: SBME expanded cone as hyperplanes} of the closed convex set $\K'$ as the intersection of the closed and affine half-spaces which contain it, and fix $(v,c)\in \A$. Since $u-\phi$ has a local maximum at $(t^*,x^*)\in (0,\infty)\times \Rpp^d$, for every $\epsilon>0$ small enough,
$$\phi(t^*,x^*+\epsilon v)-\phi(t^*,x^*)\geq u(t^*,x^*+\epsilon v)-u(t^*,x^*)\geq \epsilon c.$$
The second inequality uses the characterization of $\nabla u\in \K'$ given in \Cref{SBME gradient in convex set} and the fact that $x\cdot \epsilon v\geq \epsilon c$ for all $\smash{x\in \K'}$. Dividing by $\epsilon$ and letting $\epsilon$ tend to zero reveals that
$$\nabla \phi(t^*,x^*)\cdot v\geq c$$
for all $(v,c)\in \A$. It follows that $\nabla \phi(t^*,x^*)\in \K'$. This completes the proof.
\end{proof}

\begin{lemma}\label{SBME projecting cones}
Fix two integers $K'>K$ large enough and a pair of vectors $\smash{(v,c)\in \R^{\D_{K'}}\times \R}$ with $\Norm{v}_1=1$ such that, for every $\smash{x\in \K_{a,K'}'}$, we have $v\cdot x\geq c$. Then, for every $\smash{y\in \K_{a,K}'}$, 
\begin{equation}
P^{(K,K')}v\cdot y\geq c-\frac{2}{2^{K/2}}.
\end{equation}
\end{lemma}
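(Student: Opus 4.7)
The plan is to exploit a duality identity between the projection $P^{(K,K')}$ and the lifting $L^{(K,K')}$ to reduce the problem to a geometric approximation question. Unpacking the definitions of these two operators, a direct computation yields
\begin{equation*}
P^{(K,K')} v \cdot y = \frac{1}{2^{K'-K}} \, v \cdot L^{(K,K')} y,
\end{equation*}
so it is enough to construct some $z_0 \in \K_{a,K'}'$ for which
\begin{equation*}
\dNorm{1}{\tfrac{1}{2^{K'-K}} L^{(K,K')} y - z_0} \leq 2 \cdot 2^{-K/2};
\end{equation*}
once this is achieved, the H\"older-type inequality $\abs{v \cdot x} \leq \Norm{v}_1 \dNorm{1}{x} = \dNorm{1}{x}$ combined with the hypothesis $v \cdot z_0 \geq c$ delivers the claimed lower bound.

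To find the candidate $z_0$, I would invoke the defining decomposition of elements of $\K_{a,K}'$: write $y = G^{(K)} u + w$ with $u \in \Rp^{\D_K}$, $\Norm{u}_1 \leq a$, and $\dNorm{1}{w} \leq \abs{\D_K}^{-1/2}$. A brief check shows that $L^{(K,K')} u$ lies in $\Rp^{\D_{K'}}$ with $\Norm{L^{(K,K')} u}_1 = \Norm{u}_1 \leq a$, so the natural choice is
\begin{equation*}
z_0 = G^{(K')} L^{(K,K')} u \in \K_{a,K'} \subset \K_{a,K'}'.
\end{equation*}
The difference $\tfrac{1}{2^{K'-K}} L^{(K,K')} y - z_0$ then splits into a ``$w$-part'' and a ``$G$-part''. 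The former is $\tfrac{1}{2^{K'-K}} L^{(K,K')} w$, whose normalized-$\ell^{1,*}$ norm is easily seen to equal $\dNorm{1}{w}$ and is therefore bounded by $\abs{\D_K}^{-1/2} \leq 2^{-K/2}$. Note that one cannot simply incorporate this term into $z_0$, since its $\dNorm{1}{\cdot}$-size is too large to fit within the $\abs{\D_{K'}}^{-1/2}$-enlargement defining $\K_{a,K'}'$; it must be absorbed into the distance bound.

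The main technical task is to control the ``$G$-part'', namely
\begin{equation*}
\frac{1}{2^{K'-K}} L^{(K,K')} G^{(K)} u - G^{(K')} L^{(K,K')} u.
\end{equation*}
Writing each term coordinatewise as a weighted combination over $\D_K$ of the values $g(kj)$ and of averages of $g(k'j')$ with $\abs{k-k'}, \abs{j-j'} \leq 2^{-K}$, one obtains $\abs{kj-k'j'} \leq 2 \cdot 2^{-K}$ from $\abs{k}, \abs{j'} \leq 1$, and hence, by the continuous differentiability of $g$ on $[-1,1]$ (hypothesis \eqref{SBME H g}), a pointwise error of order $\norm{g'}_\infty 2^{-K}$. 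After factoring out the common normalization $(\abs{\D_K}\abs{\D_{K'}})^{-1}$ and using $\Norm{u}_1 \leq a$, the contribution to $\dNorm{1}{\cdot}$ is bounded by $2 a \norm{g'}_\infty 2^{-K}$, which is negligible against $2^{-K/2}$ once $K$ is large enough (depending only on $a$ and $\norm{g'}_\infty$); this is exactly where the ``$K$ large enough'' hypothesis is used. Combining with the ``$w$-part'' bound yields $\dNorm{1}{\tfrac{1}{2^{K'-K}} L^{(K,K')} y - z_0} \leq 2 \cdot 2^{-K/2}$, completing the proof via the duality identity. The main obstacle is the careful bookkeeping of the factors $2^{K'-K}$, $\abs{\D_K}$, and $\abs{\D_{K'}}$ in the comparison between $\tfrac{1}{2^{K'-K}} L^{(K,K')} G^{(K)} u$ and $G^{(K')} L^{(K,K')} u$; once the right candidate $z_0$ is identified and these normalizations are tracked correctly, the remainder of the argument is essentially H\"older's inequality together with the uniform continuity of $g$.
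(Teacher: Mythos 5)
Your proof is correct and follows essentially the same route as the paper: both arguments decompose $y=G^{(K)}u+w$, lift $u$ to scale $K'$ with the same total mass so that the lifted element lies in $\K_{a,K'}$, control the discrepancy in the pairing via the mean value theorem applied to $g$ (an error of order $a\norm{g'}_\infty 2^{-K}$, absorbed for $K$ large), and treat the $w$-term by the H\"older-type inequality. The only cosmetic differences are that you spread the lifted mass uniformly over each dyadic block and make the adjoint identity $P^{(K,K')}v\cdot y = 2^{-(K'-K)}\,v\cdot L^{(K,K')}y$ explicit, whereas the paper concentrates the lifted mass at the points of $\D_K$ and manipulates the sums directly; both yield the stated bound.
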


\begin{proof}
Fix $\smash{y\in \K_{a,K'}}$, and find vectors $\smash{u^{(K)}\in \Rp^{\D_K}}$ and $\smash{w^{(K)}\in \R^{\D_K}}$ with
$$y=G^{(K)}u^{(K)}+w^{(K)}, \quad \Norm{u^{(K)}}_1\leq a \quad \text{and} \quad \dNorm{1}{w^{(K)}}\leq \frac{1}{2^{K/2}}.$$
Consider the vector 
$$u_{k'}^{(K')}=\frac{\abs{\D_{K'}}}{\abs{\D_K}}u^{(K)}_{k'}\1\{k'\in \D_K\}$$
in $\smash{\R^{\D_{K'}}}$, and for each $k'\in \D_{K'}$, write $\underline{k}'$ for the unique dyadic $\underline{k}'\in \D_K$ with $k'\in [\underline{k}',\underline{k}'+2^{-K})$. Observe that
$$P^{(K,K')}v\cdot y=P^{(K,K')}v\cdot G^{(K)}u^{(K)}+P^{(K,K')}v\cdot w^{(K)}=v\cdot \big(G^{(K')}u^{(K')}+\alpha^{(K')}\big)+P^{(K,K')}v\cdot w^{(K)}$$
for the vector $\alpha^{(K')}\in \R^{\D_{K'}}$ defined by
$$\alpha^{(K')}_{k'}=\frac{1}{\abs{\D_{K'}}^2}\sum_{k'\in \D_{K'}}\big(g(\underline{k}k')-g(kk')\big)u^{(K')}_{k'}.$$
Since $G^{(K')}u^{(K')}\in \K_{a,K'}'$, the defining property of $v$, the fact that $\Norm{v}_1=1$ and Hölder's inequality give the lower bound
\begin{equation}\label{eqn: SBME projecting cones key}
P^{(K,K')}v\cdot y\geq c-\dNorm{1}{\alpha^{(K')}}-\dNorm{1}{w^{(K)}}\geq c-\dNorm{1}{\alpha^{(K')}}-\frac{1}{2^{K/2}},
\end{equation}
where we have used that $\Norm{P^{(K,K')}v}_1\leq \Norm{v}_1$ and $\dNorm{1}{w^{(K)}}\leq 2^{-K/2}$. The mean value theorem reveals that
$$\dNorm{1}{\alpha^{(K')}}\leq \frac{\norm{g'}_\infty}{2^K}\Norm{u^{(K)}}_1\leq \frac{a\norm{g'}_\infty}{2^K}.$$
Substituting this into \eqref{eqn: SBME projecting cones key} and taking $K$ large enough completes the proof.
\end{proof}

\begin{lemma}\label{SBME distance to K' through hyperplanes}
Fix a closed convex set $\smash{\K'\subset \R^d}$, and recall its representation \eqref{eqn: SBME expanded cone as hyperplanes} with $\smash{\norm{\cdot}=\Norm{\cdot}_1}$ as the intersection of the closed and affine half-spaces which contain it. If $\smash{x\in \R^d}$ and $\epsilon>0$ are such that $x\cdot v\geq c-\epsilon$ for all $(v,c)\in \A$, then there exist $\smash{y\in \K'}$ and $\smash{z\in \R^d}$ with $x=y+z$ and $\smash{\dNorm{1}{z}\leq \epsilon}$.
\end{lemma}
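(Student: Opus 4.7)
My plan is to prove the contrapositive using a Hahn-Banach separation argument in finite dimensions. Denote by $B_\epsilon(x)$ the closed $\dNorm{1}{\cdot}$-ball of radius $\epsilon$ around $x$, and suppose that no $y \in \K'$ satisfies $\dNorm{1}{x-y} \leq \epsilon$, i.e.\ $B_\epsilon(x) \cap \K' = \varnothing$. Since $\R^d$ is finite-dimensional, $B_\epsilon(x)$ is compact while $\K'$ is closed and convex, so the strict separation theorem produces a nonzero vector $v \in \R^d$ and $\alpha \in \R$ with
\begin{equation*}
\sup_{z \in B_\epsilon(x)} v \cdot z \;<\; \alpha \;\leq\; \inf_{y \in \K'} v \cdot y.
\end{equation*}
Because rescaling $v$ and $\alpha$ by a positive constant preserves both inequalities, I may assume $\Norm{v}_1 = 1$. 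Setting $c = \inf_{y \in \K'} v \cdot y$ (which is finite, being bounded below by $\alpha$, and bounded above by $v \cdot y_0$ for any fixed $y_0 \in \K'$), the pair $(v,c)$ belongs to the set $\A$ from \eqref{eqn: SBME expanded cone as hyperplanes set A}.

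The key computation is then to evaluate the left-hand side using the duality between the normalized-$\ell^1$ and normalized-$\ell^{1,*}$ norms. For any $w$ with $\dNorm{1}{w} \leq \epsilon$, the Hölder-type inequality recalled after \eqref{eqn: SBME normalized norms} gives $v \cdot w \leq \Norm{v}_1 \dNorm{1}{w} \leq \epsilon$, and equality is attained by choosing $w_k = (\epsilon/d) \sgn(v_k)$. Consequently,
\begin{equation*}
\sup_{z \in B_\epsilon(x)} v \cdot z \;=\; v \cdot x + \sup_{\dNorm{1}{w}\leq \epsilon} v \cdot w \;=\; v \cdot x + \epsilon.
\end{equation*}
Combining this with the strict separation gives $v \cdot x + \epsilon < c$, that is, $x \cdot v < c - \epsilon$. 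This contradicts the hypothesis that $x \cdot v \geq c - \epsilon$ for every $(v,c) \in \A$. Therefore some $y \in \K'$ satisfies $\dNorm{1}{x - y} \leq \epsilon$, and setting $z = x - y$ yields the desired decomposition $x = y + z$ with $\dNorm{1}{z} \leq \epsilon$.

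The only mildly delicate point is checking that the separating hyperplane can genuinely be recast as an element of $\A$: one must normalize $v$ so that $\Norm{v}_1 = 1$ (harmless since $v \neq 0$) and verify that $c = \inf_{y \in \K'} v \cdot y$ is finite so that $(v,c) \in \A$ is well-defined (which requires $\K' \neq \varnothing$, an implicit assumption of the statement). Beyond that, the argument is a routine application of finite-dimensional Hahn-Banach together with the precise value $\sup\{v \cdot w : \dNorm{1}{w} \leq \epsilon\} = \epsilon \Norm{v}_1$ dictated by the duality of the two norms.
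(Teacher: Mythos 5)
Your proof is correct, but it follows a genuinely different route from the paper's. The paper argues directly and constructively: it takes $y\in\K'$ to be a nearest point to $x$ in the $\dNorm{1}{\cdot}$ norm, builds by hand a vector $v$ supported on the coordinates where $\dNorm{1}{x-y}$ is attained, and verifies that $(v,\,v\cdot y)\in\A$ through a perturbation argument exploiting the minimality of the projection (in effect constructing the supporting hyperplane at $y$ explicitly); applying the hypothesis to this particular pair then yields $\dNorm{1}{x-y}\leq\epsilon$ at once. You instead prove the contrapositive: if the closed $\dNorm{1}{\cdot}$-ball $B_\epsilon(x)$ misses $\K'$, strict separation of a (nonempty) compact convex set from a closed convex set produces, after normalizing $\Norm{v}_1=1$ and setting $c=\inf_{y\in\K'}v\cdot y$, a pair $(v,c)\in\A$ violating the hypothesis, via the exact evaluation $\sup\{v\cdot w \mid \dNorm{1}{w}\leq\epsilon\}=\epsilon\Norm{v}_1$. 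Your argument is shorter and offloads the geometric content onto the strict separation theorem (a result the paper anyway invokes elsewhere from \cite{Lemarechal}), at the cost of being non-constructive, whereas the paper's proof is self-contained apart from the existence of the projection and additionally exhibits the nearest point $y$. The two caveats you flag are handled correctly: rescaling preserves both sides of the separation, and $c$ is finite precisely because $\K'\neq\varnothing$; note the paper's proof also implicitly requires $\K'\neq\varnothing$ (for the projection to exist), and in the lemma's applications $\K'=\K_{a,K}'$ is indeed nonempty, so nothing is lost.
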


\begin{proof}
Let $\smash{y\in \K'}$ denote a projection of $\smash{x\in \R^d}$ onto the set $\smash{\K'}$ with respect to the normalized-$\smash{\ell^{1,*}}$ norm. More precisely, let $\smash{y\in \K'}$ be any minimizer of the map $\smash{y'\mapsto \dNorm{1}{y'-x}}$ over points $\smash{y'\in \K'}$. The existence of such a projection is guaranteed by the fact that $\smash{\K'}$ is closed. If $y = x$, then the desired conclusion is immediate, so from now on we assume that $y \neq x$. Introduce the set
$$\I=\big\{k\leq d\mid d\abs{x_k-y_k}=\dNorm{1}{x-y}\big\}$$
of indices at which $\smash{\dNorm{1}{x-y}}$ is achieved, and define the vector $\smash{v\in \R^d}$ by
$$v_k=\frac{d}{\abs{\I}}\sgn(y_k-x_k)\1\{k\in \I\}.$$
We now show that $(v,c)\in \A$ for $c=v\cdot y$. By construction $\Norm{v}_1=1$, so suppose for the sake of contradiction that there exists $\smash{y'\in \K'}$ with $\smash{(y'-y)\cdot v=y'\cdot v-c<0}$. This means that
$$(y-y')\cdot v=\frac{d}{\abs{\I}}\sum_{k\in \I}(y_k-y'_k)\sgn(y_k-x_k)>0.$$
In particular, a coordinate $k^*\in \I$ at which the quantity $\smash{(y_k-y'_k)\sgn(y_k-x_k)}$ is maximized over $k\in \I$ must satisfy
$$(y_{k^*}-y'_{k^*})\sgn(y_{k^*}-x_{k^*})>0.$$
At this point, fix $t\in (0,1)$ small enough so that $\sgn(y_k-x_k+t(y_k'-y_k))=\sgn(y_k-x_k)$ for every $k\in \I$. For such a value of $t>0$,
$$\dNorm{1}{y-x+t(y'-y)}=d(y_{k^*}-x_{k^*}+t(y'_{k^*}-y_{k^*}))\sgn(y_{k^*}-x_{k^*})<\dNorm{1}{x-y}.$$
Since the point $y''=y+t(y'-y)$ is a convex combination of $y,y'\in \K'$, it must lie in the convex set $\K'$. This contradicts the fact that $y$ minimizes the map $\smash{y''\mapsto \dNorm{1}{y''-x}}$ over points $\smash{y''\in \K'}$, and shows that $(v,c)\in \A$. It follows that
$$\epsilon \geq c-x\cdot v=v\cdot (y-x)=\dNorm{1}{x-y}.$$
Setting $z=x-y$ completes the proof.
\end{proof}

We are now in a position to prove \Cref{SBME infinite HJ eqn WP}.

\begin{proof}[Proof of \Cref{SBME infinite HJ eqn WP}.]
To alleviate notation, until otherwise stated, we fix $\smash{R>\norm{\psi}_{\mathrm{Lip},\TV}}$ and keep all dependencies on $R$ implicit. The existence of the limit \eqref{eqn: SBME limit of projections} will be established by showing that the sequence $\smash{(f^{(K)}(t,x^{(K)}(\mu)))_K}$
is Cauchy. With this in mind, fix $K'>K$ and introduce the function
\begin{equation}
f^{(K,K')}(t,x)=f^{(K)}\big(t,P^{(K,K')}x\big)
\end{equation}
defined on 
$\smash{\Rp\times \Rp^{\D_{K'}}}$. Since $x^{(K)}(\mu)=P^{(K,K')}x^{(K')}(\mu)$, the Cauchy condition may be expressed in terms of this function as
\begin{equation}\label{eqn: SBME projected solution Cauchy}
\big\lvert f^{(K')}\big(t,x^{(K')}(\mu)\big)-f^{(K)}\big(t,x^{(K)}(\mu)\big)\big\rvert=\big\lvert f^{(K')}\big(t,x^{(K')}(\mu)\big)-f^{(K,K')}\big(t,x^{(K')}(\mu)\big)\big\rvert.
\end{equation}
To control the right-hand side of this expression, we will first show that $f^{(K,K')}$ is an approximate viscosity solution to the Hamilton-Jacobi equation \eqref{eqn: SBME HJ eqn on Rpp} satisfied by $f^{(K')}$, and then we will leverage the comparison principle in \Cref{SBME comparison principle on Rpp}.\\
\noindent \step{1: $f^{(K,K')}$ is an approximate viscosity solution.}\\
Consider a function $\phi_{K'}\in C^\infty\big((0,\infty)\times \Rpp^{\D_{K'}}\big)$ with the property that $\smash{f^{(K,K')}-\phi_{K'}}$ achieves a local maximum at $\smash{(t^*,x^*)\in (0,\infty)\times \Rpp^{\D_{K'}}}$. To be more precise, suppose that 
$$\sup_{B_{K'}(r)}\big(f^{(K,K')}-\phi_{K'}\big)=\big(f^{(K,K')}-\phi_{K'}\big)(t^*,x^*),$$
where 
$$B_{K'}(r)=\Big\{(t,x)\in (0,\infty)\times \Rp^{\D_{K'}}\mid \abs{t-t^*}+\Norm{x-x^*}_1\leq r\Big\}$$
is the ball of radius $r>0$ centered at $(t^*,x^*)$. Decreasing $r>0$ if necessary, assume without loss of generality that
$$B_{K'}(r)\subset (0,\infty)\times \Rpp^{\D_{K'}}.$$
Assume also that $\smash{\phi_{K'}\in C^\infty\big((0,\infty)\times \R^{\D_{K'}}\big)}$; this can be ensured by replacing $\smash{\phi_{K'}}$ with $\smash{\eta\phi_{K'}}$ for some $\smash{\eta\in C^\infty\big(\R^{\D_{K'}}\big)}$ which is identically one on $\smash{B_{K'}(r)}$ and vanishes outside $\smash{\Rp^{\D_{K'}}}$. With these simplifications at hand, introduce the smooth function
$$\phi_K(t,y)=\phi_{K'}\big(t,x^*+L^{(K,K')}y-L^{(K,K')}P^{(K,K')}x^*\big)$$
defined on $\smash{(0,\infty)\times \Rpp^{\D_K}}$.
We will now show that the function $\phi_K$ admits a local maximum at $\smash{(t^*,P^{(K,K')}x^*)}$. It will be convenient to notice that for any $\smash{y\in \Rp^{\D_{K}}}$
\begin{equation}\label{eqn: SBME projection of lifted point}
P^{(K,K')}\big(x^*+L^{(K,K')}y-L^{(K,K')}P^{(K,K')}x^*\big)=P^{(K,K')}x^*+y-P^{(K,K')}x^*=y\in \Rpp^{\D_{K}}
\end{equation}
by \eqref{eqn: SBME projection is surjective}.
To simplify notation, let $\smash{y^*=P^{(K,K')}x^*\in \Rp^{\D_K}}$ and introduce the ball
$$B_K(r)=\big\{(t,y)\in (0,\infty)\times \Rpp^{\D_K}\mid \abs{t-t^*}+\Norm{y-y^*}_1\leq r\big\}\subset (0,\infty)\times \Rpp^{\D_{K}}$$
of radius $r>0$ centered at $(t^*,y^*)$. Given $(t,y)\in B_K(r)$, let $z_y=x^*+L^{(K,K')}y-L^{(K,K')}P^{(K,K')}x^*$ in such a way that by \eqref{eqn: SBME projection of lifted point},
$$f^{(K)}(t,y)-\phi_K(s,y)=f^{(K,K')}(t,z_y)-\phi_{K'}(t,z_y).$$
Observe that
$$\abs{t-t^*}+\Norm{z_y-x^*}_1=\abs{t-t^*}+\Norm{L^{(K,K')}y-L^{(K,K')}y^*}_1=\abs{t-t^*}+\Norm{y-y^*}_1\leq r$$
so $(t,z_y)\in B_{K'}(r)$. It follows that
$$\sup_{B_K(r)}\big(f^{(K)}-\phi_K\big)\leq \sup_{B_{K'}(r)}\big(f^{(K,K')}-\phi_{K'}\big)=\big(f^{(K,K')}-\phi_{K'}\big)(t^*,x^*)=\big(f^{(K)}-\phi_K\big)(t^*,y^*)$$
which means that $\phi_K$ admits a local maximum at $\smash{(t^*,y^*)\in (0,\infty)\times \Rpp^{\D_K}}$. Since $\smash{f^{(K)}}$ is a viscosity subsolution to the Hamilton-Jacobi equation \eqref{eqn: SBME HJ eqn on Rpp} with $\nabla f^{(K)}\in \K_{a,K}'$, \Cref{SBME gradient of test function in cone} implies that
$$\nabla \phi_K(t^*,y^*)\in \K_{a,K}' \quad \text{and} \quad \big(\partial_t \phi_K-\H_K\big(\nabla \phi_K\big)\big)(t^*,y^*)\leq 0.$$
To write this expression in terms of the original test function $\phi_{K'}$, notice that
$$\partial_t \phi_K(t^*,y^*)=\partial_t\phi_{K'}(t^*,x^*) \quad \text{ and } \quad \nabla \phi_K(t^*,y^*)=\frac{\abs{\D_{K'}}}{\abs{\D_K}}P^{(K,K')}\nabla \phi_{K'}(t^*,x^*).$$
This means that 
$$\frac{\abs{\D_{K'}}}{\abs{\D_K}}P^{(K,K')}\nabla \phi_{K'}(t^*,x^*)\in \K_{a,K}'\quad \text{and} \quad \partial_t\phi_{K'}(t^*,x^*)-\H_K\bigg(\frac{\abs{\D_{K'}}}{\abs{\D_K}}P^{(K,K')}\nabla \phi_{K'}(t^*,x^*)\bigg)\leq 0.$$
The first of these conditions gives vectors $\smash{u^{(K)}\in \Rp^{\D_K}}$ and $\smash{w^{(K)}\in \R^{\D_K}}$ with 
\begin{equation}\label{eqn: projected solution subsolution derivative}
\frac{\abs{\D_{K'}}}{\abs{\D_K}}P^{(K,K')}\nabla \phi_{K'}(t^*,x^*)=G^{(K)}u^{(K)}+w^{(K)}, \quad \Norm{u^{(K)}}_1\leq a \quad \text{and} \quad \dNorm{1}{w^{(K)}}\leq \frac{1}{2^{K/2}}.
\end{equation}
Observe that
$$\dNorm{1}{G^{(K)}u^{(K)}}\leq \frac{\abs{\D_{K'}}}{\abs{\D_K}}\dNorm{1}{P^{(K,K')}\nabla \phi_{K'}(t^*,x^*)}+\frac{1}{2^{K/2}}\leq \dNorm{1}{\nabla \phi_{K'}(t^*,x^*)}+\frac{1}{2^{K/2}}.$$
Since $f^{(K,K')}-\phi_{K'}$ achieves a local maximum at $(t^*,x^*)$, \Cref{SBME gradient bound on test function} and \eqref{eqn: SBME projected solution Lipschitz} imply that 
$$\dNorm{1}{\nabla \phi_{K'}(t^*,x^*)}\leq \norm{\psi}_{\mathrm{Lip},\TV}.$$
Recalling that $\smash{R>\norm{\psi}_{\mathrm{Lip},\TV}}$ and taking $K$ large enough ensures that $G^{(K)}u^{(K)}\in \CC_K\cap B_R$. It follows by the Lipschitz continuity of $\H_K$ established in \Cref{SBME extending the non-linearity} that
\begin{align*}
\partial_t\phi_{K'}(t^*,x^*)-\C_K\big(G^{(K)}u^{(K)}\big)&\leq \partial_t\phi_{K'}(t^*,x^*)-\H_K\bigg(\frac{\abs{\D_{K'}}}{\abs{\D_K}}P^{(K,K')}\nabla \phi_{K'}(t^*,x^*)\bigg)+\frac{8RM}{2^{K/2}m^2}\\
&\leq \frac{8RM}{2^{K/2}m^2}.
\end{align*}
At this point, introduce the vector $u^{(K')}\in \Rp^{\D_{K'}}$ defined by
$$u^{(K')}_{k'}=\frac{\abs{\D_{K'}}}{\abs{\D_K}}u_{k'}^{(K)}\1\{k'\in \D_K\}$$
in such a way that
$$\C_{K'}\big(G^{(K')}u^{(K')}\big)=\frac{1}{\abs{\D_K}^2}\sum_{k,k'\in \D_K}g(kk')u^{(K)}_{k}u^{(K)}_{k'}=\C_K(G^{(K)}u^{(K)}\big),$$
and therefore,
\begin{equation}\label{eqn: SBME convergence subsolution for CK'}
\partial_t\phi_{K'}(t^*,x^*)-\C_{K'}\big(G^{(K')}u^{(K')}\big)\leq \frac{8RM}{2^{K/2}m^2}.
\end{equation}
We now show that, up to an error vanishing with $K$, the term $\smash{G^{(K')}u^{(K')}}$ in this expression may be replaced by $\smash{\nabla \phi_{K'}(t^*,x^*)}$. This is where \Cref{SBME projecting cones} will play its part. Recall the representation \eqref{eqn: SBME expanded cone as hyperplanes} with $\smash{\norm{\cdot}=\Norm{\cdot}_1}$ of $\smash{\K_{a,K'}'}$ as the intersection of the closed and affine half-spaces which contain it, and fix $\smash{(v,c)\in \A}$ with $\Norm{v}_1=1$. The characterization of $\smash{\nabla f^{(K)}\in \K_{a,K}'}$ given in \Cref{SBME gradient in convex set} and \Cref{SBME projecting cones} imply that for every $\epsilon>0$ small enough,
\begin{align*}
\phi_{K'}(t^*,x^*+\epsilon v)-\phi_{K'}(t^*,x^*)
&\geq f^{(K)}\big(t^*,P^{(K,K')}x^*+\epsilon P^{(K,K')}v\big)-f^{(K)}\big(t^*,P^{(K,K')}x^*\big)\\
&\geq \epsilon\bigg(c-\frac{2}{2^{K/2}}\bigg)
\end{align*}
Dividing by $\epsilon$ and letting $\epsilon$ tend to zero reveals that $\smash{\nabla \phi_{K'}(t^*,x^*)\cdot v\geq c-\frac{2}{2^{K/2}}}$. Invoking \Cref{SBME distance to K' through hyperplanes} gives $\smash{\alpha^{(K')}\in \Rp^{\D_{K'}}}$ and $\smash{\beta^{(K')}\in \R^{\D_{K'}}}$ with
\begin{equation}\label{eqn: lifted solution subsolution derivative}
\nabla\phi_{K'}(t^*,x^*)=G^{(K')}\alpha^{(K')}+\beta^{(K')},\quad \Norm{\alpha^{(K')}}_1\leq a \quad \text{and} \quad \dNorm{1}{\beta^{(K')}}\leq \frac{2}{2^{K/2}}.
\end{equation}
At this point, fix $k\in \D_{K}$ and $k'\in [k,k+2^{-K})$. The mean value theorem implies that
\begin{align*}
\abs{\D_{K'}}\big\lvert \big(G^{(K')}u^{(K')}\big)_{k'}-\partial_{x_{k'}} \phi_{K'}(t^*,x^*)\big\rvert
&=\bigg\lvert \frac{1}{\abs{\D_K}}\sum_{k''\in \D_K}g(k'k'')u^{(K)}_{k''}-\abs{\D_{K'}}\partial_{x_{k'}} \phi_{K'}(t^*,x^*)\bigg\rvert\\
&\le\big\lvert \abs{\D_K}\big(G^{(K)}u^{(K)}\big)_{k}-\abs{\D_{K'}}\partial_{x_{k'}} \phi_{K'}(t^*,x^*)\big\rvert\\
&\qquad\qquad\qquad\qquad\qquad\qquad\quad+\frac{\norm{g'}_\infty\Norm{u^{(K)}}_1}{2^K}.
\end{align*}
Remembering \eqref{eqn: projected solution subsolution derivative} and \eqref{eqn: lifted solution subsolution derivative}, noticing that $\abs{\D_K}=2^{K+1}$ and using the mean value theorem once again shows that
\begin{align*}
\abs{\D_K}\big(G^{(K)}u^{(K)}\big)_{k}&=\abs{\D_K}\sum_{\ell=0}^{2^{K'-K}-1}\partial_{x_{k+\frac{\ell}{2^{K'}}}}\phi_{K'}(t^*,x^*)-\abs{\D_K}w^{(K)}_k\\
&=\frac{\abs{\D_K}}{\abs{\D_{K'}}^2}\sum_{\ell=0}^{2^{K'-K}-1}\sum_{k''\in \D_{K'}}g\Big(k+\frac{\ell}{2^{K'}}\cdot k''\Big)\alpha_{k''}^{(K')}+\abs{\D_K}\sum_{\ell=0}^{2^{K'-K}-1}\beta^{(K')}_{k+\frac{\ell}{2^{K'}}}-\abs{\D_K}w_k^{(K)}\\
&=\frac{\abs{\D_K}}{\abs{\D_{K'}}^2}\sum_{\ell=0}^{2^{K'-K}-1}\sum_{k''\in \D_{K'}}g(k'k'')\alpha_{k''}^{(K')}+\BigO_1\bigg(\frac{\norm{g'}_\infty\Norm{\alpha^{(K')}}_1}{2^K}+\frac{3}{2^{K/2}}\bigg)\\
&=\abs{\D_{K'}}\big(G^{(K')}\alpha^{(K')}\big)_{k'}+\BigO_1\bigg(\frac{4}{2^{K/2}}\bigg)=\abs{\D_{K'}}\partial_{x_{k'}}\phi_{K'}(t^*,x^*)+\BigO_1\bigg(\frac{5}{2^{K/2}}\bigg),
\end{align*}
where we have written $X=Y+\BigO_1(Z)$ to mean that $\abs{X-Y}\leq Z$. In the third equality we used that $\smash{\dNorm{1}{\beta^{(K')}}+\dNorm{1}{w^{(K)}}\leq 3\cdot 2^{-K/2}}$, and in the fourth equality we used that $\smash{\Norm{\alpha^{(K')}}_1\leq a}$ and increased $K$ if necessary. It follows that
$$\dNorm{1}{G^{(K')}u^{(K')}-\nabla \phi_{K'}(t^*,x^*)}\leq \frac{5}{2^{K/2}}+\frac{a\norm{g'}_\infty}{2^K}\leq \frac{6}{2^{K/2}},$$
where we have used that $\Norm{u^{(K)}}_1\leq a$ and increased $K$ if necessary. Combining this bound with the Lipschitz continuity of $\H_{K'}$ established in \Cref{SBME extending the non-linearity} and \eqref{eqn: SBME convergence subsolution for CK'} reveals that
\begin{equation}
\partial_t\phi_{K'}(t^*,x^*)-\H_{K'}\big(\nabla \phi_{K'}(t^*,x^*)\big)\leq \EE_K
\end{equation}
for the error term
$$\EE_K=\frac{56RM}{2^{K/2}m^2}.$$
In particular, the function $\smash{(t,x)\mapsto f^{(K,K')}(t,x)-\EE_K t}$ is a viscosity subsolution to the Hamilton-Jacobi equation \eqref{eqn: SBME HJ eqn on Rpp} satisfied by $\smash{f^{(K')}}$. An identical argument shows that $\smash{(t,x)\mapsto f^{(K,K')}(t,x)+\EE_K t}$ is a viscosity supersolution to the Hamilton-Jacobi equation \eqref{eqn: SBME HJ eqn on Rpp} satisfied by $\smash{f^{(K')}}$.\\
\step{2: comparison principle.}\\
Using \eqref{eqn: SBME projected solution Lipschitz} and \eqref{eqn: SBME projected non-linearity Lipschitz}, it is readily verified that $\smash{f^{(K,K')}}$ and $\smash{f^{(K')}}$ are uniformly Lipschitz continuous in the $x$ variable relative to the normalized-$\smash{\ell^1}$ norm with Lipschitz constant at most $\smash{L=\norm{\psi}_{\mathrm{Lip},\TV}}$. Indeed, for any $t>0$ and all $\smash{x,x'\in \Rp^{\D_{K'}}}$,
$$\abs{f^{(K,K')}(t,x)-f^{(K,K')}(t,x')}\leq \norm{\psi}_{\mathrm{Lip},\TV}\Norm{P^{(K,K')}x-P^{(K,K')}x'}_1\leq \norm{\psi}_{\mathrm{Lip},\TV}\Norm{x-x'}_1.$$
If $V=\Norm{\H_{K}}_{\mathrm{Lip},1,*}$, then the comparison principle in \Cref{SBME comparison principle on Rpp} implies that for any $R'\in \R$, the map
\begin{equation}\label{eqn: SBME convergence of projections comparison}
(t',x')\mapsto f^{(K,K')}(t',x')-f^{(K')}(t',x')-(2L+1)\big(\Norm{x'}_1+Vt'-R'\big)_+-\EE_Kt'
\end{equation}
achieves its supremum on $\{0\}\times \Rp^{\D_{K'}}$. We now choose $\smash{R'=\Norm{x^{(K')}(\mu)}_1+Vt}$ and distinguish two cases. On the one hand, if $t'=0$ and $\smash{\Norm{x'}_1\geq (2L+1)R'}$, then \eqref{eqn: SBME convergence of projections comparison} is bounded by
\begin{equation}\label{eqn: SBME convergence of projections comparison 1}
2L\Norm{x'}_1-(2L+1)(\Norm{x'}_1-R')=R'-\Norm{x'}_1\leq 0,
\end{equation}
where we have used the fact that $f^{(K,K')}(0,0)=f^{(K')}(0,0)$. On the other hand, if $t'=0$ and $\Norm{x'}_1\leq (2L+1)R'$, then \eqref{SBME H W} implies that \eqref{eqn: SBME convergence of projections comparison} is bounded by
\begin{equation}\label{eqn: SBME convergence of projections comparison 2}
\big\lvert \psi^{(K')}(x')-\psi^{(K)}(P^{(K,K')}x')\big\rvert\leq \norm{\psi}_{\mathrm{Lip},W}\Norm{x'}_1W\big(\bar \mu^{(K')}_{x'}, \bar \mu^{(K)}_{P^{(K,K')}x'}\big).
\end{equation}
To estimate this Wasserstein distance, fix a Lipschitz function $h:[-1,1]\to \R$ with $\norm{h}_{\text{Lip}}\leq 1$ and observe that
\begin{align*}
\Big\lvert \int_{-1}^1 h(y) \ud \big(\mu^{(K')}_{x'}-\mu^{(K)}_{P^{(K,K')}x'}\big)(y)\Big\rvert&\leq \frac{1}{\abs{\D_{K'}}}\sum_{k\in\D_K}\sum_{\ell=0}^{2^{K'-K-1}}x'_{k+\frac{\ell}{2^{K'}}}\Big\lvert h\Big(k+\frac{\ell}{2^{K'}}\Big)-h(k)\Big\rvert\\
&\leq \frac{1}{\abs{\D_{K'}}}\sum_{k\in\D_K}\sum_{\ell=0}^{2^{K'-K-1}}x'_{k+\frac{\ell}{2^{K'}}}\frac{\ell}{2^{K'}}\leq \frac{\Norm{x'}_1}{2^K}.
\end{align*}
Taking the supremum over all such $h$ and recalling \eqref{eqn: SBME convergence of projections comparison 2} shows that \eqref{eqn: SBME convergence of projections comparison} is bounded by
\begin{equation}\label{eqn: SBME convergence of projections comparison bound}
\frac{\norm{\psi}_{\mathrm{Lip},W}(2L+1)R'}{2^K}
\end{equation}
whenever $t'=0$ and $\Norm{x'}_1\leq (2L+1)R'$. Combining this with \eqref{eqn: SBME convergence of projections comparison 1} reveals that the map \eqref{eqn: SBME convergence of projections comparison} is uniformly bounded by \eqref{eqn: SBME convergence of projections comparison bound}.  Choosing $t'=t$ and $x'=x^{(K')}(\mu)$, and recalling the choice of $R'$ yields
$$f^{(K,K')}\big(t,x^{(K')}(\mu)\big)-f^{(K')}\big(t,x^{(K')}(\mu)\big)\leq \frac{\norm{\psi}_{\mathrm{Lip},W}(2L+1)}{2^K}\big(\Norm{x^{(K')}(\mu)}_1+Vt\big)+\EE_Kt.$$
Together with \eqref{eqn: SBME projected solution Cauchy} and an identical argument with the roles of $f^{(K,K')}$ and $f^{(K')}$ reversed, this implies that
$$\big\lvert f^{(K')}\big(t,x^{(K')}(\mu)\big)-f^{(K)}\big(t,x^{(K)}(\mu)\big)\big\rvert\leq \frac{\norm{\psi}_{\mathrm{Lip},W}(2L+1)}{2^K}\big(\mu[-1,1]+Vt\big)+\EE_Kt.$$
Since $V=\Norm{\H_{K}}_{\mathrm{Lip},1,*}$ is independent of $K$ by \Cref{SBME extending the non-linearity} and $\EE_K$ tends to zero as $K$ tends to infinity, the sequence $\smash{(f^{(K)}(t,x^{(K)}(\mu)))_K}$ is Cauchy. This establishes the existence of the limit \eqref{eqn: SBME limit of projections} for each fixed $\smash{R>\norm{\psi}_{\mathrm{Lip},\TV}}$. All that remains is to show that this limit is independent of $R$.\\
\step{3: independence on $R$.}\\
To show that the limit \eqref{eqn: SBME limit of projections} is independent of $R$, fix $\smash{R'>R>\norm{\psi}_{\mathrm{Lip},\TV}}$ as well as $K\geq 1$ large enough. The idea will be to show that, up to an error vanishing with $K$, the function $\smash{f_R^{(K)}}$ satisfies the Hamilton-Jacobi equation defining $\smash{f_{R'}^{(K)}}$. The equality of the limit \eqref{eqn: SBME limit of projections} associated with $R$ and $R'$ will then follow from the comparison principle in \Cref{SBME comparison principle on Rpp}. Consider $\smash{\phi\in C^\infty\big((0,\infty)\times \Rpp^{\D_K}\big)}$ with the property that $\smash{f_R^{(K)}-\phi}$ achieves a local maximum at the point $\smash{(t^*,x^*)\in (0,\infty)\times \Rpp^{\D_K}}$. Since $\smash{f_R^{(K)}}$ is a viscosity subsolution to the Hamilton-Jacobi equation \eqref{eqn: SBME HJ eqn on Rpp} associated with the non-linearity $\H_{K,R}$,
$$\big(\partial_t \phi-\H_{K,R}(\nabla \phi)\big)(t^*,x^*)\leq 0.$$
The fact that $\smash{f_R^{(K)}}$ has its gradient in the set $\smash{\K_{a,K}'}$ together with \eqref{eqn: SBME projected solution Lipschitz}, \Cref{SBME gradient bound on test function} and \Cref{SBME gradient of test function in cone} implies that
$$\nabla \phi(t^*,x^*)\in \K_{a,K}' \quad \text{and} \quad \dNorm{1}{\nabla \phi(t^*,x^*)}\leq \norm{\psi}_{\mathrm{Lip},\TV}.$$
It is therefore possible to find $\smash{u\in \Rp^{\D_K}}$ and $\smash{w\in \R^{\D_K}}$ with
$$\nabla \phi(t^*,x^*)=G^{(K)}u+w, \quad \Norm{u}_1\leq a \quad \text{and} \quad \dNorm{1}{w}\leq \frac{1}{2^{K/2}}.$$
Observe that
$$\dNorm{1}{G^{(K)}u}\leq \dNorm{1}{\nabla \phi(t^*,x^*)}+\dNorm{1}{w}\leq \norm{\psi}_{\mathrm{Lip},\TV}+\frac{1}{2^{K/2}},$$
so increasing $K$ if necessary, it is possible to ensure that $\smash{G^{(K)}u\in \CC_K\cap B_R\subset \CC_K\cap B_{R'}}$. It follows by the Lipschitz continuity of $\H_{K,R}$ established in \Cref{SBME extending the non-linearity} that
\begin{align*}
\big(\partial_t\phi-\H_{K,R'}(\nabla \phi)\big)(t^*,x^*)&\leq \partial_t\phi(t^*,x^*)-\C_K\big(G^{(K)}u\big)+\frac{8R'M}{2^{K/2}m^2}\\
&\leq \big(\partial_t\phi-\H_{K,R}(\nabla \phi)\big)(t^*,x^*)+\frac{8(R'+R)M}{2^{K/2}m^2}\leq \EE_K
\end{align*}
for the error term
$$\EE_K=\frac{8(R'+R)M}{2^{K/2}m^2}.$$
In particular, the function $\smash{(t,x)\mapsto f_R^{K}(t,x)-\EE_K t}$ is a viscosity subsolution to the Hamilton-Jacobi equation \eqref{eqn: SBME HJ eqn on Rpp} defining $\smash{f_{R'}^{K}}$. An identical argument shows that $\smash{(t,x)\mapsto f_R^{K}(t,x)+\EE_K t}$ is a viscosity supersolution to the Hamilton-Jacobi equation \eqref{eqn: SBME HJ eqn on Rpp} defining $\smash{f_{R'}^{K}}$. It follows by the comparison principle in \Cref{SBME comparison principle on Rpp} that for every $\mu \in \M_+$ and $t\geq 0$,
$$\big\lvert f_R^{(K)}\big(t,x^{(K)}(\mu)\big)-f_{R'}^{(K)}\big(t,x^{(K)}(\mu)\big)\big\rvert\leq \EE_Kt.$$
Letting $K$ tend to infinity completes the proof.
\end{proof}

\section{Approximate Hopf-Lax formula in finite dimensions}\label{SBME section Hopf-Lax}

In this section, we revisit the Hamilton-Jacobi equation studied in \Cref{SBME section HJ on cone}, and under the additional assumption that the matrix $\smash{G\in \R^{d\times d}}$ in \eqref{eqn: SBME min and max of G} is non-negative definite, we establish an approximate Hopf-Lax formula for the unique solution to the Hamilton-Jacobi equation \eqref{eqn: SBME HJ eqn on Rpp}. By an approximate Hopf-Lax formula we mean that the error between the Hopf-Lax function we will define and the solution to the Hamilton-Jacobi equation constructed in \Cref{SBME WP of HJ eqn on Rpp} tends to zero when the dimension $d$ tends to infinity. This will be used in the next section to establish an exact Hopf-Lax formula for the solution to the infinite-dimensional Hamilton-Jacobi equation \eqref{eqn: SBME infinite HJ eqn}. It will be convenient to introduce the bilinear form
\begin{equation}
(x,y)_G=Gx\cdot y
\end{equation}
associated with the non-negative definite matrix $G$, as well as its induced semi-norm
\begin{equation}
\norm{x}_G=\sqrt{(x,x)_G}.
\end{equation}
In this notation, the non-linearity \eqref{eqn: SBME non-linearity on cone} may be written as
\begin{equation}\label{eqn: SBME non-linearity on cone convex}
\C(Gx)=\frac{1}{2}Gx\cdot x=\frac{1}{2}\norm{x}_G^2.
\end{equation}
In particular, the non-linearity \eqref{eqn: SBME non-linearity on cone} is a convex function. This convexity property will allow us to establish an approximate Hopf-Lax formula for the Hamilton-Jacobi equation \eqref{eqn: SBME HJ eqn on Rpp}. We define the Hopf-Lax function $\smash{\fHL:[0,\infty)\times \Rp^d\to \R}$ by
\begin{equation}\label{eqn: SBME Hopf-Lax formula}
\fHL(t,x)=\sup_{y\in \Rp^d}\bigg\{\psi(x+y)-\frac{\|y\|_G^2}{2t}\bigg\}.
\end{equation}
The main result of this section is the following.

\begin{proposition}[Hopf-Lax]\label{SBME Hopf-Lax}
Fix an initial condition $\smash{\psi:\Rp^d\to\R}$ satisfying \eqref{eqn: SBME initial condition Lipschitz assumption} with its gradient in the set $\smash{\K_a'}$, let $\smash{R>\Norm{\psi}_{\mathrm{Lip},1}}$, and denote by $\smash{f:[0,\infty)\times \Rp^d\to \R}$ the unique solution to the Hamilton-Jacobi equation \eqref{eqn: SBME HJ eqn on Rpp} constructed in \Cref{SBME WP of HJ eqn on Rpp}. If $\smash{G\in \R^{d\times d}}$ is non-negative definite, then for all $\smash{(t,x)\in [0,\infty)\times \Rp^d}$,
\begin{equation}\label{eqn: SBME Hopf-Lax}
\abs{ f(t,x)-\fHL(t,x)} \leq \frac{t}{\sqrt{d}}\bigg(R+a+\frac{8RM}{m^2}\bigg).
\end{equation}
\end{proposition}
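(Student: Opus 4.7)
The plan is to show that $\fHL$ is an approximate viscosity solution of the Hamilton-Jacobi equation \eqref{eqn: SBME HJ eqn on Rpp} with error of order $d^{-1/2}$, and then to invoke the comparison principle of \Cref{SBME comparison principle on Rpp}. Concretely, I will produce a constant $\varepsilon_d$ of order $d^{-1/2}(R + a + 8RM/m^2)$ such that $\fHL - \varepsilon_d t$ is a viscosity subsolution and $\fHL + \varepsilon_d t$ is a viscosity supersolution of \eqref{eqn: SBME HJ eqn on Rpp}. Since both $f$ and $\fHL$ share the initial condition $\psi$, comparison will then yield the quantitative bound \eqref{eqn: SBME Hopf-Lax}.

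First I would dispatch the preliminary regularity of $\fHL$. The lower bound $G_{kk'}\geq m/d^2$ implies $\norm{y}_G^2 \geq m \Norm{y}_1^2$ on $\Rp^d$, which combined with the Lipschitz continuity of $\psi$ ensures the supremum in \eqref{eqn: SBME Hopf-Lax formula} is attained at some $y^* \in \Rp^d$ satisfying $\Norm{y^*}_1 = \mathcal{O}(t)$. Standard arguments then give $\fHL \in \mathfrak{L}_{\mathrm{unif}}$, $\fHL(0,\cdot) = \psi$, $\sup_{t}\Norm{\fHL(t,\cdot)}_{\mathrm{Lip},1} \leq \Norm{\psi}_{\mathrm{Lip},1}$, and the almost-everywhere inclusion $\nabla\fHL \in \K_a'$ inherited from $\psi$ via the envelope identity $\nabla_x\fHL(t,x)=\nabla\psi(x+y^*)$ and the closedness and convexity of $\K_a'$. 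I would also establish the dynamic programming principle
\begin{equation*}
\fHL(t+s, x) = \sup_{z \in \Rp^d}\Big\{\fHL(t, x+z) - \norm{z}_G^2/(2s)\Big\},
\end{equation*}
which follows from the convexity of $z \mapsto \norm{z}_G^2$.

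The heart of the proof adapts the arguments of Section~6 of \cite{HB_cone}. Consider a smooth test function $\phi$ such that $\fHL - \phi$ has a local maximum at $(t^*, x^*) \in (0,\infty) \times \Rpp^d$, and let $y^* \in \Rp^d$ be a maximizer in $\fHL(t^*,x^*)$. Then $(t,x) \mapsto \psi(x + y^*) - \norm{y^*}_G^2/(2t) - \phi(t,x)$ also has a local maximum at $(t^*,x^*)$, which yields the identity $\partial_t\phi(t^*,x^*) = \norm{y^*}_G^2/(2t^{*2})$ and forces $\nabla\phi(t^*,x^*)$ to lie in the Clarke superdifferential of $\psi$ at $x^*+y^*$, which is itself contained in $\K_a'$ by closedness and convexity. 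Writing $\nabla\phi(t^*,x^*) = Gu + w$ with $u\in \Rp^d$, $\Norm{u}_1 \leq a$ and $\dNorm{1}{w} \leq d^{-1/2}$, and using the KKT identity $y^* \cdot (Gy^*/t^*) = y^* \cdot \nabla\psi(x^*+y^*) = y^* \cdot \nabla\phi(t^*,x^*)$ (which holds even with boundary complementary slackness because $y^* \geq 0$), bilinearity of $(\cdot,\cdot)_G$ gives
\begin{equation*}
\norm{y^*/t^*}_G^2 = \norm{u}_G^2 + u \cdot w + (y^*/t^*) \cdot w.
\end{equation*}
Combined with the H\"older-type estimates $|u\cdot w| \leq a d^{-1/2}$ and $|(y^*/t^*)\cdot w| \leq R d^{-1/2}/m$ (the latter from $\norm{y^*/t^*}_G^2 \leq \Norm{y^*/t^*}_1\,\dNorm{1}{\nabla\phi} \leq R\Norm{y^*/t^*}_1$ together with $m\Norm{y^*/t^*}_1^2 \leq \norm{y^*/t^*}_G^2$), this yields $\partial_t\phi(t^*,x^*) = \C(Gu) + \mathcal{O}(d^{-1/2}(R+a))$. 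Since $\dNorm{1}{Gu} \leq \dNorm{1}{\nabla\phi} + \dNorm{1}{w} < R$ for $d$ sufficiently large, the Lipschitz estimate \eqref{eqn: SBME extending the non-linearity} combined with the identity $\H_R(Gu) = \C(Gu)$ on $\CC \cap B_R$ gives $|\H_R(\nabla\phi(t^*,x^*)) - \C(Gu)| \leq 8RM/(m^2\sqrt d)$. Adding these contributions produces the desired subsolution inequality, and a symmetric argument at a local minimum gives the supersolution inequality; the comparison principle of \Cref{SBME comparison principle on Rpp} then closes the argument.

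The main obstacle is handling the boundary of $\Rp^d$: the maximizer $y^*$ may lie on $\partial\Rp^d$, so the envelope theorem only gives the KKT complementarity $\nabla\psi(x^*+y^*) \leq Gy^*/t^*$ with equality on the support of $y^*$, and combined with the fact that $\psi$ is merely Lipschitz this forces us to work with Clarke superdifferentials in place of classical gradients and to invoke the closedness and convexity of $\K_a'$ to preserve the decomposition $\nabla\phi = Gu + w$. A secondary difficulty is avoiding any appeal to $G^{-1}$ (whose existence and boundedness are not guaranteed) by phrasing every estimate directly in terms of the pairing $(\cdot,\cdot)_G$ and the structural decomposition provided by $\K_a'$.
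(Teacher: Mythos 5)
Your global architecture---show that $\fHL\pm\varepsilon_d t$ are approximate viscosity super/subsolutions of \eqref{eqn: SBME HJ eqn on Rpp} with $\varepsilon_d=\BigO(d^{-1/2})$ and conclude with the comparison principle of \Cref{SBME comparison principle on Rpp}---is exactly the paper's, and your preliminaries (attainment of the supremum, the semigroup property, $\fHL\in\mathfrak{L}_{\mathrm{unif}}$, gradient in $\K_a'$) match its lemmas. The gap is in the heart of the subsolution step. You pass through the ``KKT identity'' $y^*\cdot(Gy^*/t^*)=y^*\cdot\nabla\psi(x^*+y^*)=y^*\cdot\nabla\phi(t^*,x^*)$, but $\psi$ is only Lipschitz: the first equality produces, at best, some Clarke supergradient $\xi$ of $\psi$ at $x^*+y^*$ coming from the necessary conditions of the constrained maximization in $y$, while the second equality requires that this same $\xi$ agree (at least when paired with $y^*$) with the gradient $\nabla\phi(t^*,x^*)$ coming from the touching condition in $x$. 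These are two different variational conditions, and nothing in your argument ties the multiplier of one to the test-function gradient of the other; appealing to Clarke superdifferentials does not supply that link. In addition, the exact expansion $\norm{y^*/t^*}_G^2=\norm{u}_G^2+u\cdot w+(y^*/t^*)\cdot w$ would need the full equality $Gy^*/t^*=\nabla\phi(t^*,x^*)$, which fails when $y^*$ lies on $\partial\Rp^d$ (complementary slackness only controls the pairing with $y^*$). Finally, the supersolution half is not ``symmetric'': at a local minimum of $\fHL-\phi$ the inequality $\psi(x+y^*)-\norm{y^*}_G^2/(2t)\leq \fHL(t,x)$ points the wrong way, so the one-parameter family through the maximizer carries no first-order information there; that direction must instead be run through the dynamic programming principle in the direction $u$ of the decomposition $\nabla\phi=Gu+w$, which is precisely the paper's Step 1 and which you do not describe.

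The subsolution step can be repaired without ever differentiating $\psi$: since $(x,y)\mapsto \psi(x+y)-\norm{y}_G^2/(2t^*)-\phi(t^*,x)$ is jointly locally maximal at $(x^*,y^*)$, compensated variations $x\mapsto x^*+\epsilon a$, $y\mapsto y^*-\epsilon a$ (with $a$ chosen so that $y^*-\epsilon a\geq 0$) leave $\psi(x+y)$ unchanged and yield $Gy^*/t^*\geq\nabla\phi(t^*,x^*)$ componentwise with equality on the support of $y^*$, hence $y^*\cdot Gy^*/t^*=y^*\cdot\nabla\phi(t^*,x^*)$; combined with the Cauchy--Schwarz bound $(u,y^*/t^*)_G\leq\tfrac12\norm{u}_G^2+\tfrac12\norm{y^*/t^*}_G^2$ this gives the inequality $\partial_t\phi\leq\C(Gu)+(y^*/t^*)\cdot w$, which is all you need (your exact identity is not). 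Alternatively, the paper avoids first-order conditions altogether: its supersolution step uses the semigroup property along $su$, and its subsolution step is a contradiction argument combining the duality of \Cref{l.convex.dual}, continuity of $\partial_t\phi$ and $\nabla\phi$, near-maximizers $y_s$ with $\Norm{y_s}_1\lesssim s$, and the fundamental theorem of calculus. As written, your proposal does not close the subsolution step and omits the supersolution one; note also that your error term carries $R/m$ in place of $R$, so the precise constant in \eqref{eqn: SBME Hopf-Lax} would need revisiting.
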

To prove this result, we first verify that the convex dual of the mapping $y \mapsto \|y\|_G^2/2$ is the non-linearity $\C$. We next show that the function \eqref{eqn: SBME Hopf-Lax formula} satisfies the right initial condition, and that the supremum in its definition is attained. We next argue that this function satisfies a semigroup property, and deduce that it belongs to $\smash{\mathfrak{L}_{\mathrm{unif}}}$. Finally we show that, in a sense to be made precise, it is an approximate solution to the the Hamilton-Jacobi equation \eqref{eqn: SBME HJ eqn on Rpp}. The estimate \eqref{eqn: SBME Hopf-Lax} will then be a consequence of the comparison principle in \Cref{SBME comparison principle on Rpp}. It will be convenient to note that for every $z\in \Rp^d$, we have
\begin{equation}  
\label{e.easy.jensen}
\|z\|_G^2 \ge \frac{m}{d^2} \sum_{k,k' = 1}^d z_k \, z_{k'} = m \, \Norm{z}_1^2.
\end{equation}

\begin{lemma}  
\label{l.convex.dual}
If $\smash{G\in \R^{d\times d}}$ is non-negative definite, then for every $z \in \CC$, 
\begin{equation}  
\C(z) = \sup_{y \in \Rp^d} \bigg\{  y \cdot z - \frac{\|y\|_G^2}{2} \bigg\} .
\end{equation}
Moreover, the supremum is attained at any point $x\in \Rp^d$ with $z=Gx$.
\end{lemma}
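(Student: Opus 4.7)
The plan is straightforward: since $z \in \CC$, pick $x \in \Rp^d$ with $z = Gx$, so that $\C(z) = \frac{1}{2}\|x\|_G^2$. The strategy is to show both inequalities in the claimed duality identity by comparing the quadratic $y \mapsto y \cdot z - \frac{1}{2}\|y\|_G^2$ to its value at $y = x$.

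For the lower bound on the supremum, I would simply substitute $y = x \in \Rp^d$ into the right-hand side to get $x \cdot z - \frac{1}{2}\|x\|_G^2 = \|x\|_G^2 - \frac{1}{2}\|x\|_G^2 = \C(z)$. This shows $\sup_{y \in \Rp^d}\{y\cdot z - \tfrac{1}{2}\|y\|_G^2\} \geq \C(z)$ and simultaneously shows that the supremum is attained at any $x \in \Rp^d$ with $Gx=z$, as claimed.

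For the upper bound, I would use the non-negative definiteness of $G$ to expand
\begin{equation}
0 \leq \|y-x\|_G^2 = \|y\|_G^2 - 2(y,x)_G + \|x\|_G^2
\end{equation}
for any $y \in \R^d$, which rearranges to $2(y,x)_G \leq \|y\|_G^2 + \|x\|_G^2$. Since $z = Gx$ gives $y \cdot z = (y,x)_G$, this implies
\begin{equation}
y\cdot z - \tfrac{1}{2}\|y\|_G^2 \leq \tfrac{1}{2}\|x\|_G^2 = \C(z)
\end{equation}
for every $y\in \R^d$, and in particular for every $y\in \Rp^d$. Taking the supremum over $y\in \Rp^d$ yields the matching upper bound and completes the identity.

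There is no real obstacle; the argument is a standard Legendre-Fenchel computation for the quadratic form associated with a positive semidefinite matrix. The only points worth noting are that the maximizer in $\Rp^d$ exists precisely because we assumed $z \in \CC$ (so the unconstrained maximizer already lies in the non-negative orthant), and that the value $\C(z) = \tfrac{1}{2}\|x\|_G^2$ does not depend on the particular preimage $x$ chosen, by the same symmetry argument used to show that $\C$ is well-defined below \eqref{eqn: SBME non-linearity on cone}.
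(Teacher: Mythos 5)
Your proof is correct and follows essentially the same route as the paper: test the supremum at $y=x$ for attainment, and bound $y\cdot Gx$ by $\tfrac12\|x\|_G^2+\tfrac12\|y\|_G^2$ using non-negative definiteness (the paper does this via Cauchy--Schwarz and Young's inequality, you by expanding $\|y-x\|_G^2\geq 0$, which is the same computation). No gaps.
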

\begin{proof}
We can represent each $z \in \CC$ in the form of $Gx$ for some $x \in \Rp^d$. Using that $G$ is non-negative definite, we can appeal to the Cauchy-Schwarz inequality to assert that
\begin{equation*} 
y \cdot Gx =(x,y)_G \le \|x\|_G \, \|y\|_G \le \frac 1 2 \|x\|_G^2 + \frac 1 2 \|y\|_G^2.
\end{equation*}
We thus obtain that 
\begin{equation*}  
\C(Gx) = \frac 1 2 \|x\|_G^2 \ge \sup_{y \in \Rp^d} \Ll(  y \cdot Gx - \frac{\|y\|_G^2}{2} \Rr) .
\end{equation*}
For the converse inequality, we simply test the supremum with $y = x$.
\end{proof}

\begin{lemma}\label{SBME Hopf-Lax initial condition}
If $\smash{G\in \R^{d\times d}}$ is non-negative definite and $\psi:\Rp^d\to \R$ is a Lipschitz continuous initial condition with $\smash{\nabla \psi\in L^\infty(\Rp^d;\K_a')}$, then for every $\smash{x \in \Rp^d}$,
\begin{equation}
\fHL(0,x)=\psi(x).
\end{equation}
\end{lemma}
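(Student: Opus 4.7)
My plan is to bracket $\fHL(0, x)$ between two bounds both equal to $\psi(x)$, using the standard convention that $c/0 = +\infty$ for $c > 0$ and $0/0 = 0$ in the definition of $\fHL$. Under this convention, only those $y \in \Rp^d$ with $\|y\|_G = 0$ contribute to the supremum at $t = 0$, and the lower bound $\fHL(0,x) \geq \psi(x)$ is obtained by testing at $y = 0$.

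The key observation for the matching upper bound is the auxiliary estimate \eqref{e.easy.jensen}, which states that $\|y\|_G^2 \geq m \Norm{y}_1^2$. Since $m > 0$ by hypothesis \eqref{eqn: SBME min and max of G}, the seminorm $\|\cdot\|_G$ is in fact a norm on $\Rp^d$; in particular $\|y\|_G = 0$ forces $y = 0$, so the only surviving term in the supremum at $t = 0$ is $\psi(x) - 0 = \psi(x)$. An essentially equivalent but more transparent route is to realize $\fHL(0,x)$ as the right-limit $\lim_{t \to 0^+} \fHL(t, x)$ and establish the sandwich $\psi(x) \leq \fHL(t, x) \leq \psi(x) + C t$: combining the Lipschitz bound in \eqref{eqn: SBME initial condition Lipschitz assumption} with \eqref{e.easy.jensen} yields
\begin{equation*}
\psi(x + y) - \frac{\|y\|_G^2}{2t} \leq \psi(x) + \frac{\Norm{\psi}_{\mathrm{Lip},1}}{\sqrt{m}}\,\|y\|_G - \frac{\|y\|_G^2}{2t},
\end{equation*}
and optimizing the right-hand side over $s := \|y\|_G \geq 0$ gives the constant $C = \Norm{\psi}_{\mathrm{Lip},1}^2/(2m)$, after which sending $t \to 0^+$ closes the sandwich.

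The argument is essentially a direct computation; the only subtlety is fixing the convention for the divergent term in the Hopf-Lax formula at $t = 0$, or equivalently, reading the initial condition in the limiting sense. The hypothesis $\nabla \psi \in L^\infty(\Rp^d; \K_a')$ plays no role here and will only become relevant in subsequent lemmas establishing the semigroup property for $\fHL$ and the approximate PDE it satisfies.
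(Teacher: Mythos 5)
Your argument is correct and coincides with the paper's own proof: the $t=0$ case is read with the convention that only $y$ with $\|y\|_G=0$ contribute, and \eqref{e.easy.jensen} forces $y=0$, so the supremum reduces to $\psi(x)$. Your alternative ``sandwich'' route via $\lim_{t\to 0^+}\fHL(t,x)$ is essentially the time-Lipschitz estimate proved later in \Cref{SBME Hopf-Lax formula in L}, so it is a fine consistency check but not needed here.
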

\begin{proof}
For $t = 0$, we interpret the definition of $f$ as
\begin{equation}  
\label{e.def.f0.sup}
\fHL(0,x) = \sup_{\substack{y \in \Rp^d \\ \|y\|_G = 0}} \psi(x+y).
\end{equation}
Recalling \eqref{e.easy.jensen}, we see that the only $y\in \Rp^d$ with $\|y\|_G = 0$ is $y=0$. Together with \eqref{e.def.f0.sup}, this completes the proof.
\end{proof}

It will slightly simplify our arguments below to notice that the supremum in \eqref{eqn: SBME Hopf-Lax formula} is achieved.
\begin{lemma}\label{SBME Hopf-Lax maximizer}
Fix an initial condition $\smash{\psi:\Rp^d\to \R}$ satisfying \eqref{eqn: SBME initial condition Lipschitz assumption}. If $\smash{G\in \R^{d\times d}}$ is non-negative definite, then for any point $\smash{(t,x)\in (0,\infty)\times \Rp^d}$, there exists $\smash{y\in \Rp^d}$ with
\begin{equation}
\fHL(t,x)=\psi(x+y)-\frac{\|y\|_G^2}{2t}.
\end{equation}
\end{lemma}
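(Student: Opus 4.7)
The strategy is a standard coercivity-plus-continuity argument. The point is that the quadratic penalty $\|y\|_G^2/(2t)$ dominates the Lipschitz growth of $\psi$, so the supremum is effectively taken over a compact set and is therefore attained.

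First, I would use the Lipschitz continuity assumption \eqref{eqn: SBME initial condition Lipschitz assumption} on $\psi$ to obtain
\begin{equation*}
\psi(x+y)-\frac{\|y\|_G^2}{2t}\leq \psi(x)+\Norm{\psi}_{\mathrm{Lip},1}\Norm{y}_1-\frac{\|y\|_G^2}{2t}
\end{equation*}
for every $y\in \Rp^d$. I would then invoke the inequality \eqref{e.easy.jensen}, which gives $\|y\|_G^2\geq m\Norm{y}_1^2$, to conclude that
\begin{equation*}
\psi(x+y)-\frac{\|y\|_G^2}{2t}\leq \psi(x)+\Norm{\psi}_{\mathrm{Lip},1}\Norm{y}_1-\frac{m}{2t}\Norm{y}_1^2.
\end{equation*}
The right-hand side, viewed as a quadratic polynomial in $\Norm{y}_1$ with negative leading coefficient, tends to $-\infty$ as $\Norm{y}_1\to \infty$. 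Picking the value $y=0$ already gives $\fHL(t,x)\geq \psi(x)$, so there exists some explicit constant $r>0$, depending only on $t$, $m$ and $\Norm{\psi}_{\mathrm{Lip},1}$, such that
\begin{equation*}
\fHL(t,x)=\sup_{\substack{y\in \Rp^d\\ \Norm{y}_1\leq r}}\bigg\{\psi(x+y)-\frac{\|y\|_G^2}{2t}\bigg\}.
\end{equation*}

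Finally, the set $\{y\in \Rp^d\mid \Norm{y}_1\leq r\}$ is closed and bounded in $\R^d$, hence compact. The map $y\mapsto \psi(x+y)-\|y\|_G^2/(2t)$ is continuous, since $\psi$ is Lipschitz and the quadratic form $y\mapsto \|y\|_G^2$ is continuous. By the extreme value theorem, the supremum is attained at some $y\in \Rp^d$, which completes the proof. There is no genuine obstacle here; the only thing to be mindful of is that one must use the normalized-$\ell^1$ version of the Lipschitz constant to match the normalization appearing in \eqref{e.easy.jensen}.
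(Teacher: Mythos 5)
Your proposal is correct and follows essentially the same route as the paper: the Lipschitz bound on $\psi$ combined with the coercivity estimate \eqref{e.easy.jensen} restricts the supremum to a compact ball in the normalized-$\ell^1$ norm (the paper takes $\Norm{y}_1\le \frac{2t}{m}\Norm{\psi}_{\mathrm{Lip},1}$), after which continuity and compactness yield a maximizer. No further comment is needed.
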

\begin{proof}
Combining \eqref{e.easy.jensen} with the Lipschitz continuity of $\psi$ reveals that
$$\psi(x+y) - \frac{\|y\|_G^2}{2t} \le \psi(x) + \Norm{y}_1 \Big(\Norm{\psi}_{\mathrm{Lip},1}  - \frac{m}{2t} \Norm{y}_1\Big).$$
We can thus restrict the supremum in \eqref{eqn: SBME Hopf-Lax formula} to those $y$'s in $\Rp^d$ that satisfy $\Norm{y}_1 \le \frac{2t}{m} \Norm{\psi}_{\mathrm{Lip},1}$. Since we are now optimizing a continuous function over a compact set, it is clear that the supremum is achieved. 
\end{proof}

\begin{lemma}[Semigroup property]\label{SBME Hopf-Lax semigroup property}
Fix an initial condition $\smash{\psi:\Rp^d\to \R}$ satisfying \eqref{eqn: SBME initial condition Lipschitz assumption}. If $\smash{G\in \R^{d\times d}}$ is non-negative definite, then for every pair $t>s>0$ and $\smash{x\in \Rp^d}$,
\begin{equation}
\fHL(t,x)=\sup_{y\in \Rp^d}\bigg\{\fHL(s,x+y)-\frac{\|y\|_G^2}{2(t-s)}\bigg\}.
\end{equation}
\end{lemma}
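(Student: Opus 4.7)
The plan is to establish the two inequalities separately, with both relying on the key quadratic identity
\begin{equation}\label{eqn: SBME key quadratic}
\frac{\|u\|_G^2}{2\alpha}+\frac{\|v\|_G^2}{2\beta}\geq \frac{\|u+v\|_G^2}{2(\alpha+\beta)}
\end{equation}
valid for all $u,v\in \R^d$ and $\alpha,\beta>0$. To derive \eqref{eqn: SBME key quadratic}, one clears denominators to reduce it to $(\beta\|u\|_G-\alpha\|v\|_G)^2\geq 0$ combined with the Cauchy-Schwarz inequality $(u,v)_G\leq \|u\|_G\|v\|_G$, which is available because $G$ is non-negative definite.

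For the direction ``$\geq$'', fix $y\in \Rp^d$. By \Cref{SBME Hopf-Lax maximizer}, there exists $w\in \Rp^d$ with $\fHL(s,x+y)=\psi(x+y+w)-\frac{\|w\|_G^2}{2s}$. Setting $z=y+w\in \Rp^d$ and applying \eqref{eqn: SBME key quadratic} with $\alpha=t-s$, $\beta=s$, $u=y$, $v=w$ gives
\begin{equation*}
\fHL(s,x+y)-\frac{\|y\|_G^2}{2(t-s)}=\psi(x+z)-\frac{\|w\|_G^2}{2s}-\frac{\|y\|_G^2}{2(t-s)}\leq \psi(x+z)-\frac{\|z\|_G^2}{2t}\leq \fHL(t,x).
\end{equation*}
Taking the supremum over $y\in \Rp^d$ yields the desired inequality.

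For the direction ``$\leq$'', apply \Cref{SBME Hopf-Lax maximizer} at $(t,x)$ to find $z\in \Rp^d$ with $\fHL(t,x)=\psi(x+z)-\frac{\|z\|_G^2}{2t}$. Split $z$ as $z=y+w$ with $y=\frac{t-s}{t}z$ and $w=\frac{s}{t}z$, both of which lie in $\Rp^d$ by the positivity of $z$. A direct computation shows that these choices saturate \eqref{eqn: SBME key quadratic} with $\alpha=t-s$, $\beta=s$, namely
\begin{equation*}
\frac{\|y\|_G^2}{2(t-s)}+\frac{\|w\|_G^2}{2s}=\frac{(t-s)\|z\|_G^2}{2t^2}+\frac{s\|z\|_G^2}{2t^2}=\frac{\|z\|_G^2}{2t}.
\end{equation*}
Consequently,
\begin{equation*}
\fHL(t,x)=\psi(x+y+w)-\frac{\|w\|_G^2}{2s}-\frac{\|y\|_G^2}{2(t-s)}\leq \fHL(s,x+y)-\frac{\|y\|_G^2}{2(t-s)},
\end{equation*}
where we bounded the first two terms by the supremum defining $\fHL(s,x+y)$. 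Taking the supremum over $y\in \Rp^d$ on the right-hand side closes the argument.

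No real obstacle arises; the only subtlety is verifying that the decomposition $y=\frac{t-s}{t}z$, $w=\frac{s}{t}z$ used in the ``$\leq$'' direction keeps $y,w\in \Rp^d$, which is immediate from $z\in \Rp^d$ and $0<s<t$. The same approach would handle the $s=0$ endpoint trivially via \Cref{SBME Hopf-Lax initial condition}, but this is not needed here.
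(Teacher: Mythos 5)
Your proof is correct and follows essentially the same route as the paper: your key quadratic inequality is just the convexity of $\|\cdot\|_G^2$ that the paper uses for the lower bound, and your proportional split $y=\frac{t-s}{t}z$, $w=\frac{s}{t}z$ of the maximizer is exactly the paper's argument for the upper bound. The only cosmetic difference is that in the ``$\geq$'' direction you invoke \Cref{SBME Hopf-Lax maximizer} at $(s,x+y)$, whereas the paper simply takes suprema over both variables, which avoids needing the maximizer there.
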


\begin{proof}
Fix $y,z\in \Rp^d$. Since $\|\cdot\|_G^2$ is a convex mapping, we have
$$\Ll\|\frac{y+z}{t}\Rr\|_G^2\leq \frac{s}{t}\Ll\|\frac{y}{s}\Rr\|_G^2  + \frac{t-s}{t}\Ll\|\frac{z}{t-s}\Rr\|_G^2.$$
Substituting this bound into \eqref{eqn: SBME Hopf-Lax formula} yields
$$\fHL(t,x)\geq \psi(x+y+z)-\frac t 2  \Ll\|\frac{y+z}{t}\Rr\|_G^2\geq \psi(x+y+z)-\frac s 2 \Ll\|\frac{y}{s}\Rr\|_G^2-\frac {t-s}{2} \Ll\|\frac{z}{t-s}\Rr\|_G^2.$$
Taking the supremum over all $y\in \Rp^d$ gives
$$\fHL(t,x)\geq \fHL(s,x+z)-\frac{\|z\|_G^2}{2(t-s)},$$
and taking the supremum over all $z\in \Rp^d$ establishes the lower bound
$$\fHL(t,x)\geq \sup_{y\in \Rp^d}\bigg\{\fHL(s,x+y)-\frac{\|y\|_G^2}{2(t-s)}\bigg\}.$$
To obtain the matching upper bound, we invoke \Cref{SBME Hopf-Lax maximizer} to find $y\in \Rp^d$ such that
$$\fHL(t,x)=\psi(x+y)-\frac{\|y\|_G^2}{2t}.$$
Defining $z=\frac{t-s}{t}y\in \Rp^d$, we observe that
\begin{equation}\label{eqn: SBME Hopf-Lax semigroup key identities}
\frac{z}{t-s}=\frac{y}{t}=\frac{y-z}{s}.
\end{equation}
In particular, testing the supremum in \eqref{eqn: SBME Hopf-Lax formula} with $y-z=\frac{s}{t}y\in \Rp^d$ gives
$$\fHL(s,x+z)\geq \psi(x+z+y-z)-\frac{\|y-z\|_G^2}{2s}=\psi(x+y)-\frac{\|y-z\|_G^2}{2s},
$$
and thus, using also \eqref{eqn: SBME Hopf-Lax semigroup key identities}, we obtain
\begin{align*}
\fHL(s,x+z)-\frac{\|z\|_G^2}{2(t-s)}
& \geq \psi(x+y)-\frac{\|y-z\|_G^2}{2s}-\frac{\|z\|_G^2}{2(t-s)}
\\
& = \psi(x+y) - \frac{\|y\|_G^2}{2t}.
\end{align*}
Taking the supremum over $z\in \Rp^d$ and then over $y\in \Rp^d$ completes the proof.
\end{proof}
We next prove some regularity properties of the function $\fHL$ in \eqref{eqn: SBME Hopf-Lax formula}.
\begin{lemma}\label{SBME Hopf-Lax formula in L}
Fix an initial condition $\smash{\psi:\Rp^d\to \R}$ satisfying \eqref{eqn: SBME initial condition Lipschitz assumption}. If $\smash{G\in \R^{d\times d}}$ is non-negative definite, then $\fHL\in \mathfrak{L}_{\mathrm{unif}}$ with
\begin{equation}\label{eqn: SBME Hopf-Lax formula in L}
\sup_{t>0}\Norm{\fHL(t,\cdot)}_{\mathrm{Lip},1}\leq \Norm{\psi}_{\mathrm{Lip},1} \quad \text{and} \quad [\fHL]_0\leq \frac{\Norm{\psi}_{\mathrm{Lip},1}^2}{2m}.
\end{equation}
\end{lemma}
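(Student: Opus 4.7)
The plan is to verify separately the two bounds stated in the lemma, from which membership in $\mathfrak{L}_{\mathrm{unif}}$ follows immediately (the initial condition $\fHL(0,\cdot) = \psi$ is Lipschitz by \Cref{SBME Hopf-Lax initial condition} and assumption \eqref{eqn: SBME initial condition Lipschitz assumption}, and the two bounds together imply $[\fHL]_0 < \infty$ and uniform Lipschitz continuity in $x$).

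For the spatial Lipschitz bound, I would fix $t > 0$ and $x, x' \in \Rp^d$, and for any $y \in \Rp^d$ use the Lipschitz continuity of $\psi$ to write
\begin{equation*}
\psi(x+y) - \frac{\|y\|_G^2}{2t} \le \psi(x'+y) + \Norm{\psi}_{\mathrm{Lip},1} \Norm{x - x'}_1 - \frac{\|y\|_G^2}{2t}.
\end{equation*}
Taking the supremum over $y \in \Rp^d$ on both sides (keeping the same $y$ on the right to lower-bound by $\fHL(t,x')$) gives $\fHL(t,x) \le \fHL(t,x') + \Norm{\psi}_{\mathrm{Lip},1} \Norm{x - x'}_1$. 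Exchanging the roles of $x$ and $x'$ yields the first claim.

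For the time bound, I would use \Cref{SBME Hopf-Lax initial condition} to write $\fHL(0,x) = \psi(x)$, so that
\begin{equation*}
\fHL(t,x) - \fHL(0,x) = \sup_{y \in \Rp^d}\bigg\{\psi(x+y) - \psi(x) - \frac{\|y\|_G^2}{2t}\bigg\}.
\end{equation*}
The lower bound $\fHL(t,x) - \fHL(0,x) \ge 0$ follows by testing the supremum at $y = 0$ (and recalling $\|0\|_G = 0$). For the upper bound, I would combine the Lipschitz estimate $\psi(x+y) - \psi(x) \le \Norm{\psi}_{\mathrm{Lip},1}\Norm{y}_1$ with the key inequality \eqref{e.easy.jensen}, namely $\|y\|_G^2 \ge m\Norm{y}_1^2$, to obtain
\begin{equation*}
\fHL(t,x) - \fHL(0,x) \le \sup_{s \ge 0}\bigg\{\Norm{\psi}_{\mathrm{Lip},1} s - \frac{m s^2}{2t}\bigg\},
\end{equation*}
where $s = \Norm{y}_1$. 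A one-variable optimization (with maximum attained at $s = t \Norm{\psi}_{\mathrm{Lip},1}/m$) gives the bound $t \Norm{\psi}_{\mathrm{Lip},1}^2 / (2m)$. Dividing by $t$ and taking the supremum over $t > 0$ and $x \in \Rp^d$ yields $[\fHL]_0 \le \Norm{\psi}_{\mathrm{Lip},1}^2/(2m)$.

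There is no real obstacle here: the only content is the inequality $\|y\|_G^2 \ge m \Norm{y}_1^2$, which is precisely the previously recorded observation \eqref{e.easy.jensen} and encodes the coercivity of the bilinear form $(\cdot,\cdot)_G$ inherited from the lower bound on the entries of $G$ in \eqref{eqn: SBME min and max of G}. Everything else reduces to elementary manipulations of the Hopf-Lax supremum, in the same spirit as the proof of the semigroup property in \Cref{SBME Hopf-Lax semigroup property}.
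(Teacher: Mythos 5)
Your proposal is correct, and for the spatial bound it is essentially the paper's argument (the paper tests the supremum for $\fHL(t,x')$ with a maximizer $y$ from \Cref{SBME Hopf-Lax maximizer}, whereas you bound the supremum directly without needing a maximizer to exist — a cosmetic difference). For the time bound you take a slightly different route: the paper invokes the semigroup property of \Cref{SBME Hopf-Lax semigroup property} to compare $\fHL(t,x)$ with $\fHL(s,x)$ for arbitrary $0\le s<t$, and then runs the same combination of the spatial Lipschitz bound, the coercivity inequality \eqref{e.easy.jensen}, and the one-variable quadratic optimization; you instead compare only with $t=0$ directly from the definition of $\fHL$, using $\fHL(0,\cdot)=\psi$ from \Cref{SBME Hopf-Lax initial condition}. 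Since the seminorm $[\cdot]_0$ only involves differences with the initial time, your shortcut is perfectly sufficient for the stated lemma and avoids both \Cref{SBME Hopf-Lax maximizer} and \Cref{SBME Hopf-Lax semigroup property}; what the paper's version buys in exchange is the stronger (though unstated) monotonicity and Lipschitz control in time between arbitrary times $s<t$, which falls out of the semigroup argument at no extra cost.
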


\begin{proof}
Fix $(t,x,x')\in (0,\infty)\times \Rp^d\times \Rp^d$ and invoke \Cref{SBME Hopf-Lax maximizer} to find $y\in \Rp^d$ with
$$\fHL(t,x)=\psi(x+y)-\frac{\|y\|_G^2}{2t}.$$
Taking this $y\in \Rp^d$ in \eqref{eqn: SBME Hopf-Lax formula} gives the lower bound
$$\fHL(t,x')\geq \psi(x'+y)-\frac{\|y\|_G^2}{2t},$$
and thus
$$\fHL(t,x)-\fHL(t,x')\leq \psi(x+y)-\psi(x'+y)\leq \Norm{\psi}_{\mathrm{Lip},1}\Norm{x-x'}_1.$$
Reversing the roles of $x$ and $x'$ gives $y'\in \Rp^d$ with
$$\fHL(t,x')-\fHL(t,x)\leq \psi(x'+y')-\psi(x+y')\leq \Norm{\psi}_{\mathrm{Lip},1}\Norm{x-x'}_1.$$
Combining these two bounds yields the first inequality in \eqref{eqn: SBME Hopf-Lax formula in L}. To establish Lipschitz continuity in time, fix $x\in \Rp^d$ as well as $t>s\geq 0$. The semigroup property in \Cref{SBME Hopf-Lax semigroup property} with $y=0$ implies that
\begin{equation}\label{eqn: SBME Hopf-Lax Lipschitz time 1}
\fHL(t,x)\geq \fHL(s,x).
\end{equation}
Using \Cref{SBME Hopf-Lax semigroup property} in combination with the first inequality in \eqref{eqn: SBME Hopf-Lax formula in L} and \eqref{e.easy.jensen} gives
\begin{equation*}
\fHL(t,x)  \leq \fHL(s,x)+\sup_{y\in \Rp^d}\bigg\{\Norm{\psi}_{\mathrm{Lip},1}\Norm{y}_1-\frac{m\Norm{y}_1^2}{2(t-s)}\bigg\}
 \le \fHL(s,x) + \frac{\Norm{\psi}_{\mathrm{Lip},1}^2}{2m}(t-s),
\end{equation*}
where we have used the fact that $r\mapsto r-\frac{1}{2}ar^2$ achieves its maximum at $r=1/a$. Combining this with \eqref{eqn: SBME Hopf-Lax Lipschitz time 1} completes the proof.
\end{proof}

We are finally in a position to prove \Cref{SBME Hopf-Lax}.

\begin{proof}[Proof of \Cref{SBME Hopf-Lax}.]
Denote by $\EE_d$ an error term that will be defined in the course of the proof. We will proceed in three steps; first, we will show that the function $\smash{f_+(t,x)= \fHL(t,x)+\EE_dt}$ is a viscosity supersolution to the Hamilton-Jacobi equation \eqref{eqn: SBME HJ eqn on Rpp}, then we will show that the function $\smash{f_-(t,x)= \fHL(t,x)-\EE_dt}$ is a viscosity subsolution to the Hamilton-Jacobi equation \eqref{eqn: SBME HJ eqn on Rpp}, and finally we will conclude using the comparison principle in \Cref{SBME comparison principle on Rpp}.\\
\step{1: $f_+$ viscosity supersolution.}\\
Consider a smooth function $\smash{\phi\in C^\infty\big((0,\infty)\times \Rpp^d\big)}$ with the property that $\smash{f_+-\phi}$ has a local minimum at $\smash{(t^*,x^*)\in (0,\infty)\times \Rpp^d}$. Using \Cref{SBME gradient in convex set}, it is readily verified that $\fHL$ has its gradient in $\smash{\K_a'}$ as $\psi$ does. It follows by \Cref{SBME gradient of test function in cone} that $\nabla \phi(t^*,x^*)\in \K_a'$. It is therefore possible to find $\smash{u\in \Rp^d}$ and $\smash{w\in \R^d}$ with
$$\nabla \phi(t^*,x^*)=Gu+w, \quad \Norm{u}_1\leq a \quad \text{and} \quad \dNorm{1}{w}\leq \frac{1}{\sqrt{d}}.$$
On the one hand, if $s>0$ is sufficiently small that $t^*-s>0$, then
$$f_+(t^*-s,x^*+su)-\phi(t^*-s,x^*+su)\geq f_+(t^*,x^*)-\phi(t^*,x^*).$$
On the other hand, taking $su\in \Rp^d$ in \Cref{SBME Hopf-Lax semigroup property} reveals that
$$\fHL(t^*,x^*)\geq \fHL(t^*-s, x^*+su)-s \frac{\|u\|_G^2}{2} .$$
It follows that
$$\phi(t^*,x^*)-\phi(t^*-s,x^*+su)+s \frac{\|u\|_G^2}{2}-\EE_ds  \geq 0.$$
Dividing by $0<s<t^*$ and letting $s\to 0$ yields
$$\partial_t\phi(t^*,x^*)-u\cdot \nabla \phi(t^*,x^*)+\frac{\|u\|_G^2}{2}-\EE_d\geq 0.$$
Recalling that $\nabla \phi(t^*,x^*)=Gu+w$ and using \Cref{l.convex.dual}, we obtain 
$$\partial_t\phi(t^*,x^*)-\C(Gu)-u\cdot w-\EE_d\geq 0.$$
By \Cref{SBME gradient bound on test function} and \Cref{SBME Hopf-Lax formula in L}, we have
$$\dNorm{1}{Gu}\leq \dNorm{1}{\nabla \phi(t^*,x^*)}+\frac{1}{\sqrt{d}}\leq \Norm{\psi}_{\mathrm{Lip},1}+\frac{1}{\sqrt{d}}\leq R,$$
so the Lipschitz continuity of the non-linearity $\H_R$ established in \Cref{SBME extending the non-linearity} implies that
$$\big(\partial_t\phi-\H_R(\nabla \phi)\big)(t^*,x^*)\geq \EE_d-\Norm{u}_1\dNorm{1}{w}-\frac{8RM}{m^2\sqrt{d}}\geq \EE_d-\frac{a}{\sqrt{d}}-\frac{8RM}{m^2\sqrt{d}}.$$
This shows that $f_+$ is a supersolution to the Hamilton-Jacobi equation \eqref{eqn: SBME HJ eqn on Rpp} provided that
$$\EE_d\geq \frac{a}{\sqrt{d}}+\frac{8RM}{m^2\sqrt{d}}.$$
\step{2: $f_-$ viscosity subsolution.}\\
Consider a smooth function $\smash{\phi\in C^\infty\big((0,\infty)\times \Rpp^d\big)}$ with the property that $\smash{f_--\phi}$ has a local maximum at $\smash{(t^*,x^*)\in (0,\infty)\times \Rpp^d}$. Since $\fHL$ has its gradient in $\smash{\K_a'}$ as $\psi$ does, we have $\nabla \phi(t^*,x^*)\in \K_a'$ by \Cref{SBME gradient of test function in cone}. It is therefore possible to find $\smash{u\in \Rp^d}$ and $\smash{w\in \R^d}$ with
$$\nabla \phi(t^*,x^*)=Gu+w, \quad \Norm{u}_1\leq a \quad \text{and} \quad \dNorm{1}{w}\leq \frac{1}{\sqrt{d}}.$$
Suppose for the sake of contradiction that there exists $\delta>0$ with
$$\big(\partial_t\phi-\H_R(\nabla \phi)\big)(t^*,x^*)\geq \delta>0.$$
Arguing as in the previous step, this implies that
$$\partial_t\phi(t^*,x^*)-\C(Gu)\geq \delta-\frac{8RM}{m^2\sqrt{d}}.$$
By \Cref{l.convex.dual}, this may be recast as the assumption that
$$\partial_t\phi(t^*,x^*)-y\cdot Gu+\frac{\norm{y}_G^2}{2}\geq \delta-\frac{8RM}{m^2\sqrt{d}}$$
for all $y\in \Rp^d$. By continuity, of $\partial_t\phi$ and $\nabla\phi$, up to redefining $\delta>0$, we may in fact assume that
\begin{equation}\label{eqn: Hopf-Lax formula subsolution absurd}
\partial_t\phi(t',x')-y\cdot \nabla \phi(t',x')+\frac{\|y\|_G^2}{2}\geq \delta-\frac{8RM}{m^2\sqrt{d}}-\Norm{y}_1\dNorm{1}{w}
\end{equation}
for all $y\in \Rp^d$ and $(t',x')$ sufficiently close to $(t^*,x^*)$. Recalling \Cref{SBME Hopf-Lax semigroup property} and arguing as in the proof of \Cref{SBME Hopf-Lax maximizer}, it is possible to find $R>0$ such that, for every $s>0$ sufficiently small, there exists $y_s\in \Rp^d$ with $\Norm{y_s}_1\leq Rs$ and
$$\fHL(t^*,x^*)=\fHL(t^*-s,x^*+y_s)-\frac{\|y_s\|_G^2}{2s}.$$
It follows by the fundamental theorem of calculus and the absurd assumption \eqref{eqn: Hopf-Lax formula subsolution absurd} used with $y=\frac{y_s}{s} \in \Rp^d$ that
\begin{align*}
\phi(t^*,x^*)-\phi(t^*-s,x^*+y_s)&=\int_0^1 \frac{\mathrm{d}}{\mathrm{d} r}\phi\big(rt^*+(1-r)(t^*-s),rx^*+(1-r)(x^*+y_s)\big)\ud r\\
&=\int_0^1 \big(s\partial_t\phi-y_s\cdot \nabla \phi\big)(t^*+(r-1)s, x^*+(1-r)y_s)\ud r\\
&\geq s\delta -\frac{\|y_s\|_G^2}{2s}-s\frac{8RM}{m^2\sqrt{d}}-\Norm{y_s}_1\dNorm{1}{w}\\
&\geq \fHL(t^*,x^*)-\fHL(t^*-s,x^*+y_s)+s\bigg(\delta-\frac{8RM}{m^2\sqrt{d}}-\frac{R}{\sqrt{d}}\bigg).
\end{align*}
Rearranging shows that for $s$ sufficiently small,
$$f_-(t^*-s,x^*+y_s)-\phi(t^*-s,x^*+y_s)\geq s\bigg(\delta-\frac{8RM}{m^2\sqrt{d}}-\frac{R}{\sqrt{d}}+\EE_d\bigg) +f_-(t^*,x^*)-\phi(t^*,x^*).$$
This contradicts the fact that $f-\phi$ admits a local maximum at $(t^*,x^*)$ provided that
$$\EE_d\geq \frac{R}{\sqrt{d}}+\frac{8RM}{m^2\sqrt{d}}.$$
\step{3: comparison principle.}\\
Combining step 1 and step 2 shows that, if we define
$$\EE_d=\frac{1}{\sqrt{d}}\bigg(R+a+\frac{8RM}{m^2}\bigg),$$
then $\smash{f_+}$ is a viscosity supersolution to the Hamilton-Jacobi equation \eqref{eqn: SBME HJ eqn on Rpp} while $\smash{f_-}$ is a viscosity subsolution to this equation. Together with \Cref{SBME Hopf-Lax initial condition}, \Cref{SBME Hopf-Lax formula in L} and the comparison principle in \Cref{SBME comparison principle on Rpp}, this implies that for every ${(t,x)\in [0,\infty)\times \Rp^d}$,
$$\abs{\fHL(t,x)-f(t,x)}\leq \EE_dt$$
as required.
\end{proof}

\section{Hopf-Lax formula for the infinite-dimensional equation}\label{SBME section Hopf Lax limit}

In this section, we apply \Cref{SBME Hopf-Lax} to the projected Hamilton-Jacobi equation \eqref{eqn: SBME projected HJ eqn} and let $K$ tend to infinity in the resulting variational formula to establish \Cref{SBME infinite Hopf-Lax}. In addition to the assumptions \eqref{SBME H g}-\eqref{SBME H W}, we will suppose that the kernel $g:[-1,1]\to \R$ is non-negative definite in the sense that it satisfies \eqref{SBME H conv}. This assumption is equivalent to the non-negative definiteness of each of the matrices \eqref{eqn: SBME matrix GK}, and therefore to the convexity of each of the projected non-linearities~\eqref{eqn: SBME projected non-linearity}. In particular, \Cref{SBME Hopf-Lax} implies that the unique solution $\smash{f^{(K)}:[0,\infty)\times \R^{\D_K}\to \R}$ to the projected Hamilton-Jacobi equation \eqref{eqn: SBME projected HJ eqn} in $\mathfrak{L}_{\mathrm{unif}}$ subject to the initial condition $\smash{\psi^{(K)}}$ satisfies
\begin{equation}
f^{(K)}\big(t,x^{(K)}(\mu)\big)=\sup_{y\in \Rp^{\D_K}}\bigg\{\psi^{(K)}\big(x^{(K)}(\mu)+y\big)-\frac{\norm{y}_G^2}{2t}\bigg\}+\BigO\big(t\abs{\D_K}^{-1/2}\big).
\end{equation}
Remembering the definition of the projected initial condition \eqref{eqn: SBME projected initial condition}, the projected non-linearity \eqref{eqn: SBME projected non-linearity} and its relationship \eqref{eqn: SBME normalized norms} to the non-linearity $\C_\infty$ in \eqref{eqn: SBME infinite non-linearity} shows that
\begin{equation}
f^{(K)}\big(t,x^{(K)}(\mu)\big)=\sup_{\nu\in \M_+^{(K)}}\bigg\{\psi(\mu+\nu)-\frac{\C_\infty(G_\nu)}{t}\bigg\}+\BigO\big(t\abs{\D_K}^{-1/2}\big),
\end{equation}
where we have made the substitution $\smash{y=x^{(K)}(\nu)}$ for $\smash{\nu\in \M_+^{(K)}}$.
Using \Cref{SBME infinite HJ eqn WP} and a simple continuity argument to let $K$ tend to infinity in this expression gives the variational representation formula
\begin{align}
f(t,\mu)&=\sup_{\nu\in \M_+}\bigg\{\psi(\mu+\nu)-\frac{1}{2t}\int_{-1}^1 G_\nu(y)\ud \nu(y)\bigg\}\\
&=\sup_{\nu\in \M_+}\bigg\{\psi(\mu+t\nu)-\frac{t}{2}\int_{-1}^1 G_\nu(y)\ud \nu(y)\bigg\}.\label{eqn: SBME infinite Hopf-Lax unconstrained}
\end{align}
The second of these expressions follows from the first by setting $\nu'=t\nu$. To establish \Cref{SBME infinite Hopf-Lax}, we need to show that the supremum in \eqref{eqn: SBME infinite Hopf-Lax unconstrained} is achieved at some $\nu^*\in \M_+$, and that whenever the initial condition admits a Gateaux derivative at the measure $\mu+t\nu^*$ with density $x\mapsto D_\mu \psi(\mu+t\nu^*,x)$ in $\CC_\infty$,
\begin{equation}
G_{\nu^*}=D_\mu(\mu+t\nu^*,\cdot).
\end{equation}
If we ignore the constraint that the optimizers in \eqref{eqn: SBME infinite Hopf-Lax unconstrained} must be non-negative measures, then this latter property is clear from the first order conditions on a maximizer. To prove this rigorously, we first show that a maximizer exists, and we then establish a Cauchy-Schwarz inequality for the non-negative definite kernel $\smash{\tilde{g}(x,y)=g(xy)}$.

\begin{lemma}\label{SBME supremum in Parisi functional attained}
For every $t\geq 0$ and $\mu \in \M_+$, there exists $\nu^*\in \M_+$ with
\begin{equation}\label{eqn: SBME supremum in Parisi functional attained}
f(t,\mu)=\psi(\mu+t\nu^*)-\frac{t}{2}\int_{-1}^1 G_{\nu^*}(y)\ud \nu^*(y).
\end{equation}
\end{lemma}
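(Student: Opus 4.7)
The plan is to apply the direct method of the calculus of variations to the functional
\begin{equation*}
\Phi(\nu) := \psi(\mu+t\nu)-\frac{t}{2}\int_{-1}^1 G_\nu(y)\ud \nu(y),
\end{equation*}
by restricting the supremum to a weakly sequentially compact set and exhibiting weak upper semi-continuity of $\Phi$ on that set.

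The first step is to control the total mass of near-optimizers. Using \eqref{SBME H g} via the pointwise lower bound $G_\nu(y)\ge m\,\nu[-1,1]$ and \eqref{SBME H TV} via $\psi(\mu+t\nu)\le \psi(\mu)+\norm{\psi}_{\mathrm{Lip},\mathrm{TV}}\,t\,\nu[-1,1]$, one sees that $\Phi(\nu)<\Phi(0)=\psi(\mu)$ as soon as $\nu[-1,1]>2\norm{\psi}_{\mathrm{Lip},\mathrm{TV}}/m$. Hence the supremum may be restricted to the set $\mathcal{B}=\{\nu\in\M_+\mid \nu[-1,1]\le C\}$ for some explicit $C>0$. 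By Banach-Alaoglu (equivalently, Helly's selection theorem on the compact interval $[-1,1]$), any maximizing sequence $(\nu_n)\subset\mathcal{B}$ admits a subsequence that converges weakly to some $\nu^*\in\mathcal{B}$; testing against the constant function $1$ yields convergence of the total masses as well.

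The second step is the weak upper semi-continuity of $\Phi$ at $\nu^*$. The quadratic term is in fact continuous: $(x,y)\mapsto g(xy)$ is continuous on the compact $[-1,1]^2$, and $\nu_n\to\nu^*$ weakly implies $\nu_n\otimes\nu_n\to\nu^*\otimes\nu^*$ weakly, so the double integral passes to the limit. For the $\psi$-term, let $m_n:=(\mu+t\nu_n)[-1,1]\to m:=(\mu+t\nu^*)[-1,1]$. The degenerate case $m=0$ follows from \eqref{SBME H TV}, since $\TV(\mu+t\nu_n,0)=m_n\to 0$. When $m>0$, I would normalize $\mu+t\nu_n$ and $\mu+t\nu^*$ to probability measures, exploit the equivalence of weak and Wasserstein convergence on $\Pr[-1,1]$ (a compact metric space), and conclude via a triangle-inequality decomposition isolating a pure mass-change controlled by \eqref{SBME H TV} and a shape-change on probability measures controlled by \eqref{SBME H W}.

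The main obstacle is precisely the decomposition in this last point: \eqref{SBME H W} furnishes a Wasserstein-Lipschitz bound only on $\Pr[-1,1]$, whereas the measures $\mu+t\nu_n$ need not have unit total mass. The decomposition must be arranged so that every application of \eqref{SBME H W} occurs on probability measures, with residual terms accounted for by the total-variation bound \eqref{SBME H TV}. Once this continuity is secured, combining the three steps gives $\Phi(\nu^*)=\lim_n\Phi(\nu_n)=f(t,\mu)$, so $\nu^*$ attains the supremum, as required.
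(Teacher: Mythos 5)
Your overall route is the same as the paper's: use \eqref{SBME H g} and \eqref{SBME H TV} to show the supremum can be restricted to measures of bounded total mass, extract a weak subsequential limit of a maximizing sequence (Prokhorov/Helly on the compact interval), and pass to the limit in the functional; the paper's proof is exactly this, with the last step disposed of by simply invoking ``continuity of the functional being maximized.'' Your coercivity estimate and your treatment of the quadratic term (weak convergence of $\nu_n\otimes\nu_n$ against the continuous kernel) are correct.

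The soft spot is the one you flag yourself, and as described your plan does not close it. After matching total masses (replace $\mu+t\nu_n$ by $\tfrac{m}{m_n}(\mu+t\nu_n)$, at total-variation cost $\norm{\psi}_{\mathrm{Lip},\TV}\abs{m_n-m}$ via \eqref{SBME H TV}), you must compare two measures of common mass $m$ that converge weakly to each other; unless $m=1$, \eqref{SBME H W} as literally stated says nothing about them, and their total variation distance need not vanish under weak convergence (think of $m\delta_{1/n}$ versus $m\delta_0$), so neither hypothesis controls the ``shape change'' in your decomposition, and no renormalization brings you back to $\Pr[-1,1]$ without reintroducing an order-one mass discrepancy. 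The way out is to use the Wasserstein-Lipschitz hypothesis in its mass-homogeneous form: for $\rho,\sigma\in\M_+$ of common mass $\lambda>0$,
\begin{equation*}
\abs{\psi(\rho)-\psi(\sigma)}\leq \norm{\psi}_{\mathrm{Lip},W}\,\lambda\, W\big(\rho/\lambda,\sigma/\lambda\big),
\end{equation*}
which is precisely how the paper itself invokes \eqref{SBME H W} in Step 2 of the proof of \Cref{SBME infinite HJ eqn WP}, see the bound \eqref{eqn: SBME convergence of projections comparison 2} with its factor $\Norm{x'}_1$ multiplying the Wasserstein distance of the normalized measures. With this reading, your argument is complete: the mass change is handled by \eqref{SBME H TV}, the equal-mass comparison by the display above together with the fact that $W$ metrizes weak convergence on $\Pr[-1,1]$, and the degenerate case $m=0$ by \eqref{SBME H TV} alone. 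Without stating and using this extended form of \eqref{SBME H W}, the continuity of the $\psi$-term along the maximizing sequence --- and hence the existence of $\nu^*$ --- is not established at non-unit limiting mass.
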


\begin{proof}
Fix a probability measure $\nu\in \Pr[-1,1]$ and a positive constant $\lambda>0$. The Lipschitz continuity \eqref{SBME H TV} of the initial condition implies that
$$\psi(\mu+\lambda t\nu)\leq \psi(\mu)+\norm{\psi}_{\mathrm{Lip},\TV}\TV(0,\lambda t\nu)\leq \psi(\mu)+2\lambda t\norm{\psi}_{\mathrm{Lip},\TV}.$$
On the other hand,
$$\int_{-1}^1 G_{\lambda \nu}(y)\ud (\lambda\nu)(y)=\lambda^2 \int_{-1}^1 \int_{-1}^1 g(xy)\ud \nu(x)\ud \nu(y)\geq \lambda^2m.$$
Combining these two bounds reveals that
$$\psi(\mu+t\nu)-\frac{t}{2}\int_{-1}^1 G_\nu(y)\ud \nu(y)\leq \psi(\mu)+2\lambda t \norm{\psi}_{\mathrm{Lip},\TV}-\frac{\lambda^2tm}{2}.$$
The supremum in \eqref{eqn: SBME infinite Hopf-Lax unconstrained} can therefore be restricted to measures in $\M_+$ with bounded total mass. The existence of a maximizer is now an immediate consequence of Prokhorov's theorem. Indeed, if $\smash{(\nu_n)\subset \M_+}$ denotes a maximizing sequence, we may assume without loss of generality that each measure in this sequence has total mass bounded by the same constant. It follows by Prokhorov's theorem that this sequence is pre-compact, and therefore admits a subsequential limit with respect to the weak convergence of measures. By continuity of the functional being maximized in \eqref{eqn: SBME infinite Hopf-Lax unconstrained}, this weak limit must be a maximizer. This completes the proof.
\end{proof}

\begin{lemma}\label{SBME Cauchy-Schwarz for g}
If $g$ satisfies \eqref{SBME H conv} and $\mu,\nu\in \M_s$ are signed measures, then
\begin{equation}
\bigg(\int_{-1}^1 G_\nu(x)\ud \mu(x)\bigg)^2\leq \bigg(\int_{-1}^1 G_\mu(x)\ud \mu(x)\bigg)\bigg(\int_{-1}^1 G_\nu(x)\ud \nu(x)\bigg).
\end{equation}
\end{lemma}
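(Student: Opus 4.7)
The plan is to deduce the inequality by the classical argument that upgrades a non-negative definite symmetric bilinear form to a Cauchy--Schwarz inequality.

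First I would introduce the symmetric bilinear form $B:\M_s\times \M_s\to \R$ defined by
\begin{equation*}
B(\mu,\nu)=\int_{-1}^1\int_{-1}^1 g(xy)\ud \mu(x)\ud \nu(y).
\end{equation*}
The same Fubini--Tonelli reasoning that justified the well-definedness of $\C_\infty$ right after \eqref{eqn: SBME infinite non-linearity} shows that $B$ is symmetric in its two arguments and that
\begin{equation*}
B(\mu,\nu)=\int_{-1}^1 G_\nu(x)\ud \mu(x)=\int_{-1}^1 G_\mu(x)\ud \nu(x),
\end{equation*}
so the desired inequality is exactly $B(\mu,\nu)^2\leq B(\mu,\mu)\,B(\nu,\nu)$. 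Moreover, the assumption \eqref{SBME H conv} says precisely that $B(\rho,\rho)\geq 0$ for every $\rho\in \M_s$.

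Next I would run the standard quadratic-polynomial argument. For any $t\in \R$, the signed measure $\mu+t\nu$ belongs to $\M_s$, so bilinearity of $B$ together with \eqref{SBME H conv} gives
\begin{equation*}
0\leq B(\mu+t\nu,\mu+t\nu)=B(\mu,\mu)+2tB(\mu,\nu)+t^2B(\nu,\nu).
\end{equation*}
If $B(\nu,\nu)>0$, this is a non-negative real quadratic in $t$, so its discriminant is non-positive, giving $B(\mu,\nu)^2\leq B(\mu,\mu)\,B(\nu,\nu)$ at once. If instead $B(\nu,\nu)=0$, then $B(\mu,\mu)+2tB(\mu,\nu)\geq 0$ for all $t\in \R$, which forces $B(\mu,\nu)=0$, and the inequality is trivial.

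There is essentially no obstacle here; the only point requiring minor care is confirming that the symmetric bilinear form $B$ is well-defined on all of $\M_s\times \M_s$ (not only on $\M_+\times \M_+$), but this follows from the boundedness and continuity of $g$ together with the fact that every signed measure in $\M_s$ has finite total variation.
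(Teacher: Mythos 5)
Your proposal is correct and follows essentially the same route as the paper: the paper also considers the non-negative quadratic $t\mapsto\int\int g(xy)\,\d(\mu+t\nu)(x)\,\d(\mu+t\nu)(y)$ and concludes from its non-positive discriminant. Your explicit treatment of the degenerate case $B(\nu,\nu)=0$ is a harmless extra refinement that the paper's discriminant argument already covers implicitly.
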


\begin{proof}
This is the Cauchy-Schwarz inequality for the non-negative definite kernel $\smash{\tilde{g}(x,y)=g(xy)}$, and can be proved in a standard way. Indeed, for every $t \in \R$, let
\begin{align*}
P(t)&=\int_{-1}^1 \int_{-1}^1 g(xy)\ud(\mu+t\nu)(x)\ud (\mu+t\nu)(y)\\
&=\int_{-1}^1 G_\mu(x)\ud \mu(x)+2t\int_{-1}^1 G_\nu(x)\ud \mu(x)+t^2\int_{-1}^1 G_\nu(x)\ud \nu(x).
\end{align*}
This polynomial is non-negative by \eqref{SBME H conv}. In particular, its discriminant cannot be positive. This means that
$$2^2\bigg(\int_{-1}^1 G_\nu(x)\ud \mu(x)\bigg)^2-4\bigg(\int_{-1}^1G_\mu(x)\ud\mu(x)\bigg)\bigg(\int_{-1}^1 G_\nu(x)\ud \nu(x)\bigg)\leq 0.$$
Rearranging completes the proof.
\end{proof}

We are finally in a position to prove \Cref{SBME infinite Hopf-Lax}.

\begin{proof}[Proof of \Cref{SBME infinite Hopf-Lax}.]
Fix $t>0$ and $\mu \in \M_+$. Combining \Cref{SBME infinite HJ eqn WP} with \eqref{eqn: SBME infinite Hopf-Lax unconstrained} shows that the unique solution to the infinite-dimensional Hamilton-Jacobi equation \eqref{eqn: SBME infinite HJ eqn} admits the Hopf-Lax variational representation \eqref{eqn: infinite Hopf-Lax}. Moreover, \Cref{SBME supremum in Parisi functional attained} ensures that the supremum in \eqref{eqn: SBME infinite Hopf-Lax unconstrained} is achieved at some $\nu^*\in \M_+$. To establish the final statement in \Cref{SBME infinite Hopf-Lax}, suppose that the initial condition $\psi$ admits a Gateaux derivative at the measure $\mu+t\nu^*$ with density $x\mapsto D_\mu \psi(\mu+t\nu^*,x)$ in $\CC_\infty$. For any measure $\eta\in\M_+$, the Gateaux derivative of the functional
$$\nu\mapsto \psi(\mu+t\nu)-\frac{t}{2}\int_{-1}^1 G_{\nu}(y)\ud \nu(y)$$
at the measure $\nu^*$ in the direction of $\eta-\nu^*$ is
\begin{equation}
\label{e.Gateaux.diff.optim}
    D_\mu\psi\big(\mu+t\nu^*; t(\eta-\nu^*)\big)-t\int_{-1}^1\int_{-1}^1 g(xy)\ud (\eta-\nu^*)(x)\ud \nu^*(y).
\end{equation}
Moreover, for every $\epsilon \in [0,1]$, we have that $\nu^* + \epsilon (\eta - \nu^*)$ belongs to $\M_+$, and is thus a valid candidate for the optimization problem in \eqref{eqn: SBME infinite Hopf-Lax unconstrained}. As a consequence, the quantity in \eqref{e.Gateaux.diff.optim} must be non-positive. Using also the definition of the Gateaux derivative density in \eqref{eqn: SBME Gateaux derivative density}, we get that
\begin{equation}\label{eqn: SBME first order condition}
t\int_{-1}^1 \Big(D_\mu \psi(\mu+t\nu^*,x)-\int_{-1}^1 g(xy)\ud \nu^*(y)\Big)\ud \big(\eta-\nu^*\big)(x)\leq 0,
\end{equation}
for every $\eta\in \M_+$. The assumption that the density $x\mapsto D_\mu \psi(\mu+t\nu^*,x)$ belongs to the cone $\CC_\infty$ gives a measure $\eta^*\in \M_+$ with
$\smash{G_{\eta^*}(x)=D_\mu\psi(\mu+t\nu^*,x)}$. Applying \eqref{eqn: SBME first order condition} to the measure $\eta=\eta^*$ reveals that
$$\int_{-1}^1\int_{-1}^1 g(xy)\ud (\eta^*-\nu^*\big)(y)\ud (\eta^*-\nu^*\big)(x)\leq 0.$$
Together with \eqref{eqn: SBME g non-negative definite}, this implies that 
$$\int_{-1}^1\int_{-1}^1 g(xy)\ud (\eta^*-\nu^*\big)(y)\ud (\eta^*-\nu^*\big)(x)= 0.$$
Applying \Cref{SBME Cauchy-Schwarz for g} to the signed measures $\eta^*-\nu^*$ and $\delta_{x}$ for some $x\in [-1,1]$ shows that
$$\int_{-1}^1 g(xy)\ud\big(\eta^*-\nu^*\big)(y)=\int_{-1}^1 g(yz)\ud\big(\eta^*-\nu^*\big)(y)\ud \delta_{x}(z)=0.$$
Rearranging gives $G_{\nu^*}(x)=G_{\eta^*}(x)=D_\mu(\mu+t\nu^*,x)$. Since $x\in [-1,1]$ is arbitrary, this completes the proof.
\end{proof}

\section{The infinite-dimensional equation with arbitrary kernel}\label{SBME section general HJ WP}

In this section, we extend the main results of this paper to the Hamilton-Jacobi equation \eqref{eqn: SBME infinite HJ eqn} associated with a kernel $g$ that is not necessarily assumed to be positive. Fix $b\in \R$, and recall the definition of the modified kernel $\smash{\td g_b}$ in \eqref{e.def.tdg}, of the modified Hamilton-Jacobi equation \eqref{eqn: SBME infinite HJ eqn with tildes} and of the modified solution $\smash{f_b}$ in \eqref{e.def.f.or.tdf}. Introduce the symmetric matrix
\begin{equation}
\widetilde{G}^{(K)}_b=\frac{1}{\abs{\D_K}^2}\big(\td g_b(kk')\big)_{k,k'\in \D_K}\in \R^{\D_K\times\D_K}
\end{equation}
the projected cone
\begin{equation}
\widetilde{\CC}_{b,K}=\Big\{\widetilde{G}_b^{(K)}x^{(K)}(\mu)\in \R^{\D_K}\mid \mu \in \M_+^{(K)}\Big\}=\Big\{\widetilde{G}_b^{(K)}x\in \R^{\D_K}\mid x\in \Rp^{\D_K}\Big\}
\end{equation}
and the projected non-linearity $\widetilde{\C}_{b,K}:\widetilde{\CC}_{b,K}\to \R$ defined by
\begin{equation}
\widetilde{\C}_{b,K}\big(\widetilde{G}^{(K)}_bx\big)=\frac{1}{2}\widetilde{G}_b^{(K)}x\cdot x=\frac{1}{2\abs{\D_K}^2}\sum_{k,k'\in \D_K}\td g_b(kk')x_kx_{k'}.
\end{equation}
Also define the closed convex set
\begin{equation}
\widetilde{\K}_{=a,b,K}=\Big\{\widetilde{G}_b^{(K)}x\in \R^{\D_K}\mid x\in \Rp^{\D_K}\text{ and } \Norm{x}_1= a\Big\}
\end{equation}
The first order of business will be to verify that the function $\smash{f_b}$ is well-defined by ensuring that \eqref{SBME H g}-\eqref{SBME H W} are satisfied in the context of the infinite-dimensional Hamilton-Jacobi equation \eqref{eqn: SBME infinite HJ eqn with tildes}.

\begin{lemma}\label{SBME H1-H4 for tilted kernel}
Under the assumptions of \Cref{SBME general infinite HJ eqn WP}, the kernel $\smash{\td g_b}$ in \eqref{e.def.tdg} and the initial condition $\smash{\td \psi_b}$ in \eqref{e.def.tdpsi} satisfy \eqref{SBME H g}-\eqref{SBME H W}. Moreover, each projected initial condition $\smash{\td \psi_b^{(K)}}$ has its gradient in $\smash{\td \K_{=a,b,K}}$.
\end{lemma}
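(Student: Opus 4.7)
The plan is to check the four hypotheses in turn, with the main work going into the statement on the gradient of the projected initial condition. Hypothesis \eqref{SBME H g} is immediate: $\td g_b=g+b$ inherits continuous differentiability from $g$, and since it is assumed strictly positive and continuous on the compact interval $[-1,1]$, it admits a positive lower bound. Hypothesis \eqref{SBME H TV} follows by applying the triangle inequality to \eqref{e.def.tdpsi} and using that $\abs{\mu[-1,1]-\nu[-1,1]}\leq \TV(\mu,\nu)$, which yields the Lipschitz constant $\norm{\psi}_{\mathrm{Lip},\TV}+\abs{ab}$. For hypothesis \eqref{SBME H W}, any probability measures $\P,\Q\in\Pr[-1,1]$ satisfy $\int_{-1}^1\ud\P=\int_{-1}^1\ud\Q=1$, so the shift $ab\int_{-1}^1\ud\mu$ in \eqref{e.def.tdpsi} contributes zero to $\td\psi_b(\P)-\td\psi_b(\Q)$, making $\td\psi_b$ Wasserstein-Lipschitz with the same constant as $\psi$.

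The substantive step concerns the gradient of the projected initial condition. Projecting \eqref{e.def.tdpsi} gives
\begin{equation*}
\td\psi_b^{(K)}(x) = \psi^{(K)}(x) + \frac{ab}{\abs{\D_K}}\sum_{k\in \D_K} x_k,
\end{equation*}
so $\nabla \td\psi_b^{(K)}(x) = \nabla \psi^{(K)}(x) + \frac{ab}{\abs{\D_K}}\iota$, writing $\iota=(1)_{k\in \D_K}$. The crucial algebraic identity, immediate from $\td g_b=g+b$ and the definitions of $G^{(K)}$ and $\td G_b^{(K)}$, is
\begin{equation*}
\td G_b^{(K)} u - G^{(K)} u = \frac{b\Norm{u}_1}{\abs{\D_K}}\iota \qquad \text{for every } u \in \Rp^{\D_K}.
\end{equation*}
Invoking \eqref{SBME H a} together with the pointwise criterion in \Cref{SBME gradient in convex set}, at almost every $x \in \Rp^{\D_K}$ we may write $\nabla \psi^{(K)}(x) = G^{(K)} u_x + w_x$ with $u_x\in \Rp^{\D_K}$, $\Norm{u_x}_1 = a$, and $\dNorm{1}{w_x}\leq \abs{\D_K}^{-1/2}$. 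Substituting and using $\Norm{u_x}_1=a$ yields
\begin{equation*}
\nabla \td\psi_b^{(K)}(x) = \td G_b^{(K)} u_x - \frac{b\Norm{u_x}_1}{\abs{\D_K}}\iota + \frac{ab}{\abs{\D_K}}\iota + w_x = \td G_b^{(K)} u_x + w_x,
\end{equation*}
which exhibits $\nabla \td \psi_b^{(K)}(x)$ as a point of $\td\K_{=a,b,K}'$. Since $\td\K_{=a,b,K}' \subset \td\K_{a,b,K}'$, this also confirms hypothesis \eqref{SBME H K'}.

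The main obstacle is ensuring that the cancellation in the last display is exact. This is precisely why the stronger mass-equality assumption \eqref{SBME H a} is needed in place of \eqref{SBME H K'}: if one only knew $\Norm{u_x}_1 \leq a$, a leftover term $\frac{b(a-\Norm{u_x}_1)}{\abs{\D_K}}\iota$ would survive, and its normalized-$\ell^{1,*}$ magnitude equals $\abs{b}(a-\Norm{u_x}_1)$, which is bounded below uniformly in $K$ and therefore cannot be absorbed into the $\abs{\D_K}^{-1/2}$ error $w_x$. Once the pointwise representation above is established, the converse direction of \Cref{SBME gradient in convex set} translates it into the required $L^\infty$ statement.
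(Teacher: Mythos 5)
Your proof is correct in substance, but it takes a genuinely different route from the paper on the main point. The paper never touches pointwise gradients: it runs everything through the non-differential criterion of \Cref{SBME gradient in convex set}, fixing $x,x'$ with $(x'-x)\cdot z\geq c$ for all $z$ in the relevant convex set, decomposing an arbitrary $y\in \K_{=a,K}'$ as $G^{(K)}u+w$ with $\Norm{u}_1=a$, and observing that $y+ab\,\iota_K=\td G^{(K)}_b u+w$ (with $\iota_K$ the vector with entries $\abs{\D_K}^{-1}$), which transfers the supporting-hyperplane hypothesis for $\td\psi^{(K)}_b$ back to the one guaranteed for $\psi^{(K)}$ by \eqref{SBME H a}. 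You instead argue directly on the almost-everywhere defined gradient, using the same algebraic identity $\td G_b^{(K)}u=G^{(K)}u+\frac{b\Norm{u}_1}{\abs{\D_K}}\iota$ at the level of the decomposition $\nabla\psi^{(K)}(x)=G^{(K)}u_x+w_x$. Since ``has its gradient in a set'' is by definition an a.e.\ membership statement for the Rademacher gradient, no measurable selection of $(u_x,w_x)$ is required, and your computation is legitimate and arguably more transparent; your treatment of \eqref{SBME H g}, \eqref{SBME H TV} and \eqref{SBME H W} matches the paper, and noting that the mass term is constant on probability measures is the cleanest way to see \eqref{SBME H W}. One small misattribution: the decomposition of $\nabla\psi^{(K)}(x)$ comes from \eqref{SBME H a} together with the definition \eqref{eqn: SBME enlarged set} of the enlarged set, not from \Cref{SBME gradient in convex set}; that criterion is exactly the device the paper uses in order to avoid pointwise gradients altogether.

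One discrepancy you should flag explicitly: you establish $\nabla\td\psi_b^{(K)}\in\td\K_{=a,b,K}'$, whereas the lemma as stated asserts membership in the unenlarged set $\td\K_{=a,b,K}$. Under \eqref{SBME H a}, which only places $\nabla\psi^{(K)}$ in $\K_{=a,K}'$, the unenlarged conclusion is not attainable: translating by $ab\,\iota_K$ maps $\K_{=a,K}'$ onto $\td\K_{=a,b,K}'$, and the error $w_x$ cannot be absorbed. The paper's own argument likewise only supports the primed version (its final remark applies the hyperplane inequality to $z=\td G_b^{(K)}u+w$, which lies in $\td\K_{=a,b,K}'$ rather than $\td\K_{=a,b,K}$), and it is the primed version that is used immediately after the lemma and in the proof of \Cref{SBME general infinite HJ eqn WP}. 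So this is a missing prime in the statement rather than a gap in your argument, but as written your proof proves the primed claim and you should say so.
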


\begin{proof}
The kernel $\td g_b$ satisfies \eqref{SBME H g} by the choice of $b$, while the initial condition $\td \psi_b$ satisfies \eqref{SBME H TV} by the triangle inequality and the bound
$$\Big\lvert ab\int_{-1}^1 \ud \mu-ab\int_{-1}^1 \ud \nu\Big\rvert\leq a\abs{b}\big\lvert \mu[-1,1]-\nu[-1,1]\big\rvert\leq a\abs{b}\TV(\mu,\nu).$$
An identical argument shows that the initial condition $\td \psi_b$ satisfies \eqref{SBME H W}. To verify \eqref{SBME H K'}, introduce the closed convex set
$$\widetilde{\K}_{a,b,K}=\Big\{\widetilde{G}_b^{(K)}x\in \R^{\D_K}\mid x\in \Rp^{\D_K}\text{ and } \Norm{x}_1\leq a\Big\},$$
and fix $c\in \R$ and $\smash{x,x'\in \R^{\D_K}}$ such that $(x'-x)\cdot z\geq c$ for every $\smash{z\in \widetilde{\K}_{a,b,K}'}$. Now fix $\smash{y\in \K_{=a,K}'}$, and represent it as $y=G^{(K)}u+w$ for some $\smash{u\in \Rp^d}$ and $\smash{w\in \R^d}$ with $\smash{\Norm{u}_1=a}$ and $\smash{\dNorm{1}{w}\leq 2^{-K/2}}$. If $\smash{z=\widetilde{G}^{(K)}_bu+w}$, then
$$z=G^{(K)}u+b\Norm{u}_1\iota_K+w=y+ab\iota_K$$
for the vector $\smash{\iota_K=(\abs{\D_K}^{-1})_{k\in\D_K}\in \Rp^{\D_K}}$. Since $\smash{z\in \td \K_{=a,b,K}\subset  \widetilde{\K}_{a,b,K}'}$,
$$(x'-x)\cdot y=(x'-x)\cdot z-(x'-x)\cdot ab\iota_K\geq c-(x'-x)\cdot ab\iota_K.$$
The assumption \eqref{SBME H a} and \Cref{SBME gradient in convex set} therefore imply that
$$\psi^{(K)}(x')-\psi^{(K)}(x)\geq c-(x'-x)\cdot ab\iota_K.$$
Noticing that $x\cdot \iota_K=\Norm{x}_1$ and rearranging reveals that
$$\td \psi^{(K)}_b(x')-\td \psi^{(K)}_b(x)\geq c.$$
Together with \Cref{SBME gradient in convex set}, this establishes \eqref{SBME H K'}. Notice that this argument only needed the assumption that $(x'-x)\cdot z\geq c$ for every $\smash{z\in \td\K_{=a,b,K}}$ so it also shows that $\smash{\td \psi_b^{(K)}}$ has its gradient in $\smash{\td \K_{=a,b,K}}$ by \Cref{SBME gradient in convex set}. This completes the proof.
\end{proof}

Together with \Cref{SBME WP of HJ eqn on Rpp} this result implies that for every $R>0$, the Hamilton-Jacobi equation
\begin{equation}\label{eqn: SBME projected HJ eqn b}
\partial_t \td f^{(K)}(t,x)=\widetilde{\H}_{b,K, R}\big(\nabla \td f^{(K)}(t,x)\big) \quad\text{on}\quad \Rpp\times \Rpp^{\D_K}
\end{equation}
admits a unique viscosity solution $\smash{\td f^{(K)}_{b,R}\in \mathcal{L}_{\mathrm{unif}}}$ subject to the initial condition $\smash{\td \psi_b^{(K)}}$ which satisfies the Lipschitz bound
\begin{equation}
\sup_{t\geq 0}\Norm{\td f_{b,R}^{(K)}(t,\cdot)}_{\mathrm{Lip},1}=\Norm{\widetilde{\psi}_b^{(K)}}_{\mathrm{Lip},1}.
\end{equation}
Here $\smash{\widetilde{\H}_{b,K,R}:\R^{\D_K}\to \R}$ denotes the extension of the non-linearity $\smash{\widetilde{\C}_{b,K}}$ provided by \Cref{SBME extending the non-linearity}. Since the projected initial condition $\smash{\td \psi_b^{(K)}}$ has its gradient in the closed convex set $\smash{\K'_{=a, b, K}}$ by \Cref{SBME H1-H4 for tilted kernel}, an identical argument to that in \Cref{SBME WP of HJ eqn on Rpp} shows that the solution $\smash{\td f^{(K)}_{b,R}}$ also has its gradient in $\smash{\K'_{=a, b, K}}$. Moreover, \Cref{SBME infinite HJ eqn WP} allows us to define the solution to the infinite-dimensional Hamilton-Jacobi equation \eqref{eqn: SBME infinite HJ eqn with tildes} by
\begin{equation}
\td f_b(t,\mu)=\lim_{K\to \infty}\td f^{(K)}_{b,R}\big(t,x^{(K)}(\mu)\big),
\end{equation}
and guarantees that this limit is independent of $R>0$ provided that $\smash{R>\norm{\td \psi_b}_{\mathrm{Lip},\TV}}$. Using the comparison principle in \Cref{SBME comparison principle on Rpp} we now show that the limit defining the function \eqref{e.def.f.or.tdf},
\begin{equation}
f_b(t,\mu)=\lim_{K\to \infty}\bigg(\td f_{b,R}^{(K)}\big(t,x^{(K)}(\mu)\big)-ab\Norm{x^{(K)}(\mu)}_1-\frac{a^2bt}{2}\bigg),
\end{equation}
is independent of $b$ by establishing \Cref{SBME general infinite HJ eqn WP}.

\begin{proof}[Proof of Theorem~\ref{SBME general infinite HJ eqn WP}]
Let $b,b'\in \R$ be such that the kernels $\td g_b$ and $\td g_{b'}$ are positive on $[-1,1]$, and fix $\smash{R>\norm{\td \psi_b}_{\mathrm{Lip},\TV}+\norm{\td \psi_{b'}}_{\mathrm{Lip},\TV}}$. The idea will be to show that the function
$$f^{(K)}_{b,b'}(t,x)=\td f_b^{(K)}(t,x)-a(b-b')\Norm{x}_1-\frac{a^2(b-b')t}{2}$$
satisfies the Hamilton-Jacobi equation defining $\smash{\td f_{b'}^{(K)}}$ up to an error vanishing with $K$. We have omitted the dependence on $R$, and will continue to do so throughout this proof, as this constant will remain fixed. The equality of $f_b$ and $f_{b'}$ will then follow from the comparison principle in \Cref{SBME comparison principle on Rpp}. Consider $\smash{\phi\in C^\infty\big((0,\infty)\times \Rpp^{\D_K}\big)}$ with the property that $\smash{f_{b,b'}^{(K)}-\phi}$ achieves a local maximum at the point $\smash{(t^*,x^*)\in (0,\infty)\times \Rpp^{\D_K}}$. Since $\smash{\td f_b^{(K)}}$ is a viscosity subsolution to the Hamilton-Jacobi equation \eqref{eqn: SBME projected HJ eqn b},
$$\frac{a^2(b-b')}{2}+\partial_t \phi(t^*,x^*)-\td \H_{b,K}\big(a(b-b')\iota_K+\nabla \phi(t^*,x^*)\big)\leq 0$$
for the vector $\smash{\iota_K=(\abs{\D_K}^{-1})_{k\in\D_K}\in \Rp^{\D_K}}$. The fact that $\smash{\td f_b^{(K)}}$ has its gradient in $\smash{\td \K_{=a,b,K}'}$ together with \eqref{eqn: SBME projected solution Lipschitz}, \Cref{SBME gradient bound on test function} and \Cref{SBME gradient of test function in cone} implies that
$$a(b-b')\iota_K+\nabla \phi(t^*,x^*)\in \td \K_{=a,b,K}' \quad \text{and} \quad \dNorm{1}{a(b-b')\iota_K+\nabla \phi(t^*,x^*)}\leq \norm{\td \psi_b}_{\mathrm{Lip},\TV}.$$
It is therefore possible to find $\smash{u\in \Rp^{\D_K}}$ and $\smash{w\in \R^{\D_K}}$ with
$$a(b-b')\iota_K+\nabla \phi(t^*,x^*)=\td G_b^{(K)}u+w, \quad \Norm{u}_1= a \quad \text{and} \quad \dNorm{1}{w}\leq \frac{1}{2^{K/2}}.$$
Observe that
$$\dNorm{1}{\td G_b^{(K)}u}\leq \dNorm{1}{a(b-b')\iota_K+\nabla \phi(t^*,x^*)}+\dNorm{1}{w}\leq \norm{\td \psi_b}_{\mathrm{Lip},\TV}+\frac{1}{2^{K/2}},$$
so increasing $K$ if necessary, it is possible to ensure that $\smash{\td G_b^{(K)}u\in \td \CC_{b,K}\cap B_R}$. It follows by the Lipschitz continuity of $\td \H_{b,K}$ established in \Cref{SBME extending the non-linearity} that
$$\frac{a^2(b-b')}{2}+\partial_t\phi(t^*,x^*)-\td \C_{b,K}\big(\td G_b^{(K)}u\big)\leq \frac{8RM}{2^{K/2}m^2}.$$
Observe that
\begin{align*}
\td \C_{b,K}\big(\td G_b^{(K)} u\big)&=\frac{1}{2}\td G_b^{(K)}u\cdot u=\frac{1}{2}G^{(K)}u\cdot u+\frac{1}{2}b\Norm{u}_1^2=\frac{1}{2}\td G_{b'}^{(K)}u\cdot u+\frac{1}{2}(b-b')a^2\\
&=\td \C_{b',K}\big(\td G_{b'}^{(K)}u\big)+\frac{a^2(b-b')}{2}
\end{align*}
so in fact
$$\partial_t\phi(t^*,x^*)-\td \C_{b'}^{(K)}\big(\td G_{b'}^{(K)}u\big)\leq \frac{8RM}{2^{K/2}m}.$$
To replace $\td G_{b'}^{(K)}u$ by $\nabla\phi(t^*,x^*)$ observe that
\begin{align*}
\td G_{b'}^{(K)}u&=\td G^{(K)}_{b}u+(b'-b)\iota_K\Norm{u}_1=a(b-b')\iota_K+\nabla \phi(t^*,x^*)-w+a(b'-b)\iota_K\\
&=\nabla\phi(t^*,x^*)-w,
\end{align*}
and leverage the Lipschitz continuity of $\td \H_{b',K}$ established in \Cref{SBME extending the non-linearity} to deduce that
$$\big(\partial_t\phi-\td \H_{b',K}(\nabla \phi)\big)(t^*,x^*)\leq \EE_K$$
for the error term
$$\EE_K=\frac{16RM}{2^{K/2}m}.$$
In particular, the function $\smash{(t,x)\mapsto f_{b,b'}^{K}(t,x)-\EE_K t}$ is a viscosity subsolution to the Hamilton-Jacobi equation defining $\smash{\td f_{b'}^{K}}$. An identical argument shows that $\smash{(t,x)\mapsto f_{b,b'}^{K}(t,x)+\EE_K t}$ is a viscosity supersolution to the Hamilton-Jacobi equation defining $\smash{\td f_{b'}^{K}}$. It follows by the comparison principle in \Cref{SBME comparison principle on Rpp} that for every $\mu \in \M_+$,
$$\big\lvert f_b^{(K)}\big(t,x^{(K)}(\mu)\big)-f_{b'}^{(K)}\big(t,x^{(K)}(\mu)\big)\big\rvert=\big\lvert f_{b,b'}^{(K)}\big(t,x^{(K)}(\mu)\big)-\td f_{b'}^{(K)}\big(t,x^{(K)}(\mu)\big)\big\rvert\leq \EE_Kt.$$
Letting $K$ tend to infinity completes the proof.
\end{proof}

Combining this well-posedness result with the Hopf-Lax representation formula in \Cref{SBME infinite Hopf-Lax} we now prove the Hopf-Lax variational representation for $f=f_b$ stated in \Cref{SBME general infinite Hopf-Lax}.

\begin{proof}[Proof of Theorem~\ref{SBME general infinite Hopf-Lax}]
Fix $b>0$ large enough so the kernel $\td g_b$ is positive on $[-1,1]$ and satisfies \eqref{SBME H conv}. \Cref{SBME H1-H4 for tilted kernel} and the Hopf-Lax representation formula in \Cref{SBME infinite Hopf-Lax} imply that for any $t>0$ and $\mu\in \M_+$,
\begin{equation}\label{eqn: SMBE general Hopf Lax tilde}
\td f_b(t,\mu)=\sup_{\nu\in \M_+}\bigg\{\td \psi_b(\mu+t\nu)-\frac{t}{2}\int_{-1}^1 \td G_{b,\nu}(y)\ud \nu(y)\bigg\}.
\end{equation}
Since the Gateaux derivative density $x\mapsto D_\mu\psi(\mu+t\nu)$ belongs to the set $\CC_{a,\infty}$ by assumption, there exists a measure $\eta\in \M_{a,+}$ with $D_\mu\psi(\mu+t\nu,\cdot)=G_{\eta}$. This means that
$$D_\mu\td \psi_b(\mu+t\nu,x)=D_\mu\psi(\mu+t\nu,x)+ab=\int_{-1}^1 g(xy)\ud \eta(y)+ab=\int_{-1}^1 \td g_b(xy)\ud \eta(y),$$
so another application of \Cref{SBME infinite Hopf-Lax} implies that the supremum in \eqref{eqn: SMBE general Hopf Lax tilde} is achieved at some $\nu^*\in \M_+$ with
$$\td G_{b,\nu^*}=D_\mu\td \psi_b(\mu+t\nu^*,\cdot)=\td G_{b, \eta}.$$
Evaluating this equality at $x=0$ reveals that
$$\td g_b(0)\int_{-1}^1 \ud \nu^*(y)=\td g_b(0)\int_{-1}^1 \ud \eta(y)=\td g_b(0)a.$$
Since $\td g_b(0)>0$ by the choice of $b$, this means that $\nu^*\in \M_{a,+}$ and
$$\td f_b(t,\mu)=\sup_{\nu\in \M_{a,+}}\bigg\{\td \psi_b(\mu+t\nu)-\frac{t}{2}\int_{-1}^1 \td G_{b,\nu}(y)\ud \nu(y)\bigg\}.$$
It follows by \eqref{e.def.f.or.tdf} that
\begin{align*}
f_{b}(t,\mu)&=\sup_{\nu\in \M_{a,+}}\bigg\{\td \psi_b(\mu+t\nu)-\frac{t}{2}\int_{-1}^1 \td G_{b,\nu}(y)\ud \nu(y)-ab\int_{-1}^1 \ud \mu-\frac{a^2bt}{2}\bigg\}\\
&=\sup_{\nu\in \M_{a,+}}\bigg\{\psi(\mu+t\nu)+abt\int_{-1}^1 \ud \nu-\frac{t}{2}\int_{-1}^1 G_\nu(y)\ud \nu(y)-\frac{bt}{2}\int_{-1}^1\int_{-1}^1 \ud \nu\ud \nu -\frac{a^2bt}{2}\bigg\}\\
&=\sup_{\nu\in \M_{a,+}}\bigg\{\psi(\mu+t\nu)-\frac{t}{2}\int_{-1}^1 G_\nu(y)\ud \nu(y)\bigg\}.
\end{align*}
This completes the proof.
\end{proof}

\begin{appendix}

\section{Hamilton-Jacobi equations on positive half-spaces}\label{SBME app HJ eqn on half-space}

In this appendix we fix a non-linearity $\H:\R^d\to \R$ and an initial condition $\psi:\Rp^d\to \R$ with the properties that
\begin{enumerate}[label = \textbf{A\arabic*}]
\item the non-linearity $\H:\R^d\to \R$ is Lipschitz continuous with respect to the normalized-$\ell^{1,*}$ norm,
\begin{equation}
\abs{\H(y)-\H(y')}\leq \Norm{\H}_{\mathrm{Lip},1,*}\dNorm{1}{y-y'}
\end{equation}
for all $y,y'\in\R^d$;\label{eqn: SBME app A1}
\item the non-linearity is non-decreasing,
\begin{equation}
\H(y)\leq \H(y')
\end{equation}
for all $y,y'\in \R^d$ with $y\leq y'$;\label{eqn: SBME app A2}
\item the initial condition $\psi:\Rp^d\to \R$ is Lipschitz continuous with respect to the normalized-$\ell^1$ norm,
\begin{equation}
\abs{\psi(x)-\psi(x')}\leq \Norm{\psi}_{\mathrm{Lip},1}\Norm{x-x'}_1
\end{equation}
for all $x,x'\in \Rp^d$;\label{eqn: SBME app A3}
\end{enumerate}
and we establish the well-posedness of the Hamilton-Jacobi equations
\begin{equation}\label{eqn: SBME app HJ eqn on Rpp}
\partial_t f(t,x)=\H\big(\nabla f(t,x)\big) \quad \text{on} \quad \Rpp\times \Rpp^{d}
\end{equation}
and
\begin{equation}\label{eqn: SBME app HJ eqn on Rp}
\partial_t f(t,x)=\H\big(\nabla f(t,x)\big)  \quad \text{on}\quad \Rpp\times \Rp^{d}
\end{equation}
subject to the initial condition $\psi$. The appropriate notion of solution for these equations will be that of a viscosity solution as described in \Cref{SBME definition of viscosity solution}. To establish the well-posedness of these equations we will closely follow \cite{HB_cone}. We found it useful to provide full details when applying Perron's method, although our arguments will certainly be seen as classical by experts, and we hope that the reader will also find these details helpful. To be more specific, first, we will use the assumptions \eqref{eqn: SBME app A1} and \eqref{eqn: SBME app A3} to prove a comparison principle for the Hamilton-Jacobi equation~\eqref{eqn: SBME app HJ eqn on Rp} which will ensure the uniqueness of solutions to this equation. This will be the content of Section~\ref{SBME app comparison}. In Section~\ref{app SBME Perron}, we will combine \eqref{eqn: SBME app A1}-\eqref{eqn: SBME app A3} with the classical Perron method to obtain the existence of a solution to the Hamilton-Jacobi equation \eqref{eqn: SBME app HJ eqn on Rp}. Finally, in Section~\ref{SBME app equivalence}, we will leverage \eqref{eqn: SBME app A2} to show that solutions to \eqref{eqn: SBME app HJ eqn on Rpp} and  \eqref{eqn: SBME app HJ eqn on Rp} coincide. In the last section of this appendix we will show that the unique solution to the Hamilton-Jacobi equations \eqref{eqn: SBME app HJ eqn on Rpp} and \eqref{eqn: SBME app HJ eqn on Rp} preserves the monotonicity of its initial condition. It will be convenient to remember the definition of the function spaces \eqref{eqn: SBME L space} and \eqref{eqn: SBME L unif space}.

\subsection{Comparison principle and Lipschitz continuity of solutions on \texorpdfstring{$\Rp^d$}{Rpd}}\label{SBME app comparison}

In this section, we use the arguments in Proposition 3.2 of \cite{JC_NC} to obtain a comparison principle on $\smash{\mathfrak{L}_{\mathrm{unif}}}$ for the Hamilton-Jacobi equation \eqref{eqn: SBME app HJ eqn on Rp}. Together with the observation that any solution in $\smash{\mathfrak{L}}$ to the Hamilton-Jacobi equation \eqref{eqn: SBME app HJ eqn on Rp} is uniformly Lipschitz continuous with Lipschitz constant bounded by that of its initial condition, this comparison principle will imply the uniqueness of solutions in $\smash{\mathfrak{L}}$ to the Hamilton-Jacobi equation \eqref{eqn: SBME app HJ eqn on Rp}. The Lipschitz continuity of the solutions to the Hamilton-Jacobi equation \eqref{eqn: SBME app HJ eqn on Rp} will be obtained using the arguments in Proposition 3.4 of \cite{JC_NC}. For every $r \in \R$, we denote the positive part of $r$ by $r_+ = \max(r,0)$.

\begin{proposition}\label{SBME comparison principle on Rpd}
Fix a non-linearity $\H:\R^d\to \R$ satisfying \eqref{eqn: SBME app A1}, and let $\smash{u,v\in \mathfrak{L}_{\mathrm{unif}}}$ be a subsolution and a supersolution to \eqref{eqn: SBME app HJ eqn on Rp}. Write $\smash{L=\max\big(\sup_{t>0}\Norm{u(t,\cdot)}_{\mathrm{Lip},1},\sup_{t>0}\Norm{v(t,\cdot)}_{\mathrm{Lip},1}\big)}$ and $\smash{V=\Norm{\H}_{\mathrm{Lip},1,*}}$. For every $Q>2L$ and all $R\in \R$, the map
\begin{equation}
(t,x)\mapsto u(t,x)-v(t,x)-Q\big(\Norm{x}_1+Vt-R\big)_+
\end{equation}
achieves its supremum on $\smash{\{0\}\times \Rp^d}$.
\end{proposition}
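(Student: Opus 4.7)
I would proceed by contradiction using the classical doubling-of-variables technique, combined with a strict $\delta t$-perturbation to absorb the absence of any prescribed boundary condition on $\partial \Rp^d$. Set $w(t,x) = u(t,x) - v(t,x) - Q(\Norm{x}_1 + Vt - R)_+$, and let $M = \sup_{[0,\infty) \times \Rp^d} w$ and $M_0 = \sup_{\{0\} \times \Rp^d} w$. Suppose toward a contradiction that $M > M_0$. The first task is to show that $M$ is attained. The Lipschitz continuity of $u,v$ in $x$ with constant $L$ gives $\dNorm{1}{\nabla u}, \dNorm{1}{\nabla v} \leq L$ by duality, and the viscosity inequalities combined with \eqref{eqn: SBME app A1} yield $\partial_t u \leq \H(0) + VL$ and $\partial_t v \geq \H(0) - VL$ in the viscosity sense; hence $(u - v)(t,x) \leq (u - v)(0, x) + 2VLt$. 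Combined with the Lipschitz bound in $x$ and the strict inequality $Q > 2L$, this forces $w \to -\infty$ as $\Norm{x}_1 + t \to \infty$, so $M$ is attained at some $(t^*, x^*)$, necessarily with $t^* > 0$ by assumption.

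Next, I would replace $u$ by $u_\delta(t, x) = u(t, x) - \delta t$ for small $\delta > 0$, a strict subsolution in the sense that any smooth $\phi$ with $u_\delta - \phi$ having a local max at $(t, x)$ satisfies $\partial_t \phi \leq \H(\nabla \phi) - \delta$. For $\delta$ small enough the perturbed object $w_\delta$ still has supremum $> M_0$, attained at some $(t^*_\delta, x^*_\delta)$ with $t^*_\delta > 0$; a minor perturbation of $R$ may be arranged so that $\Norm{x^*_\delta}_1 + Vt^*_\delta \neq R$, i.e., the maximizer avoids the kink of the penalty. I would then double variables by considering
\[
\Phi_\epsilon(t, s, x, y) = u_\delta(t, x) - v(s, y) - Q(\Norm{x}_1 + Vt - R)_+ - \frac{(t - s)^2 + |x - y|^2}{2\epsilon},
\]
whose maximum is attained at some $(t_\epsilon, s_\epsilon, x_\epsilon, y_\epsilon)$ converging to $(t^*_\delta, t^*_\delta, x^*_\delta, x^*_\delta)$ with $\epsilon^{-1}[(t_\epsilon - s_\epsilon)^2 + |x_\epsilon - y_\epsilon|^2] \to 0$, so that for $\epsilon$ small $t_\epsilon, s_\epsilon > 0$ and the penalty is either strictly active or strictly inactive near $(t_\epsilon, x_\epsilon)$.

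Applying the strict subsolution inequality for $u_\delta$ at $(t_\epsilon, x_\epsilon)$ with smooth test function $\phi(t, x) = Q(\Norm{x}_1 + Vt - R)_+ + \frac{(t - s_\epsilon)^2 + |x - y_\epsilon|^2}{2\epsilon}$ and the supersolution inequality for $v$ at $(s_\epsilon, y_\epsilon)$ with test function $\psi(s, y) = -\frac{(t_\epsilon - s)^2 + |x_\epsilon - y|^2}{2\epsilon}$, and then subtracting, the matching $\frac{t_\epsilon - s_\epsilon}{\epsilon}$ terms cancel while the Lipschitz bound on $\H$ in the normalized-$\ell^{1,*}$ norm controls the difference of the $\H$-terms. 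Letting $\chi \in \{0, 1\}$ indicate whether the penalty is active and writing $\iota = (1, \dots, 1)$, I would obtain
\[
\chi QV \;\leq\; \H\Bigl(\chi \tfrac{Q}{d}\iota + \tfrac{x_\epsilon - y_\epsilon}{\epsilon}\Bigr) - \H\Bigl(\tfrac{x_\epsilon - y_\epsilon}{\epsilon}\Bigr) - \delta \;\leq\; \chi\, V \dNorm{1}{\tfrac{Q}{d}\iota} - \delta \;=\; \chi QV - \delta,
\]
since $\dNorm{1}{\tfrac{Q}{d}\iota} = Q$; this yields $\delta \leq 0$, contradicting $\delta > 0$. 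The main obstacle is the absence of any boundary data on $\partial \Rp^d$: this is handled not by prescribing a boundary condition but by the calibrated structure of the penalty, whose time-coefficient $V = \Norm{\H}_{\mathrm{Lip},1,*}$ is chosen precisely so that its time derivative $QV$ matches the upper bound $V \dNorm{1}{\tfrac{Q}{d}\iota} = QV$ on the gradient contribution to the $\H$-term, while the strict margin $Q > 2L$ delivers both the compactness needed to attain $M$ and the final room to close the contradiction via the $\delta t$-perturbation.
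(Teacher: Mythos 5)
Your overall architecture (a calibrated penalty $Q(\Norm{x}_1+Vt-R)_+$ with $Q>2L$, doubling of variables, a strict $\delta t$-perturbation, and the final cancellation $V\dNorm{1}{\tfrac{Q}{d}\iota}=QV$ against the time-derivative of the penalty) is sound and is essentially the mechanism the paper uses. The genuine gap is in how you obtain compactness: you work on the infinite time horizon and need the suprema of $w$ and of the doubled functional $\Phi_\epsilon$ to be attained, and for this you invoke the bound $(u-v)(t,x)\le (u-v)(0,x)+2VLt$, deduced from ``$\partial_t u\le \H(0)+VL$ and $\partial_t v\ge \H(0)-VL$ in the viscosity sense.'' Passing from these viscosity differential inequalities to the pointwise linear-in-time bound is not a triviality: it amounts to comparing the subsolution $u$ with the supersolution $(t,x)\mapsto u(0,x)+(\H(0)+VL)t$, i.e.\ it is itself an instance of the comparison principle you are trying to prove (membership in $\mathfrak{L}$ only gives growth with constants $[u]_0,[v]_0$, which need not be dominated by $QV+\delta$, so it does not rescue coercivity in $t$). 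As written the argument is therefore circular, or at best missing a separate lemma with its own doubling proof. The paper avoids this entirely by phrasing the contradiction hypothesis on a finite horizon $[0,T]$ (any violation lives in some such strip) and adding the barrier $\epsilon/(T-t)$, so that compactness in time is automatic and no a priori time-regularity of sub/supersolutions is needed; you should either adopt that device or supply an independent proof of the time-Lipschitz bound.

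A second, more technical gap: your test functions contain the unsmoothed kink $(\,\cdot\,)_+$, and you dispose of it by saying ``a minor perturbation of $R$ may be arranged so that the maximizer avoids the kink.'' Perturbing $R$ changes the functional and hence the maximizer, so it is not clear that such an $R$ exists, and nothing is said about the case where the doubled maximizer $(t_\epsilon,x_\epsilon)$ sits on the kink set. This is fixable (e.g.\ touch the penalty from below by a smooth minorant with slope $\lambda\in[0,1]$ at the kink, for which the same cancellation works, or smooth $(\,\cdot\,)_+$ and the norm as the paper does with the function $\theta$ and $\Norm{\cdot}_{1,\epsilon_0}$), but as stated it is an unjustified step. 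Note that $\Norm{x}_1$ itself is harmless on $\Rp^d$, where it is linear; the issue is only the positive part.
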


\begin{proof}
Suppose for the sake of contradiction that there exists $T>0$ with
\begin{equation}\label{eqn: SBME comparison principle absurd}
\sup_{[0,T]\times \Rp^d}\big(u(t,x)-v(t,x)-\p(t,x)\big)>\sup_{\Rp^d}\big(u(0,x)-v(0,x)-\p(0,x)\big),
\end{equation}
where $\smash{\p(t,x)=Q\big(\Norm{x}_1+Vt-R\big)_+}$. The proof proceeds in three steps: first we smoothen and perturb \eqref{eqn: SBME comparison principle absurd}, then we use a variable doubling argument to obtain a system of inequalities, and finally we contradict this system of inequalities.\\
\noindent \step{1: smoothing and perturbing.}\\
Given $\epsilon_0\in (0,1)$ to be determined, let $\theta\in C^\infty(\R)$ be an increasing function with $$(r-\epsilon_0)_+\leq \theta(r)\leq r_+$$
for all $r\in \R$. Consider the smoothed normalized-$\ell^1$ norm,
$$\Norm{x}_{1,\epsilon_0}=\frac{1}{d}\sum_{k=1}^d\big(x_k^2+\epsilon_0\big)^{\frac{1}{2}},$$
and introduce the function
$$\Phi(t,x)=Q\theta\big(\Norm{x}_{1,\epsilon_0}+Vt-R\big)$$
defined on $\Rp\times \Rp^d$. The choice of $\theta$ and the bound $(a+b)_+\leq a_++b_+$ imply that
\begin{align*}
\p(t,x)
&\leq \Phi(t,x)+Q\epsilon_0\leq \p(t,x)+Q\epsilon_0^{1/2}+Q\epsilon_0,
\end{align*}
where we have used that $(a+b)^{\frac{1}{2}}\leq a^{\frac{1}{2}}+b^{\frac{1}{2}}$ for $a,b>0$.
It follows by \eqref{eqn: SBME comparison principle absurd} that
\begin{align*}
\sup_{\Rp^d}\big(u(0,x)-v(0,x)-\Phi(0,x)\big)
&< \sup_{[0,T]\times \Rp^d}\big(u(t,x)-v(t,x)-\Phi(t,x)\big)+Q\epsilon_0+Q\epsilon_0^{1/2},
\end{align*}
so choosing $\epsilon_0>0$ small enough guarantees that
\begin{equation}
\sup_{[0,T]\times \Rp^d}\big(u(t,x)-v(t,x)-\Phi(t,x)\big)>\sup_{\Rp^d}\big(u(0,x)-v(0,x)-\Phi(0,x)\big).
\end{equation}
This is a smoothed version of the absurd hypothesis \eqref{eqn: SBME comparison principle absurd}. For technical reasons, it will be convenient to perturb the function $\Phi$. Given $\epsilon>0$ to be determined, introduce the function
$$\chi(t,x)=\Phi(t,x)+\frac{\epsilon}{T-t},$$
defined on $\Rp\times \Rp^d$. Choosing $\epsilon>0$ small enough ensures that
\begin{equation}\label{eqn: SBME comparison principle absurd smoothed}
\sup_{[0,T]\times \Rp^d}\big(u(t,x)-v(t,x)-\chi(t,x)\big)>\sup_{\Rp^d}\big(u(0,x)-v(0,x)-\chi(0,x)\big).
\end{equation}
This is a smoothed and perturbed version of the absurd hypothesis \eqref{eqn: SBME comparison principle absurd}.\\
\step{2: system of inequalities.}\\
For each $\alpha\geq 1$, define the function $\Psi_\alpha:[0,T]\times \Rp^d\times [0,T] \times \Rp^d\to \R\cup\{-\infty\}$ by
\begin{equation}\label{eqn: SBME comparison principle Psi}
\Psi_\alpha(t,x,t',x')=u(t,x)-v(t',x')-\frac{\alpha}{2}\big(\abs{t-t'}^2+\Norm{x-x'}_{1,\epsilon_0}\big)-\chi(t,x).
\end{equation}
By doubling the variables and introducing the potential in this way, we ensure that whenever $\alpha>4(L+1)$, the function $\Psi_\alpha$ achieves its supremum at a point $(t_\alpha,x_\alpha,t_\alpha',x_\alpha')$ which remains bounded as $\alpha$ tends infinity. Indeed, if we write $C>0$ for a constant that depends on $T$, $Q$, $R$, $V$, $u(0,0)$, $[u]_0$, $v(0,0)$ and $[v]_0$ whose value might not be the same at each occurrence, then for any $x\in \Rp^d$ with $\Norm{x}_{1,\epsilon_0}>R+1$, the bound $\smash{\Phi(t,x)\geq Q(\Norm{x}_{1,\epsilon_0}+Vt-R-1)_+}$ reveals that
\begin{align*}
\Psi_\alpha(t,x,t',x')&\leq u(0,x)-v(0,x')-\frac{\alpha}{2}\Norm{x-x'}_{1,\epsilon_0}-\Phi(t,x)+C\\
&\leq L\big(\Norm{x}_1+\Norm{x'}_1\big)-\frac{\alpha}{2}\Norm{x-x'}_{1,\epsilon_0}-Q\Norm{x}_{1,\epsilon_0} +C\\
&\leq (2L-Q)\Norm{x}_1+\Big(L-\frac{\alpha}{2}\Big)\Norm{x-x'}_{1,\epsilon_0}+C\\
&\leq (2L- Q)\Norm{x}_1+C.
\end{align*}
Remembering that $Q>2L$ and observing that the supremum of \eqref{eqn: SBME comparison principle Psi} is bounded from below by $\smash{\Psi_\alpha(0,0,0,0)}$ ensures that this upper semi-continuous function achieves its supremum at some point $\smash{(t_\alpha,x_\alpha,t_\alpha',x_\alpha')}$ which remains bounded with respect to the normalized-$\ell^1$ norm as $\alpha$ tends to infinity. In particular, the potential
$$\alpha\big(\abs{t_\alpha-t_\alpha'}^2+\Norm{x_\alpha-x_\alpha'}_{1}\big)\leq \alpha\big(\abs{t_\alpha-t_\alpha'}^2+\Norm{x_\alpha-x_\alpha'}_{1,\epsilon_0}\big)$$
must remain bounded as $\alpha$ tends to infinity. It follows that, up to the extraction of a subsequence, there exist $t_0\in [0,T]$ and $x_0\in \Rp^d$ such that $t_\alpha\to t_0$, $t_\alpha'\to t_0$, $x_\alpha\to x_0$ and $x_\alpha'\to x_0$ as $\alpha\to \infty$. The term $\smash{\frac{\epsilon}{T-t}}$ in the definition of $\chi$ guarantees that $t_0\in [0,T)$. On the other hand, the semi-continuity of $u$, $v$ and $\chi$ together with the bounds
$$\big(u-v-\chi\big)(t,x)\leq \Psi_\alpha(t_\alpha,x_\alpha,t_\alpha',x_\alpha')\leq u(t_\alpha,x_\alpha)-v(t_\alpha',x_\alpha')-\chi(t_\alpha,x_\alpha)$$
imply that
$$(u-v-\chi)(t_0,x_0)=\sup_{[0,T]\times\Rp^d}(u-v-\chi).$$
By \eqref{eqn: SBME comparison principle absurd smoothed} it must be the case that $t_0\in (0,T)$. This means that $t_\alpha,t_\alpha'\in (0,T)$ for all $\alpha$ large enough. We have therefore found a sequence of quadruples $((t_\alpha,x_\alpha,t_\alpha',x_\alpha'))_\alpha$ with $t_\alpha,t_\alpha'\in (0,T)$ such that $\Psi_\alpha$ achieves its supremum at $(t_\alpha,x_\alpha,t_\alpha',x_\alpha')$ for $\alpha$ large enough. With this in mind, fix $\alpha\geq 1$ large enough, and introduce the smooth functions $\smash{\phi,\phi'\in C^\infty\big((0,T)\times \Rp^d\big)}$ defined by
\begin{align*}
\phi(t,x)&=v(t_\alpha',x_\alpha')+\frac{\alpha}{2}\big(\abs{t-t_\alpha'}^2+\Norm{x-x_\alpha'}_{1,\epsilon_0}\big)+\chi(t,x),\\
\phi'(t',x')&=u(t_\alpha,x_\alpha)-\frac{\alpha}{2}\big(\abs{t'-t_\alpha}^2+\Norm{x'-x_\alpha}_{1,\epsilon_0}\big)-\chi(t_\alpha,x_\alpha).
\end{align*}
Since $(t_\alpha,x_\alpha, t_\alpha', x_\alpha')$ maximizes $\Psi_\alpha$, the function $u-\phi$ achieves a local maximum at the point $\smash{(t_\alpha,x_\alpha)\in (0,\infty)\times \Rp^d}$ while $v-\phi'$ achieves a local minimum at $\smash{(t_\alpha', x_\alpha')\in (0,\infty)\times \Rp^d}$. It follows by definition of a viscosity solution that
\begin{equation}\label{eqn: SBME comparison principle phi and phi' conditions}
\big(\partial_t\phi-\H(\nabla \phi)\big)(t_\alpha,x_\alpha)\leq 0 \quad \text{and} \quad \big(\partial_t\phi' -\H(\nabla \phi')\big)(t_\alpha',x_\alpha')\geq 0.
\end{equation}
This is the system of inequalities that we now strive to contradict.\\
\noindent \step{3: reaching a contradiction.}\\
A direct computation gives
\begin{align*}
\big(\partial_t\phi-\H(\nabla \phi)\big)(t_\alpha,x_\alpha)&=\alpha(t_\alpha-t_\alpha')+\partial_t\Phi(t_\alpha,x_\alpha)+\frac{\epsilon}{(T-t)^2}-\H\big(\nabla \phi(t_\alpha,x_\alpha)\big),\\
\big(\partial_t\phi'-\H(\nabla \phi')\big)(t_\alpha',x_\alpha')&=\alpha(t_\alpha-t_\alpha')-\H\big(\nabla \phi'(t_\alpha',x_\alpha')\big),
\end{align*}
where
\begin{align*}
\partial_{x_k}\phi(t_\alpha,x_\alpha)&=\frac{\alpha}{2d}\cdot \frac{\big(x_\alpha-x_\alpha'\big)_k}{((x_\alpha-x_\alpha')_k^2+\epsilon_0)^{\frac{1}{2}}}+\partial_{x_k}\Phi(t_\alpha,x_\alpha),\\
\partial_{x_k}\phi'(t_\alpha',x_\alpha')&=\frac{\alpha}{2d}\cdot \frac{\big(x_\alpha-x_\alpha'\big)_k}{((x_\alpha-x_\alpha')_k^2+\epsilon_0)^{\frac{1}{2}}}.
\end{align*}
It follows by the definition of $\smash{V=\Norm{\H}_{\mathrm{Lip},1,*}}$ that 
\begin{align*}
\big(\partial_t\phi'-\H(\nabla \phi')\big)(t_\alpha',x_\alpha')&\leq \alpha(t_\alpha-t_\alpha')-\H\big(\nabla \phi(t_\alpha,x_\alpha)\big)+V\dNorm{1}{\nabla \phi(t_\alpha,x_\alpha))-\nabla \phi'(t_\alpha',x_\alpha')}\\
&\leq \alpha(t_\alpha-t_\alpha')-\H\big(\nabla \phi(t_\alpha,x_\alpha)\big)+Vd\max_{1\leq k\leq d}\lvert \partial_{x_k}\Phi(t_\alpha,x_\alpha)\rvert.
\end{align*}
A direct computation shows that $Vd\abs{\partial_{x_k}\Phi(t_\alpha,x_\alpha)}\leq \partial_t\Phi(t_\alpha,x_\alpha)$, so this can be bounded further by
\begin{align*}
\big(\partial_t\phi'-\H(\nabla \phi')\big)(t_\alpha',x_\alpha')&\leq \alpha(t_\alpha-t_\alpha')+\partial_t\Phi(t_\alpha,x_\alpha)-\H\big(\nabla \phi(t_\alpha,x_\alpha)\big)\\
&<\big(\partial_t\phi-\H(\nabla \phi)\big)(t_\alpha,x_\alpha)\leq 0,
\end{align*}
where the strict inequality is due to the term $\frac{\epsilon}{(T-t)^2}$ and the final inequality leverages the first inequality in \eqref{eqn: SBME comparison principle phi and phi' conditions}. This contradicts the second inequality in \eqref{eqn: SBME comparison principle phi and phi' conditions} and completes the proof.
\end{proof}

\begin{corollary}\label{SBME comparison principle corollary on Rpd}
Fix a non-linearity $\H:\R^d\to \R$ satisfying \eqref{eqn: SBME app A1}. If $\smash{u,v\in \mathfrak{L}_{\mathrm{unif}}}$ are respectively a subsolution and a supersolution to \eqref{eqn: SBME app HJ eqn on Rp}, then
\begin{equation}
\sup_{\Rp\times \Rp^d}\big(u(t,x)-v(t,x)\big)=\sup_{\Rp^d}\big(u(0,x)-v(0,x)\big).
\end{equation}
\end{corollary}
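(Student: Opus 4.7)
The plan is to derive this corollary as an essentially immediate consequence of \Cref{SBME comparison principle on Rpd}, by exploiting the freedom to choose the parameter $R$ in that proposition as a function of the point at which one wishes to control $u-v$. The inequality $\ge$ is trivial: restricting the left-hand supremum to $t=0$ already yields $\sup_{\Rp^d}(u(0,\cdot)-v(0,\cdot))$. Only the reverse inequality $\le$ carries content.

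To establish it, I would first set $L=\max(\sup_{t>0}\Norm{u(t,\cdot)}_{\mathrm{Lip},1},\sup_{t>0}\Norm{v(t,\cdot)}_{\mathrm{Lip},1})$ and $V=\Norm{\H}_{\mathrm{Lip},1,*}$, both of which are finite because $u,v\in\mathfrak{L}_{\mathrm{unif}}$ and $\H$ satisfies \eqref{eqn: SBME app A1}, and then fix any $Q>2L$. For an arbitrary base point $(t_0,x_0)\in\Rp\times\Rp^d$, I would apply \Cref{SBME comparison principle on Rpd} with the specific choice $R_0=\Norm{x_0}_1+Vt_0$. This makes the penalty vanish at $(t_0,x_0)$, while on $\{0\}\times\Rp^d$ the penalty term $Q(\Norm{x}_1-R_0)_+$ is non-negative and hence only reduces the competing values. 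Combining these observations yields
\[
u(t_0,x_0)-v(t_0,x_0) \le \sup_{x\in\Rp^d}\bigl(u(0,x)-v(0,x)-Q(\Norm{x}_1-R_0)_+\bigr) \le \sup_{x\in\Rp^d}\bigl(u(0,x)-v(0,x)\bigr).
\]
Taking the supremum over $(t_0,x_0)\in\Rp\times\Rp^d$ on the left closes the argument.

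There is no genuine obstacle: the difficult work, namely the variable doubling, the regularization of the normalized $\ell^1$ norm by $\Norm{\cdot}_{1,\epsilon_0}$, and the contradiction argument against the viscosity sub/supersolution inequalities, has already been carried out in \Cref{SBME comparison principle on Rpd}. The corollary simply recasts that quantitative statement, with its $R$-dependent penalty, into its clean qualitative form. The only point to emphasize is that \Cref{SBME comparison principle on Rpd} supplies a conclusion valid for \emph{every} $R\in\R$, which is what allows us to tailor $R$ to each target point $(t_0,x_0)$ separately.
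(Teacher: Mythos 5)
Your proposal is correct and follows essentially the same route as the paper: the paper also invokes \Cref{SBME comparison principle on Rpd} with the point-dependent choice $R=\Norm{x^*}_1+Vt^*$ so that the penalty vanishes at the target point, merely phrasing the argument as a proof by contradiction rather than your direct estimate followed by taking a supremum.
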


\begin{proof}
Suppose for the sake of contradiction that there exists $(t^*,x^*)\in (0,\infty)\times \Rp^d$ with $$u(t^*,x^*)-v(t^*,x^*)>\sup_{\Rp^d}\big(u(0,x)-v(0,x)\big).$$
Applying the comparison principle in \Cref{SBME comparison principle on Rpd} with $R=\Norm{x^*}_1+Vt^*$ yields a contradiction and completes the proof.
\end{proof}

\begin{proposition}\label{SBME solution to HJ eqn on Rp is Lipschitz}
Fix a non-linearity $\H:\R^d\to \R$ satisfying \eqref{eqn: SBME app A1}. If $\smash{f\in \mathfrak{L}}$ is a viscosity solution to the Hamilton-Jacobi equation \eqref{eqn: SBME app HJ eqn on Rp}, then 
\begin{equation}
\sup_{t\geq 0}\Norm{f(t,\cdot)}_{\mathrm{Lip},1}=\Norm{f(0,\cdot)}_{\mathrm{Lip},1}.
\end{equation}
\end{proposition}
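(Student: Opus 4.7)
The inequality $\sup_{t \geq 0} \Norm{f(t,\cdot)}_{\mathrm{Lip},1} \geq \Norm{f(0,\cdot)}_{\mathrm{Lip},1}$ is trivial from including $t = 0$, so the content is the reverse bound. Write $K = \Norm{f(0,\cdot)}_{\mathrm{Lip},1}$. The strategy is to exploit translation invariance of the equation in the non-negative directions, and then to reduce an arbitrary comparison between two points of $\Rp^d$ to such translations via a componentwise-maximum trick.

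Fix $y \in \Rp^d$. Since $\Rp^d + y \subset \Rp^d$, the function $u(t,x) = f(t, x+y)$ is well-defined on $\Rp \times \Rp^d$, and is itself a viscosity solution of \eqref{eqn: SBME app HJ eqn on Rp}: if $u - \phi$ has a local extremum at $(t^*, x^*)$, then $f - \tilde \phi$ has a local extremum at $(t^*, x^* + y)$ for any smooth extension $\tilde \phi$ of $(t, z) \mapsto \phi(t, z-y)$ to $(0,\infty) \times \Rp^d$, and $\partial_t \tilde \phi$ and $\nabla \tilde \phi$ at $(t^*, x^*+y)$ coincide with $\partial_t \phi$ and $\nabla \phi$ at $(t^*, x^*)$. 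Both $u$ and $f$ lie in $\mathfrak{L}$ with initial-time Lipschitz constant at most $K$. Applying the comparison principle \Cref{SBME comparison principle on Rpd} to the pair $u - K\Norm{y}_1$ and $f$, the initial-time inequality $f(0, x+y) - f(0, x) - K\Norm{y}_1 \leq 0$ propagates to all $t > 0$ and yields $f(t, x+y) - f(t, x) \leq K \Norm{y}_1$. A symmetric application to $u + K\Norm{y}_1$ and $f$ gives the reverse bound, so
\begin{equation*}
\abs{f(t, x+y) - f(t, x)} \leq K \Norm{y}_1 \qquad \text{for all } t \geq 0, \ x \in \Rp^d, \ y \in \Rp^d.
\end{equation*}

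To promote this to the full Lipschitz estimate on $\Rp^d$, given arbitrary $x, x' \in \Rp^d$, set $z = x \vee x' \in \Rp^d$ (coordinatewise maximum). Then $z - x, z - x' \in \Rp^d$ and, since in each coordinate exactly one of $(z-x)_k$ and $(z-x')_k$ vanishes while the other equals $\abs{x_k - x'_k}$,
\begin{equation*}
\Norm{z-x}_1 + \Norm{z-x'}_1 = \Norm{x - x'}_1.
\end{equation*}
The triangle inequality applied via the intermediate point $z$ then gives $\abs{f(t, x) - f(t, x')} \leq K \Norm{z - x}_1 + K \Norm{z - x'}_1 = K \Norm{x - x'}_1$, which is the desired bound.

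\textbf{Main obstacle.} The principal subtlety is that \Cref{SBME comparison principle on Rpd} is stated for pairs in $\mathfrak{L}_{\mathrm{unif}}$, whereas a priori $u$ and $f$ are only known to belong to $\mathfrak{L}$. This is resolved by observing that in the proof of \Cref{SBME comparison principle on Rpd}, the constant $L$ enters exclusively to bound $u(0, \cdot)$ and $v(0, \cdot)$ in terms of the normalized-$\ell^1$ norm; consequently the argument runs verbatim with $L = \max\bigl(\Norm{u(0,\cdot)}_{\mathrm{Lip},1}, \Norm{v(0,\cdot)}_{\mathrm{Lip},1}\bigr) \leq K$, without any uniform-in-time Lipschitz hypothesis on the solutions being compared.
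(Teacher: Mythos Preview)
Your argument is correct and takes a genuinely different route from the paper's. The paper does not invoke the comparison principle at all here; instead it runs a bespoke doubling-of-variables argument from scratch. Concretely, it supposes for contradiction that $\sup_{t,x,x'}\bigl(f(t,x)-f(t,x')-L\Norm{x-x'}_1\bigr)>0$, introduces perturbed functions $u_\epsilon=f-\Phi-\epsilon/(T-t)$ and $v=f+\Phi$ (with $\Phi$ a smoothed localization weight), shows these are respectively a sub- and a supersolution, and then studies maxima of
\[
\Psi_\alpha(t,x,t',x')=u_\epsilon(t,x)-v(t',x')-\tfrac{\alpha}{2}|t-t'|^2-(L+\delta t)\Norm{x-x'}_{1,\epsilon_0}
\]
to derive a contradiction from the viscosity conditions at a carefully located interior maximizer.

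Your approach is more modular: you reuse the comparison principle as a black box, exploit translation invariance in non-negative directions, and close with the coordinatewise-maximum identity $\Norm{x\vee x'-x}_1+\Norm{x\vee x'-x'}_1=\Norm{x-x'}_1$. This is shorter and more conceptual. The price is exactly the point you flag: the comparison principle in \Cref{SBME comparison principle on Rpd} is stated for $\mathfrak{L}_{\mathrm{unif}}$, so you are implicitly invoking a sharpened version. Your diagnosis is correct---in that proof the constant $L$ enters only through the bounds $|u(0,x)-u(0,0)|\le L\Norm{x}_1$ and $|v(0,x')-v(0,0)|\le L\Norm{x'}_1$ used to localize the maximizer of $\Psi_\alpha$, while the passage from time $t$ to time $0$ uses only $[u]_0$, $[v]_0$---so the argument indeed goes through for $u,v\in\mathfrak{L}$. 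The paper's direct approach avoids having to make (or justify) this observation, which is presumably why the authors chose it: the Lipschitz estimate and the comparison principle are proved independently, and $\mathfrak{L}_{\mathrm{unif}}$ is then available for every subsequent use of comparison.
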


\begin{proof}
Let $L=\Norm{f(0,\cdot)}_{\mathrm{Lip},1}$, and suppose for the sake of contradiction that there exists $T>0$ with
\begin{align}\label{eqn: SBME solution stays Lipschitz absurd}
\sup_{[0,T]\times \Rp^d\times\Rp^d}\big(f(t,x)-f(t,x')-L&\Norm{x-x'}_1\big) \notag\\
&>0\geq \sup_{x,x'\in \Rp^d}\big(f(0,x)-f(0,x')-L\Norm{x-x'}_1\big).
\end{align}
The proof proceeds in three steps: first we perturb \eqref{eqn: SBME solution stays Lipschitz absurd}, then we use a variable doubling argument to obtain a system of inequalities, and finally we contradict this system of inequalities.\\
\step{1: perturbing.}\\
Given $\epsilon_0\in (0,1)$ to be determined, let $\theta\in C^\infty(\R)$ be an increasing function with
$$(r-\epsilon_0)_+\leq \theta(r)\leq r_+$$
for all $r\in \R$, and consider the smoothed normalized-$\ell^1$ norm,
$$\Norm{x}_{1,\epsilon_0}=\frac{1}{d}\sum_{k=1}^d\big(x_k^2+\epsilon_0\big)^{\frac{1}{2}}.$$
For a constant $R\in \R$ to be chosen, $Q>2L$ and $V=\Norm{\H}_{\mathrm{Lip},1,*}$, introduce the function
$$\Phi(t,x)=Q\theta\big(\Norm{x}_{1,\epsilon_0}+Vt-R\big)$$
defined on $\Rp\times \Rp^d$. Given $\epsilon>0$ to be determined, consider the functions
$$u_\epsilon(t,x)=f(t,x)-\Phi(t,x)-\frac{\epsilon}{T-t}\quad \text{and} \quad v(t,x)=f(t,x)+\Phi(t,x)$$
defined on $[0,T]\times \Rp^d$. Remembering \eqref{eqn: SBME solution stays Lipschitz absurd} and choosing $R>0$ large enough and $\epsilon,\epsilon_0>0$ small enough guarantees that
\begin{align}\label{eqn: SBME solution stays Lipschitz absurd perturbed}
\sup_{[0,T]\times \Rp^d\times \Rp^d}\big(u_\epsilon(t,x)-v(t,x')-L&\Norm{x-x'}_{1,\epsilon_0}\big) \notag\\
&>\sup_{\Rp^d\times \Rp^d}\big(u_\epsilon(0,x)-v(0,x')-L\Norm{x-x'}_1\big).
\end{align}
This is a perturbed version of the absurd hypothesis \eqref{eqn: SBME solution stays Lipschitz absurd}. Before moving onto the variable doubling argument, observe that $u_\epsilon$ is a viscosity subsolution to the Hamilton-Jacobi equation \eqref{eqn: SBME app HJ eqn on Rp} while $v$ is a viscosity supersolution to this equation. Indeed, fix $\smash{\phi\in C^\infty\big((0,\infty)\times \Rp^d)}$ with the property that $u_\epsilon-\phi$ has a local maximum at $(t^*,x^*)\in (0,\infty)\times \Rp^d$. This means that the map
$$(t,x)\mapsto f(t,x)-\Big(\phi(t,x)+\Phi(t,x)+\frac{\epsilon}{T-t}\Big)$$
has a local maximum at $(t^*,x^*)\in (0,\infty)\times \Rp^d$. It follows by the viscosity subsolution criterion for $f$ that 
$$\partial_t\phi(t^*,x^*)+\partial_t\Phi(t^*,x^*)+\frac{\epsilon}{(T-t^*)^2}-\H\big(\nabla (\phi+\Phi)\big)(t^*,x^*)\leq 0.$$
A direct computation shows that $dV\abs{\partial_{x_k}\Phi(t^*,x^*)}\leq \partial_t\Phi(t^*,x^*)$, so the definition of $\smash{V=\Norm{\H}_{\mathrm{Lip},1,*}}$ implies that
$$\big(\partial_t\phi-\H(\nabla \phi)\big)(t^*,x^*)\leq \partial_t\phi(t^*,x^*)-\H\big(\nabla(\phi+\Phi)\big)(t^*,x^*)+V\dNorm{1}{\nabla\Phi(t^*,x^*)}\leq 0$$
which means that $u_\epsilon$ is a viscosity subsolution to the Hamilton-Jacobi equation \eqref{eqn: SBME app HJ eqn on Rp}. An identical argument reveals that $v$ is a viscosity supersolution to this equation.\\
\step{2: system of inequalities.}\\
Fix $\delta\in (0,1)$, and for each $\alpha\geq 1$ define the function $\Psi_\alpha:\Rp\times \Rp^d\times \Rp\times \Rp^d\to \R\cup\{-\infty\}$ by
\begin{equation}\label{eqn: SBME solution stays Lipschitz Psi}
\Psi_\alpha(t,x,t',x')=u_\epsilon(t,x)-v(t',x')-\frac{\alpha}{2}\abs{t-t'}^2-(L+\delta t)\Norm{x-x'}_{1,\epsilon_0}.
\end{equation}
By doubling the variables and introducing the potential in this way, we ensure that the function $\Psi_\alpha$ achieves its supremum at a point $(t_\alpha,x_\alpha,t_\alpha',x_\alpha')$ which remains bounded as $\alpha$ tends infinity. Indeed, if we write $C>0$ for a constant that depends on $T$, $Q$, $R$, $V$, $f(0,0)$ and $[f]_0$ whose value might not be the same at each occurrence, then for any $x\in \Rp^d$ with $\Norm{x}_{1,\epsilon_0}>R+1$,
$$\Psi_\alpha(t,x,t',x')\leq L\big(\Norm{x}_1+\Norm{x'}_1\big)-\Phi(t,x)-L\Norm{x-x'}_1+C\leq (2L-Q)\Norm{x}_1+C$$
where we have used the bound $\smash{\Phi(t,x)\geq Q(\Norm{x}_{1,\epsilon_0}+Vt-R-1)_+}$. An analogous bound can be obtained with $x$ replaced by $x'$. Remembering that $Q>2L$ and observing that the supremum of \eqref{eqn: SBME solution stays Lipschitz Psi} is bounded from below by $\smash{\Psi_\alpha(0,0,0,0)}$ ensures that this function achieves its supremum at some point $\smash{(t_\alpha,x_\alpha,t_\alpha',x_\alpha')}$ which remains bounded with respect to the normalized-$\ell^1$ norm as $\alpha$ tends to infinity. In particular, the term $\smash{\alpha \abs{t-t'}^2}$ must remain bounded as $\alpha$ tends to infinity. It follows that, up to the extraction of a subsequence, there exist $t_0\in [0,T]$ and $x_0,x_0'\in \Rp^d$ such that $t_\alpha\to t_0$, $t_\alpha'\to t_0$, $x_\alpha\to x_0$ and $x_\alpha'\to x_0'$ as $\alpha\to \infty$. The term $\smash{\frac{\epsilon}{T-t}}$ in the definition of $u_\epsilon$ guarantees that $t_0\in [0,T)$.  On the other hand, if $\smash{C_1>T(\Norm{x_\alpha}_{1,\epsilon_0}+\Norm{x_\alpha'}_{1,\epsilon_0})}$ for all $\alpha\geq 1$, then the semi-continuity of $u$ together with the bound
$$\Psi_\alpha(t_\alpha,x_\alpha,t_\alpha',x_\alpha')\geq -C_1\delta +\sup_{[0,T]\times \Rp^d\times \Rp^d}\big(u_\epsilon(t,x)-v(t,x')-L\Norm{x-x'}_{1,\epsilon_0}\big)$$
implies that
\begin{equation}\label{eqn: SBME solution stays Lipschitz Psi lower bound}
\Psi_\alpha(t_0,x_0,t_0,x_0')\geq -C_1\delta +\sup_{[0,T]\times \Rp^d\times \Rp^d}\big(u_\epsilon(t,x)-v(t,x')-L\Norm{x-x'}_{1,\epsilon_0}\big).
\end{equation}
To leverage this bound, observe that
\begin{align}\label{eqn: SBME solution stays Lipschitz Psi upper bound t} 
\Psi_\alpha(0,x_0,0,x_0')&=u_\epsilon(0,x_0)-v(0,x_0')-(L+\delta t)\Norm{x_0-x_0'}_{1,\epsilon_0} \notag\\
&\leq \sup_{\Rp^d\times \Rp^d}\big(u_\epsilon(0,x)-v(0,x')-L\Norm{x-x'}_{1}\big)
\end{align}
and
\begin{align*}
\Psi_\alpha(t_0,x_0,t_0,x_0)&=u_\epsilon(t_0,x_0)-v(t_0,x_0)\leq 
\sup_{[0,T]\times \Rp^d}\Big(-2\Phi(t,x)-\frac{\epsilon}{T-t}\Big).
\end{align*}
Since $\theta$ is non-decreasing this can be bounded further by
\begin{equation}\label{eqn: SBME solution stays Lipschitz Psi upper bound x} 
\Psi_\alpha(0,x_0,0,x_0')\leq \sup_{\Rp^d}\Big(-2\Phi(0,x)-\frac{\epsilon}{T}\Big)\leq \sup_{\Rp^d\times \Rp^d}\big(u_\epsilon(0,x)-v(0,x')-L\Norm{x-x'}_1\big),
\end{equation}
where we have used the fact that $\smash{-2\Phi(0,x)-\frac{\epsilon}{T}=u_\epsilon(0,x)-v(0,x)}$. Combining \eqref{eqn: SBME solution stays Lipschitz Psi lower bound}, \eqref{eqn: SBME solution stays Lipschitz Psi upper bound t} and \eqref{eqn: SBME solution stays Lipschitz Psi upper bound x} with the absurd assumption \eqref{eqn: SBME solution stays Lipschitz absurd perturbed} and choosing $\delta$ small enough shows that $t_0\in (0,T)$ and $x_0\neq x_0'$. This means that, taking a subsequence if necessary, it is possible to guarantee that $t_\alpha,t_\alpha'\in (0,T)$ and $x_\alpha\neq x_\alpha'$ for all $\alpha$ large enough. We have therefore found a sequence of quadruples $((t_\alpha,x_\alpha,t_\alpha',x_\alpha'))_\alpha$ with $t_\alpha,t_\alpha'\in (0,T)$ and $x_\alpha\neq x_\alpha'$ such that $\Psi_\alpha$ achieves its supremum at $(t_\alpha,x_\alpha,t_\alpha',x_\alpha')$ for $\alpha$ large enough. With this in mind, fix $\alpha\geq 1$ large enough, and introduce the smooth functions $\smash{\phi,\phi'\in C^\infty\big((0,T)\times \Rp^d\big)}$ defined by
\begin{align*}
\phi(t,x)&=v(t_\alpha',x_\alpha')+\frac{\alpha}{2}\abs{t-t_\alpha'}^2+(L+\delta t)\Norm{x-x_\alpha'}_{1,\epsilon_0},\\
\phi'(t',x')&=u_\epsilon(t_\alpha,x_\alpha)-\frac{\alpha}{2}\abs{t'-t_\alpha}^2-(L+\delta t_\alpha)\Norm{x_\alpha-x'}_{1,\epsilon_0}.
\end{align*}
Since $(t_\alpha,x_\alpha, t_\alpha', x_\alpha')$ maximizes $\Psi_\alpha$, the function $u_\epsilon-\phi$ achieves a local maximum at the point $\smash{(t_\alpha,x_\alpha)\in (0,\infty)\times \Rp^d}$ while $v-\phi'$ achieves a local minimum at $\smash{(t_\alpha', x_\alpha')\in (0,\infty)\times \Rp^d}$. It follows by the observation that $u_\epsilon$ is a viscosity subsolution while $v$ is a viscosity supersolution that
\begin{equation}\label{eqn: SBME solution stays Lipschitz phi and phi' conditions}
\big(\partial_t\phi-\H(\nabla \phi)\big)(t_\alpha,x_\alpha)\leq 0 \quad \text{and} \quad \big(\partial_t\phi' -\H(\nabla \phi')\big)(t_\alpha',x_\alpha')\geq 0.
\end{equation}
This is the system of inequalities that we now strive to contradict.\\
\step{3: reaching a contradiction.}\\
A direct computation gives
\begin{align*}
\big(\partial_t\phi-\H(\nabla \phi)\big)(t_\alpha,x_\alpha)&=\alpha(t_\alpha-t_\alpha')+\delta \Norm{x-x_\alpha'}_{1,\epsilon_0}-\H\big(\nabla \phi(t_\alpha,x_\alpha)\big),\\
\big(\partial_t\phi'-\H(\nabla \phi')\big)(t_\alpha',x_\alpha')&=\alpha(t_\alpha-t_\alpha')-\H\big(\nabla \phi'(t_\alpha',x_\alpha')\big),
\end{align*}
where
$$\partial_{x_k}\phi(t_\alpha,x_\alpha)=\frac{L+\delta t_\alpha}{d}\cdot \frac{\big(x_\alpha-x_\alpha'\big)_k}{((x_\alpha-x_\alpha')_k^2+\epsilon_0)^{\frac{1}{2}}}=\partial_{x_k}\phi'(t_\alpha',x_\alpha').$$
It follows by the second inequality in \eqref{eqn: SBME solution stays Lipschitz phi and phi' conditions} that
$$\big(\partial_t\phi -\H(\nabla \phi)\big)(t_\alpha,x_\alpha)=\big(\partial_t\phi' -\H(\nabla \phi')\big)(t_\alpha',x_\alpha')+\delta\Norm{x-x_\alpha'}_{1,\epsilon_0}>0.$$
This contradicts the first inequality in \eqref{eqn: SBME solution stays Lipschitz phi and phi' conditions} and completes the proof.
\end{proof}

\begin{corollary}\label{SBME uniqueness to HJ eqn on Rp}
Fix a non-linearity $\H:\R^d\to \R$ and an initial condition $\psi:\R^d\to \R$ satisfying \eqref{eqn: SBME app A1} and \eqref{eqn: SBME app A3}. If $f_1,f_2\in \mathfrak{L}$ are viscosity solutions to the Hamilton-Jacobi equation \eqref{eqn: SBME app HJ eqn on Rp} with initial condition $\psi$, then $f_1=f_2$.
\end{corollary}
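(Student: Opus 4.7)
The plan is to derive this uniqueness statement as a direct consequence of the two preceding propositions, namely the automatic Lipschitz regularity of any solution in $\mathfrak{L}$ and the comparison principle on $\mathfrak{L}_{\mathrm{unif}}$.

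First I would upgrade both solutions from $\mathfrak{L}$ to $\mathfrak{L}_{\mathrm{unif}}$. Since $f_1$ and $f_2$ are viscosity solutions to \eqref{eqn: SBME app HJ eqn on Rp} belonging to $\mathfrak{L}$, Proposition \ref{SBME solution to HJ eqn on Rp is Lipschitz} yields
\begin{equation*}
\sup_{t\ge 0}\Norm{f_i(t,\cdot)}_{\mathrm{Lip},1} = \Norm{f_i(0,\cdot)}_{\mathrm{Lip},1} = \Norm{\psi}_{\mathrm{Lip},1}
\end{equation*}
for $i=1,2$, and the right-hand side is finite by assumption \eqref{eqn: SBME app A3}. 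Hence $f_1, f_2 \in \mathfrak{L}_{\mathrm{unif}}$.

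Next, since any viscosity solution is simultaneously a viscosity subsolution and a viscosity supersolution, I would apply Corollary \ref{SBME comparison principle corollary on Rpd} twice: once with $u = f_1$ and $v = f_2$, and once with the roles of $f_1$ and $f_2$ reversed. Each application gives
\begin{equation*}
\sup_{\Rp \times \Rp^d}\big(f_i(t,x) - f_j(t,x)\big) = \sup_{\Rp^d}\big(f_i(0,x) - f_j(0,x)\big) = \sup_{\Rp^d}\big(\psi(x)-\psi(x)\big) = 0,
\end{equation*}
so $f_1 \le f_2$ and $f_2 \le f_1$ everywhere on $\Rp \times \Rp^d$, which yields $f_1 = f_2$.

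There is no real obstacle here: all the work has been done in Proposition \ref{SBME comparison principle on Rpd} and Proposition \ref{SBME solution to HJ eqn on Rp is Lipschitz}, and the corollary is obtained by chaining them in the standard way. The only point worth noting is that Proposition \ref{SBME solution to HJ eqn on Rp is Lipschitz} requires the full solution property (both sub- and supersolution), so we are implicitly using the fact that $f_1$ and $f_2$ are two-sided viscosity solutions and not merely one-sided inequalities.
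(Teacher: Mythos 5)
Your proposal is correct and follows essentially the same route as the paper: the paper's proof is precisely to combine \Cref{SBME solution to HJ eqn on Rp is Lipschitz} (to place both solutions in $\mathfrak{L}_{\mathrm{unif}}$) with \Cref{SBME comparison principle corollary on Rpd} applied in both directions. Your spelled-out version, including the remark that the two-sided solution property is needed for the Lipschitz upgrade, matches the intended argument.
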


\begin{proof}
This is an immediate consequence of \Cref{SBME solution to HJ eqn on Rp is Lipschitz} and \Cref{SBME comparison principle corollary on Rpd}.
\end{proof}

\subsection{Existence of solutions on \texorpdfstring{$\Rp^d$}{Rpd}}\label{app SBME Perron}

In this section, we fix a non-linearity $\H:\R^d \to \R$ and an initial condition $\psi:\Rp^d\to \R$ satisfying \eqref{eqn: SBME app A1}-\eqref{eqn: SBME app A3}, and we use the classical Perron method to establish the existence of solutions to the Hamilton-Jacobi equation \eqref{eqn: SBME app HJ eqn on Rp}. We closely follow the arguments in Chapter 5 of \cite{Bardi}. It will be convenient to fix a positive constant
\begin{equation}
K>\sup\big\{\abs{\H(y)}\mid y\in \R^d \text{ with } \dNorm{1}{y}\leq \Norm{\psi}_{\mathrm{Lip},1}\big\},
\end{equation}
and to define the continuous functions $\underline{u},\overline{u}:[0,\infty)\times \Rp^d\to \R$ by
\begin{equation}\label{eqn: SBME Perron method sub and super solution}
\underline{u}(t,x)=\psi(x)-Kt \quad \text{and} \quad \overline{u}(t,x)=\psi(x)+Kt.
\end{equation}
The importance of these functions stems from the fact that they are a viscosity subsolution and a viscosity supersolution to the Hamilton-Jacobi equation \eqref{eqn: SBME app HJ eqn on Rp}, respectively.

\begin{lemma}\label{eqn: SBME underline u and overline u solutions}
Fix a non-linearity $\H:\R^d \to \R$ and an initial condition $\psi:\Rp^d\to \R$ satisfying \eqref{eqn: SBME app A1}-\eqref{eqn: SBME app A3}. The functions $\underline{u}$ and $\overline{u}$ defined in \eqref{eqn: SBME Perron method sub and super solution} are a subsolution and a supersolution to the Hamilton-Jacobi equation \eqref{eqn: SBME app HJ eqn on Rp}, respectively.
\end{lemma}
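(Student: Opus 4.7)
The plan is to verify the subsolution inequality for $\underline{u}$ and the supersolution inequality for $\overline{u}$ by direct inspection, taking advantage of the simple structure of these two candidates together with the monotonicity \eqref{eqn: SBME app A2} of $\H$ and the choice of $K$.

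I would begin with $\underline{u}$. Fix $\phi \in C^\infty((0,\infty) \times \Rp^d)$ and let $(t^*, x^*) \in (0, \infty) \times \Rp^d$ be a local maximum of $\underline{u} - \phi$. Since $\underline{u}(\cdot, x^*)$ is smooth with derivative $-K$, comparing with the smooth $\phi(\cdot, x^*)$ at the interior local extremum $t^*>0$ immediately gives $\partial_t \phi(t^*, x^*) = -K$. For the spatial gradient, the local maximum condition together with the Lipschitz continuity \eqref{eqn: SBME app A3} of $\psi$ yields for every $1 \le k \le d$ and every $\epsilon > 0$ small,
\[
\phi(t^*, x^* + \epsilon e_k) - \phi(t^*, x^*) \ge \underline{u}(t^*, x^* + \epsilon e_k) - \underline{u}(t^*, x^*) \ge - \frac{\epsilon \Norm{\psi}_{\mathrm{Lip},1}}{d},
\]
and dividing by $\epsilon$ and letting $\epsilon \to 0$ gives $\partial_{x_k} \phi(t^*, x^*) \ge -\Norm{\psi}_{\mathrm{Lip},1}/d$ for every such $k$. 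Writing $z \in \R^d$ for the vector with all components equal to $-\Norm{\psi}_{\mathrm{Lip},1}/d$, this says $\nabla \phi(t^*, x^*) \ge z$ componentwise, so monotonicity of $\H$ gives $\H(\nabla \phi(t^*, x^*)) \ge \H(z)$. Since $\dNorm{1}{z} = \Norm{\psi}_{\mathrm{Lip},1}$, the choice of $K$ forces $|\H(z)| \le K$, hence $\H(\nabla \phi(t^*, x^*)) \ge -K = \partial_t \phi(t^*, x^*)$, which is the subsolution condition.

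The argument for $\overline{u}$ is the mirror image. At a local minimum $(t^*, x^*) \in (0,\infty) \times \Rp^d$ of $\overline{u} - \phi$, the same time-derivative comparison gives $\partial_t \phi(t^*, x^*) = K$, and the Lipschitz upper bound applied to the admissible perturbations $x^* + \epsilon e_k$ yields $\partial_{x_k} \phi(t^*, x^*) \le \Norm{\psi}_{\mathrm{Lip},1}/d$ for all $k$, i.e., $\nabla \phi(t^*, x^*) \le -z$ componentwise. Monotonicity of $\H$ and the choice of $K$ then give $\H(\nabla \phi(t^*, x^*)) \le K = \partial_t \phi(t^*, x^*)$, which is the supersolution condition.

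The only delicacy worth flagging is that the local extremum may lie on the boundary of $\Rp^d$, where the two-sided bound from Lemma~\ref{SBME gradient bound on test function} is unavailable. I sidestep this by using only the perturbations in directions $+e_k$, which are always admissible since $\Rp^d$ is closed under their addition, and then appealing to monotonicity of $\H$ to convert one-sided componentwise bounds on $\nabla \phi$ into one-sided bounds on $\H(\nabla \phi)$. This is precisely where assumption \eqref{eqn: SBME app A2} enters.
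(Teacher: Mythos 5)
Your proof is correct, and it follows the same basic skeleton as the paper's: exploit the interior local extremum in time to pin down $\partial_t\phi(t^*,x^*)=\mp K$, use perturbations in the admissible directions of $\Rp^d$ to obtain one-sided componentwise bounds on $\nabla\phi(t^*,x^*)$, and then combine the monotonicity \eqref{eqn: SBME app A2} with the choice of $K$ to control $\H(\nabla\phi(t^*,x^*))$. The one genuine difference is the comparison point used for the monotonicity step: the paper compares $\nabla\phi(t^*,x^*)$ with $\nabla\psi(x^*)$, obtained by differentiating the difference quotient of $\psi$ at $x^*$ and then bounding $\dNorm{1}{\nabla\psi(x^*)}\leq \Norm{\psi}_{\mathrm{Lip},1}$ by a continuity argument, whereas you compare with the fixed constant vector $z$ whose components are all $-\Norm{\psi}_{\mathrm{Lip},1}/d$ (resp.\ $+\Norm{\psi}_{\mathrm{Lip},1}/d$), which satisfies $\dNorm{1}{z}=\Norm{\psi}_{\mathrm{Lip},1}$ and hence $\abs{\H(z)}\leq K$ directly from the definition of $K$. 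Your variant is slightly more economical: since $\psi$ is only Lipschitz, its gradient at the particular point $x^*$ need not exist, so the paper's appeal to $\nabla\psi(x^*)$ requires an implicit a.e./continuity interpretation that your argument simply never needs. Both routes use \eqref{eqn: SBME app A3} and \eqref{eqn: SBME app A2} in the same way, and both correctly restrict spatial perturbations to directions staying in $\Rp^d$, so the boundary case is handled identically.
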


\begin{proof}
Consider a smooth function $\smash{\phi\in C^\infty\big((0,\infty)\times \Rp^d\big)}$ with the property that $\underline{u}-\phi$ has a local maximum at a point $(t^*,x^*)\in (0,\infty)\times \Rp^d$. For any $x\in \Rp^d$ and every $\epsilon>0$,
$$\phi(t^*,x^*+\epsilon x)-\phi(t^*,x^*)\geq \underline{u}(t^*,x^*+\epsilon x)-\underline{u}(t^*,x^*)=\psi(x^*+\epsilon x)-\psi(x^*).$$
Dividing by $\epsilon$ and letting $\epsilon$ tend to zero shows that $\smash{\nabla \phi(t^*,x^*)\cdot x\geq \nabla \psi(x^*)}\cdot x$ for all $\smash{x\in \Rp^d}$. It follows that $\nabla \phi(t^*,x^*)\geq \nabla \psi(x^*)$, so \eqref{eqn: SBME app A2} and the fact that $t^*>0$ imply that
\begin{equation}\label{eqn: SBME underline u and overline u solutions key}
\big(\partial_t\phi-\H(\nabla \phi)\big)(t^*,x^*)\leq \partial_t\underline{u}(t^*,x^*)-\H\big(\nabla \psi(x^*)\big)=-K-\H\big(\nabla \psi(x^*)\big).
\end{equation}
To bound this further, observe that for every $x\in \Rpp^d$,  $x'\in \R^d$ and  $\epsilon>0$ small enough,
$$\psi(x+\epsilon x')-\psi(x)\leq \epsilon \Norm{\psi}_{\mathrm{Lip},1}\Norm{x'}_1.$$
Dividing by $\epsilon$ and letting $\epsilon$ tend to zero gives
$$\nabla \psi(x)\cdot x'\leq \Norm{\psi}_{\mathrm{Lip},1}\Norm{x'}_1.$$
Choosing $x'=d\sgn(\partial_{x_k}\psi(x))e_k$ shows that $\dNorm{1}{\nabla \psi(x)}\leq \Norm{\psi}_{\mathrm{Lip},1}$. It follows by continuity that $\dNorm{1}{\nabla \psi(x^*)}\leq \Norm{\psi}_{\mathrm{Lip},1}$ so \eqref{eqn: SBME underline u and overline u solutions key} and the definition of $K$ imply that $\underline{u}$ is a subsolution to the Hamilton-Jacobi equation \eqref{eqn: SBME app HJ eqn on Rp}. An identical argument shows that $\overline{u}$ is a supersolution to the Hamilton-Jacobi equation \eqref{eqn: SBME app HJ eqn on Rp}. This completes the proof.
\end{proof}

The main result of this section will be that the function $f:[0,\infty)\times \Rp^d\to \R$ defined by
\begin{equation}\label{eqn: SBME Perron method solution}
f(t,x)=\sup_{u\in \S}u(t,x)
\end{equation}
for the set
\begin{equation}
\S=\big\{u:[0,\infty)\times \Rp^d\to \R \mid \underline{u}\leq u\leq \overline{u} \text{ and } u^\star \text{ is a subsolution to } \eqref{eqn: SBME app HJ eqn on Rp}\big\}
\end{equation}
is a viscosity solution to the  Hamilton-Jacobi equation \eqref{eqn: SBME app HJ eqn on Rp}. We refer to \Cref{SBME app background} for the definitions and basic properties of lower and upper semi-continuous envelopes of a function $u$, which we denote by $u_\star$ and $u^\star$ respectively. The strategy will be to show that $\smash{f^\star}$ is a viscosity subsolution to the Hamilton-Jacobi equation \eqref{eqn: SBME app HJ eqn on Rp} while $\smash{f_\star}$ is a viscosity supersolution to this equation. The comparison principle in \Cref{SBME comparison principle corollary on Rpd} will then imply that $f$ is a viscosity solution to the Hamilton-Jacobi equation \eqref{eqn: SBME app HJ eqn on Rp}. Throughout this subsection, we will write
\begin{equation}
B_r(t^*,x^*)=\big\{(t,x)\in (0,\infty)\times \Rp^d\mid \abs{t-t^*}^2+\norm{x-x^*}_2^2\leq r^2\big\}.
\end{equation}
for the Euclidean ball of radius $r>0$ centered at the point $(t^*,x^*)\in \R\times \Rp^d$. It is readily verified that $f^\star$ is a subsolution.

\begin{lemma}\label{SBME Perron subsolution}
If $\H:\R^d\to \R$ satisfies \eqref{eqn: SBME app A1}, then the upper semi-continuous envelope $f^\star$ of the function \eqref{eqn: SBME Perron method solution} is a viscosity subsolution to the Hamilton-Jacobi equation \eqref{eqn: SBME app HJ eqn on Rp}. In particular, $f\in \S$.
\end{lemma}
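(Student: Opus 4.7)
My plan is to apply the classical Perron stability argument. Once $f^\star$ is shown to be a subsolution, the inclusion $f\in\mathcal{S}$ follows immediately from the bounds $\underline{u}\le f\le\overline{u}$; note that $\mathcal{S}$ is nonempty because, by \Cref{eqn: SBME underline u and overline u solutions}, $\underline{u}$ is a continuous subsolution, so $\underline{u}=\underline{u}^\star\in\mathcal{S}$.

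I argue by contradiction. If $f^\star$ is not a subsolution, there exist $\phi\in C^\infty\big((0,\infty)\times\Rp^d\big)$ and $(\hat t,\hat x)\in(0,\infty)\times\Rp^d$ such that $f^\star-\phi$ attains a local maximum at $(\hat t,\hat x)$ with $(\partial_t\phi-\H(\nabla\phi))(\hat t,\hat x)>0$. Through the standard modifications of adding a constant to $\phi$ and then adding a small quadratic bump $\epsilon(\abs{t-\hat t}^2+\norm{x-\hat x}_2^2)$, which preserve the value and first derivatives of $\phi$ at $(\hat t,\hat x)$, I may assume that $f^\star(\hat t,\hat x)=\phi(\hat t,\hat x)$ and that the local maximum is strict. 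I then choose $r>0$ small enough that $\overline{B_r(\hat t,\hat x)}$ is compactly contained in $(0,\infty)\times\Rp^d$, that $\partial_t\phi-\H(\nabla\phi)>0$ on this ball, and that $f^\star-\phi\le -c$ on $\overline{B_r(\hat t,\hat x)}\setminus B_{r/2}(\hat t,\hat x)$ for some $c>0$; this last bound combines the strict maximum property with upper semi-continuity of $f^\star-\phi$.

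Next I exploit the definition of $f$ as a supremum. By definition of the upper semi-continuous envelope, there is a sequence $(t_n,x_n)\to(\hat t,\hat x)$ with $f(t_n,x_n)\to f^\star(\hat t,\hat x)=\phi(\hat t,\hat x)$, and for each $n$ I pick $v_n\in\mathcal{S}$ with $v_n(t_n,x_n)\ge f(t_n,x_n)-1/n$. The function $v_n^\star$ is bounded above by $\overline{u}$ and is a subsolution by definition of $\mathcal{S}$; it therefore attains its supremum over the compact set $\overline{B_r(\hat t,\hat x)}$ at some maximizer $(t_n',x_n')$. Combining $v_n^\star(t_n,x_n)-\phi(t_n,x_n)\to 0$ with $v_n^\star\le f^\star$ and $f^\star-\phi\le -c$ outside $B_{r/2}(\hat t,\hat x)$, I will show that $(t_n',x_n')\in B_{r/2}(\hat t,\hat x)$ for $n$ large and that $(t_n',x_n')\to(\hat t,\hat x)$ by the strict maximum property together with the upper semi-continuity of $f^\star$. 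The subsolution condition for $v_n^\star$ with test function $\phi$ at the interior maximum $(t_n',x_n')$ then yields $(\partial_t\phi-\H(\nabla\phi))(t_n',x_n')\le 0$, and passing to the limit contradicts the strict inequality at $(\hat t,\hat x)$.

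The main obstacle is the localization step verifying that the maximizers $(t_n',x_n')$ stay strictly inside $B_r(\hat t,\hat x)$, since this is exactly what permits the viscosity subsolution criterion for $v_n^\star$ to be invoked. It hinges on upgrading the local maximum to a strict one and on the quantitative gap $f^\star-\phi\le -c$ away from $(\hat t,\hat x)$: because $v_n^\star-\phi$ approaches $0$ along $(t_n,x_n)$, the maximizer cannot drift into the annular region where the gap is at least $c$. Once this localization is secured, continuity of $\partial_t\phi-\H(\nabla\phi)$ closes the argument.
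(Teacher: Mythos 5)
Your argument is correct and takes essentially the same route as the paper's proof: both approximate $f^\star$ near a strict local maximum of $f^\star-\phi$ by functions $v_n\in\S$, show that the maximizers of $v_n^\star-\phi$ over a small ball must localize in the interior (hence are genuine local maxima), invoke the subsolution criterion for $v_n^\star$ there, and conclude by continuity of $\partial_t\phi-\H(\nabla\phi)$; your contradiction framing with $\partial_t\phi-\H(\nabla\phi)>0$ on the whole ball versus the paper's direct passage to the limit is only a cosmetic difference. One small phrasing slip: the point $(t_n',x_n')$ should be a maximizer of $v_n^\star-\phi$ over the closed ball (not of $v_n^\star$ alone), which is clearly what your localization and test-function steps intend.
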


\begin{proof}
Consider a smooth function $\smash{\phi\in C^\infty\big((0,\infty)\times \Rp^d\big)}$ with the property that $f^\star-\phi$ has a strict local maximum at the point $\smash{(t^*,x^*)\in (0,\infty)\times \Rp^d}$. To be more precise, suppose that
$$\big(f^\star-\phi\big)(t^*,x^*)>\big(f^\star-\phi\big)(t,x)$$
for all $(t,x)\in B_r(t^*,x^*)\setminus \{(t^*, x^*)\}$. By definition of the upper semi-continuous envelope and continuity of $\phi$, it is possible to find points $(t_n,x_n)\in B_r(t^*,x^*)$ converging to $(t^*,x^*)$ with
$$(f-\phi)(t_n,x_n)\geq (f^\star-\phi)(t^*,x^*)-\frac{1}{n}$$
for every integer $n\geq 1$. Similarly, by definition of $f$, it is possible find a sequence of functions $(u_n)\subset \S$ with
$$f(t_n,x_n)-\frac{1}{n}\leq u_n(t_n,x_n)$$
for all integer $n\geq 1$. If $(t_n',x_n')\in B_r(t^*,x^*)$ denotes the maximum of $u_n^\star-\phi$ on $B_r(t^*,x^*)$, then the fact that $u_n^\star$ is a subsolution to \eqref{eqn: SBME app HJ eqn on Rp} implies that
\begin{equation}\label{eqn: SBME Perron subsolution key}
\big(\partial_t\phi-\H(\nabla \phi)\big)(t_n',x_n')\leq 0.
\end{equation}
Notice that $u_n^\star-\phi$ achieves its maximum on the compact set $B_r(t^*,x^*)$ as it is an upper semi-continuous function by \Cref{SBME properties of semi-continuous envelope}. Remembering that $u_n^\star\geq u_n$ reveals that
$$(f^\star-\phi)(t_n',x_n')\geq (u_n^\star-\phi)(t_n',x_n')\geq (u_n-f)(t_n,x_n)+(f-\phi)(t_n,x_n)\geq (f^\star-\phi)(t^*,x^*)-\frac{2}{n},$$
where we have used that $u_n^\star \leq f^\star$ as $u_n\leq f$. If $(t'_\infty,x'_\infty)$ denotes any subsequential limit of $(t_n',x_n')$, then the upper semi-continuity of $f^\star$ established in \Cref{SBME properties of semi-continuous envelope} gives
$$(f^\star-\phi)(t'_\infty,x'_\infty)\geq (f^\star-\phi)(t^*,x^*).$$
Since $(t^*,x^*)$ is a strict local maximum of $f^\star-\phi$ on $B_r(t^*,x^*)$, this implies that $(t'_\infty,x'_\infty)=(t^*,x^*)$. Letting $n$ tend to infinity in \eqref{eqn: SBME Perron subsolution key} shows that $f^\star$ is viscosity subsolution to the Hamilton-Jacobi equation \eqref{eqn: SBME app HJ eqn on Rp}. It is clear by the definition in \eqref{eqn: SBME Perron method solution} that $\underline{u}\leq f\leq \overline{u}$ so $f\in\S$. This completes the proof.
\end{proof}

Showing that $f_\star$ is a viscosity supersolution requires more work, and relies upon the following modification of Lemma 2.12 in \cite{Bardi}. 

\begin{lemma}\label{SBME Perron bump lemma}
Fix a non-linearity $\H:\R^d\to \R$ satisfying \eqref{eqn: SBME app A1} and \eqref{eqn: SBME app A2}. If $u\in \S$ is such that $u_\star$ is not a viscosity supersolution to the Hamilton-Jacobi equation \eqref{eqn: SBME app HJ eqn on Rp}, then there exists $v\in \S$ with $v(t,x)>u(t,x)$ for some $(t,x)\in (0,\infty)\times \Rp^d$.
\end{lemma}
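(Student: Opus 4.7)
This is the classical bump construction underlying Perron's method, and I would essentially follow Chapter~5 of \cite{Bardi}, with care taken at the boundary of $\Rp^d$ and with the monotonicity hypothesis \eqref{eqn: SBME app A2} doing the work there. Since $\smash{u_\star}$ fails to be a viscosity supersolution, I would first fix $\smash{\phi\in C^\infty\big((0,\infty)\times \Rp^d\big)}$ and a point $\smash{(t^*,x^*)\in (0,\infty)\times \Rp^d}$ at which $\smash{u_\star-\phi}$ has a local minimum and
\begin{equation*}
\big(\partial_t\phi-\H(\nabla \phi)\big)(t^*,x^*)<0.
\end{equation*}
After subtracting a constant from $\phi$ I may assume $\smash{u_\star(t^*,x^*)=\phi(t^*,x^*)}$ and $\smash{u_\star\geq \phi}$ on a Euclidean ball around $(t^*,x^*)$. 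A short argument rules out the possibility that $\smash{u_\star(t^*,x^*)=\overline u(t^*,x^*)}$: otherwise $\smash{\overline u-\phi}$ would also attain a local minimum at $(t^*,x^*)$, and the fact that $\overline u$ is a supersolution by \Cref{eqn: SBME underline u and overline u solutions} would contradict the displayed strict inequality.

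\paragraph{Strict classical subsolution near $(t^*,x^*)$.}
For $r,\delta>0$ to be chosen I would introduce
\begin{equation*}
\phi_{r,\delta}(t,x)=\phi(t,x)+\delta-r\big(\abs{t-t^*}^2+\norm{x-x^*}_2^2\big).
\end{equation*}
Since $\partial_t\phi_{r,\delta}$ and $\nabla\phi_{r,\delta}$ differ from $\partial_t\phi$ and $\nabla\phi$ by terms of order $r$, the continuity of $\H$ from \eqref{eqn: SBME app A1} and the strict inequality above let me pick $r$ small enough and a closed Euclidean ball $\smash{\bar B_\rho(t^*,x^*)\subset (0,\infty)\times \R^d}$ on which
\begin{equation*}
\big(\partial_t\phi_{r,\delta}-\H(\nabla \phi_{r,\delta})\big)\leq 0,
\end{equation*}
independently of $\delta$. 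I would then shrink $\rho$ further so that $\phi\leq \overline u-2\delta$ on $\bar B_\rho$ (possible since $\phi(t^*,x^*)=u_\star(t^*,x^*)<\overline u(t^*,x^*)$) and finally choose $\delta\in(0,r\rho^2)$. With these choices $\phi_{r,\delta}\leq \overline u$ throughout $\bar B_\rho$, and on the boundary sphere $\partial B_\rho$ one has $\phi_{r,\delta}=\phi+\delta-r\rho^2<\phi\leq u_\star\leq u$.

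\paragraph{Gluing and verification.}
I would then define $v$ on $[0,\infty)\times \Rp^d$ by $v=\max(u,\phi_{r,\delta})$ on $\smash{\bar B_\rho\cap([0,\infty)\times \Rp^d)}$ and $v=u$ elsewhere; the strict inequality on $\partial B_\rho$ shows that the two prescriptions agree on their overlap, so $v$ is well defined. The bounds $\smash{\underline u\leq v\leq \overline u}$ are immediate from $u\leq v$ and $\phi_{r,\delta}\leq \overline u$ on $\bar B_\rho$. To see $v(t_n,x_n)>u(t_n,x_n)$ along some sequence, I use the definition of $u_\star$ to pick $(t_n,x_n)\to(t^*,x^*)$ with $u(t_n,x_n)\to\phi(t^*,x^*)$; then $\phi_{r,\delta}(t_n,x_n)\to\phi(t^*,x^*)+\delta$ by continuity, so eventually $\phi_{r,\delta}(t_n,x_n)>u(t_n,x_n)$, and $v(t_n,x_n)=\phi_{r,\delta}(t_n,x_n)$.

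\paragraph{Subsolution property and main obstacle.}
The step I expect to be most delicate is the verification that $\smash{v^\star}$ is a viscosity subsolution. Let $\smash{\tilde\phi\in C^\infty\big((0,\infty)\times \Rp^d\big)}$ be such that $\smash{v^\star-\tilde\phi}$ has a local maximum at some $\smash{(\bar t,\bar x)\in (0,\infty)\times \Rp^d}$. Outside the open ball $B_\rho$ one has $v=u$ locally and hence $\smash{v^\star=u^\star}$, and the desired inequality follows from $\smash{u^\star}$ being a subsolution. Inside $B_\rho$ I would use the fact that $\phi_{r,\delta}$ is continuous to write $\smash{v^\star=\max(u^\star,\phi_{r,\delta})}$ locally; if $\smash{v^\star(\bar t,\bar x)=u^\star(\bar t,\bar x)}$ then $\smash{u^\star-\tilde\phi}$ has a local max at $(\bar t,\bar x)$ and the conclusion follows as before, while if $\smash{v^\star(\bar t,\bar x)=\phi_{r,\delta}(\bar t,\bar x)}$ then $\phi_{r,\delta}-\tilde\phi$ has a local max at $(\bar t,\bar x)$. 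The hard part is that $\bar x$ may lie on the boundary of $\Rp^d$, so differentiating only yields $\smash{\partial_t\phi_{r,\delta}(\bar t,\bar x)=\partial_t\tilde\phi(\bar t,\bar x)}$ together with $\smash{\partial_{x_k}\tilde\phi(\bar t,\bar x)\geq \partial_{x_k}\phi_{r,\delta}(\bar t,\bar x)}$ for indices $k$ with $\bar x_k=0$, and equality otherwise. Here the monotonicity assumption \eqref{eqn: SBME app A2} saves the day: one has $\smash{\H(\nabla\tilde\phi(\bar t,\bar x))\geq \H(\nabla\phi_{r,\delta}(\bar t,\bar x))}$, and combining this with the strict classical subsolution inequality for $\phi_{r,\delta}$ yields $\smash{(\partial_t\tilde\phi-\H(\nabla\tilde\phi))(\bar t,\bar x)\leq 0}$, as required. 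This shows $v\in\S$ and completes the proof.
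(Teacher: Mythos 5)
Your proposal is correct and follows essentially the same route as the paper's proof: the classical Perron bump construction, ruling out $u_\star(t^*,x^*)=\overline{u}(t^*,x^*)$ via the supersolution $\overline{u}$, gluing $v=\max(u,\phi_{r,\delta})$ inside a small ball with the bump strictly below $u$ near the sphere, and invoking the monotonicity \eqref{eqn: SBME app A2} to absorb the one-sided first-order conditions at points of $\partial\Rp^d$. The only differences are cosmetic rather than substantive: you verify the subsolution property of $v^\star$ through the local identity $v^\star=\max(u^\star,\phi_{r,\delta})$ instead of the paper's sequence-based argument with strict maxima, and you should repair two small presentational points, namely the circular ordering of $\rho$ and $\delta$ (choose $\rho$ so that $\phi\leq \overline{u}-c$ on the ball for some $c>0$, then $\delta<\min(c/2,r\rho^2)$) and the claim that $v=u$ locally at points of the sphere, which needs the one-line remark that $u_\star-\phi_{r,\delta}$ is lower semi-continuous and strictly positive on $\partial B_\rho$, hence positive on a neighborhood of it.
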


\begin{proof}
The assumption that $u_\star$ is not a viscosity supersolution to the Hamilton-Jacobi equation \eqref{eqn: SBME app HJ eqn on Rp} gives $\smash{\phi \in C^\infty\big((0,\infty)\times \Rp^d\big)}$ and  $(t^*,x^*)\in (0,\infty)\times \Rp^d$ with the property that $u_\star-\phi$ has a strict local minimum at $(t^*,x^*)$ but $\smash{(\partial_t \phi-\H(\nabla \phi))(t^*,x^*)<0}$.
To be more precise, we will assume that
\begin{equation}\label{eqn: SBME Perron method bump lemma assumption 1}
\big(u_\star-\phi)(t,x)>\big(u_\star-\phi)(t^*,x^*)
\end{equation}
for all $(t,x)\in B_r(t^*,x^*)\setminus\{(t^*,x^*)\}$ and
\begin{equation}\label{eqn: SBME Perron method bump lemma assumption 2}
\big(\partial_t\phi-\H(\nabla \phi)\big)(t,x)<-\epsilon
\end{equation}
for some $\epsilon>0$ and all $(t,x)\in B_r(t^*,x^*)$. Notice that we must have $u_\star(t^*,x^*)<\overline{u}(t^*,x^*)$. Indeed, if this were not the case, the assumption that $u\in \S$ would imply that $u_\star(t^*,x^*)=\overline{u}(t^*,x^*)$, and therefore
$$\big(\overline{u}-\phi\big)(t,x)\geq \big(u_\star -\phi\big)(t,x)> (u_\star-\phi)(t^*,x^*)=\big(\overline{u}-\phi\big)(t^*,x^*)$$
for all $(t,x)\in B_r(t^*,x^*)\setminus\{(t^*,x^*)\}$. In other words, the supersolution $\overline{u}$ would be such that $\overline{u}-\phi$ achieves a local maximum at $(t^*,x^*)$. This would contradict \eqref{eqn: SBME Perron method bump lemma assumption 2}. Decreasing $r>0$ if necessary and using the continuity of $\phi$ and $\overline{u}$, it is therefore possible to find $\delta>0$ with
\begin{equation}\label{eqn: SBME Perron method bump lemma delta}
u_\star(t^*,x^*)+\delta< \overline{u}(t^*,x^*)-\delta\leq \overline{u}(t,x) \quad \text{and} \quad \phi(t,x)\leq \phi(t^*,x^*)+\frac{\delta}{2}
\end{equation}
for all $(t,x)\in B_r(t^*,x^*)$. With this in mind, given $\epsilon'<\frac{1}{4}\min(r^2,\delta)$, introduce the function
$$w(t,x)=\phi(t,x)+\epsilon'-\norm{x-x^*}_2^2-\abs{t-t^*}^2+(u_\star-\phi)(t^*,x^*),$$
and define $v:[0,\infty)\times \Rp^d\to \R$ by
$$v(t,x)=\begin{cases}
\max\big(u(t,x),w(t,x)\big) & \text{ if } (t,x)\in B_r(t^*,x^*),\\
u(t,x) & \text{ if } (t,x)\notin B_r(t^*,x^*).
\end{cases}$$
It is clear from the assumption $u\in \S$ that $v\geq u\geq \underline{u}$. Moreover, for $(t,x)\in B_r(t^*,x^*)$,
$$w(t,x)\leq \phi(t^*,x^*)+\frac{\delta}{2}+\frac{\delta}{2}+(u_\star-\phi)(t^*,x^*)=u_\star(t^*,x^*)+\delta\leq \overline{u}(t,x),$$ 
where we have used \eqref{eqn: SBME Perron method bump lemma delta} and the fact that $\epsilon'\leq \delta/2$. Together with the assumption $u\in \S$, this shows that $v\leq \overline{u}$, and therefore $\underline{u}\leq v\leq \overline{u}$. Furthermore, the definition of the lower semi-continuous envelope gives points $(t_n,x_n)\in B_r(t^*,x^*)$ with $(t_n,x_n)\to (t^*,x^*)$ and $u(t_n,x_n)\to u_\star(t^*,x^*)$. Since $v\geq w$ on $B_r(t^*,x^*)$, it follows that
$$\liminf_{n\to \infty}v(t_n,x_n)\geq \liminf_{n\to\infty}w(t_n,x_n)=\phi(t^*,x^*)+\epsilon'+(u_\star-\phi)(t^*,x^*)=u_\star(t^*,x^*)+\epsilon'.$$
This means that for any $n$ large enough
$$v(t_n,x_n)\geq u(t_n,x_n)+\frac{\epsilon'}{2}>u(t_n,x_n),$$
so there exists a point $(t,x)\in (0,\infty)\times \Rp^d$ with $v(t,x)>u(t,x)$. All that remains is to verify that $v^\star$ is a subsolution to the Hamilton-Jacobi equation \eqref{eqn: SBME app HJ eqn on Rp}. 
Consider $\smash{\beta\in C^\infty\big((0,\infty)\times \Rp^d\big)}$ and $(t_0,x_0)\in (0,\infty)\times \Rp^d$ with the property that $v^\star-\beta$ has a strict local maximum on $B_{r'}(t_0,x_0)$ at $(t_0,x_0)$. The definition of the upper semi-continuous envelope gives points $(t_n,x_n)\in B_{r'}(t_0,x_0)$ converging to $(t_0,x_0)$ with
\begin{equation}\label{eqn: Perron method bump lemma (t_n,x_n)}
v(t_n,x_n)\geq v^\star(t_0,x_0)-\frac{1}{n}.
\end{equation}
Since $v(t_n,x_n)$ is either equal to $w(t_n,x_n)$ or $u(t_n,x_n)$, passing to a subsequence, it is possible to assume that $v(t_n,x_n)=w(t_n,x_n)$ for all $n\geq 1$ or that $v(t_n,x_n)=u(t_n,x_n)$ for all $n\geq 1$. We treat these two cases separately.\\
\case{1: $v(t_n,x_n)=w(t_n,x_n)$ for all $n\geq 1$}\\
In this case, we must have $(t_n,x_n)\in B_{r}(t^*,x^*)$ for all $n\geq 1$. Indeed, for $(t,x)\notin B_{r/2}(t^*,x^*)$,
$$(w-u)(t,x)\leq (w-u_\star)(t,x)\leq (\phi-u_\star)(t,x)+\epsilon'-\frac{r^2}{4}+(u_\star-\phi)(t^*,x^*)<\epsilon'-\frac{r^2}{4}\leq 0 $$
where we have used \eqref{eqn: SBME Perron method bump lemma assumption 1} and the fact that $\epsilon'<r^2/4$. If $(t_n',x_n')$ denotes the maximum of $w-\beta$ on $B_r(t^*,x^*)\cap B_{r'}(t_0,x_0)$, arguing as in the proof of \Cref{eqn: SBME underline u and overline u solutions} shows that
\begin{align*}
\partial_t\beta(t_n',x_n')&=\partial_tw(t_n',x_n')=\partial_t\phi(t_n',x_n')-2(t_n'-t^*),\\
\nabla \beta(t_n',x_n')&\geq \nabla w(t_n',x_n')=\nabla \phi(t_n',x_n')-2(x_n'-x^*).
\end{align*}
It follows by \eqref{eqn: SBME app A2}, \eqref{eqn: SBME app A1} and \eqref{eqn: SBME Perron method bump lemma assumption 2} that
\begin{align*}
\big(\partial_t\beta-\H(\nabla \beta)\big)(t_n',x_n')&\leq \partial_t\phi(t_n',x_n')-\H\big(\nabla \phi(t_n',x_n')\big) +2\abs{t_n'-t^*}+2\Norm{\H}_{\mathrm{Lip},1,*}\dNorm{1}{x_n'-x^*}\\
&\leq -\epsilon +2\abs{t_n'-t^*}+2d\Norm{\H}_{\mathrm{Lip},1,*}\norm{x_n'-x^*}_2.
\end{align*}
Decreasing $r$ if necessary, it is therefore possible to ensure that $\smash{\big(\partial_t\beta-\H(\nabla \beta)\big)(t_n',x_n')\leq 0}$. To leverage this bound, observe that by \eqref{eqn: Perron method bump lemma (t_n,x_n)}, the continuity of $\beta$ and the fact that $(t_n,x_n)$ converges to $(t_0,x_0)$,
$$(v^\star-\beta)(t_n',x_n')\geq (w-\beta)(t_n',x_n')\geq (w-\beta)(t_n,x_n)=(v-\beta)(t_n,x_n)\geq (v^\star-\beta)(t_0,x_0)-\frac{2}{n}.$$
In particular, any subsequential limit $(t_\infty',x_\infty')$ of $(t_n',x_n')$ must satisfy
$$(v^\star-\beta)(t_\infty',x_\infty')\geq (v^\star-\beta)(t_0,x_0) \quad \text{and} \quad \big(\partial_t\beta-\H(\nabla \beta)\big)(t_\infty',x_\infty')\leq 0.$$
Since $(t_0,x_0)$ is a strict local maximum of $v^\star -\beta$ on $B_{r'}(t_0,x_0)$, the first of these inequalities shows that $(t_\infty',x_\infty')=(t_0,x_0)$ while the second implies the required subsolution criterion.\\
\case{2: $v(t_n,x_n)=u(t_n,x_n)$ for all $n\geq 1$}\\
In this case, let $(t_n',x_n')$ denote the maximum of $u^\star-\beta$ on $B_{r'}(t_0,x_0)$. Since $u^\star$ is a viscosity subsolution to the Hamilton-Jacobi equation \eqref{eqn: SBME app HJ eqn on Rp},
$$\big(\partial_t \beta-\H(\nabla \beta)\big)(t_n',x_n')\leq 0.$$
On the other hand, the inequality $v\geq u$ and \eqref{eqn: Perron method bump lemma (t_n,x_n)} reveal that
\begin{align*}
(v^\star-\beta)(t_n',x_n')&\geq (u^\star-\beta)(t_n',x_n')\geq (u^\star-\beta)(t_n,x_n)\geq (u-\beta)(t_n,x_n)=(v-\beta)(t_n,x_n)\\
&\geq  (v^\star-\beta)(t_0,x_0)-\frac{1}{n},
\end{align*}
so any subsequential limit $(t_\infty',x_\infty')$ of $(t_n',x_n')$ must satisfy
$$(v^\star-\beta)(t_\infty',x_\infty')\geq (v^\star-\beta)(t_0,x_0) \quad \text{and} \quad \big(\partial_t\beta-\H(\nabla \beta)\big)(t_\infty',x_\infty')\leq 0.$$
Since $(t_0,x_0)$ is a strict local maximum of $v^\star -\beta$ on $B_{r'}(t_0,x_0)$, the first of these inequalities shows that $(t_\infty',x_\infty')=(t_0,x_0)$ while the second implies the required subsolution criterion. This completes the proof.
\end{proof}

\begin{corollary}\label{SBME Perron supersolution}
If $\H:\R^d\to \R$ satisfies \eqref{eqn: SBME app A1} and \eqref{eqn: SBME app A2}, then the lower semi-continuous envelope $f_\star$ of the function \eqref{eqn: SBME Perron method solution} is a viscosity supersolution to the Hamilton-Jacobi equation \eqref{eqn: SBME app HJ eqn on Rp}.
\end{corollary}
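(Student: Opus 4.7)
The plan is to deduce the corollary almost immediately from the bump lemma (\Cref{SBME Perron bump lemma}) together with the fact, already established in \Cref{SBME Perron subsolution}, that $f$ itself belongs to the class $\S$. The argument is by contradiction: if $f_\star$ failed to be a viscosity supersolution to \eqref{eqn: SBME app HJ eqn on Rp}, then applying the bump lemma with $u=f$ would produce a strict competitor, contradicting the maximality that defines $f$.

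More precisely, I would first recall from \Cref{SBME Perron subsolution} that the upper semi-continuous envelope $f^\star$ is a viscosity subsolution and that $\underline{u}\le f\le \overline{u}$, so that $f\in\S$. Assume now, for the sake of contradiction, that $f_\star$ is not a viscosity supersolution to \eqref{eqn: SBME app HJ eqn on Rp}. Since $f\in\S$ and the hypotheses \eqref{eqn: SBME app A1} and \eqref{eqn: SBME app A2} on the non-linearity $\H$ are in force, \Cref{SBME Perron bump lemma} applies with $u=f$ and yields a function $v\in\S$ together with a point $(t,x)\in(0,\infty)\times\Rp^d$ such that
\begin{equation}
v(t,x)>f(t,x).
\end{equation}
On the other hand, the very definition \eqref{eqn: SBME Perron method solution} of $f$ as the pointwise supremum over $\S$ gives $v(t,x)\le f(t,x)$ for all $(t,x)$, since $v\in\S$. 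This is the desired contradiction, and it establishes that $f_\star$ is a viscosity supersolution to \eqref{eqn: SBME app HJ eqn on Rp}.

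I do not expect any genuine obstacle here, since all the heavy lifting has already been performed in the bump lemma. The only point worth a brief comment is that the proof relies in an essential way on having already identified $f$ as an element of $\S$, which is precisely the content of \Cref{SBME Perron subsolution}. Combining this corollary with \Cref{SBME Perron subsolution} and the comparison principle \Cref{SBME comparison principle corollary on Rpd} then yields $f_\star\ge f^\star$ on $\{0\}\times\Rp^d$, hence on $[0,\infty)\times\Rp^d$, which forces $f_\star=f^\star=f$ and shows that $f$ is a continuous viscosity solution to \eqref{eqn: SBME app HJ eqn on Rp} subject to the initial condition $\psi$, as required by the Perron construction.
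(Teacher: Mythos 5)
Your proposal is correct and follows the same route as the paper: assume $f_\star$ is not a supersolution, invoke \Cref{SBME Perron subsolution} to ensure $f\in\S$, apply the bump lemma \Cref{SBME Perron bump lemma} with $u=f$ to obtain $v\in\S$ with $v(t,x)>f(t,x)$, and contradict the definition of $f$ as the supremum over $\S$. The closing remarks about the comparison principle belong to the subsequent well-posedness proposition rather than this corollary, but they do not affect the argument.
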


\begin{proof}
Suppose for the sake of contradiction that $f_\star$ is not a supersolution to the Hamilton-Jacobi equation \eqref{eqn: SBME app HJ eqn on Rp}. Combining \Cref{SBME Perron subsolution} and \Cref{SBME Perron bump lemma} gives a function $v\in \S$ and a point $(t,x)\in (0,\infty)\times \Rp^d$ with $v(t,x)>f(t,x)$. The contradiction
$$f(t,x)=\sup_{u\in \S}u(t,x)\geq v(t,x)>f(t,x)$$
completes the proof.
\end{proof}

Together with \Cref{SBME Perron subsolution} and the comparison principle in \Cref{SBME comparison principle corollary on Rpd}, this result allows us to establish the well-posedness of the Hamilton-Jacobi equation \eqref{eqn: SBME app HJ eqn on Rp}.

\begin{proposition}\label{SBME WP of HJ eqn on Rp}
If $\H:\R^d\to \R$ and $\psi:\Rp^d\to \R$ satisfy \eqref{eqn: SBME app A1}-\eqref{eqn: SBME app A3}, then the Hamilton-Jacobi equation \eqref{eqn: SBME app HJ eqn on Rp} admits a unique viscosity solution $f\in \mathfrak{L}$ subject to the initial condition $\psi$. Moreover,
\begin{equation}
\sup_{t>0}\Norm{f(t,\cdot)}_{\mathrm{Lip},1}=\Norm{\psi}_{\mathrm{Lip},1}.
\end{equation}
\end{proposition}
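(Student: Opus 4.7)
Uniqueness follows directly from Corollary~\ref{SBME uniqueness to HJ eqn on Rp}, so the task reduces to producing a viscosity solution $f \in \mathfrak{L}$ with initial condition $\psi$; the claimed identity $\sup_{t>0}\Norm{f(t,\cdot)}_{\mathrm{Lip},1} = \Norm{\psi}_{\mathrm{Lip},1}$ will then be supplied by Proposition~\ref{SBME solution to HJ eqn on Rp is Lipschitz}. For existence, I would invoke the Perron construction already set up in this subsection: define $f$ by \eqref{eqn: SBME Perron method solution}. \Cref{eqn: SBME underline u and overline u solutions} places $\underline{u}$ and $\overline{u}$ in $\S$, so $\S$ is non-empty; \Cref{SBME Perron subsolution} yields that $f^\star$ is a viscosity subsolution (and that $f \in \S$), while \Cref{SBME Perron supersolution} gives that $f_\star$ is a viscosity supersolution. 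The sandwich $\underline{u} \le f \le \overline{u}$ combined with the continuity of $\psi$ forces $f^\star(0,\cdot) = f_\star(0,\cdot) = \psi$ on $\Rp^d$ and $[f^\star]_0, [f_\star]_0 \le K$, so both envelopes lie in $\mathfrak{L}$.

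The main task, and the step I expect to be hardest, is to promote the trivial inequality $f^\star \ge f_\star$ to equality. The natural tool is Corollary~\ref{SBME comparison principle corollary on Rpd}, but it requires $f^\star, f_\star \in \mathfrak{L}_{\mathrm{unif}}$, a uniform spatial Lipschitz bound that the raw Perron construction does not furnish. I would overcome this by adapting the variable-doubling argument from the proof of Proposition~\ref{SBME solution to HJ eqn on Rp is Lipschitz} directly to the pair $(f^\star, f_\star)$ rather than to a single solution: assuming for contradiction that the spatial Lipschitz estimate with constant $L = \Norm{\psi}_{\mathrm{Lip},1}$ fails on some $[0,T]$, introduce the strict subsolution $u_\epsilon(t,x) = f^\star(t,x) - \Phi(t,x) - \epsilon/(T-t)$ and the strict supersolution $v(t,x) = f_\star(t,x) + \Phi(t,x)$ exactly as in that proof, and then double variables via
\[
\Psi_\alpha(t,x,t',x') = u_\epsilon(t,x) - v(t',x') - \tfrac{\alpha}{2}\abs{t-t'}^2 - (L + \delta t)\Norm{x - x'}_{1,\epsilon_0}.
\]
Upper semi-continuity of $f^\star$ and lower semi-continuity of $f_\star$ are exactly the regularity needed to extract a limiting maximizer, and since $f^\star(0,\cdot) = f_\star(0,\cdot) = \psi$ the gap at $t=0$ is controlled as it was for a single solution; the subsolution inequality at the $(t,x)$-maximizer and the supersolution inequality at the $(t',x')$-maximizer then collide as in the proof of that proposition, yielding the desired contradiction. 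This establishes both $\sup_t \Norm{f^\star(t,\cdot)}_{\mathrm{Lip},1}$ and $\sup_t \Norm{f_\star(t,\cdot)}_{\mathrm{Lip},1}$ are bounded by $\Norm{\psi}_{\mathrm{Lip},1}$.

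With $f^\star, f_\star \in \mathfrak{L}_{\mathrm{unif}}$ in hand, Corollary~\ref{SBME comparison principle corollary on Rpd} applied to $u = f^\star$ and $v = f_\star$ gives
\[
\sup_{\Rp \times \Rp^d}\big(f^\star(t,x) - f_\star(t,x)\big) = \sup_{\Rp^d}\big(f^\star(0,x) - f_\star(0,x)\big) = 0,
\]
so $f^\star = f_\star$, $f$ is continuous, and $f$ is a viscosity solution to \eqref{eqn: SBME app HJ eqn on Rp} with initial condition $\psi$ lying in $\mathfrak{L}$. Proposition~\ref{SBME solution to HJ eqn on Rp is Lipschitz} then delivers the identity $\sup_{t>0}\Norm{f(t,\cdot)}_{\mathrm{Lip},1} = \Norm{\psi}_{\mathrm{Lip},1}$, completing the proof.
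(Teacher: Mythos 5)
Your proposal is correct and follows the same skeleton as the paper: Perron's construction \eqref{eqn: SBME Perron method solution}, the envelope properties from \Cref{SBME Perron subsolution} and \Cref{SBME Perron supersolution}, identification of $f^\star$ and $f_\star$ via comparison, and then \Cref{SBME uniqueness to HJ eqn on Rp} and \Cref{SBME solution to HJ eqn on Rp is Lipschitz} for uniqueness and the Lipschitz identity. Where you diverge is precisely the step you flag as hardest. The paper only checks that the sandwich $\underline{u}\leq f_\star\leq f\leq f^\star\leq \overline{u}$ forces $f_\star,f,f^\star\in\mathfrak{L}$, with common initial datum $\psi$ and $[\cdot]_0\leq K$, and then applies \Cref{SBME comparison principle corollary on Rpd} to the pair $(f^\star,f_\star)$ directly; the hypothesis $\mathfrak{L}_{\mathrm{unif}}$ is indeed not literally verified, but an inspection of the proof of \Cref{SBME comparison principle on Rpd} shows that the only Lipschitz information ever used is the Lipschitz constant of the time-zero data (it enters through $u(0,x)-v(0,x')\leq L(\Norm{x}_1+\Norm{x'}_1)+C$ and the requirement $Q>2L$), so membership in $\mathfrak{L}$ suffices and the paper's shortcut is legitimate. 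Your alternative---running the doubled-variable argument of \Cref{SBME solution to HJ eqn on Rp is Lipschitz} on the pair $(f^\star,f_\star)$ to first place both envelopes in $\mathfrak{L}_{\mathrm{unif}}$---also works, and buys a bit more than you claim: the mixed estimate $f^\star(t,x)-f_\star(t,x')\leq\Norm{\psi}_{\mathrm{Lip},1}\Norm{x-x'}_1$ already yields $f^\star\leq f_\star$ upon taking $x=x'$, so your subsequent appeal to \Cref{SBME comparison principle corollary on Rpd} is then redundant. One wrinkle to note when writing out the adaptation: the step in that proof which shows $x_0\neq x_0'$ relies on the identity $u_\epsilon(t,x)-v(t,x)=-2\Phi(t,x)-\epsilon/(T-t)$, available only because the same $f$ appears in both $u_\epsilon$ and $v$; for the pair this holds at $t=0$ (where both envelopes equal $\psi$) but not at positive times. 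That step is dispensable, however: thanks to the $\epsilon_0$-smoothing, the penalization contributes $\delta\Norm{x_\alpha-x_\alpha'}_{1,\epsilon_0}\geq\delta\sqrt{\epsilon_0}>0$ when the sub- and supersolution inequalities collide, so the contradiction is reached whether or not the limiting spatial points coincide, and only $t_0\in(0,T)$---which your observation $f^\star(0,\cdot)=f_\star(0,\cdot)=\psi$ does deliver---is genuinely needed. (A trivial slip: $\overline{u}$ is a supersolution, so it need not lie in $\S$; non-emptiness of $\S$ comes from $\underline{u}$ alone.)
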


\begin{proof}
Denote by $f\in \mathfrak{L}$ the function defined in \eqref{eqn: SBME Perron method solution}. Combining \Cref{SBME Perron subsolution} and \Cref{SBME Perron supersolution} shows that $f^\star$ is a viscosity subsolution to the Hamilton-Jacobi equation \eqref{eqn: SBME app HJ eqn on Rp} while $f_\star$ is a viscosity supersolution to this equation. By \Cref{SBME properties of semi-continuous envelope} and continuity of $\underline{u}$ and $\overline{u}$, it is clear that $\underline{u}\leq f_\star\leq f\leq f^\star\leq \overline{u}$. Moreover, any function $h:[0,\infty)\times \Rp^d\to \R$ with $\underline{u}\leq h\leq \overline{u}$ satisfies the bounds
\begin{align*}
\psi(x)&=\underline{u}(0,x)\leq h(0,x)\leq \overline{u}(0,x)=\psi(x),\\
h(t,x)-h(0,x)&\leq \overline{u}(t,x)-\psi(x)=Kt,\\
h(0,x)-h(t,x)&\leq \psi(x)-\underline{u}(t,x)\leq Kt.
\end{align*}
for all $t>0$ and every $x\in \Rp^d$. In particular, we have $h\in \mathfrak{L}$, and therefore $f_\star,f, f^\star\in \mathfrak{L}$. It follows by the comparison principle in \Cref{SBME comparison principle corollary on Rpd} that $f^\star\leq f_\star$. Since $f_\star\leq f\leq f^\star$ by \Cref{SBME properties of semi-continuous envelope}, we must have $f=f_\star=f^\star$. In particular, the function $f\in \mathfrak{L}$ is a continuous viscosity solution to the Hamilton-Jacobi equation \eqref{eqn: SBME app HJ eqn on Rp}. The uniqueness of such a viscosity solution is guaranteed by \Cref{SBME uniqueness to HJ eqn on Rp}. Recalling \Cref{SBME solution to HJ eqn on Rp is Lipschitz} gives the Lipschitz bound and completes the proof. 
\end{proof}

\subsection{Equivalence of solutions on \texorpdfstring{$\Rp^d$}{Rpd} and \texorpdfstring{$\Rpp^d$}{Rppd}}\label{SBME app equivalence}

In this section, we leverage the monotonicity assumption \eqref{eqn: SBME app A2} of the non-linearity to show that viscosity solutions to the Hamilton-Jacobi equations \eqref{eqn: SBME app HJ eqn on Rpp} and \eqref{eqn: SBME app HJ eqn on Rp} coincide. Combining this with \Cref{SBME WP of HJ eqn on Rp} gives a well-posedness theory for the Hamilton-Jacobi equation \eqref{eqn: SBME app HJ eqn on Rpp}.

To ignore the boundary of the upper half-plane, we proceed as in Proposition 2.1 of \cite{HB_cone} which is inspired by \cite{Crandall, Souganidis}. The main difference between \cite{HB_cone} and \cite{Crandall, Souganidis} is in the definition of a distance-like function to the boundary of the domain on which the Hamilton-Jacobi equation is defined. Since this distance-like function will reappear in the next section when we show that the solution to the Hamilton-Jacobi equation \eqref{eqn: SBME app HJ eqn on Rp} preserves the monotonicity of its initial condition, we will define it for a general closed convex cone $\K\subset \R^d$. Given a closed convex cone $\K\subset \R^d$, denote by
\begin{equation}
\K^*=\big\{x\in \R^d\mid x\cdot y\geq 0 \text{ for all } y\in \K\big\}
\end{equation}
its dual cone, and define the distance-like function 
$\udo: \K^*\to \Rp$ by
\begin{equation}\label{eqn: SBME distance-like function}
\udo(x)=\inf_{\substack{\dNorm{1}{y}=1\\y\in \K}}y\cdot x.
\end{equation}
The notion of a dual cone is reviewed in \Cref{SBME app background}. Before stating the main properties of this distance-like function, recall that the super-differential of a function $h:\K^*\to \R$ at a point $x\in \mathrm{int}(\K^*)$ is the set
\begin{equation}
\partial h(x)=\big\{p\in \R^d\mid h(x')\leq h(x)+p\cdot (x'-x) +o(x'-x) \text{ as } x'\to x \text{ in } \K^*\big\}.
\end{equation}

\begin{lemma}\label{SBME properties of distance-like function}
The function $\udo:\K^*\to \Rp$ defined in \eqref{eqn: SBME distance-like function} satisfies the following basic properties.
\begin{enumerate}
    \item The infimum defining $\udo(x)$ is achieved for every $x\in \K^*$.
    \item $\udo(x)=0$ if and only if $x\in \partial \K^*$.
    \item $\udo$ is Lipschitz continuous with respect to the normalized-$\ell^1$ norm. Moreover, it has Lipschitz constant at most one.
    \item $\udo$ is concave and $\K^*$-non-decreasing.
    \item If $x\in \mathrm{int}(\K^*)$, then $\partial \udo(x)\subset \K$ and $\dNorm{1}{p}\leq 1$ for any $p\in \partial \udo(x)$.
    \item If $h:\R^d\to \R$ is a differentiable function and $x\mapsto h(x)-\frac{1}{\udo(x)}$ achieves a local maximum at a point $x_0\in \mathrm{int}(\K^*)$, then $\smash{-\udo(x_0)^2\nabla h(x_0)\in \partial \udo (x_0)}$.
\end{enumerate}
\end{lemma}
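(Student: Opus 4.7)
The plan is to work through the six parts in order, leveraging earlier parts in later ones. For (1), the feasible set $\{y \in \K : \dNorm{1}{y} = 1\}$ is the closed intersection of $\K$ with a closed bounded norm level set, hence compact; Weierstrass then yields attainment of the infimum of the continuous linear functional $y \mapsto y \cdot x$. For (2), I would invoke the standard characterization $\mathrm{int}(\K^*) = \{x : y \cdot x > 0 \text{ for all } y \in \K \setminus \{0\}\}$ (a consequence of biduality $\K = \K^{**}$): an interior $x$ makes $y \mapsto y \cdot x$ strictly positive on the compact feasible set, forcing $\udo(x) > 0$, while for $x \in \partial \K^*$ this characterization produces some non-zero $y \in \K$ with $y \cdot x = 0$, whose normalization witnesses $\udo(x) = 0$ (combined with $\udo(x) \geq 0$ from $x \in \K^*$).

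For (3), letting $y_\star$ denote a minimizer for $x'$ from (1) and testing it at $x$ gives $\udo(x) - \udo(x') \leq y_\star \cdot (x - x')$, which is bounded by $\dNorm{1}{y_\star}\Norm{x-x'}_1 = \Norm{x-x'}_1$ by the H\"older-type inequality stated just below \eqref{eqn: SBME normalized norms}; swapping $x$ and $x'$ concludes. Part (4) is direct: $\udo$ is the infimum of affine linear functions of $x$, hence concave, and for $x' - x \in \K^*$ and any $y \in \K$ the definition of the dual cone gives $y \cdot (x' - x) \geq 0$, so taking infima in $y \cdot x \leq y \cdot x'$ yields the monotonicity claim. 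For (5), given $p \in \partial \udo(x)$ with $x \in \mathrm{int}(\K^*)$, I would test the super-differential inequality $\udo(x + tz) - \udo(x) \leq t\,p \cdot z + o(t)$ in direction $z \in \K^*$; by (4) the left-hand side is non-negative, so dividing by $t$ and letting $t \to 0$ gives $p \cdot z \geq 0$ for every $z \in \K^*$, hence $p \in \K^{**} = \K$ by biduality. Testing the super-differential inequality in the directions $\pm v$ together with the Lipschitz bound from (3) gives $|p \cdot v| \leq \Norm{v}_1$ for every $v \in \R^d$, and then choosing $v = d\sgn(p_k)e_k$ yields $\dNorm{1}{p} \leq 1$.

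For (6), the local maximality of $h - 1/\udo$ at $x_0 \in \mathrm{int}(\K^*)$ combined with the first-order expansion of $h$ reads
\begin{equation*}
\nabla h(x_0) \cdot (x - x_0) + o(\norm{x - x_0}) \leq \frac{1}{\udo(x)} - \frac{1}{\udo(x_0)} = \frac{\udo(x_0) - \udo(x)}{\udo(x)\,\udo(x_0)}
\end{equation*}
for $x$ near $x_0$. Since $\udo(x_0) > 0$ by (2) and $\udo$ is continuous by (3), one has $\udo(x)\,\udo(x_0) = \udo(x_0)^2 + o(1)$ as $x \to x_0$, so multiplying through and rearranging gives
\begin{equation*}
\udo(x) \leq \udo(x_0) + \bigl(-\udo(x_0)^2 \nabla h(x_0)\bigr) \cdot (x - x_0) + o(\norm{x - x_0}),
\end{equation*}
which is exactly the super-differential statement $-\udo(x_0)^2 \nabla h(x_0) \in \partial \udo(x_0)$. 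The main obstacle will be (2) and the containment $p \in \K$ in (5), both of which rely on biduality $\K = \K^{**}$ and the interior characterization of the dual cone; the remaining parts are short consequences of the H\"older inequality, monotonicity, and a careful first-order expansion of the reciprocal.
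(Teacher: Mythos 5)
Your proposal is correct and follows essentially the same route as the paper's proof: compactness of $\{y\in\K\mid \dNorm{1}{y}=1\}$ for (1), the interior/boundary dichotomy of $\K^*$ for (2), testing one point's minimizer at the other point for (3), infimum of linear functions and the dual pairing for (4), super-differential tests combined with $\K^{**}=\K$ and the Lipschitz bound for (5), and the first-order expansion of the reciprocal for (6). The only cosmetic difference is that you route (2) through the standard characterization $\mathrm{int}(\K^*)=\{x\mid y\cdot x>0 \text{ for all } y\in \K\setminus\{0\}\}$ (which in fact rests on closedness and compactness rather than on biduality, as you suggest), whereas the paper argues the two implications directly; this changes nothing of substance.
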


\begin{proof}
We treat each property separately.
\begin{enumerate}
\item Consider a sequence $(y_n)\subset \K$ with $\dNorm{1}{y_n}=1$ and $y_n\cdot x\to \udo(x)$. Since $(y_n)$ is uniformly bounded, it admits a subsequential limit $\smash{y\in \K}$ with $\dNorm{1}{y}=1$ and $y\cdot x=\udo (x)$. We have used the equivalence and continuity of norms as well as the fact that $\K$ is closed. This shows that the infimum in the definition of $\udo(x)$ is attained.

\item If $\udo(x)=0$, then there exists $y\in \K$ with $\dNorm{1}{y}=1$ and $y\cdot x=0$. This shows that $x\in \partial \K^*$. On the other hand, if $\smash{x\in \partial \K^*}$, then there exists a non-zero $\smash{z\in \K}$ with $z\cdot x=0$. Taking $y=z/\dNorm{1}{z}$ gives $\smash{y\in \K}$ with $\dNorm{1}{y}=1$ and $\udo (x)\leq y\cdot x=0$. This shows that $\udo(x)=0$.

\item Fix $x,y\in \K^*$, and let $\smash{z\in \K}$ with $\dNorm{1}{z}=1$ be such that $\udo(y)=z\cdot y$. By the Cauchy-Schwarz inequality,
$$\udo(x)-\udo(y)\leq z\cdot x-z\cdot y=z\cdot(y-x)\leq \dNorm{1}{z}\Norm{y-x}_1=\Norm{y-x}_1.$$
Reversing the roles of $x$ and $y$ shows that $\udo$ is Lipschitz continuous with Lipschitz constant at most one.
\item Fix $x,y\in \K^*$ as well as $t\in [0,1]$, and let $z\in \K$ achieve the infimum for $\udo(tx+(1-t)y)$. It is clear that 
$$\udo(tx+(1-t)y)=z\cdot \big(tx+(1-t)y\big)=t (z\cdot x)+(1-t)(z\cdot y)\geq t\udo(x)+(1-t)\udo(y).$$
This shows that $\udo$ is concave. To see that $\udo$ is $\smash{\K^*}$-non-decreasing, fix $\smash{x,x'\in \K^*}$ with $\smash{x'-x\in \K^*}$, and let $y\in \K$ attain the infimum defining $\udo(x')$. Since $\smash{x'-x\in \K^*}$,
$$\udo(x')-\udo(x)\geq y\cdot x'-y\cdot x= (x'-x)\cdot y\geq 0$$
as required.

\item Fix $\smash{x\in \mathrm{int}(\K^*)}$, $\smash{z\in \K^*}$ and $p\in \partial \udo(x)$. Notice that $\partial\udo (x)\neq \emptyset$ as $\udo$ is concave. Since $\smash{\epsilon z\in \K^*}$ for every $\epsilon>0$, the $\K^*$-non-decreasingness of $\udo$ and the definition of the super-differential imply that
$$0\leq \udo(x+\epsilon z)-\udo(x)\leq p\cdot \epsilon z+o(\epsilon z).$$
Dividing by $\epsilon$ and letting $\epsilon$ tend to zero shows that $p\cdot z\geq 0$ for all $\smash{z\in \K^*}$. It follows by \Cref{SBME closed convex cone bidual} that $\smash{p\in \K^{**}=\K}$. To see that $\smash{\dNorm{1}{p}\leq 1}$ for $p\in \partial \udo (x)$, find $\epsilon>0$ small enough so that $\smash{x+\epsilon y\in \K^*}$ for all $y\in \R^d$ with $\Norm{y}_1=1$. Fix $p\in \partial \udo (x)$ and $y\in \R^d$ with $\Norm{y}_1=1$. Since $\smash{x-\epsilon y\in \K^*}$, the definition of the super-differential implies that
$$\udo (x-\epsilon y)\leq \udo(x)-\epsilon p\cdot y+o(\epsilon y).$$
Rearranging and using the $1$-Lipschitz continuity of $\udo$ reveals that
$$\epsilon p\cdot y\leq \epsilon \Norm{y}_1+o(\epsilon y)=\epsilon+o(\epsilon y).$$
Dividing by $\epsilon$ and letting $\epsilon$ tend to zero shows that $p\cdot y\leq 1$ for every $y\in \R^d$ with $\Norm{y}_1=1$. Choosing $y=d\sgn(p_k)e_k$ gives $\smash{\dNorm{1}{p}\leq 1}$.

\item Fix $z\in \K^*$. Since $x_0\in \mathrm{int}(\K^*)$ is a local maximum of the map $\smash{x\mapsto h(x)-\frac{1}{\udo(x)}}$, for every $\epsilon>0$ small enough,
$$h(x_0)-\frac{1}{\udo(x_0)}\geq h(x_0+\epsilon z)-\frac{1}{\udo(x_0+\epsilon z)}.$$
Rearranging and using the $1$-Lipschitz continuity of $\udo$ as well as the differentiability of $h$ reveals that
\begin{align*}
\udo(x_0+\epsilon z)&\leq \udo(x_0)-\udo(x_0)\udo(x_0+\epsilon z)\big(h(x_0+\epsilon z)-h(x_0)\big)\\
&= \udo(x_0)-\udo(x_0)^2\nabla h(x_0)\cdot \epsilon z+o(\epsilon z).
\end{align*}
This shows that $-\udo(x_0)^2\nabla h(x_0)\in \partial \udo (x_0)$ and completes the proof.\qedhere
\end{enumerate}
\end{proof}

\begin{proposition}\label{SBME monotonic non-linearities no boundary condition}
If $\H:\R^d\to \R$ is a continuous non-linearity satisfying \eqref{eqn: SBME app A2}, then a continuous function $\smash{u:[0,\infty)\times \Rp^d\to \R}$ is a viscosity subsolution to the Hamilton-Jacobi equation \eqref{eqn: SBME app HJ eqn on Rpp} if and only if it is a viscosity subsolution to the Hamilton-Jacobi equation  \eqref{eqn: SBME app HJ eqn on Rp}. An identical statement holds for viscosity supersolutions.
\end{proposition}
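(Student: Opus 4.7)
The forward implication is immediate, since every admissible test function for \eqref{eqn: SBME app HJ eqn on Rpp} is also admissible for \eqref{eqn: SBME app HJ eqn on Rp}. For the converse, I reduce the supersolution case to the subsolution case by negation: if $v$ is a supersolution to $\partial_t f = \H(\nabla f)$, then $u := -v$ is a subsolution to $\partial_t g = \td \H(\nabla g)$ with $\td \H(y) := -\H(-y)$, and $\td \H$ inherits both Lipschitz continuity \eqref{eqn: SBME app A1} and monotonicity \eqref{eqn: SBME app A2} from $\H$. It thus suffices to show that every continuous subsolution to \eqref{eqn: SBME app HJ eqn on Rpp} is a subsolution to \eqref{eqn: SBME app HJ eqn on Rp}.

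Let $\phi \in C^\infty\big((0,\infty) \times \Rp^d\big)$ be such that $u - \phi$ has a local maximum at $(t^*, x^*) \in (0,\infty) \times \Rp^d$. If $x^* \in \Rpp^d$ there is nothing to prove, so assume $x^* \in \partial \Rp^d$. Replacing $\phi$ by $\smash{\phi + |t - t^*|^2 + \Norm{x - x^*}_1^2}$ makes the maximum strict while preserving the value, time derivative and spatial gradient of $\phi$ at $(t^*, x^*)$. I then invoke \Cref{SBME properties of distance-like function} for the self-dual cone $\K = \K^* = \Rp^d$, which supplies a concave distance-like function $\smash{\udo(x) = d^{-1} \min_{k \leq d} x_k}$ on $\Rp^d$ that vanishes exactly on $\partial \Rp^d$ and whose superdifferential at every interior point is contained in $\Rp^d$.

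Since $1/\udo$ itself is not smooth, I introduce a smooth concave approximation $\udo_\eta$ satisfying $\udo_\eta \leq \udo$, $\udo_\eta(x) \to 0$ as $x \to \partial \Rp^d$, $\nabla \udo_\eta(x) \in \Rp^d$ pointwise, and $\udo_\eta \to \udo$ uniformly on compact subsets of $\Rpp^d$ as $\eta \to 0$ (for instance a concave mollification of $\udo$, or a rescaled softmin of the form $-\eta d^{-1} \log \sum_k e^{-x_k/\eta}$ after a harmless shift at the boundary). On a small compact neighborhood $\bar B$ of $(t^*, x^*)$, the upper semi-continuous function $u - \phi - \delta/\udo_\eta$ diverges to $-\infty$ near $\partial \Rp^d$, so its maximum is attained at some interior point $(t_{\delta, \eta}, x_{\delta, \eta})$. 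Comparing with the sequence $y_\delta = x^* + \sqrt{\delta}(1, \ldots, 1)$, which satisfies $\udo(y_\delta) = \sqrt{\delta}/d$ and hence $\delta/\udo_{\eta(\delta)}(y_\delta) \to 0$ for a suitable subsequence $\eta(\delta) \to 0$, together with the strictness of the maximum of $u - \phi$ at $(t^*, x^*)$, forces $(t_{\delta, \eta(\delta)}, x_{\delta, \eta(\delta)}) \to (t^*, x^*)$ as $\delta \to 0$.

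Applying the interior subsolution property of $u$ to the smooth test function $\phi + \delta/\udo_\eta$ at $(t_{\delta, \eta}, x_{\delta, \eta})$ yields
\begin{equation*}
\partial_t \phi(t_{\delta, \eta}, x_{\delta, \eta}) \leq \H\Big(\nabla \phi(t_{\delta, \eta}, x_{\delta, \eta}) - \delta \nabla \udo_\eta(x_{\delta, \eta})/\udo_\eta(x_{\delta, \eta})^2\Big).
\end{equation*}
Because $\nabla \udo_\eta \in \Rp^d$, the argument of $\H$ on the right is componentwise at most $\nabla \phi(t_{\delta, \eta}, x_{\delta, \eta})$, and the monotonicity hypothesis \eqref{eqn: SBME app A2} therefore gives $\partial_t \phi(t_{\delta, \eta}, x_{\delta, \eta}) \leq \H\big(\nabla \phi(t_{\delta, \eta}, x_{\delta, \eta})\big)$. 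Passing to the limit $\delta \to 0$ with $\eta = \eta(\delta) \to 0$, and using continuity of $\partial_t \phi$, $\nabla \phi$ and $\H$, yields the boundary subsolution inequality at $(t^*, x^*)$. The main obstacle is the non-smoothness of the barrier $1/\udo$, which is circumvented by the smooth concave approximation $\udo_\eta$ whose gradient still lies in $\Rp^d$; monotonicity of $\H$ is then precisely what absorbs the non-negative gradient correction introduced by the barrier, making \eqref{eqn: SBME app A2} the crucial ingredient that renders any boundary condition unnecessary.
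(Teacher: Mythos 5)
Your argument is correct in substance, but it takes a genuinely different route from the paper for the hard direction (interior subsolution $\Rightarrow$ subsolution up to the boundary). The paper keeps the non-smooth barrier $\epsilon/\udo$ built from the concave distance-like function \eqref{eqn: SBME distance-like function} and compensates for its lack of smoothness by a variable-doubling argument, extracting the sign information from the superdifferential $\partial\udo(y_0)\subset\Rp^d$ of \Cref{SBME properties of distance-like function}; you instead smooth the barrier once and for all and run the classical single-variable penalization, letting the monotonicity \eqref{eqn: SBME app A2} absorb the non-negative gradient of the barrier before passing to the limit — which is exactly the right order of operations, since $\delta\nabla\udo_\eta/\udo_\eta^2$ is not controlled. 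Your approach is shorter and more elementary; the paper's has the advantage of not requiring any construction of a smooth barrier and of reusing, for a general closed convex cone, the same $\udo$-machinery that drives \Cref{SBME monotone initial conition gives monotone solution}. Your reduction of the supersolution case via $v\mapsto -v$ and $\H(y)\mapsto -\H(-y)$ (which preserves \eqref{eqn: SBME app A2}) is also a clean alternative to the paper's ``identical argument''. A few small repairs are needed but none is a real gap: (i) your parenthetical recipes for $\udo_\eta$ do not quite deliver the listed properties — a symmetric concave mollification of $\udo$ and the shifted softmin can be nonpositive at (or fail to vanish at) $\partial\Rp^d$, so $1/\udo_\eta$ need not blow up there; a negative-exponent power mean such as $d^{-1}\big(\tfrac1d\sum_k x_k^{-p}\big)^{-1/p}$, or simply the explicit smooth monotone barrier $\delta\sum_k x_k^{-1}$ (whose value at $x^*+\sqrt{\delta}(1,\dots,1)$ is $O(\sqrt\delta)$), satisfies everything you need and makes the $\eta$-parameter unnecessary; (ii) the strictness perturbation should be $\abs{t-t^*}^2+\norm{x-x^*}_2^2$ rather than $\Norm{x-x^*}_1^2$, which is not smooth away from $x^*$; (iii) in the ``easy'' direction your one-liner is backwards as stated: a test function in $C^\infty\big((0,\infty)\times\Rpp^d\big)$ is not automatically admissible for \eqref{eqn: SBME app HJ eqn on Rp} since it need not extend smoothly to the closed orthant — one modifies it by a cutoff away from the (interior) maximum point, as the paper notes; and (iv) one should record, as your strict-maximum comparison implicitly does, that for small $\delta$ the penalized maximizer also avoids the lateral boundary of the compact neighborhood, so that the interior subsolution criterion is indeed applicable there.
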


\begin{proof}
The argument for viscosity subsolutions and viscosity supersolutions being almost identical, we focus exclusively on the case of viscosity subsolutions. To begin with, suppose that $u$ is a viscosity subsolution to the Hamilton-Jacobi equation \eqref{eqn: SBME app HJ eqn on Rp}, and let $\smash{\phi\in C^\infty\big((0,\infty)\times \Rpp^d\big)}$ be a function with the property that $u-\phi$ has a local maximum at $\smash{(t^*,x^*)\in (0,\infty)\times \Rpp^d}$. After modifying $\phi$ outside of a neighborhood of $(t^*, x^*)$ so that it becomes a smooth function defined on the larger domain $(0,\infty) \times \Rp^d$, we can apply the subsolution criterion for \eqref{eqn: SBME app HJ eqn on Rp} and obtain the result.

Conversely, suppose that $u$ is a continuous viscosity subsolution to the Hamilton-Jacobi equation \eqref{eqn: SBME app HJ eqn on Rpp}, and consider a smooth function $\smash{\phi\in C^\infty\big((0,\infty)\times \Rp^d\big)}$ with the property that $u-\phi$ has a local maximum at $\smash{(t^*,x^*)\in (0,\infty)\times \Rp^d}$. If $\smash{x^*\in  \Rpp^d}$ there is nothing to prove, so assume that $\smash{x^*\in \partial \Rp^d}$. Perturbing the test function $\phi$ by a small quadratic function if necessary, suppose further that $(t^*,x^*)$ is a strict local maximum of $u-\phi$. To be more precise, assume that
\begin{equation}\label{eqn: SBME ignoring the boundary strict maximum}
u(t,x)-\phi(t,x)<u(t^*,x^*)-\phi(t^*,x^*)
\end{equation}
for any $(t,x)$ other than $(t^*,x^*)$ in the closure of the open neighborhood
$$\BigO_r=(t^*-r, t^*+r)\times \big(\mathrm{int}\big(B_r(x^*)\big)\cap \Rpp^d\big).$$
Decreasing $r>0$ if necessary, it is possible to ensure that $(t^*-r, t^*+r)\subset (0,\infty)$. To establish the subsolution criterion for \eqref{eqn: SBME app HJ eqn on Rp}, we proceed in two steps: first we show that there exists an almost maximizer of $u-\phi$ in $\BigO_r$, and then we use a variable doubling argument to conclude.\\
\step{1: almost maximizer in $\BigO_r$.}\\
Introduce the distance-like function $\udo:\Rp^d\to \R$ defined by
$$\udo(x)=\inf_{\substack{\dNorm{1}{y}=1\\y\in \Rp^d}}y\cdot x.$$
This corresponds to the function \eqref{eqn: SBME distance-like function} for the cone $\K=\Rp^d=(\Rp^d)^*$. For each $\epsilon>0$, define the function $\smash{\psi_\epsilon:(0,\infty)\times \Rp^d\to \R\cup \{-\infty\}}$ by
$$\psi_\epsilon(s,y)=u(s,y)-\phi(s,y)-\frac{\epsilon}{\udo(y)}.$$
Since $\psi_\epsilon$ is upper semi-continuous with values in $\R\cup \{-\infty\}$, there exists $\smash{(s_\epsilon,y_\epsilon)\in \overline{\BigO}_r}$ with
$$\psi_\epsilon(s_\epsilon,y_\epsilon)=\sup_{(s,y)\in \overline{\BigO}_r}\psi_\epsilon(s,y).$$
Decreasing $r$ if necessary and combining the continuity of $u-\phi$ with \eqref{eqn: SBME ignoring the boundary strict maximum} gives $(t,x)\in \BigO_r$ such that
$$u(s,y)-\phi(s,y)<u(t,x)-\phi(t,x)$$
for all $\smash{(s,y)\in \overline{\BigO}_r}$ with $\smash{s\in \{t^*-r,t^*+r\}}$ or $\smash{y\in \partial B_r(x^*)\cap \Rp^d}$. This means that for every $\epsilon>0$ small enough, we must have $\psi_\epsilon(s,y)<\psi_\epsilon(t,x)$ for all $\smash{(s,y)\in \overline{\BigO}_r}$ with $\smash{s\in \{t^*-r,t^*+r\}}$ or $\smash{y\in \partial B_r(x^*)\cap \Rp^d}$. Together with the term $\epsilon/\udo(y)$ in $\psi_\epsilon$, this ensures that 
$(s_\epsilon,y_\epsilon)\in \BigO_r$.

\noindent \step{2: doubling the variables.}\\
Fix $\epsilon,\delta>0$, and a smooth function $\zeta_\epsilon:\R\times \R^d\to [0,1]$ with
$$\supp \zeta_\epsilon \subset \BigO_r \quad \text{and} \quad \zeta_\epsilon(s_\epsilon,y_\epsilon)=1.$$
Given $0<\epsilon_0<1$ to be determined, consider the smoothed normalized-$\ell^1$ norm,
$$\Norm{x}_{1,\epsilon_0}=\frac{1}{d}\sum_{k=1}^d\big(x_k^2+\epsilon_0\big)^{\frac{1}{2}}.$$
For each $\theta>0$ introduce the modulus of continuity of $u$ on $\overline{\BigO}_r$,
$$\omega_u(\theta)=\sup\big\{\abs{u(t,x)-u(s,y)}\mid (s,x),(s,y)\in \overline{\BigO}_r \text{ and } \abs{t-s}^2+\Norm{x-y}_1\leq \theta^2 \big\}.$$
Since $u$ is continuous, and therefore uniformly continuous on $\overline{\BigO}_r$, it is possible to find $\theta>0$ sufficiently small that $\omega_u(\theta)<\delta$. With this $\theta>0$ at hand, define the function $\Psi_{\epsilon,\delta,\theta}: \overline{\BigO}_r\times \overline{\BigO}_r\to \R$ by
$$\Psi_{\epsilon,\delta,\theta}(t,x,s,y)=u(t,x)-\phi(s,y)-\frac{\epsilon}{\udo (y)}-\frac{2M_u}{\theta^2}\abs{t-s}^2-\frac{2M_u}{\theta^2}\Norm{x-y}_{1,\epsilon_0}+\delta \zeta_\epsilon(s,y),$$
where $\smash{M_u=\sup_{(t,x)\in \overline{\BigO}_r}\abs{u(t,x)}}$. Observe that $\smash{\Psi_{\epsilon,\delta,\theta}(s,y,s,y)=\psi_\epsilon(s,y)+\delta \zeta_\epsilon(s,y)}$. We now show that the maximizer $(t_0,x_0,s_0,y_0)$ of this function belongs to the open set $\BigO_r\times \BigO_r$. Given points $\smash{(t,x),(s,y)\in \overline{\BigO}_r\times\overline{\BigO}_r}$ with $\smash{\abs{t-s}^2+\Norm{x-y}_1\leq \theta^2}$, the triangle inequality and the definition of the modulus of continuity reveal that
$$\Psi_{\epsilon,\delta,\theta}(t,x,s,y)\leq \omega_u(\theta)+u(s,y)-\phi(s,y)-\frac{\epsilon}{\udo(y)}+\delta \zeta_\epsilon(s,y)=\omega_u(\theta)+\psi_\epsilon(s,y)+\delta \zeta_\epsilon(s,y).$$
On the other hand, for $\smash{(t,x),(s,y)\in \overline{\BigO}_r\times\overline{\BigO}_r}$ with $\smash{\abs{t-s}^2+\Norm{x-y}_1> \theta^2}$, the triangle inequality, the bound $\Norm{x-y}_{1,\epsilon_0}\geq \Norm{x-y}_1$ and the definition of $M_u$ imply that
$$\Psi_{\epsilon,\delta,\theta}(t,x,s,y)\leq u(s,y)-\phi(s,y)-\frac{\epsilon}{\udo(y)}+\delta \zeta_\epsilon(s,y)\leq \omega_u(\theta)+\psi_\epsilon(s,y)+\delta \zeta_\epsilon(s,y).$$
It follows that for any $(t,x)\in \overline{\BigO}_r$ and every $(s,y)\in \overline{\BigO}_r\setminus \supp \zeta_\epsilon$,
\begin{align*}
\Psi_{\epsilon,\delta,\theta}(t,x,s,y)&\leq \omega_u(\theta)+\psi_\epsilon(s_\epsilon,y_\epsilon)+\delta \zeta_\epsilon(s,y)=\omega_u(\theta)+\Psi_{\epsilon,\delta,\theta}(s_\epsilon,y_\epsilon,s_\epsilon,y_\epsilon)-\delta\\
&<\Psi_{\epsilon,\delta,\theta}(s_\epsilon,y_\epsilon,s_\epsilon,y_\epsilon),
\end{align*}
where we have used that $\zeta_\epsilon(s_\epsilon,y_\epsilon)=1$ and $\omega_u(\theta)<\delta$. This means that $(s_0,y_0)\in \supp \zeta_\epsilon\subset \BigO_r$. To show that $(t_0,x_0)$ also belongs to this open set, suppose that $\abs{t_0-s_0}^2+\Norm{x_0-y_0}_1> \theta^2$. The triangle inequality, the bound $\Norm{x_0-y_0}_{1,\epsilon_0}\geq \Norm{x_0-y_0}_1$ and the definition of $M_u$ imply that
$$\Psi_{\epsilon,\delta,\theta}(t_0,x_0,s_0,y_0)\leq u(t_0,x_0)-\phi(s_0,y_0)-\frac{\epsilon}{\udo(y_0)}-2M_u+\delta \zeta_\epsilon(s_0,y_0)\leq \Psi_{\epsilon,\delta,\theta}(s_0,y_0,s_0,y_0),$$
so, up to replacing $(t_0,x_0)$ with $(s_0,y_0)$, we may assume without loss of generality that
\begin{equation}\label{eqn: SBME ignoring the boundary limit of maximizer}
\abs{t_0-s_0}^2+\Norm{x_0-y_0}_{1}\leq \theta^2.
\end{equation}
Decreasing $\theta$ if necessary and recalling that $\BigO_r$ is open shows that $(t_0,x_0,s_0,y_0)\in \BigO_r\times \BigO_r$. Since the function $\smash{(t,x)\mapsto \Psi_{\epsilon,\delta,\theta}(t,x,s_0,y_0)}$ has a local maximum at $(t_0,x_0)$, the subsolution criterion for \eqref{eqn: SBME app HJ eqn on Rpp} implies that
\begin{equation}\label{eqn: SBME ignoring the boundary key inequality}
\frac{4M_u}{\theta^2}(t_0-s_0)-\H\bigg(\frac{2M_u}{\theta^2}z\bigg)\leq 0
\end{equation}
for the vector $z\in \R^d$ defined by
$$z_k=\frac{(x_0-y_0)_k}{d((x_0-y_0)_k^2+\epsilon_0)^{\frac{1}{2}}}.$$
On the other hand, since the function $\smash{(s,y)\mapsto \Psi_{\epsilon,\delta,\theta}(t_0,x_0,s,y)}$ achieves its maximum at an interior point $(s_0,y_0)\in \BigO_r$, a direct computation together with \Cref{SBME properties of distance-like function} shows that
\begin{align}\label{eqn: SBME ignoring the boundary time derivative}
\partial_t\phi(s_0,y_0)-\frac{4M_u}{\theta^2}(t_0-s_0)-\delta\partial_t \zeta_\epsilon(s_0,y_0)&=0,\\
\frac{\udo(y_0)^2}{\epsilon}\Big(\nabla \phi(s_0,y_0)+\frac{2M_u}{\theta^2}\widetilde{z}-\delta \nabla \zeta_\epsilon(s_0,y_0)\Big)&\in \partial\udo(y_0)\notag,
\end{align}
for the vector $\widetilde{z}\in \R^d$ defined by
$$\widetilde{z}_k=\frac{(y_0-x_0)_k}{d((y_0-x_0)_k^2+\epsilon_0)^{\frac{1}{2}}}=-z_k.$$
Remembering that $\partial\udo(y_0)\subset (\Rp^d)^*=\Rp^d$ by \Cref{SBME properties of distance-like function}, it is possible to find $p\geq 0$ with
$$\frac{2M_u}{\theta^2}z=\nabla \phi(s_0,y_0)-\delta \nabla \zeta_\epsilon(s_0,y_0)-p.$$
Substituting this and \eqref{eqn: SBME ignoring the boundary time derivative} into \eqref{eqn: SBME ignoring the boundary key inequality} and using the fact that the non-linearity $\H$ is non-decreasing reveals that
\begin{equation}\label{eqn: SBME ignoring the boundary key inequality 2}
\partial_t\phi(s_0,y_0)-\delta \partial_t\zeta_\epsilon(s_0,y_0)-\H\big(\nabla \phi(s_0,y_0)-\delta \nabla \zeta_\epsilon(s_0,y_0)\big)\leq 0.
\end{equation}
Recalling that $(s_0,y_0)\in \supp \zeta_\epsilon$ depends on $\epsilon, \delta$ and $\theta$, and that $\theta$ was chosen small enough in terms of $\delta$, we would now like to let $\theta\to 0$ and then $\delta \to 0$ in this inequality. Observe that for any $(t,x)\in \overline{\BigO}_r$,
\begin{align*}
u(t_0,x_0)-\phi(s_0,y_0)-\frac{\epsilon}{\udo(y_0)}+\delta \zeta_\epsilon(s_0,y_0)&\geq \Psi_{\epsilon,\delta,\theta}(t_0,x_0,s_0,y_0)\\
&\geq u(t,x)-\phi(t,x)-\frac{\epsilon}{\udo (x)}+\delta\zeta_\epsilon(t,x)\\
&=\psi_\epsilon(t,x)+\delta \zeta_\epsilon(t,x).
\end{align*}
If we denote by $(t_1,x_1)\in \supp \zeta_\epsilon\subset \BigO_r$ and $(t_1',x_1')\in \supp \zeta_\epsilon\subset \BigO_r$ subsequential limits of the sequences $(t_0,x_0)$ and $(s_0,y_0)$ as $\theta\to 0$ and then $\delta \to 0$, we must have $t_1=t_1'$ and $x_1=x_1'$ by \eqref{eqn: SBME ignoring the boundary limit of maximizer}. Moreover, the subsequential limit $(t_1,x_1)$ must satisfy the inequality
$$u(t_1,x_1)-\phi(t_1,x_1)\geq u(t_1,x_1)-\phi(t_1,x_1)-\frac{\epsilon}{\udo (x_1)}\geq u(t,x)-\phi(t,x)-\frac{\epsilon}{\udo(x)}$$
for all $(t,x)\in \BigO_r$. Writing $(t_2,x_2)\in \overline{\BigO}_r$ for a subsequential limit of the sequence $(t_1,x_1)$ as $\epsilon\to 0$ we find that
$$u(t_2,x_2)-\phi(t_2,x_2)\geq u(t,x)-\phi(t,x)$$
for all $(t,x)\in \BigO_r$. By continuity of $u-\phi$, this inequality extends to $\overline{\BigO}_r$. Since $(t^*,x^*)$ is a strict local maximum of $u-\phi$ on $\overline{\BigO}_r$, we must have $(t_2,x_2)=(t^*,x^*)$. It follows by letting $\theta\to 0$, then $\delta\to 0 $ and finally $\epsilon \to 0$ in \eqref{eqn: SBME ignoring the boundary key inequality 2} that
$$\partial_t \phi(t^*,x^*)-\H\big(\nabla \phi(t^*,x^*)\big)\leq 0.$$
This completes the proof.
\end{proof}

\begin{corollary}\label{SBME comparison principle on Rpp}
If $\smash{\H:\R^d\to \R}$ and $\smash{\psi:\Rp^d\to \R}$ satisfy \eqref{eqn: SBME app A1}-\eqref{eqn: SBME app A3}, then the Hamilton-Jacobi equation \eqref{eqn: SBME app HJ eqn on Rpp} admits a unique viscosity solution $f\in \mathfrak{L}$ subject to the initial condition $\psi$. Moreover,
\begin{equation}
\sup_{t>0}\Norm{f(t,\cdot)}_{\mathrm{Lip},1}=\Norm{\psi}_{\mathrm{Lip},1},
\end{equation}
and if $u,v\in \mathfrak{L}_{\mathrm{unif}}$ are respectively a continuous subsolution and a continuous supersolution to \eqref{eqn: SBME app HJ eqn on Rpp}, then
\begin{equation}
\sup_{\Rp\times \Rp^d}\big(u(t,x)-v(t,x)\big)=\sup_{\Rp^d}\big(u(0,x)-v(0,x)\big).
\end{equation}
To be more specific, if $\smash{L=\max\big(\sup_{t>0}\Norm{u(t,\cdot)}_{\mathrm{Lip},1},\sup_{t>0}\Norm{v(t,\cdot)}_{\mathrm{Lip},1}\big)}$ and $\smash{V=\Norm{\H}_{\mathrm{Lip},1,*}}$, then for every $Q>2L$ and $R\in \R$, the map
\begin{equation}
(t,x)\mapsto u(t,x)-v(t,x)-Q\big(\Norm{x}_1+Vt-R\big)_+
\end{equation}
achieves its supremum on $\smash{\{0\}\times \Rp^d}$.
\end{corollary}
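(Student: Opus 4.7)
The proof will be essentially immediate once I combine three results established earlier: the well-posedness theorem on the open half-space in \Cref{SBME WP of HJ eqn on Rp}, the comparison principle on $\Rp^d$ in \Cref{SBME comparison principle on Rpd}, and the ``boundary-ignoring'' equivalence of \Cref{SBME monotonic non-linearities no boundary condition}. The assumptions \eqref{eqn: SBME app A1}--\eqref{eqn: SBME app A3} are precisely those needed to invoke all three results, and the monotonicity hypothesis \eqref{eqn: SBME app A2} is what transfers viscosity (sub/super-)solutions freely between the equations on $\Rpp^d$ and on $\Rp^d$.

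For existence, I would take the unique viscosity solution $f\in \mathfrak{L}$ to the Hamilton-Jacobi equation \eqref{eqn: SBME app HJ eqn on Rp} with initial condition $\psi$ provided by \Cref{SBME WP of HJ eqn on Rp}. This function is continuous on $[0,\infty)\times \Rp^d$, so \Cref{SBME monotonic non-linearities no boundary condition} immediately promotes it to a viscosity solution of \eqref{eqn: SBME app HJ eqn on Rpp}. The Lipschitz bound $\sup_{t>0}\Norm{f(t,\cdot)}_{\mathrm{Lip},1}=\Norm{\psi}_{\mathrm{Lip},1}$ is inherited verbatim from \Cref{SBME WP of HJ eqn on Rp}. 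For uniqueness, if $g\in\mathfrak{L}$ is any other viscosity solution to \eqref{eqn: SBME app HJ eqn on Rpp} with the same initial condition, then the continuity of $g$ on $[0,\infty)\times \Rp^d$ combined with the reverse implication of \Cref{SBME monotonic non-linearities no boundary condition} shows that $g$ is a viscosity solution of \eqref{eqn: SBME app HJ eqn on Rp}; the uniqueness clause of \Cref{SBME WP of HJ eqn on Rp} then forces $g=f$.

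The comparison principle is handled in the same fashion. Given a continuous subsolution $u\in\mathfrak{L}_{\mathrm{unif}}$ and a continuous supersolution $v\in\mathfrak{L}_{\mathrm{unif}}$ to \eqref{eqn: SBME app HJ eqn on Rpp}, a further application of \Cref{SBME monotonic non-linearities no boundary condition}, using both that $u$ and $v$ are continuous on $[0,\infty)\times \Rp^d$ and that $\H$ is continuous and non-decreasing, converts them into a continuous subsolution and supersolution to the equation \eqref{eqn: SBME app HJ eqn on Rp}. The refined statement that the map $(t,x)\mapsto u(t,x)-v(t,x)-Q(\Norm{x}_1+Vt-R)_+$ attains its supremum on $\{0\}\times \Rp^d$ is then exactly what \Cref{SBME comparison principle on Rpd} yields, while the simpler inequality $\sup(u-v)=\sup_{t=0}(u-v)$ follows from \Cref{SBME comparison principle corollary on Rpd}.

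Given the machinery developed earlier in the appendix, this proof is short and presents no real obstacle. The only point requiring attention is verifying the continuity hypothesis of \Cref{SBME monotonic non-linearities no boundary condition} at each invocation; this is free in every case, since viscosity solutions in $\mathfrak{L}$ are continuous by definition and the comparison statement explicitly assumes continuity of $u$ and $v$.
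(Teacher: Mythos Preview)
Your proposal is correct and follows essentially the same approach as the paper's own proof, which consists of a single sentence citing \Cref{SBME WP of HJ eqn on Rp}, \Cref{SBME comparison principle on Rpd}, \Cref{SBME comparison principle corollary on Rpd}, and \Cref{SBME monotonic non-linearities no boundary condition}. Your write-up simply unpacks how these four ingredients fit together, and does so accurately.
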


\begin{proof}
This is an immediate consequence of \Cref{SBME WP of HJ eqn on Rp}, \Cref{SBME comparison principle on Rpd}, \Cref{SBME comparison principle corollary on Rpd} and \Cref{SBME monotonic non-linearities no boundary condition}.
\end{proof}

\subsection{Monotonicity of solutions on \texorpdfstring{$\Rp^d$}{Rpd}}

Recall that the notion of being $\CC^*$-non-decreasing is introduced at the beginning of Section~\ref{SBME section HJ on cone}. In this section, we follow the arguments in Section 4 of \cite{HB_cone} to show that the solution to the Hamilton-Jacobi equation \eqref{eqn: SBME app HJ eqn on Rp} preserves the monotonicity of its initial condition. To be more specific, we assume that
\begin{enumerate}[label = \textbf{A\arabic*}]
\setcounter{enumi}{3}
    \item the initial condition $\psi:\R^d\to \R$ is $\CC^*$-non-decreasing for some closed convex cone $\CC\subset \R^d$,\label{SBME app A4}
\end{enumerate}
and under a mild assumption on the dual cone $\CC^*$, we show that the solution to the Hamilton-Jacobi equation \eqref{eqn: SBME app HJ eqn on Rp} constructed in \Cref{SBME WP of HJ eqn on Rp} is also $\CC^*$-non-decreasing. This result will be used in \Cref{SBME section HJ on cone} when the well-posedness of the projected Hamilton-Jacobi equations \eqref{eqn: SBME projected HJ eqn} is established by means of \Cref{SBME WP of HJ eqn on Rp}. Indeed, it will allow us to verify the first condition in \eqref{eqn: SBME subsolution conditions} and \eqref{eqn: SBME supersolution conditions}.

\begin{proposition}\label{SBME monotone initial conition gives monotone solution}
Fix a non-linearity $\H:\R^d\to \R$ and an initial condition $\smash{\psi:\Rp^d\to \R}$ satisfying \eqref{eqn: SBME app A1}-\eqref{SBME app A4}. If $\smash{\Rp^d\cap \mathrm{int}(\CC^*)\neq \emptyset}$ and $f\in \mathfrak{L}$ is a viscosity solution to the Hamilton-Jacobi equation \eqref{eqn: SBME app HJ eqn on Rp} subject to the initial condition $\psi$, then $f$ is $\smash{\CC^*}$-non-decreasing.
\end{proposition}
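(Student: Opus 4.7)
The plan is a two-step strategy: first handle monotonicity along directions $v \in \CC^* \cap \Rp^d$ by a direct comparison-principle argument, then extend to general directions $v \in \CC^*$ via a doubling-of-variables argument built around the distance-like function from \eqref{eqn: SBME distance-like function}.

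For the first step, fix $v \in \CC^* \cap \Rp^d$. The translate $u(t, x) := f(t, x + v)$ is well-defined on the whole of $[0, \infty) \times \Rp^d$ since $v \in \Rp^d$ ensures $x + v \in \Rp^d$; it is a viscosity solution to \eqref{eqn: SBME app HJ eqn on Rp} by translation invariance in the spatial variable; and its initial condition satisfies $u(0, x) = \psi(x + v) \geq \psi(x) = f(0, x)$ by the assumption \eqref{SBME app A4} applied to the direction $v \in \CC^*$. Both $f$ and $u$ belong to $\mathfrak{L}_{\mathrm{unif}}$ by \Cref{SBME solution to HJ eqn on Rp is Lipschitz} with the same Lipschitz constant, so the comparison principle \Cref{SBME comparison principle corollary on Rpd} yields $u \geq f$, that is, $f(t, x + v) \geq f(t, x)$ for all $(t, x) \in [0, \infty) \times \Rp^d$.

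To pass to general $v \in \CC^*$, which need not lie in $\Rp^d$, I would introduce the extended distance-like function $\widetilde\udo:\R^d \to \R$ given by $\widetilde\udo(w) := \inf_{y \in \CC, \dNorm{1}{y} = 1} y \cdot w$; this is $1$-Lipschitz with respect to the normalized-$\ell^1$ norm, concave, and satisfies $\widetilde\udo(w) \geq 0$ if and only if $w \in \CC^*$ (the proofs are essentially verbatim from \Cref{SBME properties of distance-like function}). I would then argue by contradiction: if the conclusion fails there exist $T > 0$ and $(t^*, x^*, y^*) \in (0, T) \times \Rp^d \times \Rp^d$ with $\widetilde\udo(y^* - x^*) \geq 0$ and $f(t^*, x^*) > f(t^*, y^*)$. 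Following the variable-doubling pattern of Section~\ref{SBME app comparison}, I would consider a functional of the form
\begin{equation*}
\Psi_\alpha(t, x, s, y) = f(t, x) - f(s, y) - \tfrac{\alpha}{2}\bigl(\abs{t-s}^2 + \Norm{x-y}_{1, \epsilon_0}\bigr) - \alpha \max\bigl(-\widetilde\udo(y-x),\, 0\bigr) - \tfrac{\epsilon}{T-t}
\end{equation*}
on $[0, T] \times \Rp^d \times [0, T] \times \Rp^d$, perturbed by a small bump concentrated near $(t^*, x^*, y^*)$ to ensure its supremum is achieved at an interior point $(t_\alpha, x_\alpha, s_\alpha, y_\alpha)$ with $x_\alpha \neq y_\alpha$ for $\alpha$ large. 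The viscosity sub- and supersolution conditions on $f$ at this interior point, together with the super-differential characterization in part (6) of \Cref{SBME properties of distance-like function}, are then combined to derive a contradiction analogous to Step 3 of the proof of \Cref{SBME solution to HJ eqn on Rp is Lipschitz}.

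The main obstacle is the non-smoothness of the penalty $\max(-\widetilde\udo, 0)$, which forbids direct differentiation at the doubled maximizer and requires a careful use of the super-differential of $\widetilde\udo$. The interior hypothesis $\Rp^d \cap \mathrm{int}(\CC^*) \neq \emptyset$ enters precisely at this stage, in analogy with its role in \Cref{SBME monotonic non-linearities no boundary condition}: it produces an interior direction $v_0 \in \Rpp^d \cap \mathrm{int}(\CC^*)$ along which $\widetilde\udo$ has a strictly positive super-differential, and this in turn allows us to invoke the monotonicity assumption \eqref{eqn: SBME app A2} on $\H$ to absorb the error terms coming from differentiating $\max(-\widetilde\udo, 0)$ at the maximizer, closing the contradiction.
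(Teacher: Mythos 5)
Your Step 1 is fine in spirit and is more elementary than what the paper does for those directions: for $v\in\CC^*\cap\Rp^d$ the translate $u(t,x)=f(t,x+v)$ can indeed be compared with $f$, although the claim that $u$ is a viscosity solution ``by translation invariance'' is not immediate on the closed orthant (a local extremum of $u-\phi$ at a boundary point of $\Rp^d$ only gives an extremum of $f$ over the translated, smaller set $v+\Rp^d$); you need to first reduce to interior test points via \Cref{SBME monotonic non-linearities no boundary condition}, which assumption \eqref{eqn: SBME app A2} makes available. The real issue is that this step only covers $v\in\CC^*\cap\Rp^d$, so the entire burden of the proposition falls on your Step 2, and there the proposal has a genuine gap: it restates the difficulty rather than resolving it. Your penalty $\alpha\max(-\widetilde\udo(y-x),0)$ depends jointly and non-smoothly on the two doubled spatial variables, so at the maximizer you cannot manufacture the $C^\infty$ test functions that the viscosity sub- and supersolution definitions require; moreover the penalty vanishes on the constraint set (it is one-sided and inactive exactly where you want to conclude), your functional has no coercive localization term (a bump near $(t^*,x^*,y^*)$ does not prevent the supremum from escaping to infinity, which is why the paper carries a term $\Phi$ calibrated to $V=\Norm{\H}_{\mathrm{Lip},1,*}$), and the asserted mechanism for closing the contradiction --- an ``interior direction with strictly positive super-differential'' combined with the monotonicity \eqref{eqn: SBME app A2} of $\H$ --- does not correspond to an actual estimate.

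For contrast, the paper handles precisely this obstruction by introducing a \emph{third} variable $y$ ranging over $\CC^*$, with a barrier $\delta/\udo(y)$ keeping $y$ in $\mathrm{int}(\CC^*)$ and a quadratic coupling $\alpha\Norm{x'-x-y}^2_{1,\epsilon_0}$ tying $y$ to $x'-x$: all non-smoothness is confined to the $y$-slot, so once $y_\alpha$ is frozen the test functions $\phi,\phi'$ are smooth, and the first-order condition in $y$, read through part (6) of \Cref{SBME properties of distance-like function}, yields the quantitative bound $\Norm{x'_\alpha-x_\alpha-y_\alpha}_1\le K/\alpha$ that makes the scheme close. The hypothesis $\Rp^d\cap\mathrm{int}(\CC^*)\neq\emptyset$ is used there not to control super-differentials but to exhibit an admissible anchor point $(x_0,x_0+y_0,y_0)$ at which the barrier is finite, giving the lower bound on $\sup\Psi_\alpha$ that keeps the maximizer bounded and interior; and the final contradiction is obtained from the Lipschitz continuity \eqref{eqn: SBME app A1} of $\H$ (via $dV\abs{\partial_{x_k}\Phi}\le\partial_t\Phi$), not from \eqref{eqn: SBME app A2}. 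As written, your Step 2 would need to be rebuilt along these lines before the argument can be considered a proof.
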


\begin{proof}
Introduce the set
$\O=\big\{(x,x')\in \Rp^d\times \Rp^d\mid x'-x\in \CC^*\big\},$
and suppose for the sake of contradiction that there exists $T>0$ with
\begin{equation}\label{eqn: SBME HJ solution monotonic absurd 0}
\sup_{\substack{t\in [0,T]\\(x,x')\in \O}}\big(f(t,x)-f(t,x')\big)>0\geq \sup_{(x,x')\in \O}\big(f(0,x)-f(0,x')\big).
\end{equation}
The proof proceeds in three steps: first we perturb \eqref{eqn: SBME HJ solution monotonic absurd 0}, then we use a variable doubling argument to obtain a system of inequalities, and finally we contradict this system of inequalities.

\noindent\step{1: perturbing.}\\
Let $V=\Norm{\H}_{\mathrm{Lip},1,*}$ and fix a constant $L>0$ with
$$L>\Norm{\psi}_{\mathrm{Lip},1} \quad \text{and}\quad \lvert f(t,x)-\psi(x)\rvert \leq Lt$$
for all $(t,x)\in\Rpp\times \Rp^d$. The existence of such a constant follows from the assumption $f\in \mathfrak{L}$. Denote by $\smash{\udo:\CC^*\to \Rp}$ the distance-like function \eqref{eqn: SBME distance-like function} associated with the cone $\CC$,
$$\udo(y)=\inf_{\substack{\dNorm{1}{y'}=1\\y'\in \CC}}y'\cdot y.$$
Fix $\smash{y_0\in \Rp^d\cap  \mathrm{int}(\CC^*)}$ as well as $x_0\in \Rp^d$, and let $\theta\in C^\infty(\R)$ be an increasing function with $r_+\leq \theta(r)\leq (r+1)_+$ for all $r\in \R$. Given $0<\epsilon_0<1$ to be determined, consider the smoothed normalized-$\ell^1$ norm, 
\begin{equation*}
\Norm{x}_{1,\epsilon_0}=\frac{1}{d}\sum_{k=1}^d\big(x_k^2+\epsilon_0\big)^{\frac{1}{2}},
\end{equation*}
and introduce the function
$$\Phi(t,x)=\theta\big(\Norm{x}_{1,\epsilon_0}+Vt-R\big)$$
defined on $\Rp\times \R^d$, where $R>0$ is chosen large enough so that $\Phi(0,x_0)=0$. Increasing $R>0$ if necessary, it is possible to perturb the inequality \eqref{eqn: SBME HJ solution monotonic absurd 0} to ensure that
\begin{equation*}
\sup_{\substack{t\in [0,T]\\(x,x')\in \O}}\big(f(t,x)-f(t,x')-\Phi(t,x)\big)>0\geq \sup_{(x,x')\in \O}\big(f(0,x)-f(0,x')-\Phi(0,x)\big).
\end{equation*}
Picking $\delta>0$ small enough, it is also possible to guarantee that
\begin{align}\label{eqn: SBME HJ solution monotonic absurd 1}
\sup_{\substack{t\in [0,T]\\(x,x')\in \O}}&\Big(f(t,x)-f(t,x')-\delta t-\zeta(t,t)-\Phi(t,x)-\frac{\delta}{\udo(x'-x)}-2\delta \Norm{x-x'}_{1,\epsilon_0}^2\Big)\notag \\
&>\sup_{(x,x')\in \O}\Big(f(0,x)-f(0,x')-\zeta(0,0)-\Phi(0,x)-\frac{\delta}{\udo(x'-x)}-2\delta \Norm{x-x'}_{1,\epsilon_0}^2\Big)
\end{align}
for the perturbation function
$$\zeta(t,t')=\frac{\delta}{T-t}+\frac{\delta}{T-t'}.$$
This is a perturbed version of the absurd hypothesis \eqref{eqn: SBME HJ solution monotonic absurd 0}.\\
\step{2: system of inequalities.}\\
For each $\alpha\geq 1$, define the function $\Psi_\alpha:[0,T]\times [0,T]\times \O\times \CC^*\to \R\cup\{-\infty\}$ by
\begin{align}\label{eqn: SBME HJ solution monotonic Psi}
\Psi_\alpha(t,t',x,x',y)=f(t,x)-f(t',x')-&\Phi(t,x)-\psi_\alpha(x,x',y)\notag\\
&-\delta t-\zeta(t,t')-\alpha\abs{t-t'}^2-\delta \Norm{x-x'}_{1,\epsilon_0}^2,
\end{align}
where
$$\psi_\alpha(x,x',y)=\alpha \Norm{x'-x-y}_{1,\epsilon_0}^2+\frac{\delta}{\udo(y)}+\delta \Norm{y}_{1,\epsilon_0}^2.$$
Observe that $\Psi_\alpha(t,t,x,x',x'-x)$ coincides with the function being maximized in \eqref{eqn: SBME HJ solution monotonic absurd 1}. By doubling the variables in this way, we ensure that the function $\Psi_\alpha$ achieves its supremum at a point $(t_\alpha,t_\alpha',x_\alpha,x_\alpha', y_\alpha)$ which remains bounded as $\alpha$ tends to infinity. Indeed, if we temporarily fix $\alpha\geq 1$ and let $\smash{(t_{\alpha,n},t_{\alpha,n}',x_{\alpha,n},x_{\alpha,n}',y_{\alpha,n})}$ be a maximizing sequence for $\Psi_\alpha$, then the choice of $x_0$ implies that when $n$ is large enough,
\begin{equation}\label{eqn: SBME HJ solution monotonic Psi lower bounded}
\Psi_\alpha(t_{\alpha,n},t_{\alpha,n}',x_{\alpha,n}, x_{\alpha,n}',y_{\alpha,n})\geq \Psi_\alpha(0,0,x_0,x_0+y_0,y_0)=C_0
\end{equation}
for the constant $\smash{C_0=f(0,x_0)-f(0,x_0+y_0)-\zeta(0,0)-\frac{\delta}{\udo (y_0)}-2\delta\norm{y_0}_{1,\epsilon_0}^2}$. We have used the fact that $\smash{y_0\in \Rp^d}$ and that $\smash{\Rp^d}$ is a cone. Combining this with the Lipschitz bound \begin{equation}\label{eqn: SBME HJ solution monotonic f Lipschitz}
\abs{f(t,x)-f(t',x')}\leq L(t+t')+L\Norm{x-x'}_1\leq L(t+t')+L\Norm{x-x'}_{1,\epsilon_0}
\end{equation}
and the fact that $\Phi(t,x)\geq \Norm{x}_{1,\epsilon_0}-R$ reveals that
\begin{align}\label{eqn: SBME HJ solution monotonic Psi maximizer bounded}
Lt_{\alpha,n}+Lt_{\alpha,n}'+L\Norm{x_{\alpha,n}-x_{\alpha,n}'}_{1,\epsilon_0}+R-&\Norm{x_{\alpha,n}}_{1,\epsilon_0} \notag\\
&-\delta \Norm{y_{\alpha,n}}_{1,\epsilon_0}^2-\delta \Norm{x_{\alpha,n}-x_{\alpha,n}'}_{1,\epsilon_0}^2\geq C_0.
\end{align}
Noticing that $t_{\alpha,n},t_{\alpha,n}'<T$ due to the presence of $\zeta$ in the function $\Psi_\alpha$ and observing that the quadratic function $\smash{r\mapsto Lr-\delta r^2}$ is bounded by $\smash{\frac{L^2}{4\delta}}$ gives the uniform boundedness of $x_{\alpha,n}$ and $y_{\alpha,n}$ in both $n$ and $\alpha$ with respect to the normalized-$\smash{\ell^1}$ norm. Rearranging the lower bound \eqref{eqn: SBME HJ solution monotonic Psi maximizer bounded} also shows that
$$2LT+L\Norm{x_{\alpha,n}-x_{\alpha,n}'}_{1,\epsilon_0}+R-C_0\geq \delta \norm{x_{\alpha,n}-x_{\alpha,n}'}_{1,\epsilon_0}^2$$
which gives the uniform boundedness of $x_{\alpha,n}-x'_{\alpha,n}$ and hence $x_{\alpha,n}'$ in both $n$ and $\alpha$ with respect to the normalized-$\smash{\ell^1}$ norm. It is therefore possible to let $n$ tend to infinity along a subsequence to obtain a maximizer $(t_\alpha,t_\alpha',x_\alpha,x_\alpha',y_\alpha)$ of $\Psi_\alpha$ all of whose components are bounded by some constant $C_1>0$ that is independent of $\alpha$ with respect to the normalized-$\smash{\ell^1}$ norm. Choosing $\epsilon_0$ small enough, these components will also be assumed to be bounded by $C_1>0$ with respect to the smoothed version of the normalized-$\smash{\ell^1}$ norm. We now obtain some essential bounds on the components of this maximizer. Taking the limit as $n$ tends to infinity in the inequality \eqref{eqn: SBME HJ solution monotonic Psi lower bounded} reveals that
$$f(t_\alpha,x_\alpha)-f(t_\alpha',x_\alpha') -\frac{\delta}{\udo(y_\alpha)}-\delta\Norm{y_\alpha}_{1,\epsilon_0}^2-\alpha\abs{t_\alpha-t_{\alpha}'}^2-\delta \Norm{x_\alpha-x_\alpha'}_{1,\epsilon_0}^2\geq C_0.$$
Combining this with \eqref{eqn: SBME HJ solution monotonic f Lipschitz} gives
\begin{align*}
\alpha\abs{t_\alpha-t_\alpha'}^2+\frac{\delta}{\udo(y_\alpha)}+\delta \Norm{y_\alpha}_{1,\epsilon_0}^2&\leq L(t_\alpha+t_\alpha')+L\Norm{x_\alpha-x_\alpha'}_{1,\epsilon_0}-\delta \Norm{x_\alpha-x_\alpha'}_{1,\epsilon_0}^2-C_0\\
&\leq 2LT+\frac{L^2}{4\delta}-C_0,
\end{align*}
where we again used the fact that the quadratic function $\smash{r\mapsto Lr-\delta r^2}$ is bounded by $\smash{\frac{L^2}{4\delta}}$. If we introduce the constant $\smash{C_2=\max\big(2LT+\frac{L^2}{4\delta}-C_0,1\big)}$, this upper bound implies that
\begin{equation}\label{eqn: SBME HJ solution monotonic Psi maximizer bounds}
\abs{t_\alpha-t_\alpha'}\leq \sqrt{\frac{C_2}{\alpha}},\qquad \udo(y_\alpha)\geq \frac{\delta}{C_2}, \qquad \norm{y_\alpha}_{1,\epsilon_0}\leq \sqrt{\frac{C_2}{\delta}}.
\end{equation}
In particular $y_\alpha\in \mathrm{int}(\CC^*)$, so $\dNorm{1}{p}\leq 1$ for every $p\in \partial \udo(y_\alpha)$ by \Cref{SBME properties of distance-like function}. To leverage this observation, notice that $y\mapsto \Psi_\alpha(t_\alpha,t_\alpha',x_\alpha,x_\alpha',y)$ achieves a local maximum at $y_\alpha$, and therefore so does $y\mapsto -\psi_\alpha(x_\alpha,x_\alpha',y)$. It follows by a direct computation and \Cref{SBME properties of distance-like function} that the vector $p\in \R^d$ defined by
$$p_k=\frac{2\udo(y_\alpha)^2}{\delta}\bigg(\alpha \Norm{x_\alpha'-x_\alpha-y_\alpha}_{1,\epsilon_0}\frac{(x_\alpha'-x_\alpha-y_\alpha)_k}{((x_\alpha'-x_\alpha-y_\alpha)_k^2+\epsilon_0)^{\frac{1}{2}}}+\delta \Norm{y_\alpha}_{1,\epsilon_0}\frac{(y_\alpha)_k}{((y_\alpha)_k^2+\epsilon_0)^{\frac{1}{2}}}\bigg)$$
belongs to the super-differential $\partial\udo(y_\alpha)$. This means that
$$\alpha \Norm{x_\alpha'-x_\alpha-y_\alpha}_{1,\epsilon_0}\max_{1\leq k\leq d}\bigg\lvert \frac{(x_\alpha'-x_\alpha-y_\alpha)_k}{((x_\alpha'-x_\alpha-y_\alpha)_k^2+\epsilon_0)^{\frac{1}{2}}}\bigg\rvert\leq \frac{\delta}{2d \udo(y_\alpha)^2}+\delta \Norm{y_\alpha}_{1,\epsilon_0},$$
where we have used the bounds $\dNorm{1}{p}\leq 1$ and $\smash{\abs{(y_\alpha)_k}\leq ((y_\alpha)_k^2+\epsilon_0)^{\frac{1}{2}}}$. To bound this further, suppose that
\begin{equation}\label{eqn: SBME HJ solution monotonic Psi maximizer y_alpha bound}
\Norm{x_\alpha'-x_\alpha-y_\alpha}_{1,\epsilon_0}>C
\end{equation}
for some constant $C$ to be determined, and let $1\leq k^*\leq d$ be such that $\smash{((x_\alpha'-x_\alpha-y_\alpha)_{k^*}^2+\epsilon_0)^{\frac{1}{2}}>C}$. Observe that for any $z\in \R^d$ with $\smash{(z_{k^*}^2+\epsilon_0)^{\frac{1}{2}}>C}$,
$$\frac{\abs{z_{k^*}}}{(z_{k^*}^2+\epsilon_0)^{\frac{1}{2}}}=\frac{\abs{z_{k^*}}+\sqrt{\epsilon_0}}{(z_{k^*}^2+\epsilon_0)^{\frac{1}{2}}}-\frac{\sqrt{\epsilon_0}}{(z_{k^*}^2+\epsilon_0)^{\frac{1}{2}}}\geq 1-\frac{\sqrt{\epsilon_0}}{C}=\frac{C-\sqrt{\epsilon_0}}{C},$$
where we have used the fact that $\abs{z_k}+\sqrt{\epsilon_0}\geq (z_k^2+\epsilon_0)^{\frac{1}{2}}$. Together with \eqref{eqn: SBME HJ solution monotonic Psi maximizer bounds}, this implies that
$$\frac{\alpha(C-\sqrt{\epsilon_0})}{C}\Norm{x_\alpha'-x_\alpha-y_\alpha}_{1,\epsilon_0}\leq \frac{\delta}{2d \udo(y_\alpha)^2}+\delta \Norm{y_\alpha}_{1,\epsilon_0}\leq K$$
for the constant $\smash{K=\frac{C_2^2}{2d\delta}+\sqrt{\delta C_2}}$. Rearranging, remembering \eqref{eqn: SBME HJ solution monotonic Psi maximizer y_alpha bound} and choosing $\smash{C=\sqrt{\epsilon_0}+\frac{K}{\alpha}}$ reveals that
$$\Norm{x_\alpha'-x_\alpha-y_\alpha}_1\leq \Norm{x_\alpha'-x_\alpha-y_\alpha}_{1,\epsilon_0}\leq \max\Big(C,\frac{KC}{\alpha(C-\sqrt{\epsilon_0})}\Big)=\frac{K}{\alpha}+\sqrt{\epsilon_0}.$$
Letting $\epsilon_0$ tend to zero in this upper bound yields
\begin{equation}\label{eqn: SBME HJ solution monotonic y maximizer}
\Norm{x_\alpha'-x_\alpha-y_\alpha}_1\leq \frac{K}{\alpha}.
\end{equation}
Combining this with the first bound in \eqref{eqn: SBME HJ solution monotonic Psi maximizer bounds} and the fact that each component in the sequence of maximizers $(t_\alpha,t_\alpha,x_\alpha,x_\alpha',y_\alpha)$ is uniformly bounded by a constant independent of $\alpha$ gives the existence of a subsequential limit $(t_\infty,t_\infty,x_\infty,x_\infty',x_\infty'-x_\infty)$ with respect to the normalized-$\ell^1$ norm. Observe that for any $t\in [0,T)$ and every $(x,x')\in \O$,
\begin{align*}
f(t_\alpha,x_\alpha)-f(t_\alpha',x_\alpha')-\delta t_\alpha-\zeta(t_\alpha,t_\alpha')-\Phi(t_\alpha,x_\alpha)&-\frac{\delta}{\udo(y_\alpha)}-\delta\Norm{y_\alpha}_{1,\epsilon_0}^2-\delta\Norm{ x_\alpha-x_\alpha'}_{1,\epsilon_0}\\
&\geq \Psi_\alpha(t_\alpha,t_\alpha',x_\alpha,x_\alpha',y_\alpha)\geq \Psi_\alpha(t,t,x,x',x'-x).
\end{align*}
Taking the supremum over $(t,x,x')\in [0,T]\times \O$, recalling that $\Psi_\alpha(t,t,x,x',x'-x)$ coincides with the function being maximized in \eqref{eqn: SBME HJ solution monotonic absurd 1} and letting $\alpha\to \infty$ shows that $t_\infty>0$. At this point, we can use the fact that $f$ is a viscosity solution to the Hamilton-Jacobi equation \eqref{eqn: SBME app HJ eqn on Rp} to obtain a system of inequalities. Using the second inequality in \eqref{eqn: SBME HJ solution monotonic Psi maximizer bounds}, the bound \eqref{eqn: SBME HJ solution monotonic y maximizer} and the observation that $t_\infty>0$, fix $\alpha\geq 1$ large enough so that $\smash{x_\alpha'-x_\alpha\in \mathrm{int}(\CC^*)}$ and $\smash{t_\alpha,t_\alpha'>0}$. Introduce the smooth functions
$$\phi(t,x)=f(t,x)-\Psi_\alpha(t,t_\alpha',x,x_\alpha', y_\alpha) \quad \text{and} \quad \phi'(t',x')=f(t',x')+\Psi_\alpha(t_\alpha,t',x_\alpha,x',y_\alpha)$$
defined on $(0,\infty)\times \Rp^d$. Since $(t_\alpha,t_\alpha', x_\alpha, x_\alpha', y_\alpha)$ maximizes $\Psi_\alpha$, the function $f-\phi$ achieves a local maximum at $\smash{(t_\alpha,x_\alpha)\in (0,\infty)\times \Rp^d}$ while the function $f-\phi'$ achieves a local minimum at $\smash{(t_\alpha', x_\alpha')\in (0,\infty)\times \Rp^d}$. It follows by definition of a viscosity solution that
\begin{equation}\label{eqn: SBME HJ solution monotonic system of inequaltities}
\partial_t\phi(t_\alpha,x_\alpha)-\H\big(\nabla \phi(t_\alpha,x_\alpha)\big)\leq 0 \quad \text{and} \quad \partial_t\phi'(t_\alpha',x_\alpha')-\H\big(\nabla \phi'(t_\alpha',x_\alpha')\big)\geq 0.
\end{equation}
This is the system of inequalities that we now strive to contradict.\\
\noindent \step{3: reaching a contradiction.}\\
The choice $V=\Norm{\H}_{\mathrm{Lip},1,*}$ and a direct computation reveal that
\begin{align*}
\partial_t\phi'(t_\alpha',x_\alpha')-\H\big(\nabla \phi'(t_\alpha',x_\alpha')\big)
&< \delta +\frac{\delta}{(T-t_\alpha)^2}+V\dNorm{1}{\nabla\Phi(t_\alpha,x_\alpha)}+2\alpha(t_\alpha-t_\alpha')\\
&\qquad\qquad\qquad\qquad\qquad\qquad\qquad\qquad\qquad -\H\big(\nabla \phi(t_\alpha,x_\alpha)\big).
\end{align*}
Another direct computation shows that 
$$\partial_t\phi(t_\alpha,x_\alpha)= \delta+\frac{\delta}{(T-t_\alpha)^2}+\partial_t\Phi(t_\alpha,x_\alpha)+2\alpha(t_\alpha-t_\alpha')$$
and that $dV\abs{\partial_{x_k}\Phi(t_\alpha,x_\alpha)}\leq \partial_t\Phi(t_\alpha,x_\alpha)$. It follows by the first inequality in \eqref{eqn: SBME HJ solution monotonic system of inequaltities} that
$$\partial_t\phi'(t_\alpha',x_\alpha')-\H\big(\nabla \phi'(t_\alpha',x_\alpha')\big)< \partial_t\phi(t_\alpha,x_\alpha)-\H\big(\nabla \phi(t_\alpha,x_\alpha)\big)\leq 0$$
which contradicts the second inequality in \eqref{eqn: SBME HJ solution monotonic system of inequaltities} and completes the proof.
\end{proof}

\section{Background material}\label{SBME app background}

In this appendix, we establish three elementary results in analysis. The first is a classical result in convex analysis regarding the bidual of a closed convex cone. Recall that the dual of a convex cone $\K\subset \R^d$ is the closed convex cone
\begin{equation}
\label{e.def.dual.cone}
\K^*=\big\{x\in \R^d\mid x\cdot y\geq 0 \text{ for all } y\in \K\big\}.
\end{equation}
It is clear that any convex cone $\K$ is always a subset of its bidual $\K^{**}$. Since $\smash{\K^{**}}$ is closed, a necessary condition for this containment to be an equality is that $\K$ be closed; it turns out that this is also a sufficient condition. This is often deduced from the Hahn-Banach separation theorem \cite{Lemarechal} or the Fenchel-Moreau theorem \cite{Bertsekas}. For the reader's convenience we prove this duality result using the Hahn-Banach separation theorem as stated in Theorem 4.1.1 of \cite{Lemarechal}.

\begin{proposition}\label{SBME closed convex cone bidual}
If $\K\subset \R^d$ is a non-empty closed convex cone, then $\K=\K^{**}$.
\end{proposition}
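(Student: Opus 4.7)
The plan is to establish the two inclusions $\K \subset \K^{**}$ and $\K^{**} \subset \K$ separately. The first inclusion is immediate from the definitions: any $x \in \K$ satisfies $x \cdot y \geq 0$ for every $y \in \K^*$, which is exactly the defining property of $\K^{**}$.

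For the reverse inclusion, I would argue by contradiction. Suppose there exists $x \in \K^{**} \setminus \K$. Since $\K$ is closed and convex and $\{x\}$ is a compact convex set disjoint from $\K$, the Hahn-Banach separation theorem (Theorem 4.1.1 of \cite{Lemarechal}) yields a vector $v \in \R^d$ and a constant $c \in \R$ satisfying
\begin{equation*}
v \cdot y > c > v \cdot x \quad \text{for every } y \in \K.
\end{equation*}
The main step is then to extract from this separating hyperplane an element of the dual cone $\K^*$ that witnesses the desired contradiction. First, since $\K$ is a cone it contains $0$, so the inequality above forces $c < 0$ and hence $v \cdot x < 0$. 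Second, using that $\K$ is a cone, for any $y \in \K$ and any $\lambda > 0$ we have $\lambda y \in \K$, so $\lambda \, v \cdot y > c$; dividing by $\lambda$ and letting $\lambda \to \infty$ gives $v \cdot y \geq 0$ for every $y \in \K$. This means exactly that $v \in \K^*$.

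Finally, since $x \in \K^{**}$ and $v \in \K^*$, the definition of the bidual forces $x \cdot v \geq 0$, contradicting $v \cdot x < 0$. Hence no such $x$ exists and $\K^{**} \subset \K$. The only genuine obstacle is invoking the separation theorem in the correct strict form, which requires the closedness of $\K$; the rest is a routine exploitation of the cone structure to promote the affine separator $v$ into a dual element.
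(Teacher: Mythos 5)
Your proposal is correct and takes essentially the same route as the paper: strictly separate the point $x \in \K^{**}\setminus\K$ from the closed convex cone $\K$ via Hahn-Banach, use the cone structure (scaling by $\lambda \to \infty$) to upgrade the separating vector to an element of $\K^*$, and contradict $x \in \K^{**}$; the paper's version differs only in the sign convention of the separator (it produces $-\alpha \in \K^*$). One tiny point: rather than asserting that a cone contains $0$, the paper deduces $0 \in \K$ from non-emptiness and closedness (as $\frac1n x_0 \to 0$), which is the safer justification if the cone is only assumed stable under multiplication by $\lambda > 0$.
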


\begin{proof}
It is clear that $\K\subset \K^{**}$. Suppose for the sake of contradiction that there exists $x\in \K^{**}$ with $x\notin \K$. Since $\K$ is a non-empty closed convex set, the Hahn-Banach separation theorem gives $\alpha\in \R^d$ with
\begin{equation}\label{eqn: SBME closed convex cone bidual key}
\alpha \cdot x> \sup\{\alpha \cdot y \mid y\in \K\}.
\end{equation}
Given $x_0\in \K$, the assumption that $\K$ is closed implies that $\smash{0=\lim_{n\to \infty}\frac{1}{n}x_0\in \K}$. Together with \eqref{eqn: SBME closed convex cone bidual key}, this means that $\alpha\cdot x>0$. If there were $y_0\in \K$ with $\alpha \cdot y> 0$, the fact that $\K$ is a cone would imply that $\alpha \cdot x\geq \lambda \alpha \cdot y_0$ for all $\lambda>0$, and letting $\lambda$ tend to infinity would give a contradiction. It follows by \eqref{eqn: SBME closed convex cone bidual key} that
$$\alpha \cdot x >0 = \sup\{\alpha \cdot y \mid y\in \K\},$$
where we have used that $0\in \K$. The lower bound implies that $-\alpha\in \K^*$ while the upper bound gives $x\cdot (-\alpha)< 0$. This contradicts the assumption that $\smash{x\in \K^{**}}$ and completes the proof.
\end{proof}

The second also belongs to the realm of convex analysis, and it gives a non-differential characterization of a Lipschitz function having its gradient in a closed convex set.
\begin{proposition}\label{SBME gradient in convex set}
If $\K\subset \R^d$ is a closed convex set and $\psi:\R^d\to \R$ is a Lipschitz function, then $\nabla \psi\in \K$ if and only if the following holds. For every $c\in \R$ and $x,x'\in \R^d$ with the property that for every $z\in \K$,  $(x'-x)\cdot z\geq c$, we have $\psi(x')-\psi(x)\geq c$.
\end{proposition}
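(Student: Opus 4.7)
The plan is to establish the forward and reverse implications separately, relying on mollification for the former and the Hahn--Banach separation theorem for the latter.

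For the forward direction (assume $\nabla \psi \in \K$ almost everywhere), first regularize $\psi$ by setting $\psi_\epsilon = \psi * \rho_\epsilon$ where $\rho_\epsilon$ is a standard smooth mollifier. Standard results ensure that $\psi_\epsilon$ is smooth with $\nabla \psi_\epsilon(y) = \int \nabla \psi(y-z) \rho_\epsilon(z) \ud z$ for every $y$, and that $\psi_\epsilon \to \psi$ locally uniformly. The key observation is that $\nabla \psi_\epsilon(y) \in \K$ for \emph{every} $y$: should $\nabla \psi_\epsilon(y)$ lie outside the closed convex set $\K$, Hahn--Banach separation would furnish $v \in \R^d$ and $\alpha \in \R$ with $v \cdot \nabla \psi_\epsilon(y) < \alpha \leq v \cdot z$ for all $z \in \K$, and integrating the a.e.\ bound $v \cdot \nabla \psi(y-z) \geq \alpha$ against $\rho_\epsilon(z) \ud z$ would contradict this strict inequality. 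With $\nabla \psi_\epsilon$ pointwise in $\K$, given $c,x,x'$ satisfying $(x'-x)\cdot z \geq c$ for every $z \in \K$, the fundamental theorem of calculus yields
\[
\psi_\epsilon(x') - \psi_\epsilon(x) = \int_0^1 \nabla \psi_\epsilon\big(x + t(x'-x)\big) \cdot (x'-x)\ud t \geq c.
\]
Passing to the limit $\epsilon \to 0$ delivers the claimed inequality for $\psi$.

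For the reverse direction (assume the inequality condition), let $x_0 \in \R^d$ be a point of differentiability of $\psi$, as furnished by Rademacher's theorem. Suppose toward a contradiction that $\nabla \psi(x_0) \notin \K$. Separating the singleton $\{\nabla \psi(x_0)\}$ from $\K$ via Hahn--Banach produces $v \in \R^d$ and $\alpha \in \R$ with $v \cdot \nabla \psi(x_0) < \alpha \leq v \cdot z$ for every $z \in \K$. For $\epsilon > 0$, set $x = x_0$, $x' = x_0 + \epsilon v$, and $c = \epsilon \alpha$: then $(x'-x) \cdot z = \epsilon\, v \cdot z \geq \epsilon \alpha = c$ for every $z \in \K$, so the standing assumption yields $\psi(x_0 + \epsilon v) - \psi(x_0) \geq \epsilon \alpha$. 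Dividing by $\epsilon$ and letting $\epsilon \to 0^+$ gives $\nabla \psi(x_0) \cdot v \geq \alpha$, contradicting the strict inequality $v \cdot \nabla \psi(x_0) < \alpha$.

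The main technical point is establishing the pointwise inclusion $\nabla \psi_\epsilon(y) \in \K$; this is where the closedness and convexity of $\K$ enter through Hahn--Banach, or equivalently through the representation of $\K$ as the intersection of the closed affine half-spaces containing it (cf.\ \eqref{eqn: SBME expanded cone as hyperplanes}). Once this inclusion is in hand, everything else reduces to standard mollification-and-pass-to-the-limit arguments combined with the fundamental theorem of calculus along line segments.
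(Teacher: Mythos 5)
Your proof is correct, and your reverse implication coincides with the paper's own argument: Hahn--Banach separation of $\nabla\psi(x_0)$ from $\K$ at a point of differentiability, tested along the direction $v$. The forward implication, however, takes a genuinely different route. The paper does not mollify: it notes that $\psi$ need not be differentiable almost everywhere along the particular segment from $x$ to $x'$, and fixes this by a Fubini-type selection over the family of parallel segments through a small cross-section $\A_{\epsilon,x}$ of $B_\epsilon(x)$, producing nearby endpoints $x_\epsilon$, $x'_\epsilon$ joined by a segment on which $\psi$ is differentiable one-dimensionally almost everywhere; the fundamental theorem of calculus for one-dimensional Lipschitz functions then gives $\psi(x'_\epsilon)-\psi(x_\epsilon)\geq c$, and continuity lets $\epsilon\to 0$. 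You replace this selection step by smoothing: $\nabla\psi_\epsilon(y)=\int\nabla\psi(y-z)\rho_\epsilon(z)\ud z$ is an average (with respect to a probability density) of points of $\K$, hence lies in the closed convex set $\K$, and your separation argument is exactly the right justification of this; after that the classical FTC applies along the whole segment and the locally uniform convergence $\psi_\epsilon\to\psi$ finishes the job. Both arguments are complete for the statement as given (with $\psi$ defined on all of $\R^d$). Your regularization route is arguably the more standard one and exploits the convexity of $\K$ once more (closure under averaging); the paper's segment-selection route has the minor advantage of adapting verbatim to Lipschitz functions defined only on a convex subdomain such as $\Rp^d$, which is the setting in which the proposition is actually invoked elsewhere in the paper, whereas convolution would there require extra care near the boundary.
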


\begin{proof}
Suppose that $\nabla \psi\in \K$, and fix $c\in \R$ and $x,x'\in \R^d$ with $(x'-x)\cdot z\geq c$ for every $z\in \K$. If we knew that $\psi$ was almost everywhere differentiable along the line joining $x$ and $x'$, we could apply the fundamental theorem of calculus to the one-dimensional Lipschitz function $t\mapsto \psi(x+t(x'-x))$ and conclude that
$$\psi(x')-\psi(x)=\int_0^1 \nabla \psi\big(x+t(x'-x)\big)\cdot (x'-x)\ud t\geq c.$$
Although $\psi$ could fail to be differentiable almost everywhere on the line joining $x$ and $x'$, we will now fix $\epsilon>0$ and show that it must be differentiable almost everywhere on some line joining some point $\smash{x_\epsilon\in B_\epsilon(x)}$ and some point $\smash{x'_\epsilon \in B_\epsilon(x')}$. Denote by 
$$\HH=\big\{y\in \R^d \mid y\cdot (x'-x)=0\big\}\cong \R^{d-1}$$
the hyperplane perpendicular to the line segment joining $x$ and $x'$, and write
$$\A_{\epsilon,x}=B_\epsilon(x)\cap(x+\HH)$$
for the cross-section of $B_\epsilon(x)$ through $x$ and perpendicular to the line segment joining $x$ and $x'$. Denote by $\mathcal{\L}$ the set of line segments between points in $\smash{\A_{\epsilon,x}}$ and points in $\smash{\A_{\epsilon,x'}}$ which are parallel to the line segment joining $x$ and $x'$. For each $\smash{y\in \A_{\epsilon,x}}$, write $\smash{\ell_y\in \L}$ for the unique line segment in $\smash{\A_{\epsilon,x}}$ through $y$, and introduce the set
$$\D_{y}=\big\{z\in \ell_y\mid \psi \text{ is not differentiable at } z\big\}$$
of points on $\ell_y$ at which $\psi$ is not differentiable. If $\D_y$ were a set of positive one-dimensional Lebesgue measure $\smash{m_1(\D_y)>0}$ for every $y\in \A_{\epsilon,x}$, then the $d$-dimensional Lebesgue measure of the set of points in $\cup_{y\in \A_{\epsilon,x}}\ell_y$ at which $\psi$ is not differentiable would have positive measure,
$$\int_{\A_{\epsilon,x}}m_1(\D_y)\ud y>0.$$
This would contradict Rademacher's theorem on the almost everywhere differentiability of Lipschitz functions (see Theorem 6 in Chapter 5.8 of \cite{Evans}). It is therefore possible to find $\smash{x_\epsilon\in \A_{\epsilon,x}}$ with $\smash{m_1(\D_{x_\epsilon})=0}$. If we write $\smash{x_\epsilon'\in \A_{\epsilon,x'}}$ for the right endpoint of $\smash{\ell_{x_\epsilon}}$, then the fundamental theorem of calculus implies that
$$\psi(x'_\epsilon)-\psi(x_\epsilon)=\int_0^1 \nabla \psi\big(x_\epsilon+t(x'_\epsilon-x_\epsilon)\big)\cdot (x'-x)\ud t\geq c.$$
Letting $\epsilon$ tend to zero shows that $\psi(x')-\psi(x)\geq c$ as required. Conversely, suppose that for every $c\in \R$ and $x,x'\in \R^d$ with the property that for every $z\in \K$,  $(x'-x)\cdot z\geq c$, we have $\psi(x')-\psi(x)\geq c$. Assume for the sake of contradiction that there exists $y\in \R^d$ with $\nabla \psi(y)\notin \K$. The Hahn-Banach separation theorem gives $v\in \R^d$ and $\delta>0$ with
$$v \cdot \nabla \psi(y)+\delta< \inf\{v \cdot z \mid z\in \K\}.$$
It follows that
$$\psi(y+\epsilon v)-\psi(y)\geq \epsilon \big(v\cdot \nabla \psi(y)+\delta\big).$$
Dividing by $\epsilon$ and letting $\epsilon$ tend to zero reveals that $\nabla \psi(y)\cdot v\geq v\cdot \nabla \psi(y)+\delta$.
This contradiction completes the proof.
\end{proof}

The third elementary result in analysis that we will prove regards the basic properties of semi-continuous envelopes. To strive for generality, fix a set $X\subset \R^d$ endowed with a norm $\norm{\cdot}$. Recall that a function $u:X\to \R$ is said to be \emph{upper semi-continuous} at a point $x\in X$ if
\begin{equation}
u(x)\geq \limsup_{y\to x}u(y):=\lim_{r\dec 0}\sup\big\{u(y)\mid y\in X \text{ with } \norm{y-x}\leq r\big\},
\end{equation}
and it is said to be \emph{lower semi-continuous} at a point $x\in X$ if
\begin{equation}
u(x)\leq \liminf_{y\to x}u(y):=\lim_{r\dec 0}\inf\big\{u(y)\mid y\in X\text{ with } \norm{y-x}\leq r\big\}.
\end{equation}
Moreover, the \emph{upper semi-continuous envelope} of $u$ is the function $u^\star:X\to \R$ defined by
\begin{equation}\label{eqn: SBME upper semi-continuous envelope}
u^\star(x)=\limsup_{y\to x}u(y)=\lim_{r\dec 0}\sup\big\{u(y)\mid y\in X \text{ with } \norm{y-x}\leq r\big\},
\end{equation}
while its \emph{lower semi-continuous envelope} is the function $u_\star:X\to \R$ defined by
\begin{equation}\label{eqn: SBME lower semi-continuous envelope}
u_\star(x)=\liminf_{y\to x}u(y)=\lim_{r\dec 0}\inf\big\{u(y)\mid y\in X\text{ with } \norm{y-x}\leq r\big\}.
\end{equation}
The following proposition collects the basic properties of semi-continuous envelopes. This result is used in \Cref{app SBME Perron} with $\smash{X=[0,\infty)\times \Rp^d}$ and $\smash{\norm{(t,x)}=\abs{t}+\Norm{x}_1}$.

\begin{proposition}\label{SBME properties of semi-continuous envelope}
The semi-continuous envelopes of a locally bounded function $u:X\to \R$ satisfy the following basic properties.
\begin{enumerate}
    \item $u_\star(x)\leq u(x)\leq u^\star(x)$ for all $x\in X$.
    \item $u^\star(x)=\min\{v(x)\mid u\leq v \text{ and } v \text{ is upper semi-continuous}\}$ for all $x\in X$. In particular, $u^\star$ is upper semi-continuous.
    \item $u_\star(x)=\max\{v(x)\mid v\leq u \text{ and } v \text{ is lower semi-continuous}\}$ for all $x\in X$. In particular, $u_\star$ is lower semi-continuous.
    \item $u$ is upper semi-continuous at $x\in X$ if and only if $u(x)=u^\star (x)$.
    \item $u$ is lower semi-continuous at $x\in X$ if and only if $u(x)=u_\star(x)$.
\end{enumerate}
\end{proposition}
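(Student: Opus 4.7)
The plan is to prove the five items in the order (1), (4)--(5), (2)--(3), exploiting the symmetry between the upper and lower cases throughout. For (1), the definition of $u^\star(x)$ as a limit as $r \dec 0$ of suprema over closed balls $\{y : \norm{y-x}\le r\}$, each of which contains $x$ itself, immediately yields $u(x) \le \sup\{u(y) : \norm{y-x}\le r\}$ for every $r>0$, and passing to the limit gives $u(x) \le u^\star(x)$. The inequality $u_\star(x) \le u(x)$ is entirely analogous via $\inf$ in place of $\sup$.

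For (4) and (5), I would simply unfold the definitions of semi-continuity at a point. The condition that $u$ is upper semi-continuous at $x$ is, by definition, $u(x) \ge \limsup_{y\to x} u(y) = u^\star(x)$; combined with (1) this is equivalent to $u(x) = u^\star(x)$. The lower semi-continuous case is symmetric.

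For (2), I would first verify that $u^\star$ is itself upper semi-continuous, and then that it is the pointwise minimum of upper semi-continuous majorants of $u$. For the former, fix $x \in X$ and $r > 0$; for any $y$ with $\norm{y-x}\le r/2$, the inclusion of balls $\{z : \norm{z-y}\le r/2\} \subset \{z : \norm{z-x}\le r\}$ gives, after sending the inner radius to zero, $u^\star(y) \le \sup\{u(z) : \norm{z-x}\le r\}$. Taking $\limsup$ as $y \to x$ and then $r \dec 0$ yields $u^\star(x) \ge \limsup_{y\to x} u^\star(y)$, which is the desired semi-continuity. For minimality, if $v \ge u$ is any upper semi-continuous function, then for each $x$, $v(x) \ge \limsup_{y\to x} v(y) \ge \limsup_{y\to x} u(y) = u^\star(x)$ by monotonicity of $\limsup$; since $u^\star$ itself lies in the set (by (1) together with the upper semi-continuity just established), it realises the minimum. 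Item (3) follows by the symmetric argument with $\liminf$ and infima of balls.

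The main obstacle, to the extent there is one, is the two-radius covering argument showing that $u^\star$ is upper semi-continuous; but this is a standard piece of analysis and presents no real difficulty. Local boundedness of $u$ ensures that the relevant suprema and infima over small balls are finite, so one never needs to manipulate $\pm\infty$ and the usual calculus of $\limsup$ and $\liminf$ applies without caveat.
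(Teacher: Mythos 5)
Your proposal is correct and follows essentially the same route as the paper: item (1) from the definition, items (4)--(5) by unfolding the definition of semi-continuity together with (1), and items (2)--(3) by first establishing the semi-continuity of the envelope via a ball-inclusion/triangle-inequality argument and then checking minimality (resp.\ maximality) against arbitrary semi-continuous majorants (resp.\ minorants). The only cosmetic differences are that the paper proves upper semi-continuity of $u^\star$ with an $\epsilon$-approximation of the supremum rather than your two-radius inclusion, and derives the lower statements explicitly from the identity $u_\star=-(-u)^\star$ rather than by a "symmetric argument"; both variants are sound.
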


\begin{proof}
To deduce properties of the lower semi-continuous envelope from the corresponding properties of the upper semi-continuous envelope we will leverage the observation that
\begin{align}\label{eqn: SBME lower and upper semi-continuous envelope relationship}
u_\star(x)&=\lim_{r\dec 0}\inf\big\{u(y)\mid y\in X\text{ with } \norm{y-x}\leq r\big\}\notag\\
&=-\lim_{r\dec 0}\sup\big\{-u(y)\mid y\in X\text{ with } \norm{y-x}\leq r\big\}=-(-u)^\star(x).
\end{align}
\begin{enumerate}
\item This is immediate from the definition of the semi-continuous envelopes in \eqref{eqn: SBME upper semi-continuous envelope} and \eqref{eqn: SBME lower semi-continuous envelope}.
\item If $v$ is an upper semi-continuous function with $u\leq v$, taking the limsup as $y$ tends to $x$ on both sides of the inequality $u(y)\leq v(y)$ and leveraging the upper semi-continuity of $v$ reveals that
$$u^\star(x)=\limsup_{y\to x}u(y)\leq \limsup_{y\to x}v(y)\leq v(x).$$
This implies that
$$u^\star(x)\leq \inf\{v(x)\mid u\leq v \text{ and } v \text{ is upper semi-continuous}\}.$$
To show that this infimum is achieved and that this inequality is in fact an equality, it suffices to prove that $u^\star$ is itself upper semi-continuous. Fix $x\in X$ as well as $\epsilon>0$, and find $r>0$ with
$$u^\star(x)+\epsilon>\sup\big\{u(y)\mid y\in X \text{ with } \norm{y-x}\leq r\big\}.$$
The triangle inequality reveals that for any $z\in X$ with $\norm{z-x}<r$,
$$u^\star(x)+\epsilon\geq \sup\big\{u(y)\mid y\in X \text{ with } \norm{y-z}\leq r-\norm{x-z}\big\}\geq u^\star(z).$$
It follows that $\smash{\limsup_{z\to x}u^\star(z)\leq u^\star(x)}$ so $u^\star$ is upper semi-continuous at $x$. Since $x$ is arbitrary, this establishes the claim.
\item Combining the previous part with \eqref{eqn: SBME lower and upper semi-continuous envelope relationship} shows that
$$u_\star(x)=-(-u)^\star(x)=\max\big\{-v(x)\mid -u\leq v \text{ and } v \text{ is upper semi-continuous}\big\}.$$
Observing that $v$ is upper semi-continuous if and only $-v$ is lower semi-continuous establishes the claim.
\item If $u$ is upper semi-continuous at $x$, then
$$u^\star(x)=\limsup_{y\to x}u(y)\leq u(x).$$
Together with the inequality $u(x)\leq u^\star(x)$, this shows that $u(x)=u^\star(x)$. On the other hand, if $u^\star(x)=u(x)$, then
$$\limsup_{y\to x}u(y)=u^\star(x)=u(x)\leq u(x)$$
so $u$ is upper semi-continuous at $x$.
\item Observe that $u$ is lower semi-continuous at $x\in X$ if and only if $-u$ is upper semi-continuous at $x\in X$. The previous part implies that this is the case if and only if $-u(x)=(-u)^\star(x)$. Invoking \eqref{eqn: SBME lower and upper semi-continuous envelope relationship} completes the proof. \qedhere
\end{enumerate}
\end{proof}

\end{appendix}

\bibliographystyle{abbrv}
\bibliography{sparse_PDE}

\newcommand{\noop}[1]{} \def\cprime{$'$}
\begin{thebibliography}{10}

\bibitem{amb14}
L.~Ambrosio and J.~Feng.
\newblock On a class of first order {H}amilton-{J}acobi equations in metric
  spaces.
\newblock {\em Journal of Differential Equations}, 256(7):2194--2245, 2014.

\bibitem{Bardi}
M.~Bardi and I.~Capuzzo-Dolcetta.
\newblock {\em Optimal Control and Viscosity Solutions of
  {H}amilton-{J}acobi-{B}ellman Equations}.
\newblock Modern Birkh{\"a}user Classics. Birkh{\"a}user Boston, 2008.

\bibitem{Bertsekas}
D.~Bertsekas.
\newblock {\em Convex Optimization Theory}.
\newblock Athena Scientific optimization and computation series. Athena
  Scientific, 2009.

\bibitem{car10}
P.~Cardaliaguet.
\newblock Notes on mean field games.
\newblock Technical report, 2010.

\bibitem{carqui}
P.~Cardaliaguet and M.~Quincampoix.
\newblock Deterministic differential games under probability knowledge of
  initial condition.
\newblock {\em International Game Theory Review}, 10(1):1--16, 2008.

\bibitem{carsou}
P.~Cardaliaguet and A.~Souqui\`ere.
\newblock A differential game with a blind player.
\newblock {\em SIAM Journal on Control and Optimization}, 50(4):2090--2116,
  2012.

\bibitem{HB_nonsymmetric}
H.-B. Chen.
\newblock Hamilton-{J}acobi equations for nonsymmetric matrix inference.
\newblock {\em arXiv e-prints}, 2020.

\bibitem{JC_HB_FinRank}
H.-B. Chen, J.-C. Mourrat, and J.~Xia.
\newblock Statistical inference of finite-rank tensors.
\newblock {\em arXiv e-prints}, 2022.

\bibitem{HB_Fenchel}
H.-B. Chen and J.~Xia.
\newblock Fenchel-{Moreau} identities on convex cones.
\newblock {\em arXiv e-prints}, 2020.

\bibitem{HB_tensor}
H.-B. Chen and J.~Xia.
\newblock Hamilton-{J}acobi equations for inference of matrix tensor products.
\newblock {\em arXiv e-prints}, 2020.

\bibitem{HB_multilayer}
H.-B. Chen and J.~Xia.
\newblock Limiting free energy of multi-layer generalized linear models.
\newblock {\em arXiv e-prints}, 2021.

\bibitem{HB_HJ}
H.-B. Chen and J.~Xia.
\newblock Hamilton-{J}acobi equations from mean-field spin glasses.
\newblock {\em arXiv e-prints}, 2022.

\bibitem{HB_cone}
H.-B. Chen and J.~Xia.
\newblock Hamilton-{J}acobi equations with monotone nonlinearities on convex
  cones.
\newblock {\em arXiv e-prints}, 2022.

\bibitem{CKPZ}
A.~Coja-Oghlan, F.~Krzakala, W.~Perkins, and L.~Zdeborov{\'{a}}.
\newblock Information-theoretic thresholds from the cavity method.
\newblock {\em Advances in Mathematics}, 333:694--795, 2018.

\bibitem{cl1}
M.~G. Crandall and P.-L. Lions.
\newblock Hamilton-{J}acobi equations in infinite dimensions. {I}. {U}niqueness
  of viscosity solutions.
\newblock {\em Journal of Functional Analysis}, 62(3):379--396, 1985.

\bibitem{cl2}
M.~G. Crandall and P.-L. Lions.
\newblock Hamilton-{J}acobi equations in infinite dimensions. {II}. {E}xistence
  of viscosity solutions.
\newblock {\em Journal of Functional Analysis}, 65(3):368--405, 1986.

\bibitem{cl3}
M.~G. Crandall and P.-L. Lions.
\newblock Hamilton-{J}acobi equations in infinite dimensions. {III}.
\newblock {\em Journal of Functional Analysis}, 68(2):214--247, 1986.

\bibitem{Crandall}
M.~G. Crandall and R.~Newcomb.
\newblock Viscosity solutions of {H}amilton-{J}acobi equations at the boundary.
\newblock {\em Proceedings of the American Mathematical Society},
  94(2):283--290, 1985.

\bibitem{TD_JC}
T.~Dominguez and J.-C. Mourrat.
\newblock {M}utual information for the sparse stochastic block model.
\newblock {\em arXiv e-prints}, 2022.

\bibitem{Evans}
L.~Evans.
\newblock {\em Partial Differential Equations}.
\newblock Graduate studies in mathematics. American Mathematical Society, 2010.

\bibitem{fenkat}
J.~Feng and M.~Katsoulakis.
\newblock A comparison principle for {H}amilton-{J}acobi equations related to
  controlled gradient flows in infinite dimensions.
\newblock {\em Archive for Rational Mechanics and Analysis}, 192(2):275--310,
  2009.

\bibitem{fenkur}
J.~Feng and T.~G. Kurtz.
\newblock {\em Large deviations for stochastic processes}, volume 131 of {\em
  Mathematical Surveys and Monographs}.
\newblock American Mathematical Society, Providence, RI, 2006.

\bibitem{gan08}
W.~Gangbo, T.~Nguyen, and A.~Tudorascu.
\newblock Hamilton-{J}acobi equations in the {W}asserstein space.
\newblock {\em Methods and Applications of Analysis}, 15(2):155--183, 2008.

\bibitem{ganswi}
W.~Gangbo and A.~\'{S}wi\c ech.
\newblock Optimal transport and large number of particles.
\newblock {\em Discrete and Continuous Dynamical Systems. Series A},
  34(4):1397--1441, 2014.

\bibitem{Lemarechal}
J.~Hiriart-Urruty and C.~Lemar{\'e}chal.
\newblock {\em Fundamentals of Convex Analysis}.
\newblock Grundlehren Text Editions. Springer Berlin Heidelberg, 2004.

\bibitem{JC_matrix}
J.-C. Mourrat.
\newblock {H}amilton–{J}acobi equations for finite-rank matrix inference.
\newblock {\em The Annals of Applied Probability}, 30(5):2234--2260, 2020.

\bibitem{JC_upper}
J.-C. Mourrat.
\newblock Free energy upper bound for mean-field vector spin glasses.
\newblock {\em arXiv e-prints}, 2021.

\bibitem{JC_HJ}
J.-C. Mourrat.
\newblock {H}amilton–{J}acobi equations for mean-field disordered systems.
\newblock {\em Annales Henri Lebesgue}, 4:453--484, 2021.

\bibitem{JC_NC}
J.-C. Mourrat.
\newblock Nonconvex interactions in mean-field spin glasses.
\newblock {\em Probability and Mathematical Physics}, 2(2):61–119, 2021.

\bibitem{JC_parisi}
J.-C. Mourrat.
\newblock The {Parisi formula is a Hamilton--Jacobi equation in Wasserstein
  space}.
\newblock {\em Canadian Journal of Mathematics}, 74(3):607--629, 2022.

\bibitem{Souganidis}
P.~E. Souganidis.
\newblock A remark about viscosity solutions of {H}amilton-{J}acobi equations
  at the boundary.
\newblock {\em Proceedings of the American Mathematical Society},
  96(2):323--329, 1986.

\end{thebibliography}

\end{document}